\documentclass[reqno,11pt]{amsart}

\usepackage{ulem, bbm, braket, stmaryrd, rotating}

\usepackage{amscd,latexsym,amsthm,amsfonts,amssymb,amsmath,amsxtra,dsfont,mathrsfs}

\usepackage{ytableau}
\ytableausetup{smalltableaux}

\usepackage{dynkin-diagrams}

\usepackage{multirow}
\renewcommand{\arraystretch}{1.5}

\usepackage[all]{xypic}

\usepackage[pdftex,bookmarks,colorlinks,pdfmenubar]{hyperref}

\usepackage{verbatim}

\newcommand{\bd}{\boldsymbol{\rm d}}

\newcommand{\BC}{{\mathbb {C}}}

\newcommand{\BN}{{\mathbb {N}}}

\newcommand{\BW}{{\mathbb {W}}}

\renewcommand{\CD}{{\mathcal {D}}}
\newcommand{\CE}{{\mathcal {E}}}
\newcommand{\CF}{{\mathcal {F}}}

\newcommand{\CL}{{\mathcal {L}}}

\newcommand{\CN}{{\mathcal {N}}}
\newcommand{\CO}{{\mathcal {O}}}

\newcommand{\CS}{{\mathcal {S}}}

\newcommand{\CU}{{\mathcal {U}}}

\newcommand{\CY}{{\mathcal {Y}}}

\newcommand{\Fa}{{\mathfrak {a}}}

\newcommand{\Fg}{{\mathfrak {g}}}

\newcommand{\Fl}{{\mathfrak {l}}}

\newcommand{\Fn}{{\mathfrak {n}}}

\newcommand{\RG}{{\mathrm {G}}}

\newcommand{\RO}{{\mathrm {O}}}

\newcommand{\RU}{{\mathrm {U}}}

\newcommand{\ScO}{{\mathscr {O}}}

\newcommand{\Ad}{{\mathrm{Ad}}}

\newcommand{\bb}{\mathbb}

\newcommand{\cal}{\mathcal}

\newcommand{\disc}{{\mathrm{disc}}}

\newcommand{\fg}{\mathfrak{g}}

\newcommand{\GL}{{\mathrm{GL}}}

\newcommand{\Hom}{{\mathrm{Hom}}}

\newcommand{\Ind}{{\mathrm{Ind}}}

\newcommand{\Ker}{{\mathrm{Ker}}}

\newcommand{\rank}{{\mathrm{rank}}}

\newcommand{\Res}{{\mathrm{Res}}}

\newcommand{\SL}{{\mathrm{SL}}}

\newcommand{\SO}{{\mathrm{SO}}}

\newcommand{\sgn}{{\mathrm{sgn}}}
\newcommand{\Sp}{{\mathrm{Sp}}}

\newcommand{\tr}{{\mathrm{tr}}}

\newcommand{\wt}{\widetilde}

\newcommand{\apair}[1]{\left\langle {#1} \right\rangle}
\newcommand{\bpair}[1]{\left[{#1}\right]}

\newcommand{\ppair}[1]{\left( {#1} \right)}

\def\diag{{\rm diag}}

\def\eps{{\epsilon}}

\newtheorem{thm}{Theorem}[section]
\newtheorem{cor}[thm]{Corollary}
\newtheorem{lem}[thm]{Lemma}
\newtheorem{prop}[thm]{Proposition}

\newtheorem {ques/conj}[thm]{Question/Conjecture}

\newtheorem{defn}[thm]{Definition}
\newtheorem{rmk}[thm]{Remark}

\newtheorem{exmp}[thm]{Example}
\newenvironment{sop}{\begin{proof}[\indent  Sketch of proof]}{\end{proof}}

\usepackage[numbers]{natbib}

\newcommand{\Irr}{{\rm Irr}}

\newcommand{\prll}{\pi_{\rho,\Lambda_1,\Lambda_{-1}}}
\newcommand{\prllz}{\pi_{\rho,\Lambda_1,\Lambda_{-1},\eta}}
\newcommand{\prllp}{\pi_{\rho',\Lambda_1',\Lambda'_{-1}}}
\newcommand{\prllps}{\pi_{\rho',\Lambda_1',\Lambda^*_{-1}}}

\newcommand{\prllpz}{\pi_{\rho',\Lambda_1',\Lambda'_{-1},\eta}}

\newcommand{\prllpt}{\pi_{\rho',\Lambda_1',\Lambda^{\prime t}_{-1}}}
\newcommand{\prllpst}{\pi_{\rho',\Lambda_1',\Lambda^{\star t}_{-1}}}

\newcommand{\prllpp}{\pi_{\rho'',\Lambda''_1,\Lambda''_{-1}}}
\newcommand{\prlls}{\pi_{\rho^\star,\Lambda^\star_1,\Lambda^\star_{-1}}}
\newcommand{\prllsp}{\pi_{\rho^{\star\prime},\Lambda^{\star\prime}_1,\Lambda^{\star\prime}_{-1}}}

\newcommand{\ep}{\epsilon_\alpha}
\newcommand{\ev}{\epsilon_\beta}
\newcommand{\epp}{\epsilon_{\alpha}'}
\newcommand{\evp}{\epsilon_{\beta}'}

\newcommand{\er}{\epsilon_\rho}
\newcommand{\erp}{\epsilon_{\rho'}}

\newcommand{\UU}{{\mathrm{U}}}

\newcommand{\llb}{\llbracket}
\newcommand{\rrb}{\rrbracket}

\newcommand{\SCO}{{\cal{O}^{\rm{st}}}}
\newcommand{\SCOP}{{\cal{O'}^{\rm{st}}}}
\newcommand{\SCOPS}{{\cal{O^{\prime }}^{\rm{st}*}}}
\newcommand{\SCOF}{{\cal{O}^{\rm{st}}_{\CF}}}
\newcommand{\mf}{{\mathcal{M}} ( {\mathcal{F}})}

\newcommand{\SOM}{\ScO(\pi)^{\max}}
\newcommand{\SOMS}{\ScO(\pi)^{\max, \rm{st}}}

\title{Rational wavefront sets and the descent method for finite classical groups}

 \author{Zhicheng Wang}
 \address{Department of Mathematics, Jilin University, Changchun 130012, Jilin, P. R. China}
 \email{wangzhicheng@jlu.edu.cn}
\date{\today}

 \subjclass[2010]{Primary  20C33}
\begin{document}

 \begin{abstract}

Let \(\mathbbm{k}\) be a finite field of sufficiently large odd
characteristic, let \(F\) be the Frobenius map, and let \(G\) be a
symplectic or orthogonal group defined over \(\mathbbm{k}\).  For an
irreducible representation \(\pi\) of \(G^F\), its rational wavefront
set \(\ScO(\pi)\) is the set of \(F\)-rational nilpotent orbits
\(\mathcal O\) such that \(\pi\) occurs in the generalized
Gelfand--Graev representation \(\Gamma_{\mathcal O}\).

We prove that all maximal rational orbits in \(\ScO(\pi)\) lie in a
single \(F\)-stable nilpotent orbit, and that the whole rational
wavefront set is obtained from its maximal rational orbits by
rational closure. We also give an explicit description of these maximal rational
orbits in terms of Lusztig parameters and compute the dimensions of
the corresponding generalized Whittaker models.

For unipotent representations, we prove that the maximal rational
wavefront set is a single fiber of Lusztig's canonical quotient.
For special unipotent representations, this fiber identifies the
kernel of the canonical quotient map, giving a finite-field
representation-theoretic realization of the canonical quotient.  We
also describe the effect of Alvis--Curtis duality on maximal rational
wavefront sets.  

 \end{abstract}

\maketitle
\tableofcontents

\section{Introduction}

Let $\mathbbm{k}$ be a finite field of cardinality $q$ with an algebraic
closure $\overline{\mathbbm{k}}$, and let $p$ be the characteristic of
$\mathbbm{k}$.  Let $F$ be the Frobenius map.
Let $V_\Fn$ be an $\Fn$-dimensional nondegenerate symmetric or
skew-symmetric space over $\mathbbm{k}$.  If $V_\Fn$ is symmetric,
define its discriminant by
\begin{equation}\label{disc1}
        \disc(V_\Fn)
        =
        (-1)^{\frac{\Fn(\Fn-1)}{2}}\det(V_\Fn)
        \in \mathbbm{k}^{\times}/\mathbbm{k}^{\times2}.
\end{equation}
Let $G$ be the isometry group of $V_\Fn$; thus $G$ is $\Sp_\Fn$ or
$\RO_\Fn$.  When $V_\Fn$ is symmetric, we write
\[
        G=\RO(V_\Fn)=\RO_\Fn^{\disc(V_\Fn)}.
\]
Thus the sign in $\RO_\Fn^\pm$ is the discriminant of the
underlying symmetric space.  With our convention for $\disc$,
$\RO_{2m}^+$ is the split even orthogonal group.

Let $G^\circ_{\overline{\mathbbm{k}}}$ denote the identity component
of the base change of $G$ to $\overline{\mathbbm{k}}$, and let
$h_G$ be the Coxeter number of its root system.  Throughout this
paper, we assume that
\[
        p>3(h_G-1).
\]

If $H$ is an $F$-stable subgroup of $G$, and if
$\pi\in\Irr(H^F)$ and $\sigma\in\Irr(G^F)$, define
\[
        \apair{\pi,\sigma}_{H^F}
        :=
        \dim\Hom_{H^F}(\pi,\sigma|_{H^F}).
\]
If $H=G$, we write $\apair{\pi,\sigma}$ instead of
$\apair{\pi,\sigma}_{G^F}$.  As usual, we use the same symbol for
a representation and its character when no confusion can arise.

We write $\Fg$ for the Lie algebra of $G$ and $\CN\subset\Fg$ for
the nilpotent cone.  We denote an $F$-stable nilpotent orbit in
$\CN$ by $\SCO$, and an $F$-rational nilpotent orbit in $\CN^F$ by
$\CO$.  For classical groups, $F$-rational nilpotent orbits are
parametrized by signed Young diagrams; the precise convention and
the rational orbit order are recalled in Section~\ref{lcq}.

We fix once and for all a nontrivial additive character
\[
        \psi:\mathbbm{k}\longrightarrow\mathbb C^\times .
\]
For each $F$-rational nilpotent orbit $\CO\subset\CN^F$, let
$\Gamma_\CO$ be the corresponding generalized Gelfand--Graev representation
of $G^F$ with respect to $\psi$; see Section~\ref{sec:notation} for details.  For $\pi\in\Irr(G^F)$, define its rational wavefront set by
\[
        \ScO(\pi)
        :=
        \bigcup_{\apair{\pi,\Gamma_\CO}\neq0}\CO .
\]
Equivalently,
\[
        \ScO(\pi)
        =
        \set{
        X\in\CN^F
        \mid
        \apair{\pi,\Gamma_{\CO_X}}\neq0
        },
\]
where $\CO_X$ is the $F$-rational nilpotent orbit of $X$. We freely identify a collection of $F$-rational nilpotent orbits
with its union in $\CN^F$.  Under this convention, $\CO\in\ScO(\pi)$ means that $\CO$ is one
of the rational nilpotent orbits appearing in the above union. Whenever the cardinality of such a union is considered, it means
the number of rational nilpotent orbits in the union. We also use the
equivalent formulation in terms of generalized Whittaker models:
\[
        \CO\in\ScO(\pi)
        \quad\Longleftrightarrow\quad
        \operatorname{Wh}_\CO(\pi)\neq0.
\]
Define $\ScO(\pi)^{\max}$ to be the union of all rational
nilpotent orbits $\CO\in\ScO(\pi)$ that are maximal with respect
to the rational orbit order defined in Section~\ref{ssec:PO}.  Define $\ScO(\pi)^{\max,\rm st}$ to be the union of all rational
nilpotent orbits $\CO\in\ScO(\pi)$ such that $\CO^{\rm st}$ is
maximal, with respect to the closure order on $F$-stable
nilpotent orbits, among the orbits $(\CO')^{\rm st}$ with
$\CO'\in\ScO(\pi)$.
By the compatibility of the rational orbit order with the closure
order on $F$-stable nilpotent orbits, we have
\[
        \ScO(\pi)^{\max,\rm st}\subset \ScO(\pi)^{\max}.
\]
The stable orbits $\CO^{\rm st}$ with
$\CO\in\ScO(\pi)^{\max,\rm st}$ form the Kawanaka wavefront set
of $\pi$.  By the results of Lusztig, Geck--Malle,
Achar--Aubert, and Taylor \cite{Lu92,GM00,AA07,Tay16}, this set
consists of a single orbit, called the Kawanaka wavefront
orbit of $\pi$.  Although $\ScO(\pi)^{\max,\rm st}$ is defined as
a union of $F$-rational nilpotent orbits, by abuse of notation,
when no confusion can arise, we also use it for this unique
$F$-stable orbit.  Accordingly, in the sense of this abuse of
notation, the equality
\[
        \CO^{\rm st}=\ScO(\pi)^{\max,\rm st}
\]
means that $\CO^{\rm st}$ is the Kawanaka wavefront orbit of $\pi$.

\subsection{Rational wavefront sets}

Wavefront sets have long been studied for representations of
reductive groups over non-archimedean local fields. In the Harish--Chandra--Howe local character expansion, the
character near the identity is expressed as a linear combination
of Fourier transforms of nilpotent orbital integrals indexed by
rational nilpotent orbits over the local field
\cite{HC99,Howe74,DeB02}. The maximal rational orbits with
nonzero coefficients form the rational wavefront set.
M\oe glin--Waldspurger proved that these are precisely the maximal
orbits for which the associated degenerate Whittaker models are
nonzero \cite{MW87}. Determining this set of rational orbits is difficult, and many
results are formulated instead in terms of the stable wavefront
set obtained after passing to an algebraic closure; see, for
example, \cite{Wal18, CK25}. The relation between rational and stable wavefront sets is
discussed in \cite{Tsai25}.

In the local field case, depth-zero representations are closely related to representations
of the finite reductive quotients of parahoric subgroups.
Barbasch--Moy use generalized Gelfand--Graev characters of these
finite groups to study the coefficients in the local character
expansion \cite{BM97}.  Okada uses their Kawanaka wavefront orbits
to describe the wavefront set of depth-zero representations after extending the local field to
its maximal unramified extension \cite{Okada21}.  The resulting
unramified wavefront set lies between the rational wavefront set
over the original field and the stable wavefront set.  Replacing
the stable finite-group data by rational wavefront sets may therefore
lead to a finer local invariant, closer to the rational wavefront
set.  Related developments for unipotent representations appear in
\cite{CMBO24,CMBO25}.

For finite groups of Lie type, Kawanaka introduced generalized
Gelfand--Graev representations attached to rational nilpotent
orbits \cite{K85}.  The work of Lusztig, Geck--Malle,
Achar--Aubert, and Taylor \cite{Lu92,GM00,AA07,Tay16} determines
for each irreducible representation a unique $F$-stable Kawanaka
wavefront orbit, but does not determine which individual rational
nilpotent orbits occur in $\ScO(\pi)$.  This paper studies this
question for finite symplectic and orthogonal groups.

The guiding philosophy, originating in the descent method of
Ginzburg--Rallis--Soudry \cite{Gin06,GRS03,GRS11} and formulated
in the local conjectures of Jiang--Zhang and
Jiang--Liu--Zhang \cite{JZ18,JLZ22}, is that wavefront sets should
be read by descent.  The finite-field version of this philosophy
was initiated for symplectic groups in \cite{PW23}, where the
method treated only the range accessible through the previously
known composition law.  This range is limited: it does not reach
even the trivial representation of $\Sp_{2n}(\mathbbm{k})$, whose
rational wavefront set consists only of the trivial orbit.

The present paper removes this restriction by establishing new
composition laws for generalized Whittaker models.  Together with
the explicit first-descent theorem, they make the descent method
applicable to all rational nilpotent orbits for finite symplectic
and orthogonal groups.

\subsection{Main results}

\begin{thm}\label{main1}
Let $G$ be a symplectic or orthogonal group defined over $\mathbbm{k}$, and
let $\pi\in\Irr(G^F)$.  Then
\begin{enumerate}
\item[(i)]
\[
        \ScO(\pi)^{\max}
        =
        \ScO(\pi)^{\max,\rm st}.
\]
\item[(ii)]
\[
        \ScO(\pi)=\overline{\ScO(\pi)^{\max}},
\]
where the closure on the right-hand side is taken with respect to the
rational orbit order defined in Section~\ref{ssec:PO}.  
\end{enumerate}
\end{thm}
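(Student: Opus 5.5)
We will prove both parts by induction on $\dim V_\Fn$, using the descent method. The two tools are the composition laws for generalized Whittaker models and the explicit first-descent theorem announced in the introduction.

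\textbf{Step 1 (reduction to descent).} Let $\CO$ be a rational orbit with signed Young diagram whose first row has length $\ell$. The composition law should factor $\operatorname{Wh}_\CO(\pi)$ through a Fourier--Jacobi or Bessel coefficient of depth $\ell$. This coefficient lands on a smaller classical group $G'$ attached to the orthogonal complement of a vector of prescribed anisotropy class, and is followed by $\operatorname{Wh}_{\CO'}$. Here $\CO'$ is obtained from $\CO$ by deleting the first row, with the sign and discriminant bookkeeping determined by the class of the vector. Thus
\[
\operatorname{Wh}_\CO(\pi)\neq 0 \iff \operatorname{Wh}_{\CO'}(\sigma)\neq 0
\]
for some irreducible $\sigma$ of $G'^F$ occurring in the $\ell$-th descent of $\pi$. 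This gives the recursive description
\[
\ScO(\pi)=\bigcup_{\ell,\;\sigma\subset \mathcal D_\ell(\pi)} \{\CO : \CO' \in \ScO(\sigma)\}.
\]

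\textbf{Step 2 (first descent).} Let $\ell_0$ be the first occurrence index, meaning the largest $\ell$ with $\mathcal D_\ell(\pi)\neq 0$. The explicit first-descent theorem, via theta correspondence and Lusztig parameters, identifies $\ell_0$ with the first column (equivalently, the first row of the dual) of the Kawanaka wavefront orbit $\SOMS$. It also describes the constituents $\sigma$ of $\mathcal D_{\ell_0}(\pi)$ explicitly. By induction, each such $\sigma$ satisfies (i) and (ii), so its maximal orbits lie in the single stable orbit $\ScO(\sigma)^{\max,\rm st}$. We must then check that all these stable orbits coincide as $\sigma$ varies, and that prepending a row of length $\ell_0$ to them yields exactly the stable orbit $\SOMS$. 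This is a combinatorial verification on symbols.

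\textbf{Step 3 (deducing (i)).} Let $\CO\in\SOM$. If its first row $\ell$ were smaller than $\ell_0$, we would show $\CO$ is dominated by some orbit obtained from a first-descent constituent, contradicting maximality. For this we use that $\mathcal D_\ell(\pi)$ for $\ell<\ell_0$ is built from $\mathcal D_{\ell_0}$ by a tower or restriction argument (the composition laws in reverse). Hence $\ell=\ell_0$, and by Step 1 and induction $\CO'$ is maximal in $\ScO(\sigma)$ for some $\sigma$. By Step 2, $\CO^{\rm st}=\SOMS$.

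\textbf{Step 4 (deducing (ii)).} The inclusion $\ScO(\pi)\subset\overline{\SOM}$ is automatic, since the rational orbit order is a partial order on a finite set. For the reverse inclusion, take $\CO\le \CO_1$ with $\CO_1\in\ScO(\pi)$. We need $\operatorname{Wh}_\CO(\pi)\ne0$. We would reduce to covering relations in the rational order and handle two cases:
\begin{itemize}
\item when the first rows agree, apply induction to $\CO'\le\CO_1'$ within $\ScO(\sigma)$;
\item when the first row decreases, use the composition law to realize $\operatorname{Wh}_\CO$ as a constituent of a further coefficient of $\operatorname{Wh}_{\CO_1}$, a "row-splitting" nonvanishing.
\end{itemize}

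The main obstacle is this last nonvanishing. Showing that Whittaker models propagate downward along every covering relation of the rational order, including the sign and discriminant changes, requires the new composition laws in full strength. We would first try to prove it via the non-archimedean-style argument of Gomez--Gourevitch--Sahi, adapted to Kawanaka's construction.
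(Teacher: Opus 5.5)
Your proposal has genuine gaps in Steps~1--3. In $\Sp_{2n}$ an odd largest part comes as a pair $(2\ell+1)^2$ (in $\RO_m$ an even largest part does), and such a pair cannot be peeled off by a single Fourier--Jacobi or Bessel coefficient. The only available reduction is the paper's new odd-pair law, Theorem~\ref{2l+1}. It relates $\operatorname{Wh}_{\CO_{(2\ell+1)^2}}$ to $\CO_{2\ell,a}$ \emph{followed by} $\CO_{2\ell+2,\varepsilon(-1)a}$, so the second index is larger than the first. In general it also carries extra terms $\frac{q+1}{2}R_a+\frac{q-1}{2}R_{-a}$ from the orbits $[(2\ell+2,\pm a),(2\ell,\pm\varepsilon(-1)a)]$; these vanish when $\ell$ is the first descent index, but not in general.

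Consequently your Step~1 equivalence has no meaning for odd first parts, and your Step~2 recipe (``prepend a row of length $2\ell_0$ to the stable orbit of $\sigma$'') is false. For the trivial representation, $\ell_0=0$ and the first descent is $\omega_{n,\psi_b}$ on the same group, whose constituents have maximal orbit $[(2,b),1^{2n-2}]$. More generally, whenever $\SOMS$ begins with $(2\ell_0+1)^2$, the first-descent constituents have Kawanaka orbit beginning with $2\ell_0+2$. This is why the paper works with descent sequence indexes $\llb(2\ell_1,a_1),(2\ell_2,a_2),\ldots\rrb$ that need not be partitions. It introduces good indexes, where a rise $\ell_{i+1}=\ell_i+1$ with $a_i=\varepsilon(-1)a_{i+1}$ encodes $(2\ell_i+1)^2$. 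It then proves in Lemma~\ref{lem:orbit-sequence-order} that the mutation order on indexes matches the rational orbit order in both directions.

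Your Step~3 has no substitute for that comparison. Maximality in the rational order is governed by column sums and sign conditions, not by first-row length. There is also no ``tower or restriction argument'' producing $\mathcal D_\ell(\pi)$, $\ell<\ell_0$, from $\mathcal D_{\ell_0}(\pi)$. What the paper proves instead is Propositions~\ref{lem73}, \ref{lem75}, \ref{lem74}. If a descent sequence $\pi\to\pi'\to\pi''\to\cdots$ does not start with a first descent, the explicit branching criterion Theorem~\ref{ggp}(i) gives a reroute $\pi\to\pi^\star\to\pi''$ with strictly larger first index and the same product of signs, i.e.\ an upward elementary mutation. The tie analysis behind Corollaries~\ref{5.6} and \ref{fdi-comp} then shows that all maximal indexes share one numerical sequence and are good, which gives (i).

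For (ii) you correctly locate the difficulty, but downward nonvanishing \emph{is} the content of (ii), and proposing to try Gomez--Gourevitch--Sahi leaves it unproved. Your two-case split is also too coarse. The generating moves of the rational order (mutations~(1)--(6) in Section~\ref{ssec:PO}) change signs, transfer signs between blocks, and create or destroy odd pairs. The paper's mechanism has three parts:
\begin{itemize}
\item By Lemma~\ref{lem:orbit-sequence-order}(i), every rational relation lifts to elementary index mutations $(2\ell_i,a_i),(2\ell_{i+1},a_{i+1})\mapsto(2\ell_i+2,b_i),(2\ell_{i+1}-2,b_{i+1})$ with $a_ia_{i+1}=b_ib_{i+1}$.
\item The inverse of each mutation is realized inside $\mathcal D(\pi)$ by Lemma~\ref{lem7.7}. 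One Deligne--Lusztig induces $\chi^\pm\otimes\pi'$ from $\GL_1\times\Sp$ or $\RU_1\times\Sp$, with a rank-one eigenvalue avoiding those of $-s$, $s'$, $-s''$. By Theorem~\ref{ggp}(i), this irreducible representation occurs in the descent of $\pi$ of index $\ell_1-1$ and sign $\epsilon a_1$, and contains $\pi''$ in its descent of index $\ell_2+1$ and sign $\epsilon a_2$.
\item Corollary~\ref{od} converts this back into $\operatorname{Wh}_\CO(\pi)\neq0$; it itself relies on Theorem~\ref{thm5.1} and Lemma~\ref{lem7.7}.
\end{itemize}
Finally, the orthogonal case is not done by Bessel descents, which would need a Bessel branching law and an even-pair analogue of Theorem~\ref{2l+1}. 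The paper lifts to a symplectic group by theta and uses the Gomez--Zhu compatibility Theorem~\ref{gz}. Lemmas~\ref{thetamax} and \ref{theta-strict} then isolate the unique constituent of the big theta lift whose maximal orbits have exactly $\dim V'-\dim V$ rows, so that the order equivalence of Proposition~\ref{8.4} applies.
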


This theorem is not a formal consequence of the uniqueness of the
Kawanaka wavefront orbit.  A priori, a rational orbit whose
underlying stable orbit is strictly below the Kawanaka wavefront
orbit could nevertheless be maximal in $\ScO(\pi)$ with respect to
the rational orbit order.  Part~(i) rules out this possibility:
every maximal rational orbit in $\ScO(\pi)$ lies over the Kawanaka
wavefront orbit.  Part~(ii) further shows that all the remaining
rational orbits in $\ScO(\pi)$ are obtained from the maximal ones
by rational closure.

The mechanism is descent.  A rational nilpotent orbit for a symplectic
group can be decomposed into basic pieces: even one-block orbits
\[
        \CO_{2\ell,a}
        =
        [(2\ell,a),1^{2n-2\ell}],
        \qquad a\in\{\pm\},
\]
and odd-pair orbits
\[
        \CO_{(2\ell+1)^2}
        =
        [(2\ell+1)^2,1^{2n-4\ell-2}].
\]
Here we use the signed Young diagram notation recalled in
Section~\ref{lcq}.  If the largest part of a rational orbit is even, one
can remove one even block by the known composition law; the resulting
one-step model is the Fourier--Jacobi descent
$\operatorname{Wh}_{\CO_{2\ell,a}}(\pi)$, whose computation is a refined
finite Gan--Gross--Prasad problem solved in \cite{Wang21,Wang23}.

The remaining difficulty is to handle the odd-pair pieces.  The following
composition law reduces $\CO_{(2\ell+1)^2}$ to a composition of
$\CO_{2\ell,a}$ and $\CO_{2\ell+2,\varepsilon(-1)a}$, where
$\varepsilon(-1)$ denotes the square class of $-1$.  The case $\ell=0$ requires a convention: it occurs when one treats the
odd-pair orbit $\CO_{1^2}$.  In this case, the formal first even piece is
$\CO_{0,a}$, which is not a nilpotent orbit. We use the convention (see \eqref{descent0})
\[
        \operatorname{Wh}_{\CO_{0,a}}(\pi)
        :=
        \pi\otimes\omega_{n,\psi_{\varepsilon(-1)a}},
\]
where $\omega_{\psi_{n,\varepsilon(-1)a}}$ is the Weil representation of $\Sp_{2n}(\mathbbm{k})$ attached to
$\psi_{\varepsilon(-1)a}$, with $\psi_+=\psi$ and
$\psi_-(x)=\psi(\eta x)$ for a fixed nonsquare
$\eta\in\mathbbm{k}^{\times}$.  

\begin{thm}[Composition Law]\label{s1}
Let $\pi\in\Irr(\Sp_{2n}(\mathbbm{k}))$.  Suppose that $\ell\leq (n-1)/2$
and
\[
        \operatorname{Wh}_{\CO_{2d,\pm}}(\pi)=0
        \qquad
        \text{for all } d>\ell .
\]
Then the following hold.
\begin{enumerate}
\item[(i)] For every $a\in\{\pm\}$,
\[
\dim
\operatorname{Wh}_{\CO_{2\ell+2,\varepsilon(-1)a}}
\bigl(
\operatorname{Wh}_{\CO_{2\ell,a}}(\pi)
\bigr)
=
2\dim
\operatorname{Wh}_{\CO_{(2\ell+1)^2}}(\pi).
\]
\item[(ii)] Let
\(
        \CO=
        [(\lambda_1^{n_1},a_1),\ldots,(\lambda_r^{n_r},a_r)]
\)
be an $F$-rational nilpotent orbit of
$\Sp_{2n-4\ell-2}(\mathbbm{k})$ such that
$\lambda_1\leq 2\ell+1$.  Then
\[
\dim
\operatorname{Wh}_{\CO_{(2\ell+1)^2}*\CO}(\pi)
=
\dim
\operatorname{Wh}_\CO
\bigl(
\operatorname{Wh}_{\CO_{(2\ell+1)^2}}(\pi)
\bigr).
\]
\end{enumerate}
\end{thm}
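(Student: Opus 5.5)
We prove the Composition Law by unfolding both sides as spaces of invariants under explicit unipotent subgroups, and then relating the two unipotent-integration problems through root exchange and Fourier expansion in the style of Ginzburg--Rallis--Soudry.

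\emph{Step 1: model of the left side.} Write $\operatorname{Wh}_{\CO_{2\ell,a}}(\pi)$ as the $(N_{2\ell},\psi_{2\ell,a})$-coinvariants of $\pi\otimes\omega_{\psi}$. Here $N_{2\ell}$ is the unipotent radical of the parabolic with Levi $\GL_1^{\ell}\times\Sp_{2n-2\ell}$, and the Weil representation enters through the Heisenberg quotient; when $\ell=0$ this is the convention \eqref{descent0}. This is a representation of $\Sp_{2n-2\ell}(\mathbbm{k})$. Applying $\operatorname{Wh}_{\CO_{2\ell+2,\varepsilon(-1)a}}$ introduces a second Fourier--Jacobi datum. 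The Weil representations multiply, and $\omega_{\psi_a}\otimes\omega_{\psi_{\varepsilon(-1)a}}$ of the Heisenberg group of a one-dimensional polarization is a linear (non-Heisenberg) representation, because the central characters cancel. The sign $\varepsilon(-1)$ is chosen exactly so that this cancellation happens.

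Using the Schrödinger model, this tensor product is the regular representation of $\mathbbm{k}$-functions on a line. Taking coinvariants against it therefore amounts to taking coinvariants for a unipotent group $U$ with a character $\psi_U$, summed over a one-parameter family. The composite can then be rewritten as $(U,\psi_U)$-coinvariants of $\pi$. Here $U$ is contained in the unipotent radical attached to $\CO_{(2\ell+1)^2}$, but it is not in standard position.

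\emph{Step 2: root exchange.} Compare $(U,\psi_U)$ with the standard pair $(N_{(2\ell+1)^2},\psi_{(2\ell+1)^2})$ defining $\operatorname{Wh}_{\CO_{(2\ell+1)^2}}(\pi)$. I would carry out a chain of root exchanges: swap a root subgroup $X$ in $U$ on which $\psi_U$ is trivial for a complementary $Y$, where $X$ and $Y$ are paired by the commutator into a subgroup on which the character is nondegenerate. In the finite-field setting each exchange is an isomorphism of coinvariant spaces, by a Fourier-analysis argument on $X\times Y$.

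The residual effect is a Fourier expansion along a single root subgroup, or a one-dimensional torus direction. The nontrivial characters split into two $\mathbbm{k}^{\times2}$-classes, and the trivial character contributes $\operatorname{Wh}_{\CO_{2d,\pm}}$ terms with $d>\ell$. These vanish by hypothesis. The two square classes each contribute a copy of $\operatorname{Wh}_{\CO_{(2\ell+1)^2}}(\pi)$, since the stabilizer of the odd pair is $\RO_2$, which swaps the two splittings. This yields the factor $2$ in (i). The condition $\ell\le(n-1)/2$ guarantees that there is room for the $(2\ell+1)^2$ block.

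\emph{Step 3: part (ii).} The argument is the standard transitivity of unipotent integration. The unipotent radical of $\CO_{(2\ell+1)^2}*\CO$ is a semidirect product of $N_{(2\ell+1)^2}$ with the unipotent group defining $\CO$ inside the stabilizer $\Sp_{2n-4\ell-2}$ (times $\RO_2$). The character restricts compatibly, and taking coinvariants in stages gives the equality. The condition $\lambda_1\le 2\ell+1$ is used to ensure that the $\mathfrak{sl}_2$-grading of the combined orbit restricts to the given gradings, so the unipotent radicals match. Possibly one more root exchange is needed where the parity of $\lambda_1$ and $2\ell+1$ interact.

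\emph{Main obstacle.} I expect the main obstacle to be Step 2: organizing the root exchanges so that all extra Fourier coefficients are exactly $\operatorname{Wh}_{\CO_{2d,\pm}}$ with $d>\ell$. Equivalently, one must check that no intermediate orbits not covered by the hypothesis appear, and must count the two square classes with multiplicity one each. I would first work out the case $\ell=0$, $n=2$ by direct computation with the Weil representation, and then do induction on $\ell$.
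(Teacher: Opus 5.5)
\textbf{Part (i).} Your toolbox (exchange roots, Fourier-expand, kill the leftover terms with the hypothesis) is the right one, but the mechanism you give for the factor $2$ cannot work, and the content of the theorem lies exactly there.

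\emph{Your counting is impossible.} Each coefficient of a Fourier expansion along a one-parameter group is itself a Whittaker-type dimension. If the nontrivial characters form two square classes, each a single torus orbit, then a class contributes $\frac{q-1}{2}$ times a dimension, not ``one copy'' of $\operatorname{Wh}_{\CO_{(2\ell+1)^2}}(\pi)$. Also, the stabilizer of the odd pair is $\SL_2^\triangle$, not $\RO_2$.

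\emph{Root exchange alone is not enough.} Root exchanges preserve the averaged quantity, so no chain of them can carry the composite model to the odd-pair model and produce a factor $2$. Both sides must be Fourier-expanded and compared.
\begin{enumerate}
\item[(a)] After conjugating by $w$ and iterating Lemma~\ref{ex2}, the composite becomes an average over $N^{[\ell]}$. Its character on the residual $2\times2$ block is $\Fa u_2-\Fa u_3$. The sign $\varepsilon(-1)a$ is what makes $W=\diag\{\Fa,-\Fa\}$ a hyperbolic plane.
\item[(b)] Expanding along $b(t)$ gives coefficients indexed by $(k,1)\in W$. The $q-2$ anisotropic ones (including $k=0$, and sorted by the square class of $k^2-1$, not of $k$) are the models $R_{\pm a}$ of $\CO_{(2\ell+2,\pm a),(2\ell,\pm\varepsilon(-1)a)}$. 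These vanish by the hypothesis together with Lemma~\ref{erL3}. The two isotropic ones, $k=\pm1$, give the same term $T$.
\item[(c)] Separately, a further exchange writes $2\dim\operatorname{Wh}_{\CO_{(2\ell+1)^2}}(\pi)$ as twice an average over $N^{[\ell]}$ with a different character. Expanding it along $u_{1,2}$ gives $2T$ (from $k=0$) plus $R_{\pm a}$-terms.
\end{enumerate}
Hence $2\dim\operatorname{Wh}_{\CO_{(2\ell+1)^2}}(\pi)=\dim(\text{composite})+\frac{q+1}{2}R_a+\frac{q-1}{2}R_{-a}$. The $2$ counts the two isotropic lines of $W$ against the single trivial character on the odd-pair side.

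\emph{Step~1.} The two Fourier--Jacobi data involve Heisenberg groups of different symplectic spaces, so no central characters cancel. With the definition via $G_{\geqslant1.5}$ and a Lagrangian, the composite is just the model of $N_{\CO_{2\ell+2,\varepsilon(-1)a}}N_{\CO_{2\ell,a}}$ with the product character.

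\textbf{Part (ii).} Step~3 rests on a false identification: $N_{\CO_{(2\ell+1)^2}*\CO}$ is not $N_\CO\ltimes N_{\CO_{(2\ell+1)^2}}$. The latter is exactly the group whose model is the right side of (ii). The grading defining $N_{\CO_{(2\ell+1)^2}}$ is $0$ on the complement, whereas the combined grading acts there through $h_\CO$. So the mixed root spaces between the odd-pair block and the complement are selected differently. After conjugation the two groups are $CX$ and $CY$ inside $CXY$, with $X,Y\neq1$ as soon as $\CO\neq0$. Taking coinvariants in stages therefore proves nothing.

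The paper instead exchanges $X$ and $Y$ one graded piece $X_r,Y_r$ ($1\leqslant r\leqslant2\ell$) at a time, with respect to $\hbar=\diag\{\mathtt T,h_\CO,\mathtt T'\}$; this is where $\lambda_1\leqslant2\ell+1$ enters. It verifies non-degeneracy of each pairing, including the one through the last odd-pair coordinate $e_{2\ell,2n-2\ell}$. It ends with a conjugation argument interchanging $xy$ and $yx$.

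\textbf{The case $\ell=0$.} Under \eqref{descent0}, this case is reached neither by unipotent integration nor by induction on $\ell$ from $n=2$. The paper uses Proposition~\ref{trivial}(i), which comes from the explicit descent theory, to see that the hypothesis forces $\pi$ to be trivial, and then applies Proposition~\ref{trivial}(ii). The cases $\ell\geqslant1$ are treated uniformly, not by induction on $\ell$.
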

\begin{rmk}
Theorem~\ref{s1} is not a direct application of the usual exchange-of-roots
arguments of \cite{GRS11,GGS17}.  The standard method gives the basic
framework, but it does not by itself produce the odd-to-even composition
needed here.  The extra point is the finite-orbit analysis which controls
the relevant subgroup actions and the rational signs.  This is what allows
the odd-pair model to be reduced to two even Fourier--Jacobi descents.  The
same idea may have a local analogue, though one would have to handle the
analytic issues in exchanging unipotent integrals and tracking the
corresponding subgroup actions.
\end{rmk}

Thus, the odd-pair pieces are reduced to the even descents computed by
finite Gan--Gross--Prasad theory.
A descent sequence of
$\pi\in\Irr(\Sp_{2n}(\mathbbm{k}))$ is a sequence
\begin{equation}\label{descent1}
        \pi=\pi_0
        \xrightarrow{(\ell_1,a_1)}
        \pi_1
        \xrightarrow{(\ell_2,a_2)}
        \cdots
        \xrightarrow{(\ell_r,a_r)}
        \pi_r=\mathbbm{1},
\end{equation}
where $\pi_i$ is an irreducible constituent of
$\operatorname{Wh}_{\CO_{2\ell_i,a_i}}(\pi_{i-1})$, and $\mathbbm{1}$ is
the trivial representation of the trivial group $\Sp_0(\mathbbm{k})$.  Its
descent sequence index is denoted by
\[
        \Gamma
        =
        \llb
        (2\ell_1,a_1),(2\ell_2,a_2),\ldots,(2\ell_r,a_r)
        \rrb .
\]
Some descent sequence indexes correspond to signed Young diagrams,
and hence to rational nilpotent orbits, but not every index does.
The mutation order is nevertheless defined on all descent sequence
indexes, whether or not they correspond to rational orbits.
It is proved in Section~\ref{sec4} that this order is compatible
with the rational orbit order in both directions. If
$\Gamma\leqslant\Gamma'$ and they correspond to rational
orbits $\CO$ and $\CO'$, respectively, then
$\CO\leqslant\CO'$.  Conversely, if
$\CO\leqslant\CO'$, then for every index $\Gamma$ corresponding
to $\CO$, there exists an index $\Gamma'$ corresponding to
$\CO'$ such that $\Gamma\leqslant\Gamma'$.  Together with the
relation between descent sequences and generalized Whittaker
models, this allows the study of $\ScO(\pi)^{\max}$ to be reduced
to that of maximal descent sequence indexes.

A first descent is a nonzero descent with the largest possible value of
$\ell$.  The explicit first-descent theorem (see Section~\ref{sec6}),
together with the comparison of descent branches in Section~\ref{sec7},
shows that every maximal descent sequence index is obtained by successive
first descents.  Iterating the explicit description of first descent then
gives an explicit description of all maximal descent sequence indexes.
We further prove that every such index corresponds to a rational nilpotent
orbit.  The comparison between the mutation order and the rational orbit
order therefore identifies these orbits with those in $\SOM$, yielding an
explicit description of $\SOM$ for symplectic groups.  The orthogonal cases
are obtained from the symplectic case by theta correspondence and the
moment-map theorem of Gomez--Zhu (see Section~\ref{sec8}).

\begin{thm}\label{main2}
Let $G$ be a symplectic or orthogonal group defined over $\mathbbm{k}$, and
let $\pi\in\Irr(G^F)$. Then
\[
        \SOM
        =
        \bigcup_{\Gamma}\CO(\Gamma),
\]
where $\Gamma$ runs over all maximal descent sequence indexes of $\pi$ and
$\CO(\Gamma)$ is the rational nilpotent orbit attached to $\Gamma$.
Moreover, after fixing the theta-compatible Lusztig correspondence of
Theorem~\ref{thm:L-can}, these maximal descent sequence indexes are computed
explicitly from the Lusztig parameter of $\pi$.  The explicit formulas of these maximal descent sequence indexes are
given in Theorems~\ref{max}, \ref{maxo1} and \ref{maxo2}.
\end{thm}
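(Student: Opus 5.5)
The plan is to treat the symplectic case first and then transfer the result to orthogonal groups by theta correspondence.

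\textbf{Step 1: descent sequences realize all of $\SOM$.} Let $\pi\in\Irr(\Sp_{2n}(\mathbbm{k}))$ and $\CO\in\ScO(\pi)$. Write $\CO$ as a concatenation of its basic pieces, namely even one-block orbits $\CO_{2\ell,a}$ and odd-pair orbits $\CO_{(2\ell+1)^2}$, ordered by decreasing part size. The known composition law peels off an even block. Theorem~\ref{s1}(ii) peels off an odd pair, and Theorem~\ref{s1}(i) replaces that odd pair by the two even descents $\CO_{2\ell,a}$ and $\CO_{2\ell+2,\varepsilon(-1)a}$. Iterating, one obtains the following.
\begin{itemize}
\item $\operatorname{Wh}_\CO(\pi)\neq0$ holds if and only if some descent sequence of $\pi$ has an index $\Gamma$ corresponding to $\CO$.
\item The vanishing hypothesis of Theorem~\ref{s1} must hold at each step. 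We arrange this by decomposing $\CO$ largest part first: any larger nonzero even descent would contradict maximality only when $\CO$ is maximal. For general $\CO$, we instead ask that the index be obtained with the chosen order of pieces, which suffices for the implication ``nonzero model $\Rightarrow$ descent index''.
\end{itemize}
Next we invoke the two-sided compatibility of the mutation order with the rational orbit order from Section~\ref{sec4}. A maximal $\CO$ in $\ScO(\pi)$ has an index $\Gamma$. If $\Gamma$ were not maximal, it would lie below a maximal descent sequence index $\Gamma'$. By the result below, $\Gamma'$ corresponds to an orbit $\CO(\Gamma')\ge\CO$ lying in $\ScO(\pi)$, which forces equality. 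Conversely, every maximal $\Gamma$ gives an orbit in $\ScO(\pi)$. By the converse half of the compatibility, this orbit is maximal: an orbit above it would carry an index above $\Gamma$.

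\textbf{Step 2: maximal indices correspond to orbits and are computed explicitly.} Using the comparison of descent branches in Section~\ref{sec7}, we show that every maximal index arises from successive first descents. At each stage we would compare a non-first branch $(\ell,a)$ with the first one $(\ell_{\max},a')$, and exhibit a mutation showing that the non-first branch is dominated. The explicit first-descent theorem of Section~\ref{sec6} is stated in terms of Lusztig parameters, where the finite Gan--Gross--Prasad results \cite{Wang21,Wang23} give the constituents. Iterating it therefore computes these indices from the theta-compatible Lusztig correspondence of Theorem~\ref{thm:L-can}. This is what yields Theorem~\ref{max}. One then checks directly from the resulting formula that successive $\ell_i$ are weakly decreasing, with the right sign pattern so that odd pairs pair up. It follows that each maximal index is of the form attached to a signed Young diagram, so $\CO(\Gamma)$ is defined.

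\textbf{Step 3: orthogonal groups.} For $\pi\in\Irr(\RO^\pm_\Fn(\mathbbm{k}))$, take a theta lift to a suitably large symplectic group; the choice of target is determined by the Lusztig parameter. By the moment-map theorem of Gomez--Zhu, generalized Whittaker models of $\pi$ along $\CO$ match those of the lift along the orbit obtained by adding a column via the moment map. Since this correspondence preserves the orders, maximal orbits correspond to maximal orbits. Transporting Step~2 then gives Theorems~\ref{maxo1} and \ref{maxo2}.

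\textbf{Main obstacle.} I expect the hardest part to be Step 2's claim that non-first branches are always dominated in the mutation order. This requires tracking the rational signs $a_i$ through the Lusztig-symbol combinatorics, including the $\varepsilon(-1)$ twists in the odd-pair reduction. My first attempt would be induction on $n$, comparing the explicit symbols of the two branches after one further first descent.
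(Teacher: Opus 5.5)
Your architecture is the paper's: a dictionary between nonzero models and good descent indexes, the two-sided compatibility of Lemma~\ref{lem:orbit-sequence-order}, maximal indexes built from first descents, and a theta transfer. However, Step~1 has a genuine gap.

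You invoke only the conditional odd-pair law, Theorem~\ref{s1}, and your sketch does not secure its hypothesis $\operatorname{Wh}_{\CO_{2d,\pm}}(\pi)=0$ for $d>\ell$.
\begin{itemize}
\item A nonzero $\operatorname{Wh}_{\CO_{2d,b}}(\pi)$ with $d>\ell$ by itself only yields orbits such as $[(2d,b),1^{2n-2d}]$. These need not be comparable with $[(2\ell+1)^2,\ldots]$, so they do not contradict maximality of $\CO$. That a maximal orbit with leading odd pair $(2\ell+1)^2$ has $\ell$ equal to the first descent index is an output of the theorem, not an input.
\item After the first piece is peeled off, the remaining orbit need not be maximal for the constituent you landed on. So the hypothesis can fail at later stages even for maximal $\CO$; at $\ell=0$ it would force that constituent to be trivial, by Proposition~\ref{trivial}(i).
\item For non-maximal $\CO$, Theorem~\ref{s1}(i) says nothing at all, so fixing an order of pieces does not help.
\item Theorem~\ref{s1}(i) is only an identity of dimensions. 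It does not let you pass $\operatorname{Wh}_{\CO'}$ through the odd-pair model.
\end{itemize}
The missing ingredient is the unconditional identity of Theorem~\ref{thm5.1}. It says that $2\dim\operatorname{Wh}_{\CO_{(2\ell+1)^2}*\CO'}(\pi)$ equals $\dim\operatorname{Wh}_{\CO_{2\ell+2,\varepsilon(-1)a}*\CO'}(\operatorname{Wh}_{\CO_{2\ell,a}}(\pi))$, plus $\frac{q+1}{2}$ and $\frac{q-1}{2}$ times the $\CO'$-models of $\operatorname{Wh}_{\CO_{(2\ell+2,\pm a),(2\ell,\pm\varepsilon(-1)a)}}(\pi)$. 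The extra terms come from the anisotropic Fourier coefficients in the root-exchange computation.

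Since every term is nonnegative, a nonzero odd-pair model yields either your good index directly, or a descent sequence beginning $(2\ell+2,b),(2\ell,\varepsilon(-1)b)$. The paper converts the latter into one beginning $(2\ell,a),(2\ell+2,\varepsilon(-1)a)$ using the inverse elementary mutation of Lemma~\ref{lem7.7}. That lemma uses Deligne--Lusztig induction from an auxiliary $\GL_1$ or $\RU_1$ character with a new eigenvalue. The case $\ell=0$ is handled via Proposition~\ref{trivial}. This gives Corollary~\ref{od}(i) for every $\CO$.

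Step~3 has a second gap. Generalized descent preserves the rational order, but Proposition~\ref{8.4} gives order reflection only when the second orbit has exactly $\dim V'-\dim V$ rows. Moreover, Theorem~\ref{gz} compares $\pi$ with the whole big theta lift, which is reducible. So ``maximal orbits correspond to maximal orbits'' does not follow. The paper lifts to $\Sp_{4n}$ and singles out the constituent defined by \eqref{Lambda1p}. Lemma~\ref{thetamax} shows that the other constituents add nothing to the wavefront set of the lift. Lemma~\ref{theta-strict} shows that every maximal orbit of the chosen constituent has exactly $2n$ rows. Only then is generalized descent a bijection on maximal orbits.

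In Step~2, maximal indexes are not weakly decreasing. An odd pair is encoded by a rise $\ell_{i+1}=\ell_i+1$ with $a_i=\varepsilon(-1)a_{i+1}$. Goodness comes from the inequalities of Corollary~\ref{fdi-comp}; equality forces the $\rho$-contributions to agree and both dual symbols to have tied row maxima.

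For your main obstacle, do not compare with the first-descent branch itself. Instead, keep the endpoint $\pi''$ of the second arrow fixed and modify only the first arrow, as in Propositions~\ref{lem73}, \ref{lem75}, \ref{lem74}:
\begin{itemize}
\item make the first arrow sharp;
\item then give it the first-descent sign pair;
\item then make its $\rho$-part sharp.
\end{itemize}
At each stage, Theorem~\ref{ggp}(i) verifies that the new intermediate representation still descends to $\pi''$. The two-step index then changes by elementary mutations, contradicting maximality.
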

\begin{rmk}
By Theorem~\ref{main1}\textup{(ii)}, the explicit description of
$\SOM$ determines the whole rational wavefront set $\ScO(\pi)$ by
rational closure.
\end{rmk}

\begin{exmp}\label{trv}
Let $\mathbbm{1}_n$ be the trivial representation of
$\Sp_{2n}(\mathbbm{k})$.  All positive even descents vanish:
\[
        \operatorname{Wh}_{\CO_{2\ell,a}}(\mathbbm{1}_n)=0
        \qquad(\ell>0).
\]
By the composition law, this also kills the odd-pair descents
$\CO_{(2\ell+1)^2}$ with $\ell>0$.  Thus only the case $\ell=0$, namely
$\CO_{1^2}$, remains.
For this case we use the convention
\[
        \operatorname{Wh}_{\CO_{0,a}}(\mathbbm{1}_n)
        =
        \omega_{n,\psi_{\varepsilon(-1)a}}.
\]
The Weil representation
$\omega_{n,\psi_{\varepsilon(-1)a}}$ has two irreducible constituents, and the  maximal element in rational
wavefront set of each constituent is the single orbit
\(
        \CO_{2,\varepsilon(-1)a}.
\)
Thus
\[
        \dim
        \operatorname{Wh}_{\CO_{2,\varepsilon(-1)a}}
        (\omega_{n,\psi_{\varepsilon(-1)a}})
        =
        2.
\]
By Theorem~\ref{s1},
\[
        \dim\operatorname{Wh}_{\CO_{1^2}}(\mathbbm{1}_n)=1,
\]
and this descent is $\mathbbm{1}_{n-1}$.  Iterating gives
\[
        \ScO(\mathbbm{1}_n)^{\max}
        =
        [1^{2n}].
\]
In summary, we have the following descent diagram:
\[
\xymatrix{
&  \mathbbm{1}_n \ar[dl]_{(0,+)} \ar[dr]^{(0,-)}  &  \\
\omega_{n,\psi_{\varepsilon(-1)}} \ar[dr]_{(2,\varepsilon(-1))}& & \omega_{n,\psi_{-\varepsilon(-1)}} \ar[dl]^{(2,-\varepsilon(-1))} \\
  &\mathbbm{1}_{n-1}. &  }
\]
\end{exmp}

\begin{rmk}
Corollary~\ref{5.6} shows that this kind of symmetry is not special
to the trivial representation.  The paths in its diagrams determine
descent sequences, hence rational nilpotent orbits, while the
different branches record symmetries among rational orbits in the
same wavefront set.
\end{rmk}

We use the
theta-compatible Lusztig correspondence fixed in Theorem~\ref{thm:L-can}.
The following example illustrates why the theta-compatible parametrization
is needed.

\begin{exmp}
Let $G=\Sp_4(\mathbbm{k})$, and let $s\in G^{*F}$ be a semisimple element
such that
\[
        C_{G^{*F}}(s)\simeq \RO_2^\eta(\mathbbm{k}).
\]
The Lusztig series $\mathcal E(G,s)$ contains two irreducible
representations, say $\pi_+$ and $\pi_-$.  The abstract Lusztig
correspondence does not distinguish which one corresponds to the
trivial character of $\RO_2^\eta(\mathbbm{k})$ and which one
corresponds to the sign character.
Nevertheless,
\[
        \ScO(\pi_+)^{\max}
        \neq
        \ScO(\pi_-)^{\max},
\]
although their Kawanaka wavefront orbits coincide.  This distinction is detected by theta correspondence.  Each
$\pi_\epsilon$ is obtained by theta lifting from a representation
of a one-dimensional orthogonal group
$\RO_1^{\epsilon'}(\mathbbm{k})$.  Gomez--Zhu's theorem relates
the rational orbit data in $\ScO(\pi_\epsilon)^{\max}$ to the
isomorphism class of this one-dimensional orthogonal space.
\end{exmp}

We also record an explicit dimension formula for maximal generalized
Whittaker models. The numerical multiplicities are closely related to, and in
principle recoverable from, the formulas of Lusztig, Geck--Malle,
Geck--H\'ezard and Taylor for generalized Gelfand--Graev representations;
see, for example, \cite[Section 4]{GH03} and \cite[Proposition 15.4]{Tay16}.
The point of the present formulation is that it is adapted to the rational
wavefront-set problem. It expresses the multiplicity in terms of the number
of maximal rational nilpotent orbits in $\ScO(\pi)$, the component group of the Kawanaka
wavefront orbit, and Lusztig's canonical quotient on the dual side.

Let $\pi\in\mathcal E(G,s)$ be an irreducible
representation in the Lusztig series associated with a semisimple element
$s\in G^{*F}$.  Let
\[
        \pi^*=\mathcal L^{\rm can}(\pi)
        \in \mathcal E(C_{G^*}(s),1)
\]
be its image under the Lusztig correspondence fixed in
Theorem~\ref{thm:L-can}.  Let $\CO\in\SOM$ and put
\[
        \SCO=\SOMS,
        \qquad
        \SCO^*=\ScO(\pi^*)^{\max,\rm st}.
\]
Write $A(\SCO)$ for the component group of the centralizer of a point in
$\SCO$, and write $\overline A(\SCO^*)$ for Lusztig's canonical quotient
attached to $\SCO^*$ in $C_{G^*}(s)$.

\begin{thm}\label{main3}
With the notation above,
\[
\dim \operatorname{Wh}_\CO(\pi)
=
\begin{cases}
\displaystyle
\frac{|A(\SCO)|}
     {|\SOM|\,|\overline A(\SCO^*)|},
& \text{if $G$ is symplectic or even orthogonal},\\[1.2em]
\displaystyle
\frac{|A(\SCO)|}
     {2|\SOM|\,|\overline A(\SCO^*)|},
& \text{if $G$ is odd orthogonal}.
\end{cases}
\]
Moreover, $\dim\operatorname{Wh}_\CO(\pi)$ is a power of $2$, explicitly
given in Theorems~\ref{max}, \ref{maxo1} and \ref{maxo2}.
\end{thm}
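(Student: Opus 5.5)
The plan is to prove the formula by combining two known ingredients. The first is the exact multiplicity of maximal rational orbits, which comes from the explicit descent description (Theorems~\ref{max}, \ref{maxo1}, \ref{maxo2}). The second is the total multiplicity of $\pi$ in the generalized Gelfand--Graev representations attached to the Kawanaka wavefront orbit, which comes from Lusztig's and Geck--Malle's character formulas.

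\textbf{Step 1: symmetry among maximal orbits.} Using Theorem~\ref{main1}(i), every $\CO\in\SOM$ lies over the single orbit $\SCO$. By the descent diagrams (cf.\ Corollary~\ref{5.6}), the maximal descent sequence indexes of $\pi$ are permuted by the sign-flipping symmetries of the branches, and these symmetries preserve the dimensions of the one-step Fourier--Jacobi descents. The descent dimensions are computed by the finite Gan--Gross--Prasad multiplicity-one results of \cite{Wang21,Wang23}. Composing them through Theorem~\ref{s1} shows that
\[
d(\pi):=\dim \operatorname{Wh}_\CO(\pi)
\]
is independent of $\CO\in\SOM$ and equals a product of factors $1$ or $2$. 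The factor $2$ arises exactly from the doubling in Theorem~\ref{s1}(i) when an odd pair is absorbed without a corresponding branch split. This already gives the power-of-$2$ assertion and the explicit values.

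\textbf{Step 2: a global sum rule.} I would use the Lusztig/Geck--H\'ezard/Taylor formula \cite[Proposition 15.4]{Tay16}. For $\pi\in\mathcal E(G,s)$ with Kawanaka orbit $\SCO$, it gives
\[
\sum_{\CO\subset (\SCO)^F}\dim\operatorname{Wh}_\CO(\pi)=\frac{|A(\SCO)|}{|\overline A(\SCO^*)|}.
\]
Here one rewrites the sum over rational orbits as a sum over $A(\SCO)$-classes weighted by the Fourier transform on the dual side. The factor $\overline A(\SCO^*)$ comes from the family of $\pi^*$ in $C_{G^*}(s)$ through Jordan decomposition. This uses the theta-compatible normalization of Theorem~\ref{thm:L-can} only to fix which member of a pair carries which parameter; the sum itself is insensitive to that choice. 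In the odd orthogonal case, $G=\RO_{2n+1}=\SO_{2n+1}\times\{\pm1\}$ and Whittaker models only see $\SO$. Restricting to $\SO_{2n+1}$ and comparing component groups therefore introduces the extra factor $2$.

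\textbf{Step 3: combine.} Only orbits in $\SOM$ contribute nonzero terms over $\SCO$, by Theorem~\ref{main1}(i) and the definition of $\SOM$. Together with the constancy from Step 1, the sum rule gives
\[
|\SOM|\,d(\pi)=\frac{|A(\SCO)|}{|\overline A(\SCO^*)|},
\]
which is the stated formula (with the $2$ in the odd orthogonal case). As a consistency check, and to verify the explicit power of $2$, I would compare with the explicit counts of $|\SOM|$ from Theorem~\ref{main2}, using the standard combinatorics of $|A(\SCO)|$ (a power of $2$ given by the distinct odd/even parts) and of $|\overline A|$ via Lusztig symbols.

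\textbf{Main obstacle.} I expect Step 1's constancy, together with the precise identification of Taylor's sum, to be the hardest part. Taylor's formula is stated for $\SO$/connected centers and in terms of unipotent supports, so transporting it to the full $\RO_\Fn$ and to arbitrary Lusztig series needs care with the disconnected center and with the Lusztig correspondence. If that transport is awkward, my fallback is purely descent-theoretic. I would induct on $n$ along first descents, verifying at each step that $|A|$, $|\overline A|$ and $|\SOM|$ change by exactly the factor by which $d(\pi)$ changes.
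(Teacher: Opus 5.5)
\textbf{Step 1 gets the source of the factor $2$ wrong.} Theorem~\ref{s1}(i) never doubles the odd-pair model; it says $\dim\operatorname{Wh}_{\CO_{(2\ell+1)^2}}(\pi)$ is \emph{half} of the two-step composite. Moreover, in a maximal descent sequence an odd pair ($\ell_i=\ell_{i+1}-1$) occurs only when both ${}^{\bd}\Lambda_1$ and ${}^{\bd}\Lambda_{-1}$ have tied row maxima. That is exactly a branch split (Corollary~\ref{5.6}(i)), and there the two constituents $\pi'\oplus\pi''$ contribute a $2$ that cancels the $\frac12$. The factor $2$ survives precisely when the first descent splits into two constituents but the first part stays even, i.e.\ $\ell[\ne\pm1]_i>\ell[\ne\pm1]_{i+1}$. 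This is the set $\mathtt M$ of Theorem~\ref{max}(iv). Taken literally, your mechanism (``odd pair without a branch split'') never occurs in a maximal sequence, so it would predict $d(\pi)=1$ always. That contradicts, e.g., $\pi=R^{\Sp_4}_{\GL_1\times\Sp_2}(\chi\otimes\mathbbm{1})$, where $\dim\operatorname{Wh}_{[(2^2,\pm)]}(\pi)=2$ with no odd pair. The repair is to cite Theorems~\ref{max}(iv), \ref{maxo1}(iv), \ref{maxo2}(iv) directly. There $d(\pi)=2^{\#\mathtt M}$ depends only on numerical data, which gives constancy on $\ScO(\pi)^{\max}$, the power of $2$, and the explicit value at once.

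\textbf{Steps 2--3 are a genuinely different route, but the sum rule carries all the content.} Once $d$ is constant and Theorem~\ref{main1}(i) identifies the contributing rational orbits over the Kawanaka orbit with $\ScO(\pi)^{\max}$, your sum rule is \emph{equivalent} to the theorem. So everything is moved into the imported Lusztig/Geck--H\'ezard/Taylor multiplicity identity, and that identity is not available off the shelf in the form you need. Three points would have to be checked. First, $\Sp_{2n}$ has disconnected centre. Second, $C_{G^*}(s)$ can contain full $\RO$ factors, so $\overline A(\SCO^*)$ must be the canonical quotient in the full centralizer and must match Lusztig's denominator under $\mathcal L^{\rm can}$. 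Third, for full orthogonal groups $A_{\RO}(\SCO)$ parametrizes the rational orbits of \emph{both} quadratic forms, yet no extra $2$ appears for $\RO_{2n}$ while one does for $\RO_{2n+1}$.

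The paper avoids all of this (it only remarks that the multiplicities are ``in principle recoverable'' from those formulas) and proves Theorem~\ref{dimW} inside the descent framework:
\begin{enumerate}
\item[(a)] Lemma~\ref{unm} shows the ratio is $1$ for quadratic unipotent $\pi$. It tracks how $|A(\SCO)|$, $|\ScO(\pi)^{\max}|$ and $|\overline A(\SCO^*)|$ change under one first descent, in the four tie patterns of the top two entries of ${}^{\bd}\Lambda_{\pm1}$.
\item[(b)] The general case reduces to the quadratic unipotent part, because $\GL$ and $\RU$ factors have trivial canonical quotient. This gives $Q(\pi,\CO)=2Q(\pi',\CO')$ exactly when $1\in\mathtt M$, matching Theorem~\ref{max}(iv).
\item[(c)] Orthogonal groups then follow by theta correspondence and Gomez--Zhu.
\end{enumerate}
Your fallback is essentially this argument, and the case analysis in (a) is where the work lies. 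Your main route would give a conceptual explanation: the formula becomes Lusztig's identity restricted to $\ScO(\pi)^{\max}$. The paper's route is self-contained and produces $d(\pi)$ and $|\ScO(\pi)^{\max}|$ in the same induction, which amounts to an independent derivation of the numerical content of those formulas.
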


\subsection{Unipotent representations and canonical quotients}

The unipotent case reveals the intrinsic structure behind the formulas.
There are three simplifying features:
\begin{enumerate}
\item The image of an irreducible unipotent representation is
independent of the chosen Lusztig correspondence.
\item For unipotent representations, $\SOM$ is independent of the choice of
the additive character $\psi$; see Theorem~\ref{indep}.
\item The maximal rational wavefront set is governed by Lusztig's canonical
quotient.
\end{enumerate}

Recall the notation from Section~\ref{lcq}.  For each $F$-stable nilpotent
orbit $\SCO$, we define a map
\[
        S:
       \{\text{$F$-rational nilpotent orbits attached to }\SCO\}
        \longrightarrow
        A(\SCO),
\]
and hence a surjection
\[
        \overline S:
        \{\text{$F$-rational nilpotent orbits attached to }\SCO\}
        \longrightarrow
        \overline A(\SCO).
\]
For $x\in\overline A(\SCO)$, let $\ScO(x)$ denote the preimage of $x$
under $\overline S$. In the full orthogonal cases, the domains of $S$ and $\overline S$
are the disjoint union of the rational orbits for the two quadratic
forms. In even dimension, for a rational orbit $\CO$ on either form,
$\SCO$ denotes the $F$-stable nilpotent orbit in the split form with
the same underlying partition as $\CO$; see Section~\ref{lcq}.

\begin{thm}\label{main4}
Let $G$ be a symplectic or orthogonal group defined over $\mathbbm{k}$,
and let $\pi$ be an irreducible unipotent representation of $G^F$ such
that
\[
\SOMS=\SCO.
\]
Then there exists $x\in\overline A(\SCO)$ such that
\[
\SOM=\mathscr O(x).
\]
If $\pi$ is special, regarded in the odd orthogonal case as a
representation of $\RO_{2n+1}^{+}(\mathbbm{k})$, then
\[
S(\SOM)
=
\ker\bigl(A(\SCO)\longrightarrow\overline A(\SCO)\bigr).
\]
Consequently,
\[
A(\SCO)/S(\SOM)\cong\overline A(\SCO).
\]
\end{thm}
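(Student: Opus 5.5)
The proof uses the explicit description of maximal descent sequence indexes from Theorem~\ref{main2}, specialized to unipotent representations, and compares it with the combinatorics of the map $\overline S$. It proceeds in four steps.

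\textbf{Step 1 (symplectic case, symbols).} Let $\pi$ be unipotent, with Lusztig symbol $\Lambda$ and $\pi^*=\pi$ (up to the dual group). Theorem~\ref{main1}(i) places every $\CO\in\SOM$ over $\SCO$. By Theorem~\ref{main2} and Theorem~\ref{max}, the elements of $\SOM$ are the orbits $\CO(\Gamma)$, where $\Gamma$ runs over maximal descent indexes. These are built by iterated first descents; at each step one either removes an even block $(2\ell,a)$ with a sign, or, through the composition law Theorem~\ref{s1}, removes an odd pair.

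I would record, for each first-descent step, which signs $a$ are allowed. These are determined by the finite Gan--Gross--Prasad results. The goal is to show that the set of allowed sign vectors, read through $S$, is exactly a coset of $\ker(A(\SCO)\to\overline A(\SCO))$. Concretely, $A(\SCO)$ is generated by the even parts of the partition, and $S(\CO)$ records the discriminant-type signs of the even rows. Lusztig's canonical quotient identifies certain generators according to the intervals determined by the special symbol.

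The claim to prove is a matching: the rows whose signs are free in the descent are exactly the generators that die in $\overline A(\SCO)$, while the remaining rows have signs fixed by $\Lambda$. Given this, $\SOM=\overline S^{-1}(x)$, where $x$ is the fixed value. Theorem~\ref{main3} provides a consistency check here: $|\SOM|\cdot\dim\operatorname{Wh}_\CO=|A|/|\overline A|$ when $\overline A(\SCO^*)$ is computed in the dual group.

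\textbf{Step 2 (orthogonal cases).} Even and odd orthogonal unipotent representations are theta lifts in the stable range, or arise through Theorems~\ref{maxo1} and~\ref{maxo2}. The Gomez--Zhu moment-map theorem transports the fiber description from the symplectic side. This requires checking that $S$ and $\overline S$ are compatible with adding or removing a column under the moment map. The disjoint-union convention for the two quadratic forms is exactly what makes the image a full fiber.

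\textbf{Step 3 (special case).} When $\pi$ is special, I would show that $x$ is the identity. For a special symbol, the fixed signs produced by the first descents are all the "trivial" ones. On the symplectic side this follows because special representations are lifts from the trivial character in the Lusztig parametrization, so the allowed sign vector contains the split choice. In the odd orthogonal case, fixing $\RO_{2n+1}^+$ normalizes the sign. Then $S(\SOM)$ is the preimage of $1$, namely the kernel, and the isomorphism $A(\SCO)/S(\SOM)\cong\overline A(\SCO)$ follows from the surjectivity of $A\to\overline A$.

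\textbf{Main obstacle.} The hardest step is the combinatorial identification in Step 1: matching the freedom of signs across successive first descents with Lusztig's quotient, which Sommers describes via the special pieces and intervals of the symbol. I would first treat the case where the symbol has a single interval, then induct on the number of first descents. The inductive step uses the fact that a first descent removes the top row pair and changes the symbol in the manner dictated by the GGP rule, and that Lusztig's canonical quotient is compatible with deleting the largest rows.
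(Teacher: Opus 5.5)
There is a genuine gap. Your Step~1 restates the theorem as ``$S(\SOM)$ is a coset of $H(\SCO)=\ker(A(\SCO)\to\overline A(\SCO))$'' and defers the proof to the ``main obstacle''. The mechanism you sketch there would not work as stated.

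First, you cannot induct one first descent at a time. The first descendant of a unipotent $\pi_\Lambda$ has its $(-1)$-symbol built from $\Lambda$ (Example~\ref{dex2}), so it is typically only quadratic unipotent. For such representations $\SOM$ is in general not a fiber of $\overline S$: Theorem~\ref{main6} realizes every single rational orbit. So the inductive hypothesis is unavailable at the intermediate step. The right unit is two consecutive first descents, which return to a unipotent $\pi'$. One then needs the specialness of $\SCO$ to see that the stable orbit of $\pi'$ has partition $[\lambda_3,\ldots,\lambda_k]$; otherwise a single even part would sit above an odd pair.

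Second, ``the canonical quotient is compatible with deleting the largest rows'' is false in the form you need. When $\lambda_1>\lambda_2$ are distinct even parts, the odd-height corner at $\lambda_1$ gives $|H(\SCO)|=2|H(\CO^{\prime\rm st})|$. This must be compensated by the deletion map $d:\SOM\to\ScO(\pi')^{\max}$ being two-to-one. That compensation comes from the two-branch symmetry $(a_1,a_2)\mapsto(-a_1,-a_2)$ of Corollary~\ref{5.6}, and it is the heart of the paper's proof.

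Relatedly, your description of the freedom is inaccurate. No individual row sign is free: each free parameter $y_i$ in Proposition~\ref{un}(i) negates two consecutive rows at once. So what must be matched with Sommers' kernel are sums $x_p+x_q$ of two generators (or a single generator at the bottom), not ``generators that die''.

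Even with the explicit sign pattern, you need both inclusions, and your plan supplies at most one. The paper gets $\SOM\subset\mathscr O(x)$ from the compatibility of $\overline S$ with $d$. It then upgrades this to equality by counting: $|\SOM|=r|\mathscr O(x')|=r|H(\CO^{\prime\rm st})|=|H(\SCO)|=|\mathscr O(x)|$. Your proposal has no counting step.

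Your ``consistency check'' could in fact supply it in the symplectic case. For unipotent $\pi$, Theorem~\ref{max}(iv) gives $\dim\mathrm{Wh}_\CO(\pi)=1$. Also $|\overline A(\SCO^*)|=|\overline A(\SCO)|$, since both are realized as $I^{\rm odd}_{\mathcal F}$ and $I^{\rm even}_{\mathcal F}$ for the same family $\mathcal F$ of symbols. Theorem~\ref{main3} then yields $|\SOM|=|H(\SCO)|$. You would still have to prove that $\overline S$ is constant on $\SOM$.

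Finally, Step~3 is not an argument. The origin of $\overline S$ is the normalized orbit $[(\lambda_i^{n_i},\varepsilon(-1)^{k_i})]$, not a ``split'' one. ``Special representations are lifts from the trivial character'' says nothing about descent signs. What must be checked is that, for a special symbol, the successive largest entries of ${}^{\bd}\Lambda$ alternate between the two rows. Proposition~\ref{un}(i) then gives $a_{2i-1}a_{2i}=\varepsilon(-1)$ for every successive pair, which makes every Sommers coordinate in \eqref{S} equal to $+$.
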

Thus the maximal rational wavefront set of a unipotent representation
is a single fiber of Lusztig's canonical quotient. For the special
representation, the image of this fiber under $S$ is the kernel of the
quotient map.

The proof uses the two-step symmetry appearing in
Example~\ref{trv}. There, the trivial representation has two first
descents, one for each sign. The one-step descendants are different,
but after the next descent they return to the same trivial
representation of smaller rank. For a general unipotent representation, extending a maximal descent
sequence of the smaller representation along the analogous two paths
gives either the same rational orbit or two distinct rational orbits. The factor one or two agrees with the ratio between the sizes of the
kernels of the canonical quotient maps attached to the stable
wavefront orbits of the original representation and the smaller representation. See Section~\ref{sec:un} for details.

This canonical quotient description is compatible with Alvis--Curtis
duality.   Recall that Lusztig
partitions the irreducible unipotent representations of $G^F$ into
families.  If $\mathcal F$ is such a family, let
$\CO^{\rm st}_{\mathcal F}$ be the stable nilpotent orbit corresponding to its unipotent support.  If
$\pi\in\mathcal F$ and if $\pi'$ is its Alvis--Curtis dual, lying in a
family $\mathcal F'$, then
\[
        \ScO(\pi)^{\max,\rm st}
        =
        \CO^{\rm st}_{\mathcal F'},
        \qquad
        \ScO(\pi')^{\max,\rm st}
        =
        \CO^{\rm st}_{\mathcal F}.
\]
For symplectic, even orthogonal, and special odd orthogonal groups,
the representations in $\mathcal F$ are parametrized by
\(
\mathcal M(\overline A(\CO^{\rm st}_{\mathcal F})),
\)
where, for a finite group $A$,
\[
\mathcal M(A)
=
\{(x,\tau)\mid x\in A,\ \tau\in\Irr(C_A(x))\}/A.
\]
We write the corresponding parameter as $(x^+,x^-)$ and denote the
representation by
\(
\pi_{\CO^{\rm st}_{\mathcal F},x^+,x^-}.
\)

For the full odd orthogonal group, the parameters $(x^+,x^-)$
parametrize the restriction to the special orthogonal subgroup, while
the additional parameter $\eta\in\{\pm\}$ records the action of the
central element $-I$ and hence distinguishes the two extensions to
the full orthogonal group. We write
\(
\pi_{\CO^{\rm st}_{\mathcal F},x^+,x^-,\eta}.
\)

By comparing the family realization introduced in
Section~\ref{family} with the rational-orbit realization introduced
in Section~\ref{lcq}, Section~\ref{sec:un} associates to each possible
parameter $x^+$ in the family $\mathcal F$ an element
\[
\Phi_{\mathcal F}(x^+)
\in
\overline A(\CO_{\mathcal F}^{\rm st}).
\]
In the odd orthogonal case, $x^+$ is identified with an element of
$\overline A_{\SO}(\CO_{\mathcal F}^{\rm st})$, whereas
\(
\Phi_{\mathcal F}(x^+)
\in
\overline A_{\RO}(\CO_{\mathcal F}^{\rm st}).
\)

\begin{thm}\label{main5}
Let
\(
\pi
=
\pi_{\CO_{\mathcal F}^{\rm st},x^+,x^-,\eta}
\)
be an irreducible unipotent representation of $G^F$, and let
\(
\pi'
=
\pi_{\CO_{\mathcal F'}^{\rm st},
x^{\prime+},x^{\prime-},\eta'}
\)
be its Alvis--Curtis dual. Here the parameters $\eta$ and $\eta'$ are
present only in the odd orthogonal case. Then
\[
\ScO(\pi')^{\max}
=
\mathscr O\bigl(\Phi_{\mathcal F}(x^+)\bigr),
\qquad
\ScO(\pi)^{\max}
=
\mathscr O\bigl(\Phi_{\mathcal F'}(x^{\prime+})\bigr).
\]
\end{thm}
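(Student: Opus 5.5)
Theorem~\ref{main5} follows by combining Theorem~\ref{main4} with an explicit computation that links the Alvis--Curtis dual to the family data. Alvis--Curtis duality sends the family $\mathcal F$ to $\mathcal F'$, and it exchanges Kawanaka wavefront orbits with unipotent supports. So $\ScO(\pi')^{\max,\rm st}=\CO^{\rm st}_{\mathcal F}$. By Theorem~\ref{main4}, $\ScO(\pi')^{\max}=\mathscr O(y)$ for a single $y\in\overline A(\CO^{\rm st}_{\mathcal F})$. The task therefore reduces to showing $y=\Phi_{\mathcal F}(x^+)$. The second equality is the same statement with the roles of $\pi$ and $\pi'$ exchanged, since duality is an involution. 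In the odd orthogonal case the extra sign $\eta$ must also be tracked; by the independence of $\eta$ discussed below, it plays no role.

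To identify $y$, I would use Theorem~\ref{main2} and the explicit maximal descent sequence indexes of Theorems~\ref{max}, \ref{maxo1} and \ref{maxo2}. I would first write the Lusztig symbol $\Lambda'$ of $\pi'$ in terms of the symbol $\Lambda$ of $\pi$. Alvis--Curtis duality acts on symbols by transposition, or by swapping rows up to the standard shift. I would then compute one maximal descent sequence index $\Gamma$ of $\pi'$ by iterated first descents and read off the rational orbit $\CO(\Gamma)$. Applying $S$ then $\overline S$ gives $y$, since by Theorem~\ref{main4} any single orbit in the fiber determines it. On the other side, $\Phi_{\mathcal F}(x^+)$ is obtained from Section~\ref{sec:un} by comparing the family realization, where $x^+$ is read from the symbol through the $\mathcal M(\overline A)$ parametrization, with the rational-orbit realization of Section~\ref{lcq}. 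I would check that both procedures extract the same sign data from the same intervals or blocks of the symbol of $\pi$. Concretely, I would do this by induction on rank along the two-step symmetry: one first descent of $\pi'$ removes a piece of the symbol, the inductive hypothesis applies to the smaller representation, and the new block either preserves the fiber or doubles it in the way predicted by the kernel ratio.

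In the odd orthogonal case, I would pass through the special orthogonal group $\SO_{2n+1}$. The parameter $x^+$ lives in $\overline A_{\SO}$, and the map to $\overline A_{\RO}$ is built into $\Phi_{\mathcal F}$. The sign $\eta$ only records the action of the central element $-I$. Generalized Gelfand--Graev representations are induced from unipotent groups, so twisting by $\eta$ does not change $\ScO$. I would justify this by noting that $-I$ acts trivially on $\Fg$, hence on the relevant nilpotent data.

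The main obstacle is the combinatorial matching in the second paragraph. One must show that the element read from the maximal descent indexes of the dual symbol coincides with $\Phi_{\mathcal F}(x^+)$, including the $\varepsilon(-1)$ sign conventions of the composition law. I would first verify this on cuspidal families and the trivial and Steinberg pair, using Example~\ref{trv}, and then propagate by the inductive descent argument.
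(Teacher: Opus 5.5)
Your reduction matches the paper's. By \cite{Lu92}, the stable orbit of $\ScO(\pi')^{\max}$ is $\CO^{\rm st}_{\mathcal F}$, so Theorem~\ref{main4} makes $\ScO(\pi')^{\max}$ a single fiber $\mathscr O(y)$, and involutivity gives the second identity. What remains is $y=\Phi_{\mathcal F}(x^+)$, to be proved by induction on deleting entries of the symbol. That identification is the entire content of the theorem beyond Theorem~\ref{main4}, and your proposal does not carry it out. Moreover, the mechanism you name for the inductive step (``the new block either preserves the fiber or doubles it as predicted by the kernel ratio'') is the fiber-counting of Theorem~\ref{un1}. It shows that $\SOM$ is a full fiber, but it says nothing about which element of $\overline A$ that fiber lies over.

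What is needed instead are two explicit transition formulas for deleting the two largest entries $a_0\geqslant b_0$ of $\Lambda$. You never need a formula for ${}^{\bd}\Lambda$, which is not a row swap: it exchanges and transposes the two partitions of $\Upsilon(\Lambda)$ at fixed defect. The reason is that first descents of $\pi'=\pi_{{}^{\bd}\Lambda}$ remove successive largest entries of ${}^{\bd}({}^{\bd}\Lambda)=\Lambda$, so both $y$ and $x^+$ are read off $\Lambda$. Let $c$ be the largest remaining entry of the row not containing $b_0$, and let $\sigma=-\zeta^0(\Lambda)\zeta^1(\Lambda)$; thus $\sigma=+$ or $-$ according as $a_0,b_0$ lie in different rows or the same row.

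\emph{Orbit side.} The free signs of the trivial $(-1)$-part come in opposite pairs. Hence the first two descent signs satisfy $a_1a_2\varepsilon(-1)=\sigma$, which also settles your $\varepsilon(-1)$ worry. The cases in the proof of Theorem~\ref{un1} then give $y=(\sigma,\gamma_1,\dots,\gamma_r)$, $(\gamma_1,\dots,\gamma_r)$, or $(\sigma\gamma_1,\gamma_2,\dots,\gamma_r)$ according as $a_0>b_0>c$, $a_0=b_0$, or $a_0>b_0=c$. Here $(\gamma_1,\dots,\gamma_r)$ is the parameter for the smaller symbol.

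\emph{Family side.} This is the idea you are missing. When $a_0,b_0$ lie in the same row, $\Lambda^2$ has defect $\equiv 3\pmod 4$ and must be replaced by its transpose. That transposition reverses every retained row-agreement sign, giving $x^+=(-,-x_2^+)$ or $-x_2^+$, as against $(+,x_2^+)$ or $x_2^+$ in the different-row cases. Separately, when $a_0>b_0=c$, the single $c$ becomes half of a double and is replaced by $a_0$. Only after substituting both sides into $\Phi_{\mathcal F}(\epsilon_1,\dots)=(\epsilon_1,\epsilon_1\epsilon_2,\dots)$ does the identity close. Checking cuspidal families or the trivial/Steinberg pair does not replace this step; the induction simply starts from symbols with at most one entry.

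In the odd orthogonal case, your conclusion about $\eta$ is right, but the reason is that changing $\eta$ twists by $\det$, which is trivial on the unipotent groups $G_{\geqslant1.5}^F\subset\SO_{2n+1}^F$. The real issue is the extra coordinate. $\Phi_{\mathcal F}$ takes values in $\overline A_{\RO}$ and depends on the form $\epsilon$ of $G$, and passing to $\SO_{2n+1}$ does not produce that coordinate. You need Proposition~\ref{un}(i), whose sign pattern is $(\epsilon,y_1,-y_1,\dots)$, to obtain the first $d$ coordinates $\epsilon\epsilon_1,\epsilon_1\epsilon_2,\dots,\epsilon_{d-1}\epsilon_d$. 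Here the family coordinates are products over the first $2i-1$ singles. You then need the fact that the product of all Sommers coordinates equals $\disc(V)=\epsilon$, which forces the last coordinate to be $\epsilon_d$.
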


The rigidity of Theorems~\ref{main4} and~\ref{main5} does not persist
for general quadratic unipotent representations. In fact, the
quadratic unipotent case exhibits the opposite extreme.

\begin{thm}\label{main6}
Let $G$ be a symplectic or orthogonal group defined over $\mathbbm{k}$.
For every $F$-rational nilpotent orbit $\CO\subset\Fg^F$, there exists a quadratic
unipotent representation $\pi\in\Irr(G^F)$ such that
\[
        \SOM=\{\CO\}.
\]
\end{thm}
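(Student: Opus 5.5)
Given a rational orbit $\CO$, we build $\pi$ by descent: a descent sequence whose maximal index is forced to be the single index attached to $\CO$. We treat $\Sp_{2n}(\mathbbm{k})$ first and then pass to orthogonal groups by theta correspondence, as in Section~\ref{sec8}.

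Write $\CO$ as a signed Young diagram and fix a descent sequence index
\[
\Gamma=\llb (2\ell_1,a_1),\ldots,(2\ell_r,a_r)\rrb
\]
attached to $\CO$. Each even row $(2\ell,a)$ contributes a single even step. Each odd pair $(2\ell+1)^2$ contributes two consecutive even steps $(2\ell,a),(2\ell+2,\varepsilon(-1)a)$, as dictated by Theorem~\ref{s1}. We then look for a quadratic unipotent $\pi$, so $s^2=1$ and $C_{G^*}(s)$ is a product of two classical groups, one for the eigenvalue $1$ and one for $-1$. We choose its Lusztig parameter $(\rho,\Lambda_1,\Lambda_{-1})$ so that the explicit maximal index formula of Theorem~\ref{max} returns exactly $\Gamma$ and nothing else. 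Concretely, the sizes of the blocks of the row lengths of $\CO$ are to be distributed between the $\Lambda_1$ and $\Lambda_{-1}$ symbols. The two symbols then carry independent data, and this independence is what lets us choose the sign pattern $a_i$ freely, which unipotent representations alone cannot do (Theorem~\ref{main4}).

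The proof of $\SOM=\{\CO\}$ then has two steps.

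First, we apply the explicit first-descent theorem of Section~\ref{sec6} iteratively. At each stage the first descent of $\pi_{i-1}$ must be of type $(2\ell_i,a_i)$ with a unique sign and a unique irreducible constituent $\pi_i$, itself quadratic unipotent, whose parameter is obtained by the recipe of the finite Gan--Gross--Prasad results \cite{Wang21,Wang23}.

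Second, we use the comparison of descent branches in Section~\ref{sec7}. Since every maximal index arises from successive first descents, uniqueness at each step gives a unique maximal descent sequence index. By Theorem~\ref{main2}, $\SOM=\CO(\Gamma)=\CO$.

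For the orthogonal groups we take $\CO$ in $\RO_\Fn^{\pm}$ and realize the corresponding $\pi$ as a theta lift from a suitable symplectic quadratic unipotent representation built as above. The moment-map theorem of Gomez--Zhu then transports the single maximal orbit, and Theorems~\ref{maxo1} and~\ref{maxo2} confirm the outcome. The sign data and discriminant are tuned through the choice of the $-1$ eigenspace form, i.e.\ through $\RO^{\epsilon'}$ on the dual side.

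The main obstacle is the combinatorial step: inverting the explicit formulas of Theorems~\ref{max}, \ref{maxo1} and \ref{maxo2}. For every signed Young diagram we must exhibit symbols whose first descents never branch, meaning one sign and one constituent at each step. The first thing to try is to take cuspidal-type symbols, with defect as large as possible and each $\Lambda_{\pm1}$ a single column, for the parts of each parity. This choice should rigidify the Gan--Gross--Prasad branching so that only one sign $a_i$ survives. The odd-pair orbits and the case $\ell=0$, governed by the Weil representation convention, need a separate check that the two signs in Theorem~\ref{s1}(i) do not both survive.
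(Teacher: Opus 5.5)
Your overall architecture (symplectic case via descent sequences, orthogonal case via theta lifting from a symplectic group) matches the paper's. However, the symplectic step aims at a property that is false for most orbits.

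You require each first descent to have a unique sign and a unique constituent, so that $\pi$ has a unique maximal descent sequence index. This cannot happen once $\CO$ has an odd pair or an even part of multiplicity at least two. In that case $\mathfrak G(\CO)$ has at least two elements: there is a free sign $a$ in $(2\ell,a),(2\ell+2,\varepsilon(-1)a)$, or a redistribution of signs inside an even block. Corollary~\ref{od}(i) then makes every element of $\mathfrak G(\CO)$ a descent sequence index of any $\pi$ with $\operatorname{Wh}_\CO(\pi)\neq0$. When $\SOM=\{\CO\}$, these indices are all maximal, hence built from first descents.

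Concretely, Theorem~\ref{s1}(i) holds for both $a\in\{\pm\}$. Moreover, at an odd pair the equality $\ell_i=\ell_{i+1}-1$ forces tied heads in both dual symbols, so Corollary~\ref{5.6}(i) produces two signs with two constituents each. Your planned check ``that the two signs in Theorem~\ref{s1}(i) do not both survive'' is therefore backwards: both must survive, and harmlessly so, since they give the same orbit. The simplest counterexample is $\CO=[1^{2n}]$. By Proposition~\ref{trivial}(i), the only $\pi$ with $\SOM=\{\CO\}$ is the trivial representation, which branches at every step (Example~\ref{trv}). The right target is weaker: every maximal descent sequence index of $\pi$ should lie in $\mathfrak G(\CO)$, i.e.\ branching is allowed exactly where it does not change the rational orbit. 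Theorem~\ref{max}(ii) then gives $\SOM=\{\CO\}$.

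The step you flag as the main obstacle, inverting the first-descent formulas, is the actual content of the proof. The cuspidal-symbol heuristic does not supply it; already $[1^{2n}]$ forces the non-cuspidal trivial representation. The paper inducts on the length of $\Gamma=\llb(2\ell_1,a_1),\ldots,(2\ell_r,a_r)\rrb\in\mathfrak G(\CO)$ and reverses one first descent at a time:
\begin{itemize}
\item Start from a quadratic unipotent $\pi'=\pi_{\Lambda'_1,\Lambda'_{-1}}$ with $\ScO(\pi')^{\max}=\{\CO'\}$, where $\CO'$ is the orbit of $\Gamma$ with its first term deleted.
\item Set ${}^{\bd}\Lambda_1$ to be ${}^{\bd}\Lambda'_{-1}$ with a new leading entry $a$ adjoined: to the top row if $\mathrm{def}({}^{\bd}\Lambda'_{-1})\equiv0\pmod 4$, to the bottom row if $\equiv2$.
\item Set ${}^{\bd}\Lambda_{-1}$ to be ${}^{\bd}\Lambda'_1$ with a new leading entry $b$ adjoined in whichever row makes the first-descent sign equal to $a_1$.
\item Choose $a$ and $b$, using Theorem~\ref{ggp}(i), so that the first descent has index $\ell_1$ and contains $\pi'$.
\end{itemize}
The explicit first-descent formula, Corollary~\ref{5.6} in the equal-head case, and the induction hypothesis then show that every maximal index of $\pi=\pi_{\Lambda_1,\Lambda_{-1}}$ lies in $\mathfrak G(\CO)$.

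For orthogonal groups, the choice you leave unspecified is decisive. Take $\CO_{\rm sp}=\nabla^{\rm gen}_{V',V}(\CO)$, obtained by deleting one box from each row, so that $\dim V'-\dim V$ equals the number of rows of $\CO$. Proposition~\ref{orbit} transports the rational forms, so no separate tuning of signs through the $(-1)$-eigenspace is needed. Only with this row count does Proposition~\ref{8.4} supply the reverse implication $\nabla^{\rm gen}_{V',V}(\widetilde\CO)\leqslant\CO_{\rm sp}\Rightarrow\widetilde\CO\leqslant\CO$. Together with Theorem~\ref{gz}, this shows that $\CO$ is the unique maximal orbit of a suitable constituent $\pi$ of $\Theta(\sigma)$.
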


Thus every $F$-rational nilpotent orbit can occur as the unique
maximal orbit in the rational wavefront set of a quadratic
unipotent representation.  This
also gives another proof of \cite[Proposition~4.3]{GH03}, which is used to
establish a conjecture of Kawanaka \cite[(3.3.1)]{K85}.

\subsection{Ideas of the proof}

We briefly explain the main ideas of the proof.  The first step is the
composition law for the odd-pair orbits.  Its proof starts from root
exchange.  In the usual exchange-root argument one chooses unipotent
subgroups $C,X,Y$ of $G$ with
\[
        B=CY,\qquad D=CX,\qquad A=CXY .
\]
The phrase ``exchange of roots'' means that the roots in $B$ are
replaced by those in $D$ without changing the corresponding induced
representation.  More precisely, if $\psi_B$ and $\psi_D$ denote the
extensions of a character $\psi$ of $C^F$ that are trivial on $Y^F$ and $X^F$, respectively,
then
\[
        \operatorname{Ind}_{B^F}^{G^F}\psi_B
        \simeq
        \operatorname{Ind}_{D^F}^{G^F}\psi_D.
\]
This follows from the Stone--von Neumann theorem applied to the finite
symplectic space defined by the pairing
\[
        (x,y)\longmapsto\psi([x,y]),
        \qquad x\in X^F,\quad y\in Y^F.
\]

For Theorem~\ref{s1}, the unipotent subgroups attached to 
\[
\operatorname{Wh}_{\CO_{2\ell+2,\varepsilon(-1)a}}(\operatorname{Wh}_{\CO_{2\ell,a}}(\pi)) \quad\textrm{and}\quad 
\operatorname{Wh}_{\CO_{(2\ell+1)^2}}(\pi)
\] 
do not satisfy the usual exchange-root hypotheses directly.  We first
exchange the roots which can be exchanged.  What remains are some residual
finite character sums.  A suitable Levi subgroup acts on these residual sets.
The two open orbits give the two contributions to the model attached to
$\CO_{(2\ell+1)^2}$, and this is the source of the factor $2$ in
Theorem~\ref{s1} (i).  The other orbits lead to generalized Whittaker models \[
\operatorname{Wh}_{\CO_{2\ell,\varepsilon(-1)b}}(\operatorname{Wh}_{\CO_{2\ell+2,b}}(\pi)) \quad \textrm{with }b\in\{\pm\},
\]
and they vanish by the hypothesis of
Theorem~\ref{s1}.  This proves the odd-pair composition law and expresses the
odd-pair Whittaker models in terms of successive descents.

After the odd-pair composition law is established, the symplectic
case is reduced to descent sequence indexes.  Every step in a
maximal descent sequence must be a first descent, that is, a
descent with the largest possible value of $\ell$.  Indeed, suppose
that two successive steps have the form
\[
\pi\longrightarrow\pi'\longrightarrow\pi''.
\]
If the first step is not a first descent, the explicit
first-descent formulas obtained in our previous work
\cite{Wang21,Wang23} allow us to construct another route
\[
\pi\longrightarrow\pi^\star\longrightarrow\pi''
\]
whose first descent index is strictly larger.  Replacing the
original two steps by this route gives a larger descent sequence
index, contradicting maximality.  Applying this argument
successively proves that every maximal descent sequence is obtained
by first descents at every step.

A first descent may have several irreducible constituents, so
different maximal descent sequences may pass through different
representations.  The underlying phenomenon is already visible in
the example of the trivial representation: the descent branches are
different, but the corresponding descent sequences give the same
rational orbit.  Corollary~\ref{5.6} describes the general symmetry
behind this example.  Together with the explicit first-descent
formulas, this symmetry implies that all maximal descent sequence
indexes have the same numerical sequence.

The explicit first-descent formulas also show that every maximal
descent sequence index is good, meaning that it corresponds to a
rational nilpotent orbit.  By the compatibility between the mutation
order and the rational orbit order, the preceding discussion then
shows that the orbits in $\SOM$ are precisely those corresponding to
maximal descent sequence indexes of $\pi$.  Since every maximal
descent sequence is obtained by successive first descents, iterating
the explicit formulas gives an explicit description of $\SOM$.  This
proves Theorem~\ref{main2} for symplectic groups.

Since all maximal descent sequence indexes of $\pi$ have the same
numerical sequence, the corresponding rational orbits have the same
underlying partition and hence lie in the same $F$-stable nilpotent
orbit, necessarily the stable wavefront orbit of $\pi$.  This proves
Theorem~\ref{main1}\textup{(i)} for symplectic groups.  Keeping track
of the factors of $2$ contributed by the composition laws along a
maximal descent sequence also gives the Whittaker-dimension formula
in Theorem~\ref{main3}.

To recover the remaining rational orbits in $\ScO(\pi)$, we prove
that descent sequence indexes of $\pi$ are downward closed.  Recall
that the mutation order is generated by elementary mutations: if
$\Gamma\leqslant\Gamma'$, then $\Gamma'$ can be obtained from
$\Gamma$ by finitely many replacements of the following form:
\[
(2\ell_i,a_i),(2\ell_{i+1},a_{i+1})
\longmapsto
(2\ell_i+2,b_i),(2\ell_{i+1}-2,b_{i+1}),
\qquad
a_i a_{i+1}=b_i b_{i+1}.
\]
The rational orbit order also admits a description by mutations, but
several different types of mutation occur there.  A main advantage
of passing to descent sequence indexes is that their order is
generated by this single operation, so no case-by-case analysis is
needed.  
To prove downward closure, it is enough to show that the inverse of
one elementary mutation is again the index of a descent sequence of $\pi$.
By applying the same argument after any number of descent sequences,
we may restrict attention to a mutation at the first two positions. Starting from
\(
\pi_0\to\pi_1\to\pi_2,
\)
we choose an auxiliary generic rank-one representation such that the
eigenvalues of the semisimple element defining its Lusztig series
are disjoint from those of $\pi_i$, and combine it with
$\pi_1$ by Deligne--Lusztig induction.  The resulting representation
is a descent of $\pi_0$ with index $\ell_1-1$ and has $\pi_2$ in its
descent with index $\ell_2+1$, with both signs multiplied by the same
$\epsilon\in\{\pm1\}$.  Thus the inverse elementary mutation is
realized.  Iterating this construction proves downward closure.  The
compatibility between the mutation order and the rational orbit
order then gives the rational-closure description of $\ScO(\pi)$,
proving Theorem~\ref{main1}\textup{(ii)} for symplectic groups.

The orthogonal cases are obtained from the symplectic case by theta
correspondence.  The theta-compatible Lusztig correspondence
controls the rational parametrization of the representations, while
the moment-map theorem of Gomez--Zhu transfers the corresponding
rational nilpotent orbits.  Together, these results extend
Theorems~\ref{main1}, \ref{main2}, and \ref{main3} from symplectic
groups to orthogonal groups.

For the unipotent results, the same symmetry among descent sequences,
again described in Corollary~\ref{5.6}, is the key ingredient.  After
deleting the first two steps of a maximal descent sequence, the
different branches either determine the same rational orbit or
determine two distinct rational orbits.  The map obtained by
forgetting these two steps, from $\SOM$ to the maximal rational
wavefront set of the resulting smaller unipotent representation, is
therefore one-to-one or two-to-one.  Its fiber size agrees exactly
with the corresponding change in the kernel defining Lusztig's
canonical quotient.  Induction then shows that the maximal rational
wavefront set is a single fiber of the canonical quotient, proving
Theorem~\ref{main4}.  Comparing the same symmetry on the
rational-orbit and symbol sides gives the compatibility with
Alvis--Curtis duality and proves Theorem~\ref{main5}.  Reversing the
first-descent construction produces a quadratic unipotent
representation having any prescribed rational nilpotent orbit as
its unique maximal rational wavefront orbit, proving
Theorem~\ref{main6}.

\subsection{Remarks on the characteristic}

We include a few remarks on the assumption $p>3(h_G-1)$.  This
assumption ensures that nilpotent orbit theory and generalized
Gelfand--Graev representations behave in the required
large-characteristic setting.  Some parts of the theory, such as
the classification of rational nilpotent orbits and the
construction of generalized Gelfand--Graev representations, can be
developed under weaker hypotheses; see, for example,
\cite{Tay16}.  However, the stated bound is used in several
essential places, especially in the first-descent theorem, in the
construction of auxiliary generic representations, and in the
reduction of the orthogonal case to the symplectic case by theta
correspondence.  If these ingredients were established under weaker
hypotheses, the characteristic assumption could likely be weakened.

\subsection{Organization of this paper}

This paper is organized in four stages.  Sections~\ref{sec2}--\ref{sec4}
set up the representation-theoretic framework and the order structure
of rational nilpotent orbits and descent sequence indexes. We first
review Lusztig's Jordan decomposition and the classification of
irreducible representations of finite classical groups, then recall
generalized Gelfand--Graev representations, generalized Whittaker
models, rational nilpotent orbits, the rational orbit order, and the
relevant component-group and canonical-quotient maps.  We next introduce
the descent operations and the combinatorial structures used to organize
them, including descent sequence indexes, the mutation order, and good
descent sequence indexes.

Sections~\ref{sec5} and \ref{sec6} establish the two main inputs for the
proofs.  Section~\ref{sec5} proves the new composition laws for pairs of
equal odd parts, while Section~\ref{sec6} recalls the explicit
first-descent theorem and derives the structural consequences needed
later.

The main theorems are proved in Sections~\ref{sec7} and \ref{sec8}.
Section~\ref{sec7} treats symplectic groups by combining the composition
laws with first descent, and Section~\ref{sec8} obtains the orthogonal
analogues through theta correspondence and generalized descent.

Finally, Section~\ref{sec:un} studies the unipotent case.  We prove
independence of the additive character, describe the maximal rational
wavefront set through Lusztig's canonical quotient, establish
compatibility with Alvis--Curtis duality, and prove the realization
theorem for quadratic unipotent representations.

\subsection*{Acknowledgement} 
 The author wishes to thank Lei Zhang and Dongwen Liu for their insightful and valuable discussions.
The author would also like to thank the organizers of the Seminar on Harmonic Analysis on Algebraic Groups and the Seminar on the Relative Langlands Program held at the Tianyuan Mathematics Research Center in Kunming in August 2025. This paper was inspired by discussions and feedback received at these two seminars.

 The author gratefully acknowledges financial support from the National Natural Science Foundation of China (Grant No. 12571012) and the Jilin Provincial Postdoctoral Foundation. 

\clearpage

\section{Classification of irreducible representations of $\RO_\Fn$ and $\Sp_{2n}$}\label{sec2}
Let $V_\Fn$ be an $\Fn$-dimensional (skew) symmetric space over $\mathbbm{k}$.
Denote by $G$ the isometry group of $V_\Fn$.
More precisely, $G$ is $\Sp_{\Fn}$ or $\RO_{\Fn}$.

For a symmetric space $V_\Fn$, define its discriminant by 
\begin{equation}\label{eq:disc}
\disc(V_\Fn)=(-1)^{\frac{\Fn(\Fn-1)}{2}}\det(V_\Fn)\in \mathbbm{k}^{\times}/\mathbbm{k}^{\times 2}.
\end{equation}

\subsection{Lusztig's symbols} \label{sec:L-s}
In this section, we follow the notation of \cite{Pan19} to review Lusztig's symbols, which are slightly different from those of \cite{Lu77}.

Let $\lambda=[\lambda_1,\lambda_2,\dots]$ be a partition of $n$ and $|\lambda|:=\sum_i  \lambda_i$.
We often write $\lambda=[\lambda_1^{k_1},\cdots,\lambda_l^{k_l}]$ where the $\lambda_j$ are distinct nonzero parts and the $k_j$ are their
multiplicities.
As is standard, we realize partitions as Young diagrams and denote by $\lambda^t=[\lambda^t_1,\lambda^t_2,\cdots]$ the transpose of $\lambda$. A partition $\lambda$ is called {\it even} if all multiplicities $k_i$ are even.

We say that two partitions $\lambda=[\lambda_1,\lambda_2,\dots]$ and $\lambda'=[\lambda'_1,\lambda'_2,\dots]$ are {\it close} if $|\lambda_i-\lambda_i'|\leqslant 1$ for every $i$.
We define
\[
\lambda\cap\lambda' :=[\lambda_i]_{\set{ i \mid \lambda_i=\lambda_i'}}
\]
to be the partition formed by the common parts of $\lambda$ and $\lambda'$. Following \cite{AMR96}, we say that $\lambda$ and $\lambda'$ are {\it 2-transverse} if they are close and $\lambda\cap \lambda'$ is even.
In particular, if $\lambda$ and $\lambda'$ are close and $\lambda\cap \lambda'=\varnothing$, then $\lambda$ and $\lambda'$ are 2-transverse, and in this case we say that they are {\it transverse}. For example, let
$\lambda=[\lambda_1,\ldots, \lambda_k]$ be a partition of $n$, and let
\[
\lambda_\star=[\lambda_2,\ldots, \lambda_k]
\]
be the partition of $n-\lambda_1$ obtained by removing the first row of $\lambda$. Then $\lambda^t$ and $\lambda_{\star}^t$ are transverse, hence 2-transverse.  Moreover, $\lambda_{\star}^t$ is the unique partition of $n-\lambda_1$ with this property.

A {\it symbol} is an array of the form
\[
\Lambda=
\begin{pmatrix}
A\\
B
\end{pmatrix}
=
\begin{pmatrix}
a_1,a_2,\cdots,a_{m_1}\\
b_1,b_2,\cdots,b_{m_2}
\end{pmatrix}
\]
of two finite sets $A$ and $B$ of nonnegative integers, possibly empty,
with $a_1>\cdots>a_{m_1}\geqslant 0$ and
$b_1>\cdots>b_{m_2}\geqslant 0$.
The {\it rank} and {\it defect} of a symbol $\Lambda$ are defined by
\[
\begin{aligned}
&\rank(\Lambda):=\sum_{a_i\in A}a_i+\sum_{b_i\in B}b_i-\left\lfloor\left(\frac{\#A+\#B-1}{2}\right)^2\right\rfloor, \\
&\textrm{def}(\Lambda):=\#A-\#B.
\end{aligned}
\]
 Note that the definition of ${\mathrm{def}}(\Lambda)$ differs from that of \cite[p.133]{Lu77}.

For a symbol $\Lambda=\binom{A}{B}$, let $\Lambda^*$ (resp. $\Lambda_*$) denote the first row (resp. second row) of $\Lambda$, i.e. $\Lambda^*=A$ and $\Lambda_*=B$, 
and let
$\Lambda^t := \binom{B}{A}$.

Define an equivalence relation generated by the rule
\[
\begin{pmatrix}
a_1,a_2,\cdots,a_{m_1}\\
b_1,b_2,\cdots,b_{m_2}
\end{pmatrix}
\sim
\begin{pmatrix}
a_1+1,a_2+1,\cdots,a_{m_1}+1,0\\
b_1+1,b_2+1,\cdots,b_{m_2}+1,0
\end{pmatrix}.
\]
Note that the defect and rank are functions on the set of equivalence classes of symbols. Except for the terminal convention below, we always consider symbols that
have at most one zero entry, and each equivalence class of symbols has a unique representative with this property. For such symbols $\Lambda$, let $|\Lambda|$ be the number of nonzero entries in $\Lambda$.
Let us explain why we only count nonzero entries instead of all the entries. Lusztig parametrizes unipotent representations by symbols, and we would like to use $|\Lambda|$ to describe the length of the partition of the wavefront set of the representation corresponding to $\Lambda$. Under Lusztig's parametrization, the trivial representation of the trivial symplectic group $\Sp_0(\mathbbm{k})$ corresponds to the symbol
$\Lambda=\binom{0}{-}$.
Thus $|\Lambda |$ should be 0.

Set
\[
\begin{aligned}
& \Lambda^0 :=\Lambda, \quad \textrm{ and }
& \zeta^0(\Lambda)=\zeta(\Lambda):=
\begin{cases}
 +,&\textrm{ if }a_1>b_1;\\
  -,&\textrm{ if }a_1<b_1;\\
  \{\pm\},&\textrm{ if }a_1=b_1.\\
\end{cases}
\end{aligned}
\]
Here when one row of a symbol is empty, we regard its largest entry as
$-\infty$.
Inductively, let
\[
\Lambda^i=\begin{pmatrix}
A^i\\
B^i
\end{pmatrix}
\]
be the symbol obtained by removing the largest number in $\Lambda^{i-1}$. Clearly, we have 
\[
|\Lambda^i|=|\Lambda^{i-1}|-1=\cdots=|\Lambda|-i.
\]
There is a slight ambiguity in the intermediate step.  Suppose that the
largest entries of $A^i$ and $B^i$ are equal.  Then $\Lambda^{i+1}$ may be
chosen to be either $\Lambda^i_A$ or $\Lambda^i_B$, where $\Lambda^i_X$ is
obtained by removing the largest entry from the row $X$.  After the next
step the other equal entry must be removed, so $\Lambda^{i+2}$ is
well-defined.

Set
\[
\zeta^i(\Lambda):=
  \begin{cases}
+,&\textrm{ if the largest number of }\Lambda^i \textrm{ only appears in }A^i;\\
  -,&\textrm{ if the largest number of } \Lambda^i \textrm{ only appears in }B^i.
\end{cases}
\]
In the ambiguous case above, the two possible choices give
\[
        (\zeta^i(\Lambda),\zeta^{i+1}(\Lambda))=(+,-)
        \quad\textrm{or}\quad
        (-,+).
\]
This convention records the rational orbit data: the number removed
determines the size of the corresponding part, while the row from which it
is removed determines the corresponding rational form.  Therefore, the two
possible orders may give two different rational nilpotent orbits, but they
differ only in the two parts corresponding to these two equal entries; see
Corollary~\ref{5.6}.  In the arguments below, whenever such an ambiguity occurs and no choice is specified, we make an arbitrary choice; the argument does not depend on this choice. If the choice affects
the statement or the proof, we will specify and discuss the choice
explicitly.

We also use one terminal convention.  Later, we will often remove entries
simultaneously from the two symbols $\Lambda_1$ and $\Lambda_{-1}$ in the
Lusztig parameter.  If one of the two symbols has no nonzero entry left while
the other still has entries to be removed, we regard the exhausted symbol as
\(
        \binom{0}{0}.
\)
This is the only symbol with two zero entries which occurs in this paper.
The two zeros are treated as two equal largest entries: either zero may be
removed first, and then the remaining zero is removed at the next step.  This
convention is illustrated in Example~\ref{dex2}.

\subsection{Bi-partitions}  \label{sec:bi}
For partitions $\lambda=[\lambda_1,\lambda_2,\cdots,\lambda_k]$ and $\mu=[\mu_1,\mu_2,\cdots,\mu_l]$, we write
\[
\lambda\preccurlyeq\mu
\quad\textrm{if }\quad
\mu_i-1\leqslant \lambda_i \leqslant \mu_i
\textrm{ for each }i,
\]
after appending zeros to the two partitions if necessary.

Let $\mathcal{P}_2(n)$
denote the set of bi-partitions $\bpair{\begin{smallmatrix}
\lambda\\
\mu\end{smallmatrix}}$ of $n$, where $\lambda$, $\mu$ are partitions
such that $|\lambda| + |\mu| = n$. We can associate a bi-partition to each symbol as follows:
\begin{align*}
\Upsilon: &\Lambda
=\begin{pmatrix}
A\\
B
\end{pmatrix}
=\begin{pmatrix}
a_1,a_2,\dots,a_{m_1}\\
b_1,b_2,\dots,b_{m_2}
\end{pmatrix}\\
&\mapsto
\begin{bmatrix}
\lambda\\
\mu
\end{bmatrix} = 
\begin{bmatrix}
a_1-(m_1-1),a_2-(m_1-2),\dots,a_{m_1-1}-1,a_{m_1}\\
b_1-(m_2-1),b_2-(m_2-2),\dots,b_{m_2-1}-1,b_{m_2}
\end{bmatrix}.
\end{align*}
This gives a bijection
\begin{equation}\label{bp}
\Upsilon:\mathcal{S}_{n,\beta}\to 
\begin{cases}
\mathcal{P}_2(n-(\frac{\beta+1}{2})(\frac{\beta-1}{2})), &  \textrm{if }\ \beta\textrm{ is odd};\\
\mathcal{P}_2(n-(\frac{\beta}{2})^2), & \textrm{if }\ \beta\textrm{ is even},
\end{cases}
\end{equation}
where $\mathcal{S}_{n,\beta}$ denotes the set of symbols of rank $n$ and defect $\beta$. Then a symbol $\Lambda$ can be characterized by its defect and the bi-partition $\Upsilon(\Lambda)$.

Set
\[
\Upsilon(\Lambda)^\epsilon:=
  \begin{cases}
\lambda, &\textrm{ if }\epsilon=+;\\
\mu, &\textrm{ if }\epsilon=-.
\end{cases}
\]
For a partition $\lambda$, we set
\[
\lambda_{\epsilon}:=
  \begin{cases}
  \lambda, &\textrm{ if }\epsilon=+;\\
  \textrm{the partition obtained from $\lambda$ by removing its first column},
  &\textrm{ if }\epsilon=-.
  \end{cases}
\]
For example, $(\Upsilon(\Lambda)^-)_-$ is the partition $[\mu_1-1,\mu_2-1,\mu_3-1,\cdots]$, that is, the partition obtained from the bottom partition $\mu$ of
$\Upsilon(\Lambda)$ by removing its first column.

To each symbol $\Lambda$, we associate its Alvis--Curtis dual
${}^{\bd}\Lambda$ \cite{Al79,Cu80} by requiring that
 \[
\textrm{def}({}^{\bd}\Lambda)=\textrm{def}(\Lambda)\quad 
\text{ and }\quad
 \Upsilon({}^{\bd}\Lambda)^\epsilon=\left(\Upsilon(\Lambda)^{-\epsilon}\right)^t, 
 \]
where $\epsilon=\pm$. 
For convenience, throughout this paper we use the convention
\begin{equation}\label{eq:bd-iteration}
{}^{\bd}\Lambda^i:=({}^{\bd}\Lambda)^i,
\end{equation}
where the superscript on the right is defined in Section~\ref{sec:L-s}.
 


\subsection{Lusztig correspondence}\label{mlus} \label{sec3.2}
To parametrize the irreducible representations of $G^F$, we first follow Lusztig's Jordan decomposition (see  \cite[Section 2.3]{Ma17} and \cite[Theorem 5.2]{Gec17} for instance) to
partition $\Irr(G^F)$ according to the semisimple conjugacy classes in the dual group $G^{*F}$ of $G^F$, that is,
\begin{equation}\label{eq:irr-decomp-1}
\Irr(G^F)=\coprod_{(s)\in (G^{*})^{\circ F}_{ss}/ \Ad(G^{*F})}\mathcal{E}(G,s)
\end{equation}
where $(s)$ runs over the $G^{*F}$-conjugacy classes in
$(G^*)^{\circ F}_{ss}$, and
 the Lusztig series $\mathcal{E}(G,s)$ is defined by
\[
\mathcal{E}(G,s) := \set{
\pi \in \Irr(G^F)
\mid
\langle \pi, R_{T^*,s}^G\rangle \ne 0
\textrm{ for some $F$-stable maximal torus $T^*$ containing $s$}
}.
\]
When $G$ is a connected reductive group, there is a natural bijection between
$G^F$-conjugacy classes of pairs $(T,\theta)$, where $T$ is an $F$-stable
maximal torus of $G$ and $\theta$ is a character of $T^F$, and
$G^{*F}$-conjugacy classes of pairs $(T^*,s)$. Here $T^*$ is an
$F$-stable maximal torus of $G^*$ containing $s$.
If $(T, \theta)$ corresponds to $(T^*, s)$, we denote the Deligne--Lusztig character $R_T^G(\theta)$ in \cite{DL76} by
$R_{T^*,s}^G$. 
If $G$ is non-connected, we define
\[
        R^G_{T^*,s}:=
        \Ind^{G^F}_{(G^\circ)^F} R^{G^\circ}_{T^*,s}.
\]

Continuing \eqref{eq:irr-decomp-1},
we recall the correspondence between the Lusztig series $\CE(G,s)$ and the unipotent representations $\CE(C_{G^*}(s),1)$ of $C_{G^{*}}(s)^F$ in \cite[Chap 9]{Lu77}, \cite[Theorem 8.14]{Sr79} and  \cite[Lemma 2.7]{Ma17},
where $C_{G^*}(s)$ is the centralizer  of $s$ in $G^*$. 

\begin{prop}[Lusztig's Jordan decomposition]\label{Lus}
Let $G$ be a connected reductive group defined over $\mathbbm{k}$.
There is a bijection
\[
\CL_s:\mathcal{E}(G,s)\to \mathcal{E}(C_{G^{*}}(s), 1),
\]
which extends by linearity to a map between virtual characters such that
\[
\CL_s(\varepsilon_G R^G_{T^*,s})=\varepsilon_{C_{G^{*}}(s)} R^{C_{G^{*}}(s)}_{T^*,1},
\]
where $\varepsilon_G:=(-1)^{\mathrm{rk}\,G}$ and $\mathrm{rk}\,G$ is the
$\mathbbm{k}$-rank of $G$.  Moreover, $\CL_s$ sends cuspidal representations
to cuspidal representations.
\end{prop}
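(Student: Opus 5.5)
This is a recalled result (\cite[Chap.~9]{Lu77}, \cite[Theorem~8.14]{Sr79}, \cite[Lemma~2.7]{Ma17}), so the plan is to reconstruct it from the standard ingredients rather than derive it from anything earlier in this paper.

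\emph{Step 1 (orthogonality in Lusztig series).} Use the Deligne--Lusztig inner product formula
\[
\apair{R^G_{T^*,s},R^G_{T'^*,s}}=\#\{w\in W^F(T^*,T'^*)\ \text{with}\ {}^ws=s\}
\]
on the $G$ side. The corresponding formula on the $C_{G^*}(s)$ side, for $R^{C_{G^*}(s)}_{T^*,1}$, counts elements of the Weyl group of $C_{G^*}(s)$. When $C_{G^*}(s)$ is connected, for instance when $Z(G)$ is connected, the two Weyl groups agree. Hence
\[
\varepsilon_GR^G_{T^*,s}\longmapsto\varepsilon_{C_{G^*}(s)}R^{C_{G^*}(s)}_{T^*,1}
\]
is an isometry between the spans of these characters.

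\emph{Step 2 (connected centre).} Still assuming $Z(G)$ connected, use Lusztig's classification. Each irreducible character is determined by its projection to the span of the Deligne--Lusztig characters together with its family and Fourier-matrix data. These data, namely the families, the groups $\mathcal M(\overline A)$ and the eigenvalues of Frobenius, are defined purely in terms of the Weyl group of $C_{G^*}(s)$ and its $F$-action. Matching them shows that the isometry of Step 1 is induced by a bijection $\CL_s$ on irreducibles, with the stated sign twist.

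\emph{Step 3 (general connected $G$).} Choose a regular embedding $G\hookrightarrow\widetilde G$ with connected centre and the same derived group. For $\tilde s\mapsto s$, restriction from $\widetilde G^F$ to $G^F$ maps $\mathcal E(\widetilde G,\tilde s)$ onto $\mathcal E(G,s)$, with orbits controlled by $A_{G^*}(s)^F$. On the dual side, $\mathcal E(C_{\widetilde G^*}(\tilde s),1)\to\mathcal E(C_{G^*}(s),1)$ has matching orbits (Lusztig's disconnected-centre theorem). Compatibility of restriction with $R_{T^*,s}$ then descends $\CL_{\tilde s}$ to $\CL_s$.

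\emph{Step 4 (cuspidality).} A representation $\pi$ is cuspidal iff $\apair{\pi,R^G_L(\sigma)}=0$ for all proper split Levi subgroups $L$. Lusztig induction commutes with $\CL_s$ in the form $\CL_s\circ R^G_L=\pm R^{C_{G^*}(s)}_{C_{L^*}(s)}\circ\CL^L_s$. Moreover, proper split Levis of $G$ containing $s$ in their duals correspond to proper split Levis of $C_{G^*}(s)$. Together these show that cuspidals go to cuspidals.

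The main obstacle is Step 2. There the isometry on uniform functions must be upgraded to a bijection of irreducibles, which requires Lusztig's full nonabelian Fourier transform machinery. In practice I would import it from \cite{Lu77} and \cite{Ma17} rather than reprove it. A secondary difficulty is Step 3, where $C_{G^*}(s)$ is disconnected and the orbit-matching must be tracked carefully.
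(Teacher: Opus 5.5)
The paper does not prove this proposition; it only cites it. Your Steps~1--3, which in the end import Lusztig's multiplicity theorem and his regular-embedding analysis, are an acceptable outline of what is being cited. The genuine problem is Step~4.

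\textbf{The gap in Step 4.} You derive ``cuspidal $\mapsto$ cuspidal'' by evaluating $\langle\pi,R^G_L\sigma\rangle$ for arbitrary $\sigma\in\CE(L,s)$ via $\CL_s\circ R^G_L=\pm R^{C_{G^*}(s)}_{C_{L^*}(s)}\circ\CL^L_s$. This commutation is not available to you.
\begin{itemize}
\item The bijection of Steps~1--3 is pinned down only by the multiplicities $\langle\pi,R^G_{T^*,s}\rangle$, i.e.\ only on uniform functions.
\item It is not unique. The $\Sp_4$ example in the introduction, where the abstract correspondence cannot tell $\pi_+$ from $\pi_-$, is an instance.
\item Since $R^G_L\sigma$ is in general not uniform, intertwining $R^G_L$ on all class functions is an extra property of a particular choice of $\CL_s$. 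It does not follow from your construction.
\item For Lusztig induction from non-split Levi subgroups, as you phrase it, this commutation is not established in general.
\end{itemize}
Also, split Levis do not ``correspond'' bijectively: a proper split $L^*\ni s$ can have $C_{L^*}(s)=C_{G^*}(s)$. For example, take $s$ regular in a split torus of $\GL_2$. Then $\CL_s$ sends a principal series representation to the trivial, hence cuspidal, character of the torus $C_{G^*}(s)$. So only the direction from Levis of $C_{G^*}(s)^\circ$ to Levis of $G^*$ can be used, which is why the statement is one-directional.

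\textbf{How to fix it.} Test cuspidality against a uniform function instead, so that nothing beyond Step~1 is needed.
\begin{itemize}
\item For a proper $F$-stable parabolic $P=LU$ and irreducible $\pi$, one has $\dim\pi^{U^F}=\langle\pi,\Ind_{U^F}^{G^F}1\rangle=\langle\pi,R^G_L(\mathrm{reg}_{L^F})\rangle$. Here $R^G_L(\mathrm{reg}_{L^F})$ is uniform, by the Deligne--Lusztig expansion of the regular character and transitivity.
\item Split $\mathrm{reg}_{L^F}$ by rational series. Every resulting term is $\geq 0$.
\item When $s\in L^*$, the $\CE(L,s)$-term equals $c\,\dim\CL_s(\pi)^{U_M^F}$ with $c>0$ and $M=C_{L^*}(s)$. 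This uses only Step~1: on uniform functions $\CL_s$ intertwines $R^G_L$ with $R^{C_{G^*}(s)}_M$ by transitivity. It also sends the $\CE(L,s)$-part of $\mathrm{reg}_{L^F}$ to a positive multiple of the unipotent part of $\mathrm{reg}_{M^F}$ (degree formula).
\item Now suppose $\CL_s(\pi)$ is not cuspidal, witnessed by a proper split Levi $M$ of $C_{G^*}(s)^\circ$. Let $S$ be the maximal split subtorus of $Z(M)^\circ$ and put $L^*=C_{G^*}(S)$.
\item Then $L^*$ is a proper split Levi subgroup containing $s$ with $C_{L^*}(s)^\circ=M$. Hence $\pi^{U^F}\neq 0$ and $\pi$ is not cuspidal.
\end{itemize}
Thus cuspidal $\mapsto$ cuspidal follows from the multiplicity condition alone, with the usual modifications for disconnected $C_{G^*}(s)$.
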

Lusztig's Jordan decomposition can be extended to even orthogonal groups (see \cite[Proposition 1.7]{AMR96} for instance). 

We remark that the choice of $\CL_s$ is not unique in general. To remove this ambiguity, 
we will introduce a canonical choice in Theorem \ref{thm:L-can}.

Next, we recall the description of $C_{G^{*}}(s)$ (see \cite[Section 1.B]{AMR96}  for instance) and then refine $\CE(C_{G^*}(s),1)$.
The multiset of eigenvalues of $s$ on the standard representation of $G^*$ (cf. \cite[Section 1.B]{AMR96}) is of the form
\[
(s)= \begin{cases}\set{x_1,\ldots,x_n, x_1^{-1}, \ldots, x_n^{-1}, 1},  & \textrm{if }G^*=\SO_{2n+1}; \\
\set{x_1,\ldots,x_n, x_1^{-1}, \ldots, x_n^{-1}}, & \textrm{otherwise}.
\end{cases}
\]
For $a\in \overline{\mathbbm{k}}^\times$, define 
 \[
\nu_a(s):=\#\set{i \mid x_i=a \textrm{ or } x_i^{-1}=a}
\quad \textrm{and} \quad
[a]:=\set{a^{q^k},a^{-q^k} \mid k\in\bb{Z}},
  \]
where $\#X$ denotes the cardinality of a finite set $X$.   
Clearly, $\nu_a(s)$ and $[a]$ depend only on the orbit of $a$ under $\Gamma_{\mathbbm{k}}$, the group of automorphisms of $\overline{\mathbbm{k}}^\times$ generated by the Frobenius map $F$ and the inversion $a\mapsto a^{-1}$.
The centralizer $C_{G^*}(s)$ has a natural decomposition 
\[
C_{G^*}(s)\cong \begin{cases}
\prod_{[a]}G^*_{[a]}(s)\times\{\pm\},&\textrm{if }G=\RO_{2n+1};\\
 \prod_{[a]}G^*_{[a]}(s),&\textrm{otherwise},
\end{cases}
\]
where $[a]$ runs over the orbits $(s)/\Gamma_{\mathbbm{k}}$, and
$G^*_{[a]}(s)$ is obtained by restriction of scalars from a classical
group defined over $\mathbbm{k}(a)$. If $a\ne\pm1$, then
$G^*_{[a]}(s)$ has $\overline{\mathbbm{k}}$-rank
\(
\frac{1}{2}\#[a]\nu_a(s).
\)
For convenience, we set
\[
G^*_{[\ne\pm 1]}(s) :=\prod_{[a], \, a\ne\pm 1 }G^*_{[ a]}(s).
\]
Then we rewrite
\begin{equation}\label{decomp}
 \begin{aligned}
C_{G^*}(s)
\cong &
G^*_{[ \ne \pm 1]}(s)\times G^*_{[ 1]}(s)\times G^*_{[ -1]}(s)\times 
\begin{cases}
\{\pm\},&\textrm{if }G=\RO_{2n+1};\\
1,&\textrm{otherwise}.
\end{cases}
 \end{aligned}
\end{equation}

In particular, we have the following explicit description of $G^*_{[a]}(s)$:
\begin{itemize}
    \item 
If $a\ne \pm 1$, then $G^*_{[a]}(s)$ is either $\Res_{{\mathbbm{k}}(a)/{\mathbbm{k}}}\GL_{\nu_a(s)}$ or the unitary group $\Res_{{\mathbbm{k}}(a)/{\mathbbm{k}}}\RU_{\nu_a(s)}$;
\item
If $G=\RO_{2n+1}$, then we may take $G^*=\Sp_{2n}\times\{\pm 1\} =\Sp(V^*)\times \{\pm1\}$, in which case
$
G^*_{[\pm 1]}(s)=\Sp_{2\nu_{\pm 1}(s)}= \Sp(V^*_{s=\pm 1});
$
\item
If $G=\Sp_{2n}$, then we may take $G^*=\SO_{2n+1}=\SO(V^*)$, in which case  
$G^*_{[ 1]}(s)=\SO_{2\nu_{1}(s)+1}=\SO(V^*_{s=1})$ and $G^*_{[-1]}(s)=\RO_{2\nu_{-1}(s)}=\RO(V^*_{s=-1})$;
\item
If $G=\RO_{2n}(V)$, then $G^*=\RO_{2n}(V^*)$ with $V^*=V$, and  
$
G^*_{[\pm 1]}(s)=\RO_{2\nu_{\pm 1}(s)} = \RO(V^*_{s=\pm 1}).
$
\end{itemize}
Here $V^*$ is the (skew) symmetric space defining the dual group $G^*$, and
$V^*_{s=\pm1}$ is the eigenspace of $s$ on $V^*$ corresponding to the
eigenvalue $\pm1$.
We remark that when $G^*$ is orthogonal,  $\RO(V^*_{s=\pm 1})$ depends on the discriminant of $V^*_{s=\pm 1}$.
 

Substituting \eqref{decomp} into Proposition~\ref{Lus}, we obtain
\begin{align}  \label{lcor}
& \mathcal E(G,s)
\nonumber\\
=\,& \cal{E}(G^*_{[\ne \pm 1]}(s)\times G^*_{[1]}(s)\times G^*_{[-1]}(s),1)
\times \begin{cases}
 \{\pm\},&\textrm{if }G=\RO_{2n+1};\\
  1,&\textrm{otherwise},
\end{cases}
\end{align}
where, by abuse of notation, $\{\pm\}$ also denotes the set of irreducible
representations of $\{\pm\}\subset G^*$ when $G=\RO_{2n+1}$.
Thus, for $\pi\in\mathcal E(G,s)$, we write
\[
        \CL_s(\pi)
        =
        \bigotimes_{[a]\in (s)/\Gamma_{\mathbbm{k}}}\pi[a]\otimes\eta,
\]
where $\eta\in\{\pm\}$ occurs only for $G=\RO_{2n+1}$. Here $\eta$ is the scalar by which the central element $-I$ acts.

\subsection{Theta correspondence} \label{subsec:theta}

We briefly recall the theta correspondence over finite fields, which will be used in the description of  $\Irr(G^F)$ later. 
Let $V$ and $V'$ be an $\epsilon$-symmetric space and a $(-\epsilon)$-symmetric space over $\mathbbm{k}$, respectively, where $\epsilon\in\{\pm\}$.  Let $G$ and $G'$ be their isometry groups. Let $\BW := \textrm{Hom}(V, V')$, and  define a symplectic form $\langle, \rangle_\BW$ on $\BW$ by
\[
        \langle T,S\rangle_\BW:=\operatorname{Tr}(T^*S),
        \quad T,S\in\BW,
\]
where the trace is taken for $T^*S$ as a $\mathbbm{k}$-linear endomorphism
of $V$. Here $T^*\in \Hom(V',V)$ is defined by
\[
(Tv,v')_{V'}=(v,T^*v')_V,\quad v\in V,\ v'\in V'.
\]
Let $\Sp(\BW)$ be the isometry group of $\langle \cdot, \cdot\rangle_\BW$. There is a natural homomorphism $G \times G' \to \Sp(\BW)$ given by
\[
(g, g') \cdot T = g'T g^{-1}\quad
\textrm{for } T \in \textrm{Hom}(V, V'),\  g \in G,\  g' \in G'.
\]
Then $(G, G')$ is a reductive dual pair inside $\Sp(\BW)$.

Fix a nontrivial additive character $\psi$ of $\mathbbm{k}$. 
Let $\omega_{N,\psi}$ be the Weil representation of the finite symplectic
group $\Sp(\BW)^F=\Sp_{2N}(\mathbbm{k})$ associated with the unique
irreducible representation of the Heisenberg group of $\BW$ with central
character $\psi$, where $\dim\BW=2N$.
Write
$\omega_{G,G'}$ for the restriction of $\omega_{N, \psi}$ to $G^F \times G'^F$, which decomposes into a direct sum
\[
\omega_{G, G'}=\bigoplus_{\pi,\pi'} m_{\pi,\pi '} \,\pi\otimes\pi '
\]
where $\pi$ and $\pi'$ run over $\Irr(G^F)$ and $\Irr(G'^F)$ respectively, and $m_{\pi,\pi'}$ are nonnegative integers. We can reassemble this decomposition as
\[
\omega_{G, G'}=\bigoplus_{\pi} \pi\otimes\Theta_{G,G'}(\pi )
\]
 where $\Theta_{G, G'}(\pi ) := \bigoplus_{\pi'} m_{\pi,\pi '}\pi'$ is a (not necessarily irreducible) representation of $G'^F$, called the (big) theta lift of $\pi$ from $G^F$ to $G'^F$. We will write $\pi'\subset \Theta_{G, G'}(\pi)$ if $\pi\otimes\pi'$ occurs in $\omega_{G,G'}$, i.e. $m_{\pi, \pi'}\neq 0$. We remark that even if $\Theta_{G,G'}(\pi)=\pi'$ is irreducible, one only
has in general
\[
        \pi\subset \Theta_{G',G}(\pi'),
\]
and equality need not hold. We will simply write $\Theta_{G, G'}$ as 
 $\Theta$ when the dual pair is clear from the context. 

\subsection{Classification of irreducible representations of classical groups}\label{5.2}

In this section, we first recall Lusztig's classification of unipotent
representations of classical groups \cite{Lu77,Lu81,Lu82,LS77}.  In this
classification, the unipotent representations of the factors
$G^*_{[a]}(s)$ are parametrized by Lusztig symbols as recalled in
Sections~\ref{sec:L-s} and~\ref{sec:bi}. Then we are ready to introduce the unique choice of Lusztig's correspondence $\CL_{\rm can}$ to label irreducible representations $\Irr(G^F)$. 

Continuing the decomposition \eqref{lcor},
if $G^*_{[a]}(s)$ is $\GL_{\nu_a(s)}$ or  $\UU_{\nu_a(s)}$, 
then Lusztig and Srinivasan \cite{LS77} give a canonical bijection between
$\CE(G^*_{[a]}(s),1)$ and the irreducible representations of the Weyl group
$S_{\nu_a(s)}$ of type $A$.
If $G^*_{[\pm1]}(s)$ is symplectic or orthogonal, we follow the notation in Sections~\ref{sec:L-s} and~\ref{sec:bi} and introduce a set $\CS(W)$ of symbols associated with a (skew) symmetric space $W$ of dimension $m$:
\begin{itemize}
\item  $\CS(W)=\set{\Lambda : \textrm{rank}(\Lambda)=\lceil \frac{m-1}{2} \rceil, \, \textrm{def}(\Lambda)\equiv 1\ (\textrm{mod }4)}$ if $W$ is symplectic or odd orthogonal; 
\item $\CS(W)=\set{\Lambda : \textrm{rank}(\Lambda)=\frac{m}{2},\, \textrm{def}(\Lambda)\equiv 0\ (\textrm{mod }4)}$ if $W$ is even orthogonal and $\disc(W)=+$;
\item $\CS(W)=\set{\Lambda : \textrm{rank}(\Lambda)=\frac{m}{2},\, \textrm{def}(\Lambda)\equiv 2\ (\textrm{mod }4)}$ if $W$ is even orthogonal and $\disc(W)=-$.
\end{itemize}
In \cite{Lu81, Lu82}, Lusztig gives bijections between $\CS(V^*)$ and the corresponding unipotent
series
\[
        \CE(\Sp(V^*),1),\quad
        \CE(\SO(V^*),1),\quad
        \CE(\RO(V^*),1),
\]
according to the type of $V^*$.

We now describe the Lusztig correspondence $\CL_s$ in terms of symbols. For fixed $s$, it labels each $\pi\in\mathcal E(G,s)$ by a quadruple
\begin{align*}
\CL_{s}:  &\mathcal{E}(G,s) \longrightarrow 
\mathcal{E}(G^{*}_{[ \ne\pm1]}(s), 1)\times\cal{S}(V^*_{s=1})\times\cal{S}(V^*_{s=-1})\times\{\pm\}^\star\\ 
&\pi\mapsto (\rho, \Lambda_1,\Lambda_{-1},\eta)
\end{align*}
where $\{\pm\}^\star=\{\pm\}$ if $G=\RO_{2n+1}$ and is a singleton otherwise; accordingly, $\eta$ occurs only for $G=\RO_{2n+1}$. 
We denote by $\pi^s_{\rho, \Lambda_1,\Lambda_{-1},\eta}$ or simply $\pi_{\rho, \Lambda_1,\Lambda_{-1},\eta}$  the irreducible representation of $G^F$ for the given quadruple.

For
\(
\pi=\pi^s_{\rho,\Lambda_1,\Lambda_{-1},\eta},
\)
write
\[
\rho=\prod_{[a]\ne[\pm1]}\pi[a],
\]
where $\pi[a]$ is the unipotent representation of
$G^*_{[a]}(s)^F$ parametrized by the partition $\lambda[a]$.
For $i\geq0$, set
\[
\rho_i=\prod_{[a]\ne[\pm1]}\pi[a]_i,
\]
where $\pi[a]_i$ is the unipotent representation of the group of the
same type as $G^*_{[a]}(s)^F$ parametrized by the partition obtained
from $\lambda[a]$ by removing its first $i$ columns.

Finally, we are ready to state our unique canonical choice of Lusztig correspondence $\CL_{\rm can}$. Recall from Section~\ref{subsec:theta} that we have fixed a nontrivial additive character $\psi$ of $\mathbbm{k}$ to define the Weil representation and theta correspondence.
\begin{thm}[\cite{Wang23}]\label{thm:L-can}
 Suppose that the cardinality of $\mathbbm{k}$ is large
enough so that the main result in \cite{Sr79} holds.
There is a unique choice of Lusztig correspondence
\[
\CL_{\rm can} \colon \Irr(G^F) \to 
\coprod_{(s)\in (G^{*})^{\circ F}_{ss}/ \Ad(G^{*F})}
\mathcal{E}(G^{*}_{[ \ne\pm1]}(s), 1)\times\cal{S}(V^*_{s=1})\times\cal{S}(V^*_{s=-1})\times\{\pm\}^\star
\]
satisfying the following conditions.
\begin{enumerate}
\item It is compatible with parabolic induction (cf. \cite[Section 3.4]{Wang23}). 

\item  Consider the dual pair $(G,G')=(\Sp_{2n},\RO^{\epsilon'}_{2n'+1})$.
 Let $\prll^s\in \Irr(\Sp_{2n}(\mathbbm{k}))$, and $\prllpz^{s'}\in \Irr(\RO^{\epsilon'}_{2n'+1}(\mathbbm{k}))$. Then $\prllpz^{s'}$ occurs in $\Theta(\prll^{s})$ if and only if the following conditions hold:
\begin{itemize}
    \item For any $a\in\overline{\mathbbm{k}}^\times$ with $a\ne\pm1$, one has $G^*_{[a]}(s)\cong (G')^*_{[-a]}(s')$ and $\pi[a]\cong \pi'[-a]$, where $\rho=\prod \pi[a]$ and $\rho'=\prod \pi'[a]$;
    \item $\pi_{\Lambda'_{-1}}=\pi_{\Lambda_{1}}$;
    \item $\pi_{\Lambda'_1}$ occurs in $\Theta_{\epsilon'\epsilon_\rho}(\pi_{\Lambda_{-1}})$;
    \item $\eta=\widetilde{\iota_\rho}(-1)^{\left\lfloor\frac{\rm{def}(\Lambda_{-1})}{2}\right\rfloor}\varepsilon(-1)^{\rm{rank}(\Lambda_1)}$.
\end{itemize}
Here $\widetilde{\iota_\rho}$ is defined in
\cite[Section 3.1]{Wang23}.  We set
\begin{equation}
\eps_\rho:=(-1)^{\#\{ [a]\in (s')/\Gamma_{\mathbbm{k}}\mid \  a\ne \pm 1,~G^{\prime *}_{[a]}(s')\text{ is unitary}\}},
\end{equation}
and define
\[
\Theta_{\epsilon'\epsilon_\rho}(\pi):=\begin{cases}
    \Theta(\pi), &  \textrm{if }\epsilon'\epsilon_\rho=+;\\
     \Theta(\rm{sgn}\cdot\pi), & \textrm{otherwise}.
\end{cases}
\]
In other words, we have the following commutative diagram
\[
\xymatrix{
\prll^s\in \Irr(\Sp_{2n}(\mathbbm{k})) \ar[r]^\Theta \ar[d]_{\CL_{\rm can}} & \prllpz^{s'}\in  \Irr(\RO^{\epsilon'}_{2n'+1}(\mathbbm{k})) \ar[d]^{\CL_{\rm can}} \\
\rho\otimes\pi_{\Lambda_{-1}}\otimes\pi_{\Lambda_1} \ar[r]^(0.35){ \mathrm{iso}\otimes\Theta_{\epsilon'\epsilon_\rho}\otimes\mathrm{id} } &  \rho'\otimes\pi_{\Lambda'_1}\otimes\pi_{\Lambda'_{-1}}\otimes\eta.
}
\]

\item Consider the dual pair $(G,G')=(\Sp_{2n},\RO^{\epsilon'}_{2n'})$.
Let $\prll^s\in \Irr(\Sp_{2n}(\mathbbm{k}))$ and
$\prllp^{s'}\in \Irr(\RO^{\epsilon'}_{2n'}(\mathbbm{k}))$.  Then $\prllp^{s'}$
occurs in $\Theta(\prll^s)$ if and only if the following conditions hold:
\begin{itemize}
\item For any $a\in\overline{\mathbbm{k}}^\times$ with $a\ne\pm1$, $G^*_{[a]}(s)\cong (G')^*_{[a]}(s')$ and $\pi'[a]\cong \pi[a]$, where $\rho=\prod\pi[a]$, and $\rho'=\prod\pi'[a]$;
    \item $\pi_{\Lambda_{-1}'}=\pi_{\Lambda_{-1}}$;
    \item $\pi_{\Lambda_1'}$ occurs in $\Theta(\pi_{\Lambda_1})$.
\end{itemize}
In other words, we have the following commutative diagram
\[
\xymatrix{
\prll^s\in \Irr(\Sp_{2n}(\mathbbm{k})) \ar[r]^\Theta  \ar[d]_{\CL_{\rm can}} &  \prllp^{s'}\in  \Irr(\RO_{2n'}^{\eps'}(\mathbbm{k})) \ar[d]^{\CL_{\rm can}} \\ 
\rho\otimes\pi_{\Lambda_1}\otimes\pi_{\Lambda_{-1}}\ar[r]^{\rm{iso}\otimes\Theta\otimes\rm{id}} & \rho'\otimes\pi_{\Lambda_1'}\otimes\pi_{\Lambda_{-1}}.
}
\]

    \end{enumerate}
\end{thm}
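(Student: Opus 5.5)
The plan is to construct $\CL_{\rm can}$ by modifying an arbitrary Lusztig correspondence as in Proposition~\ref{Lus}, and to prove existence and uniqueness together by induction on the rank of $G$, using Harish-Chandra theory. The starting observation is that, for fixed $s$, any two Lusztig correspondences differ by a permutation of $\mathcal E(G,s)$ that preserves all uniform projections $\langle\pi,R^G_{T^*,s}\rangle$. By Lusztig's description of $\mathcal E(C_{G^*}(s),1)$ through symbols, such a permutation can only act on the $\pm1$-eigenvalue factors. There it does one of two things: it exchanges the two members of a pair of representations that have the same uniform projection, namely a pair $\pi_\Lambda,\pi_{\Lambda^t}$ or a pair $\pi,\ \sgn\cdot\pi$ of an even orthogonal factor; or, for $G=\RO_{2n+1}$, it flips the sign $\eta$. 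The factors $G^*_{[a]}(s)$ with $a\neq\pm1$ are of type $\GL$ or $\RU$, and there the correspondence of \cite{LS77} is already canonical. So the whole proof comes down to fixing these finitely many binary ambiguities.

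First I would treat cuspidal representations. Under the hypothesis on $q$ from \cite{Sr79}, a cuspidal member of $\mathcal E(G,s)$ is determined by $s$ together with cuspidal symbols in the $\pm1$ parts, and Proposition~\ref{Lus} says that $\CL_s$ preserves cuspidality. I would then apply the first-occurrence results for theta lifting of cuspidal representations in the Adams--Moy and Pan setting: the theta lift between $\Sp_{2n}$ and $\RO^{\epsilon'}_{2n'+1}$ (resp.\ $\RO^{\epsilon'}_{2n'}$) at first occurrence is irreducible and cuspidal. It sends the $1$-eigenvalue block to the $-1$-eigenvalue block (resp.\ preserves it), and it acts on the remaining block by the unipotent theta correspondence, which Pan and \cite{AMR96} describe on symbols. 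Requiring the diagrams in (2) and (3) to commute therefore forces a unique labelling of the cuspidals. The only exceptions are the places where the unipotent theta correspondence itself cannot tell $\pi$ from $\sgn\cdot\pi$, and these are handled exactly by the twist $\Theta_{\epsilon'\epsilon_\rho}$. Second, I would extend from cuspidals to all of $\mathcal E(G,s)$ using condition (1). Each $\pi$ lies in a Harish-Chandra series $\Ind_P^G(\tau\otimes\sigma)$ with $\sigma$ cuspidal on a smaller classical group, and its position in that series is governed by the Hecke algebra. Compatibility with induction then determines $\CL_{\rm can}(\pi)$ from $\CL_{\rm can}(\sigma)$ and the $\GL$-data. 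One has to check that the theta conditions remain valid after induction; for this I would use the known compatibility of theta lifting with parabolic induction (Kudla's filtration over finite fields) together with the inductive structure of Pan's symbol description.

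Uniqueness follows from the same analysis. Suppose two correspondences satisfy (1)--(3). By (1) and induction they agree outside the cuspidal level, so they differ only by some of the binary swaps above on cuspidals. For each swap one produces a dual pair in which the two swapped representations have theta lifts with different labels on the other side. When $\Lambda$ and $\Lambda^t$ are exchanged on an even orthogonal factor, the first-occurrence index or the discriminant $\epsilon'$ of the target distinguishes them. When $\eta$ is flipped, the central character of the lift to $\RO_{2n'+1}$ distinguishes them. Hence any nontrivial swap violates (2) or (3).

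I expect the main obstacle to be the sign $\eta$ and the formula $\eta=\widetilde{\iota_\rho}(-1)^{\lfloor \mathrm{def}(\Lambda_{-1})/2\rfloor}\varepsilon(-1)^{\mathrm{rank}(\Lambda_1)}$. This requires computing the action of $-I\in\RO_{2n'+1}$ on the Weil representation restricted to the lift. That action is a Weil-index (Gauss sum) sign which depends on $\psi$, on the discriminant, and on the eigenvalue data of $s$, which enter through $\widetilde{\iota_\rho}$. To compute it I would reduce to tori: evaluate the Weil character at $(-I,t)$ for $t$ in a maximal torus attached to $s$, and compare with the Deligne--Lusztig decomposition of the Weil representation due to Srinivasan. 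The remaining difficulty is to track $\epsilon_\rho$, which comes from the unitary factors, through this comparison.
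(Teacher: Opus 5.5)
The paper does not prove this statement; it quotes it from \cite{Wang23}. Judged on its own terms, your plan breaks at the step that carries most of the weight: the reduction to cuspidal representations through condition~(1). Compatibility with parabolic induction cannot resolve the binary ambiguities you isolated when the two candidates lie in the same Harish-Chandra series, and they often do.

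Already for $\Sp_2=\SL_2$ and $s=\diag(-1,-1,1)\in\SO_3$ with split $(-1)$-eigenspace, the two members of $\mathcal E(\SL_2,s)$ are the two constituents of $\Ind_B^{\SL_2}(\chi)$, with $\chi$ quadratic. They must be matched with $1$ and $\sgn$ of $\RO_2^+$.

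More generally, suppose $\CL$ satisfies~(1). Modify it by replacing $\Lambda_{-1}$ with $\Lambda_{-1}^t$ whenever $\mathrm{def}(\Lambda_{-1})=0$, simultaneously for all ranks; equivalently, twist the split factor $\RO(V^*_{s=-1})$ by $\sgn$ on its unipotent principal series. The modified correspondence still matches uniform projections. It still commutes with Harish-Chandra induction, because $\Ind(\tau\otimes\sgn\cdot\sigma)=\sgn\cdot\Ind(\tau\otimes\sigma)$ from $\GL_k\times\RO_{2m-2k}$ and the defect-$0$ series is stable. And it fixes every cuspidal representation.

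So both of your claims are false: that ``compatibility with induction then determines $\CL_{\rm can}(\pi)$ from $\CL_{\rm can}(\sigma)$'' (existence), and that ``by (1) the two correspondences agree outside the cuspidal level'' (uniqueness). Deciding which constituent of such a series carries $\Lambda$ and which carries $\Lambda^t$ amounts to normalizing the intertwining operator of the nontrivial $R$-group element; for $\Ind_B^{\Sp_{2n}}(\chi\otimes\cdots\otimes\chi)$ the endomorphism algebra is of type $D_n$ extended by $\mathbb{Z}/2$. Condition (1) supplies no such normalization. The swaps must therefore be detected by (2) and (3) on every member of $\mathcal E(G,s)$, not only on cuspidals. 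For uniqueness this is repairable: anchor the argument on the odd orthogonal side, where the symplectic-type labels are fixed by uniform projections and $\eta$ is the action of $-I$.

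For existence the missing ingredient is more serious. Conditions (2) and (3) are ``if and only if'' statements about every occurrence in $\Theta$, for all $n,n',\epsilon'$, and mostly concern non-cuspidal representations and non-first occurrences. First-occurrence results for cuspidals (Adams--Moy) and the finite-field Kudla filtration only show that $\Theta$ respects Harish-Chandra series. They do not say which constituent of a series is matched with which, so they cannot produce a labelling satisfying (2) and (3).

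What is needed is the following: for a suitable choice of Lusztig correspondences on both members of every dual pair, the Howe correspondence between $\mathcal E(G,s)$ and $\mathcal E(G',s')$ is $\mathrm{iso}\otimes\Theta^{\rm unip}\otimes\mathrm{id}$, up to the $\sgn$-twist recorded by $\epsilon'\epsilon_\rho$. This must hold consistently for the symplectic/odd-orthogonal and symplectic/even-orthogonal pairs at once. That is the content of Pan's comparison of the Lusztig and Howe correspondences, which goes through uniform projections and Srinivasan's Deligne--Lusztig decomposition of the Weil character. This is why the statement assumes $q$ large enough for \cite{Sr79}; your plan invokes \cite{Sr79} only for $\eta$, and nothing in it replaces this input.

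Conversely, $\eta$ is not the real obstacle. The elements $(-I_V,1)$ and $(1,-I_{V'})$ have the same image $-I_{\BW}$ in $\Sp(\BW)$. Hence on $\pi\otimes\pi'\subset\omega_{G,G'}$, the scalar by which $-I$ acts on $\pi'$ equals the central character of $\pi$ at $-I$, which is constant on $\mathcal E(\Sp_{2n},s)$. The last condition in (2) is thus a computation in terms of $s$, not a normalization requiring Gauss sums on tori.
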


\subsection{Families of unipotent representations}\label{family}

Let $W$ be the Weyl group of $G^\circ$. In \cite{Lu84}, Lusztig defined a partition of $\Irr(W)$ into families and, similarly, a
partition of the irreducible unipotent representations of $G^F$ into families
\[
\mathcal{E}(G,1)
=
\bigsqcup_{\mathcal F}\mathcal{E}(G,1)_{\mathcal F},
\]
with $\mathcal{F}$ running over the ($F$-stable) families of ${\rm Irr}(W)$. By abuse of notation, we still denote $\mathcal{E}(G,1)_{\mathcal{F}}$ by $\mathcal{F}$ when there is no confusion.
To each family $\mathcal{F}$, there is a corresponding $F$-stable nilpotent orbit $\SCOF$ of $G$, related via the Springer correspondence. Let $\cal{U}^{\rm{st}}_{\cal{F}}$ be the corresponding $F$-stable unipotent orbit.
It is known \cite{Lu92,AA07,Tay16} that unipotent representations belong to $\mathcal{F}$ if and only if they have unipotent support $\cal{U}^{\rm{st}}_{\cal{F}}$.  For an $F$-stable nilpotent orbit $\SCO$, we denote by
$\mathcal F_{\SCO}$ the family consisting of unipotent representations whose
unipotent support corresponds to $\SCO$.

For an $F$-stable nilpotent orbit $\SCO$ of $G$ and $u\in\SCO$, let
\[
        A(u)=C_G(u)/C_G^\circ(u)
\]
be the component group of the centralizer of $u$ in $G$.  Lusztig defined a
quotient $\overline A(\SCO)$ of $A(u)$, called the canonical quotient.  When \(G\) is an even orthogonal, symplectic, or special odd orthogonal group, Lusztig constructed, for the family \(\mathcal F\) corresponding to \(\SCOF\), a bijection between the unipotent representations in \(\mathcal F\) and
\(
{\cal M}(\overline A(\SCOF))
\),
where, for a finite group ${\cal G}$,
\[
{\cal M}({\cal G})
:=
\set{(x,\tau)\mid x\in{\cal G},\ \tau\in\Irr(C_{\cal G}(x))}/{\cal G}.
\]
For simplicity, we abbreviate ${\cal{M}}(\overline{A}(\SCOF))$ to  $\mf$. If \( \pi \) corresponds to \( (x, \tau) \in \mf\), we denote it by \( \pi_{\SCOF,x,\tau} \).

We note that unipotent representations admit an alternative parametrization via partitions or symbols. In what follows, we briefly clarify the correspondence between these two distinct parametrizations of unipotent representations.

If $G$ is a general linear group or a unitary group, then for each family
$\mathcal F$, the corresponding set $\mf$ is trivial. In this case, an irreducible unipotent representation is assigned the partition that corresponds to its unipotent support.
 
To unify the descriptions of families $\CF$ and sets $\mf$ for symplectic groups, special odd orthogonal groups, and even orthogonal groups, we employ a slightly different notation from \cite{Lu84}. For an even orthogonal symbol, we append $-\infty$ to the top row
when its defect is congruent to $0$ modulo $4$, and to the bottom row
when its defect is congruent to $2$ modulo $4$. Thus the resulting
extended symbol has defect congruent to $1$ modulo $4$. By doing so, we can naturally apply all operators associated with the symbols of symplectic groups to even orthogonal groups as well.
If $\pi_\Lambda\in\CF$, then $\CF$ consists of all unipotent representations $\pi_{\Lambda'}$ of $G^F$ such that the multisets of entries of $\Lambda$ and $\Lambda'$ agree. 
A symbol $\Lambda_M$ with $\pi_{\Lambda_M}\in \CF$ is of the form
\[
\Lambda_M= \begin{pmatrix}
Z_2\sqcup(Z_1-M)\\
Z_2\sqcup M
\end{pmatrix}
\]
where $Z_2$ is the set of elements which appear in both rows of $\Lambda$, $Z_1$
is the set of singles of $\Lambda$ and $M$ is a subset of $Z_1 $ such that
$|M| \equiv d\pmod 2$ with $|Z_1|=2d+1$.

Let $\Lambda_{M_0}$ be the special symbol in $\mathcal F$, so that
$M_0\subset Z_1$ is the set of its singles lying in the second row.
For $\Lambda_M\in\mathcal F$, define
\[
M^\#
=
(M\setminus M_0)\cup(M_0\setminus M),
\]
consisting of the elements that belong to exactly one of $M$ and $M_0$. Thus $M^\#$ is
precisely the set of singles that lie in different rows in
$\Lambda_M$ and in the special symbol. Since $|M|\equiv|M_0|\equiv d\pmod 2$, the set
$M^\#$ has even cardinality.
Let $V_{Z_1}$ be the $\mathbb F_2$-vector space of even-cardinality
subsets of $Z_1$, where the sum of two subsets is the set of elements
belonging to exactly one of them. Then
\[
\Lambda_M\longmapsto M^\#
\]
is a bijection from $\mathcal F$ onto $V_{Z_1}$.
Unlike Lusztig, who orders the elements of $Z_1$ increasingly in
\cite{Lu84}, we write them in descending order as
\(
z_1>z_2>\cdots>z_{2d+1},
\)
and put
\begin{equation}\label{ei}
     e_i=\{z_i,z_{i+1}\},
\qquad 1\leq i\leq 2d.
\end{equation}
Then $e_1,\ldots,e_{2d}$ form a basis of $V_{Z_1}$.  Define
\[
I^{\rm even}_{\mathcal F}
=
\langle e_2,e_4,\ldots,e_{2d}\rangle,
\qquad
I^{\rm odd}_{\mathcal F}
=
\langle e_1,e_3,\ldots,e_{2d-1}\rangle.
\]
Then
\[
V_{Z_1}
=
I^{\rm even}_{\mathcal F}
\oplus
I^{\rm odd}_{\mathcal F}.
\]
By \cite[Sections~4 and~13]{Lu84}, with the conventions adopted above,
the canonical quotient associated with $\mathcal F$ is identified with
\[
\overline A(\SCOF)
\cong
\begin{cases}
I^{\rm even}_{\mathcal F},
   &\text{if $G$ is symplectic or even orthogonal},\\
I^{\rm odd}_{\mathcal F},
   &\text{if $G$ is special odd orthogonal}.
\end{cases}
\]
Accordingly, define
\[
(I^+_{\mathcal F},I^-_{\mathcal F})
=
\begin{cases}
(I^{\rm even}_{\mathcal F},I^{\rm odd}_{\mathcal F}),
   &\text{if $G$ is symplectic or even orthogonal},\\
(I^{\rm odd}_{\mathcal F},I^{\rm even}_{\mathcal F}),
   &\text{if $G$ is special odd orthogonal}.
\end{cases}
\]
Thus $I^+_{\mathcal F}$ is always identified with
$\overline A(\SCOF)$. The natural perfect pairing between
$I^{\rm even}_{\mathcal F}$ and $I^{\rm odd}_{\mathcal F}$ identifies
$I^-_{\mathcal F}$ with the character group of
$I^+_{\mathcal F}$. Consequently, we identify
\[
\mf
=
{\cal M}(\overline A(\SCOF))
\cong
\set{(x^+,x^-)\mid
x^\pm\in I^\pm_{\mathcal F}}.
\]
To describe these components explicitly, for each $j$, let
$\delta_j=+$ if $z_j$ lies in the same row of $\Lambda_M$ as in the
special symbol of $\mathcal F$, and let $\delta_j=-$ otherwise. In the
bases above, the $I^{\rm even}_{\mathcal F}$- and
$I^{\rm odd}_{\mathcal F}$-components of $M^\#$ have coordinates
\[
(\delta_1\delta_2,\,
 \delta_1\delta_2\delta_3\delta_4,\,
 \ldots,\,
 \delta_1\cdots\delta_{2d})
\ \textrm{ and }\
(\delta_1,\,
 \delta_1\delta_2\delta_3,\,
 \ldots,\,
 \delta_1\cdots\delta_{2d-1}),
\]
respectively. Under the preceding convention, these components are
denoted by $x^+$ and $x^-$, with $x^+$ corresponding to the
canonical-quotient parameter. Thus
\[
\pi_{\Lambda_M}
=
\pi_{\SCOF,x^+,x^-}.
\]

For $G=\mathrm O_{2n+1}^{\epsilon}$, irreducible unipotent
representations are parametrized by pairs
$(\Lambda_M,\eta)$ with $\eta\in\{\pm\}$. With the same coordinates
$x^+$ and $x^-$ as above, we write
\[
\pi_{\Lambda_M,\eta}
=
\pi_{\SCOF,x^+,x^-,\eta}.
\]

\section{Wavefront set and nilpotent orbits} \label{sec:notation}

In this section, we recall the definition of the rational wavefront set for representations of $G^F$.  We then parameterize $F$-rational nilpotent orbits and define the rational orbit order used throughout the paper.

Let $\CN$ be the nilpotent cone of the Lie algebra $\Fg$ of $G$, and let
$\CU$ be the unipotent variety of $G$.  There exists a $G$-equivariant
isomorphism
\[
        \CU\longrightarrow \CN
\]
which is compatible with $F$.  For instance, one may take the Cayley map
\[
        \operatorname{Cay}:\CU\longrightarrow \CN
\]
as in \cite[Theorem III.3.14]{SS70}.

For each $F$-rational nilpotent orbit $\CO\subset \CN^{F}$, define
$\Gamma_{\CO}$ to be the corresponding generalized Gelfand-Graev representation of $G^F$.
We recall the definition of $\Gamma_\CO$ following \cite{Lu92} and \cite{GZ14}. Let $\{X, Y, H\}\subset \frak{g}^F$ be an $\frak{sl}_2$-triple with $X\in \CO$, and decompose $\frak{g} = \bigoplus_{i\in \mathbb{Z}}\frak{g}_i$, where 
$\frak{g}_i$ is the $i$-eigenspace of ${\rm ad}\, H$. For $i\in \BN$ put $\Fg_{\geqslant i} := \bigoplus_{j\geqslant i}\Fg_j$. Let $\langle\, ,\,\rangle$ be a fixed $G$-invariant nondegenerate symmetric
bilinear form on $\Fg$, defined over $\mathbbm{k}$. Define a linear form 
\[
\lambda: \Fg_{\geqslant 1} \to \overline{\mathbbm{k}},\quad x\mapsto \langle Y, x\rangle. 
\]
Then $(x,y):=\lambda([x,y])$ defines a non-degenerate symplectic form on $\Fg_1$. Put $\Fg_{\geqslant 1.5}:=\Fl\oplus \Fg_{\geqslant 2}$, where $\Fl$ is a fixed $F$-stable Lagrangian subspace of $\Fg_1$. Then $\Fg_{\geqslant 1.5}$ is a Lie subalgebra corresponding to a closed connected unipotent subgroup $G_{\geqslant 1.5}$ of $G$. 

Recall the fixed  non-trivial additive character
$\psi: \mathbbm{k}\to  \BC^\times$.
Define the character \(\psi_\CO\) of \(G_{\geqslant1.5}^F\) by
\[
G_{\geqslant 1.5}^F \xlongrightarrow{\operatorname{Cay}} \Fg_{\geqslant 1.5}^F \xlongrightarrow{\lambda} \mathbbm{k} \xlongrightarrow{\psi} \BC^\times,
\]
where $\operatorname{Cay}$ denotes the restriction of the fixed
Cayley map to $G_{\geqslant 1.5}$.
The generalized Gelfand-Graev  representation $\Gamma_\CO$ is defined to be the induced representation 
\[
\Gamma_\CO :=\Ind^{G^F}_{G_{\geqslant 1.5}^F}\psi_\CO.
\]
The isomorphism class of $\Gamma_\CO$ is independent of the choices of
the $\frak{sl}_2$-triple and the Lagrangian subspace $\Fl$, and depends
only on $\CO$ and $\psi$; see \cite{Lu92,GZ14}.

\begin{defn}[Rational wavefront set $\ScO(\pi)$]
For a finite-dimensional representation $\pi$ of $G^F$, define its
{\bf rational wavefront set} by
\[
        \ScO(\pi)
        :=
        \bigcup_{\substack{\CO\subset \CN^F\\
        \apair{\pi,\Gamma_\CO}\ne 0}}
        \CO .
\]
Equivalently,
\[
        \ScO(\pi)
        =
        \set{X\in\CN^F\mid \apair{\pi,\Gamma_{\CO_X}}\ne 0},
\]
where $\CO_X$ is the $G^F$-orbit of $X$.
\end{defn}

Recall that the generalized Whittaker model of $\pi$ associated with $\CO$ is the space
\[
\operatorname{Wh}_\CO(\pi) := \Hom_{G_{\geqslant 1.5}^F}(\psi_\CO,\pi).
\]
This definition of $\operatorname{Wh}_\CO(\pi)$ differs from the usual one; we adopt it for convenience in the descent constructions below.
By Frobenius reciprocity,
\[
\langle  \Gamma_\CO,\pi\rangle = \dim \operatorname{Wh}_\CO(\pi).
\]
Alternatively, $\ScO(\pi)$ is the union of all the $\mathbbm{k}$-rational nilpotent orbits $\CO$ such that the corresponding generalized Whittaker models $\operatorname{Wh}_\CO(\pi)$ are nonzero.

Let \(M_\CO:=Z_G(H)\).  It is a Levi subgroup which normalizes
$G_{\geqslant 1}$.  Let 
\(
        M_{\psi_\CO}=\{m\in M_\CO\mid \Ad(m)(Y)=Y\}.
\)
There exists a canonical representation $\omega_{\psi_\CO}$ of
\(
        M_{\psi_\CO}^F\ltimes G_{\geqslant 1}^F
\)
which extends the representation \(     \Ind^{G_{\geqslant 1}^F}_{G_{\geqslant 1.5}^F}\psi_\CO\)
of $G_{\geqslant 1}^F$.  When $\Fg_1\ne 0$, we refer to
$\omega_{\psi_\CO}$ as the Weil representation associated with
$\psi_\CO$.  Then, for a representation $\pi$ of $G^F$, the Hom space
\[
\operatorname{Wh}_\CO(\pi)
\cong
\Hom_{G_{\geqslant 1}^F}
\bigl(\omega_{\psi_\CO},\pi\bigr)
\]
is naturally a representation of $M_{\psi_\CO}^F$. When $\Fg_1=0$, $\omega_{\psi_\CO}$ is the extension of
$\psi_\CO$ to
$M_{\psi_\CO}^F\ltimes G_{\geqslant 1}^F$
which is trivial on $M_{\psi_\CO}^F$.

\subsection{Parameterization of rational orbits}  \label{lcq}

In this subsection, we recall the parametrization of rational nilpotent
orbits by signed Young diagrams, following
\cite[Section~9.2]{CM93} and \cite{GZ14}. We then describe the maps
$S$ and $\overline S$ from rational orbits to the corresponding
component groups and  Lusztig's canonical quotients.

Following the definition of discriminant in \eqref{eq:disc}, we may identify $\mathbbm{k}^{\times}/(\mathbbm{k}^\times)^2$ with $\{\pm\}$ to assign the signs on the Young diagram. 
For a (skew) symmetric space $V$,
let \(\CY(V)\) denote the set of
signed Young diagrams
$[(\lambda_1^{n_1},a_1),\dots,(\lambda_l^{n_l},a_l)]$ satisfying the following conditions:
\begin{itemize}
\item
$\lambda_1>\cdots>\lambda_l>0$ and $n_i\in\mathbb Z_{>0}$;
    \item $\sum_i n_i\lambda_i=\dim V$ and
$a_i\in\mathbbm{k}^\times/(\mathbbm{k}^{\times})^2=\{\pm\}$, 
    \item  if $V$ is skew-symmetric and $\lambda_i$ is odd, then $n_i$ is even and $a_i=+$,
    \item  if $V$ is symmetric and $\lambda_i$ is even, then $n_i$ is even and $a_i=+$,
    \item if $V$ is symmetric, then $\prod_{1\leqslant k<t\leqslant l}\varepsilon(-1)^{n_k n_t}\cdot \prod_{i} a_i=\disc(V)$,
\end{itemize}
where 
\begin{equation} \label{eq:eps-1}
        \varepsilon(-1)\in\mathbbm{k}^{\times}/(\mathbbm{k}^{\times})^2
\end{equation}
denotes the square class of $-1$.

For each part $\lambda_i^{n_i}$, there is an auxiliary space of dimension $n_i$ carrying a form determined by the parity of $\lambda_i$ and by the form on $V$. When this auxiliary form is symmetric, the sign $a_i$ records its discriminant: $a_i=+$ if the discriminant is a square, and $a_i=-$ otherwise. For convenience, we sometimes omit the sign $a_i=+$ when
$V$ is skew-symmetric and $\lambda_i$ is odd, or when $V$ is symmetric and $\lambda_i$ is even.

As in \cite[Proposition~3.5]{GZ14}, there is a  one-to-one
correspondence  
\[
\CN^F/\Ad(G^F) \longleftrightarrow
\CY(V).
\]
By abuse of notation, we also denote by $[(\lambda_1^{n_1},a_1),(\lambda_2^{n_2},a_2),\dots,(\lambda_l^{n_l},a_l)]$ the $F$-rational nilpotent orbit corresponding to this Young diagram.

For symplectic and special odd orthogonal groups, given a unipotent
element $u\in G^F$, let $\SCO$ be the $F$-stable nilpotent orbit
containing $\operatorname{Cay}(u)$. By
\cite[Proposition~4.8]{BDT20}, there is a bijection
\[
S_u:\{\text{rational nilpotent orbits in }\SCO\}
\longrightarrow H^1(F,A(\SCO)).
\]
We may choose $u$ so that $F$ acts trivially on $A(\SCO)$; hence
$H^1(F,A(\SCO))=A(\SCO)$, and $S_u$ induces a surjection
$\overline S_u$ onto $\overline A(\SCO)$.

For full orthogonal groups, let $V^\epsilon$ be the quadratic space of
the relevant dimension with $\disc(V^\epsilon)=\epsilon$; in even
dimension, $V^+$ is split. In odd dimension, rational orbits associated
with $V^+$ and $V^-$ are regarded as distinct orbits. We write $A_{\SO}(\SCO)$ and $A_{\RO}(\SCO)$ for the
component groups defined using $\SO(V^\epsilon)$ and
$\RO(V^\epsilon)$, respectively.
On $V^+$, choose $u\in\SO(V^+)^F$ and let $\SCO$ be the $F$-stable
orbit containing $\operatorname{Cay}(u)$. The cocycle construction
gives an injective map
\[
S_u^+:
\{\text{rational nilpotent orbits in $\SCO$ associated with }V^+\}
\longrightarrow A_{\RO}(\SCO),
\]
and, by composition with the quotient map, a map $\overline S_u^+$.
In odd dimension, these maps are obtained from those for $\SO(V^+)$
through the inclusion
$A_{\SO}(\SCO)\subset A_{\RO}(\SCO)$. At the end of this subsection,
we extend them to the rational orbits associated with $V^-$.
When no confusion can arise, we write $A(\SCO)$ and
$\overline A(\SCO)$ for $A_{\RO}(\SCO)$ and
$\overline A_{\RO}(\SCO)$, respectively.

We first fix a normalization for symplectic, special odd orthogonal
groups and full split even orthogonal groups. Suppose that the orbit $\SCO$
has partition
\(
[\lambda_1^{n_1},\ldots,\lambda_l^{n_l}],
\)
and set
\[
k_i:=n_i\left(\sum_{m=1}^{i-1}n_m\right).
\]
We choose $u$ so that $\operatorname{Cay}(u)$ lies in
\[
[(\lambda_1^{n_1},\varepsilon(-1)^{k_1}),\ldots,
 (\lambda_l^{n_l},\varepsilon(-1)^{k_l})],
\]
which we abbreviate as
\(
[(\lambda_i^{n_i},\varepsilon(-1)^{k_i})].
\)
The values $S_u(\CO)$ or
$S_u^+(\CO)$ are independent of the choice of $u$ in
this orbit, and in both cases we write them as $S(\CO)$.
This normalization declares
\(
[(\lambda_i^{n_i},\varepsilon(-1)^{k_i})]
\)
to be the origin on the rational-orbit side. It is chosen so that, for unipotent representations corresponding to special symbols, the set $S(\SOM)$ maps to the origin of
Lusztig's canonical quotient; see Theorem~\ref{main4}.

To explicitly describe the map $\overline{S}$, we recall Sommers' description \cite{So01} of Lusztig's canonical quotient. He first characterizes the kernel $H(\SCO)$ of $\overline{A}(\SCO)=A(\SCO)/H(\SCO)$, then defines a subgroup $K(\SCO)$ of $A(\SCO)$ which maps bijectively onto $\overline{A}(\SCO)$. The description of $K(\SCO)$ provided by Sommers is as follows:
\begin{itemize}
    \item In type A, Lusztig's canonical quotient is trivial.
    \item In other classical types, $\overline{A}(\SCO)$ is an elementary 2-group, and there exists a subgroup $K(\SCO)$ of $A(\SCO)$ such that $K(\SCO) \cong \overline{A}(\SCO)$. Assume the partition of $\SCO$ has the form $[\lambda_1^{m_1}, \cdots, \lambda_l^{m_l}]$. Then $A(\SCO)$ is a subgroup of an elementary 2-group with basis $\{x_i \mid i = 1, \cdots, l\}$.
\item For odd special orthogonal groups, $A(\SCO)$ consists of sums of
an even number of elements in
$\{x_i\mid\lambda_i\text{ is odd}\}$, and $K(\SCO)$ consists of sums
of an even number of elements in
\[
\left\{
x_i\ \middle|\
\lambda_i\text{ is odd and }\sum_{j=1}^i m_j\text{ is odd}
\right\}.
\]
For full odd orthogonal groups, $A(\SCO)$ is generated by
$\{x_i\mid\lambda_i\text{ is odd}\}$, and $K(\SCO)$ is generated by
the displayed distinguished subset.

\item For full even orthogonal groups, $A(\SCO)$ is generated by
$\{x_i\mid\lambda_i\text{ is odd}\}$, and $K(\SCO)$ is generated by
\[
\left\{
x_i\ \middle|\
\lambda_i\text{ is odd and }\sum_{j=1}^i m_j\text{ is even}
\right\}.
\]
    \item For symplectic groups, $A(\SCO)$ is generated by $\{x_i \mid \lambda_i \text{ is even}\}$, and $K(\SCO)$ is the subgroup generated by 
    \[
\left\{
x_i\ \middle|\
\lambda_i\text{ is even and }\sum_{j=1}^i m_j\text{ is even}
\right\}.
\]
\end{itemize}

 We identify $\overline A(\SCO)$ with $K(\SCO)$. The quotient map
\(
A(\SCO)\longrightarrow\overline A(\SCO)\cong K(\SCO)
\)
is determined by the kernel $H(\SCO)$ described in the proof of
\cite[Theorem~6]{So01}.

Let $\{x_{r_1},x_{r_2},\ldots\}$ be the distinguished subset appearing
in the description of $K(\SCO)$, and set $r_0=0$. For a rational orbit
\(
\CO=[(\lambda_1^{n_1},a_1),\ldots,(\lambda_l^{n_l},a_l)],
\)
if $V_i$ is the quadratic space of dimension $n_i$ with
$\disc(V_i)=a_i$, then we set
\[
y_t:=\disc\left(
V_{r_{t-1}+1}\oplus\cdots\oplus V_{r_t}
\right)
=
\prod_{j=r_{t-1}+1}^{r_t}\delta_j \quad\textrm{with}\quad
\delta_i:=a_i\varepsilon(-1)^{k_i}.
\]
We write elements of $K(\SCO)$ in sign coordinates with respect to
$\{x_{r_t}\}$, so that $(y_1,y_2,\ldots)$ denotes
\(
\sum_{t:\,y_t=-}x_{r_t}.
\)
Then
\begin{equation}\label{S}
\begin{matrix}
\overline S:
&
\set{\CO\mid\CO\subset\SCO}
&
\longrightarrow
&
K(\SCO)\cong\overline A(\SCO)
\\
&
[(\lambda_1^{n_1},a_1),(\lambda_2^{n_2},a_2),\ldots]
&
\longmapsto
&
(y_1,y_2,\ldots).
\end{matrix}
\end{equation}
The chosen normalization gives
\[
\overline S
\bigl([(\lambda_i^{n_i},\varepsilon(-1)^{k_i})]\bigr)
=
(+,\ldots,+).
\]

For full orthogonal groups, \eqref{S} has so far been defined only on
the rational orbits associated with $V^+$. We now extend both $S$ and
$\overline S$ to the rational orbits associated with $V^-$.
Fix a partition
\(
[\lambda_1^{n_1},\ldots,\lambda_l^{n_l}],
\)
and let $\SCO$ denote the $F$-stable orbit for $V^+$ with this
partition. For a rational orbit
\(
\CO=[(\lambda_1^{n_1},a_1),\ldots,(\lambda_l^{n_l},a_l)]
\)
associated with either $V^+$ or $V^-$, define
\[
S(\CO)
=
\sum_{\substack{i\\
\lambda_i\text{ is odd}\\
\delta_i=-}}
x_i
\in A_{\RO}(\SCO).
\]
On $V^+$, this agrees with the cocycle parametrization $S$ under
the normalization chosen above.
Since
\(
\disc(V^\epsilon)
=
\prod_{\lambda_i\text{ is odd}}\delta_i,
\)
this gives a bijection
\[
S:
\coprod_{\epsilon\in\{\pm\}}
\left\{
\begin{array}{c}
\text{rational nilpotent orbits for $V^\epsilon$}\\
\text{having partition }
[\lambda_1^{n_1},\ldots,\lambda_l^{n_l}]
\end{array}
\right\}
\xrightarrow{\;\sim\;}
A_{\RO}(\SCO).
\]

The restriction of $S$ to the non-split even form does not arise as
$S_u$ from any choice of a base rational orbit on that form. Indeed,
$S_u$ must send its base orbit to the identity of $A(\SCO)$, whereas
the orbit sent to the identity by this formal map is the one associated
with the special symbol, and this orbit belongs to the split even
orthogonal group.

In odd dimension $2n+1$, this $S$ agrees with the extension determined by
$A_{\RO}(\SCO)= A_{\SO}(\SCO)\rtimes\langle -I_{2n+1}\rangle$: the orbits for $V^+$ map to
$A_{\SO}(\SCO)$, while those for $V^-$ map to the other coset.

We define $\overline S$ as the composition of $S$ with the canonical
quotient map. Equivalently, it is given on both forms by the same
sign-coordinate formula \eqref{S}.

\subsection{Partial order on rational orbits} \label{ssec:PO} 

To study the rational structure of $\ScO(\pi)$, we introduce a partial order on the $F$-rational nilpotent orbits in $\CN^{F}$ for orthogonal groups and symplectic groups. 

Let $X=[(\lambda_1^{n_1},a_1),(\lambda_2^{n_2},a_2),\dots]$ and $Y=[(\mu_1^{m_1},b_1),(\mu_2^{m_2},b_2),\dots]$ be two rational nilpotent orbits in $\CN^F$. 
Let $c_i(X)$ be the number of boxes in the $i$-th column of the signed Young diagram $X$ and
\[
        C_r(X):=\sum_{i=1}^r c_i(X)
\]
be the number of boxes in the first $r$ columns of the signed Young diagram
of $X$. Set
\[
{\rm sgn}_r(X)=\prod_{\lambda_i\geqslant r} a_i
\prod_{\substack{i<t\\ \lambda_i,\lambda_t\geqslant r}}
        \varepsilon(-1)^{n_i n_t}.
\]

Assume first that $V$ is skew-symmetric. We define
$X\leqslant Y$ by the following two conditions. 
\begin{enumerate}
\item[(i)] For every $r\geqslant 1$,
\(
        C_r(X)\geqslant C_r(Y).
\)
\item[(ii)] For every $s\geqslant 1$ such that
\(
        C_{2s-1}(X)=C_{2s-1}(Y),
\)
one has
\(
        {\rm sgn}_{2s-1}(X)={\rm sgn}_{2s-1}(Y),
\)
\end{enumerate}

Assume next that $V$ is symmetric. We define
$X\leqslant Y$ by the following two conditions.
\begin{enumerate}
\item[(i)] For every $r\geqslant 1$,
\(
        C_r(X)\geqslant C_r(Y).
\)
\item[(ii)] For every $s\geqslant 1$ such that
\(
        C_{2s}(X)=C_{2s}(Y),
\)
one has
\(
        {\rm sgn}_{2s}(X)={\rm sgn}_{2s}(Y),
\)
\end{enumerate}

For each $\CO\subset\CN^F$, define its rational closure by
\[
        \overline{\CO}
        :=
        \bigcup_{\substack{\CO'\subset\CN^F\\ \CO'\leqslant\CO}}
        \CO'.
\]
More generally, for a union $\mathcal S$ of rational nilpotent orbits, set
\[
        \overline{\mathcal S}
        :=
        \bigcup_{\substack{\CO'\leqslant\CO\\ \CO\subset\mathcal S}}
        \CO'.
\]
Define $\ScO(\pi)^{\max}$ to be the union of all rational nilpotent orbits
in $\ScO(\pi)$ which are maximal with respect to this rational orbit order.
Define $\ScO(\pi)^{\max,\rm st}$ to be the union of all rational nilpotent
orbits $\CO\in\ScO(\pi)$ such that $\CO^{\rm st}$ is maximal, with respect to the stable closure order, among stable orbits
which contain rational orbits occurring in $\ScO(\pi)$.

The $F$-stable nilpotent orbits that are maximal, with respect to the
stable closure order, among those containing an $F$-rational orbit
$\CO\in\ScO(\pi)$ form the Kawanaka wavefront set of $\pi$.
When $\pi$ is irreducible, this set consists of a single orbit
\cite{Lu92,GM00,AA07,Tay16}, also called the Kawanaka wavefront orbit
of $\pi$. Although $\ScO(\pi)^{\max,\rm st}$ is defined as a union of
$F$-rational nilpotent orbits, when no confusion can arise, by abuse
of notation we also use it for the unique $F$-stable orbit containing
them, namely the Kawanaka wavefront set of $\pi$.

Motivated by \cite{D82}, we describe the rational orbit order by
mutations of signed Young diagrams.  We use the convention
\[
        [(\lambda^m,a),(\lambda^{m'},a')]
        =
        [(\lambda^{m+m'},\varepsilon(-1)^{mm'}aa')],
\]
and hence
\[
        [(\lambda,a_1),\ldots,(\lambda,a_m)]
        =
        [(\lambda^m,
        \varepsilon(-1)^{\frac{m(m-1)}{2}}a_1\cdots a_m)].
\]
A mutation is an unordered local operation: before applying it, equal parts
may be split using the above convention; after applying it, the result is
reordered and equal parts are recombined.

We write \(X\rightsquigarrow_{\rm orb}Y\) if \(Y\) is obtained from \(X\) by
one of the following local operations.  Here
\(\lambda\geqslant\delta>0\) are even if \(V\) is skew-symmetric and odd if
\(V\) is symmetric, and \(a,b\in\{\pm\}\):
\[
\begin{array}{ll}
{\rm (1)}&
(\lambda-1)^2\to
(\lambda,a),(\lambda-2,\varepsilon(-1)a),\qquad \lambda>1,
\\[2pt]
{\rm (2)}&
(\lambda,a),(\delta,b)\to
(\lambda+2,a),(\delta-2,b),\qquad \delta>1,
\\[2pt]
{\rm (3)}&
(\lambda,a),(\delta,b)\to
(\lambda+2,-a),(\delta-2,-b),\qquad \delta>1,
\\[2pt]
{\rm (4)}&
(\lambda^2,a),(\delta,b)\to
(\lambda+1)^2,(\delta-2,ab),\qquad \delta>1,
\\[2pt]
{\rm (5)}&
(\lambda,a),(\delta^2,b)\to
(\lambda+2,ab),(\delta-1)^2,
\\[2pt]
{\rm (6)}&
(\lambda^2,a),(\delta^2,a)\to
(\lambda+1)^2,(\delta-1)^2 .
\end{array}
\]
A zero part is omitted.

\begin{lem}\label{mutation}
If \(X\leqslant Y\) are rational nilpotent orbits in \(\CN^F\), then there is
a finite chain
\[
        X=X_0\rightsquigarrow_{\rm orb}X_1
        \rightsquigarrow_{\rm orb}\cdots
        \rightsquigarrow_{\rm orb}X_m=Y .
\]
\end{lem}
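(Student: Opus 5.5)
We argue by induction on a discrete measure of the distance between $X$ and $Y$, namely
\[
D(X,Y):=\sum_{r\geqslant1}\bigl(C_r(X)-C_r(Y)\bigr)\geqslant0 .
\]
If $D(X,Y)=0$, then $X$ and $Y$ have the same underlying partition. Condition (ii) then applies at every odd (resp.\ even) column index. The signs ${\rm sgn}_r$ at the relevant $r$ determine the signs $a_i$ of the blocks whose parts have the relevant parity, because successive quotients of the ${\rm sgn}_r$ isolate each block. The remaining parts carry forced signs. Hence $X=Y$, and the empty chain works.

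For $D>0$ we aim to produce a single mutation $X\rightsquigarrow_{\rm orb}X_1$ with $X_1\leqslant Y$. Every mutation strictly decreases some $C_r$ and never increases any, so $D(X_1,Y)<D(X,Y)$, and the induction closes.

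\textbf{Step 1 (the partition move).} We follow Gerstenhaber/Kraft--Procesi for the stable closure order on $\epsilon$-partitions, in the form used in \cite{D82}. Let $r_0$ be the smallest index with $C_{r_0}(X)>C_{r_0}(Y)$, and consider the last row of $X$ reaching column $r_0$. Moving a box from a lower row up to a higher row of $X$ produces the minimal $\epsilon$-admissible degeneration. The possible shapes are exactly the patterns (1)--(6) at the level of partitions. These are: splitting an odd (resp.\ even) pair into $\lambda,\lambda-2$; moving two boxes between two parts of the good parity; and the mixed moves (4)--(6) involving a pair of parts of the bad parity. We choose the move so that the new partition $\mu$ still satisfies $C_r(\mu)\geqslant C_r(Y)$ for all $r$. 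This is the classical statement that the dominance order on $\epsilon$-partitions is generated by such minimal moves.

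\textbf{Step 2 (sign bookkeeping).} This is the main obstacle. We must choose the signs in the chosen mutation, which means choosing the free parameters $a,b$ together with the choice between (2) and (3), so that condition (ii) holds for $X_1$ against $Y$. A mutation affecting parts $\lambda$ and $\delta$ only changes ${\rm sgn}_r$ and $C_r$ for $r$ in the window $\delta-2<r\leqslant\lambda+2$.

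Outside the window, ${\rm sgn}_r(X_1)={\rm sgn}_r(X)$. Here we check directly from the convention $[(\lambda^m,a),(\lambda^{m'},a')]=[(\lambda^{m+m'},\varepsilon(-1)^{mm'}aa')]$ that the $\varepsilon(-1)$ cross terms are preserved. This is why the specific factors $\varepsilon(-1)a$ and $ab$ appear in (1), (4) and (5).

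Inside the window, equalities $C_{r}(X_1)=C_r(Y)$ at the relevant odd (resp.\ even) $r$ can newly occur only at one or two indices. Having both (2) and (3) available lets us flip the sign of the single new-top block freely while preserving the product. In (1), the choice of $a$ plays the same role. So we can match ${\rm sgn}_r(Y)$ at the one index where equality newly holds.

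The delicate cases are those where two such indices occur simultaneously, namely the mixed moves (4)--(6). There we use that $X\leqslant Y$ already forces the sign equality at one index. We would verify this case by case, first trying to reduce (4) and (5) to compositions of (1)--(3) when $D$ allows, and handling the residual cases by a direct computation with $k_i$-type exponents.

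Once a sign-compatible $X_1\leqslant Y$ exists, induction on $D$ gives the chain.
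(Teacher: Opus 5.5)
Your induction on $D(X,Y)$, the base case $D=0$, and the reduction to producing one mutation $X\rightsquigarrow_{\rm orb}X_1$ with $X_1\leqslant Y$ are all fine. The gap is in Steps~1--2, which carry the whole content of the lemma, and the mechanism you propose there fails.

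You pick the partition move first, as a minimal Kraft--Procesi degeneration that ignores signs, and then repair the signs with free parameters of the mutation. But mutations (4)--(6) have no free sign on the mutated rows. Moreover, (6) has an \emph{input} condition: its two blocks must carry the same sign, and for $\lambda=\delta$ the four rows must form a block of sign $+$. So the signs decide which partition moves exist at all.

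Concretely, in $\mathfrak{sp}_8$ let $X=[(2^4,-)]$ and $Y=[(3^2),(2,+)]$.
\begin{itemize}
\item The column sums $(C_1,C_2,C_3)$ are $(4,8,8)$ for $X$ and $(3,6,8)$ for $Y$. At the only odd column with equality, $r=3$, both signs are $+$. Hence $X<Y$.
\item The unique cover of $(2^4)$ among symplectic partitions is $(3,3,1,1)$, and it is the only partition strictly between $(2^4)$ and $(3,3,2)$.
\item However $X\not\leqslant[(3^2),(1^2)]$: the sums $C_1$ agree while ${\rm sgn}_1(X)=-\neq+$.
\end{itemize}
So the only possible chain is the single non-minimal move (4), $[(2^4,-)]\rightsquigarrow_{\rm orb}[(3^2),(2,c)]$. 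The required $c=+$ comes from how the block $(2^4,-)$ is split, not from any free parameter of the mutation. Your Step~1 would select the unrealizable cover, and Step~2 has nothing with which to repair it.

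The mutation therefore has to be chosen jointly with the signs, and this is what the paper's case split does. It works with the block $(\mu_d^{k_d},a_d)$ containing the lowest box of $X$ in column $r$. For odd $r$ it uses (1) if $\mu_d$ is odd, and (6) only when $k_d\geqslant5$, or $k_d=4$ and $a_d=+$. Otherwise it uses a suitable mutation among (2)--(5).

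Second, your claim that new equalities $C_r(X_1)=C_r(Y)$ occur at only one or two indices is false. Each of (2)--(6) lowers $C_r$ by $2$ for every $\delta\leqslant r\leqslant\lambda$, so new equalities can appear along a whole stretch of relevant columns. The sign freedom in the mutation acts uniformly along that stretch, so it can only fix a uniform mismatch.

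What is needed is to show that, for the chosen mutation, no row of $Y$ or of $X_1$ ends inside such a stretch. Then all ${\rm sgn}_r$ there coincide and one sign choice suffices. For even $r$, the inherited equality ${\rm sgn}_{r-1}(X)={\rm sgn}_{r-1}(Y)$ serves as the reference sign. The paper obtains this by isolating the delicate case $\mu_d=r+1$, $C_r(X)-C_r(Y)=1$, and using the column estimates to control where rows can end. Without these two ingredients, ``we would verify this case by case'' leaves the lemma unproved.
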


\begin{proof}[Sketch of proof]
We treat the skew-symmetric case; the symmetric case is obtained by
interchanging the two parities.  The case \(X=Y\) is immediate, so
assume that \(X<Y\).  Let \(r\) be minimal such that
\[
        C_r(X)>C_r(Y),
\]
and let \((\mu_d^{k_d},a_d)\) be the block containing the lowest box
of \(X\) in the \(r\)-th column.  We construct from \(X\), by a mutation, a rational orbit
\(Z\leqslant Y\).  The lemma then follows by induction.
The comparison between \(C_i(Y)\) and \(C_i(Z)\) is routine: a
mutation moves only two boxes, so only the columns between their old
and new positions can change. The main point is to verify all rational sign conditions between
\(Y\) and \(Z\), choosing the signs in the mutation as needed.

Suppose first that \(r\) is odd.  If \(\mu_d\) is odd, mutation~{\rm(1)}, with a suitable
\(x\in\{\pm\}\), replaces two copies of \(\mu_d\) by
\[
        (\mu_d+1,x),\qquad
        (\mu_d-1,\varepsilon(-1)x),
\]
leaving the remaining copies unchanged.  If \(\mu_d\) is even and
either \(k_d\geqslant5\), or \(k_d=4\) and \(a_d=+\), mutation~{\rm(6)}
gives
\[
 [(\mu_d^{k_d},a_d)]
 \rightsquigarrow_{\rm orb}
 [(\mu_d+1)^2,(\mu_d^{k_d-4},a_d),(\mu_d-1)^2].
\]
In these cases, a direct calculation shows that the rational orbit
\(Z\) obtained by the indicated mutation satisfies \(Z\leqslant Y\).  
The cases of small \(k_d\) are handled by a suitable choice among
mutations~{\rm(2)}--{\rm(5)}. However, 
the guiding observation is the same in every case.  

First assume that
\(k_d>1\).  If \(\mu_d>r+1\), then, by parity,
\(\mu_d\geqslant r+3\).  Since \(Y\) has at least one fewer box than \(X\) in every column
from \(r\) through \(\mu_d\), we have
\[
 C_s(X)-C_s(Y)\geqslant s-r+1
 \qquad(r\leqslant s\leqslant\mu_d).
\]
For the mutations used here, any affected column not exceeding
\(\mu_d\) is at least \(\mu_d-1\).  Since
\(\mu_d\geqslant r+3\), the column-sum difference between $X$ and $Y$ there is at least
three and hence absorbs the change caused by moving two boxes in the mutation.  The remaining affected columns and any sign condition arising at an
equality are checked directly from the chosen mutation and the
condition used to select it.  Hence \(Z\leqslant Y\).
Likewise, if \(\mu_d=r+1\) and
\(
        C_r(X)-C_r(Y)\geqslant2,
\)
the same column comparison shows that \(Z\leqslant Y\).

The only delicate situation is therefore
\[
        \mu_d=r+1,\qquad C_r(X)-C_r(Y)=1.
\]
If \(c_{r+1}(Y)<c_r(Y)\), then $C_{r+1}(X)-C_{r+1}(Y)$
is already at least three.  Thus only the sign condition ${\rm sgn}_{r}(Z)={\rm sgn}_{r}(Y)$  at \(r\) can become
relevant, and the available free signs in the mutation are
chosen so that \({\rm sgn}_r(Z)\) agrees with
\({\rm sgn}_r(Y)\).

Assume instead that
\(
        c_{r+1}(Y)=c_r(Y).
\)
Let \(y\) be the length of the row of \(Y\) containing its lowest box
in column \(r\); then \(y\geqslant r+1\).  Different configurations
of \(Y\) require different mutations to obtain \(Z\).  Nevertheless,
regardless of \(Y\) and the corresponding mutation, the argument
establishing \(Z\leqslant Y\) is the same: the comparison between
\(C_i(Z)\) and \(C_i(Y)\) proceeds as in the preceding cases.
For the sign conditions, if only \(\operatorname{sgn}_r\) needs to be
checked, we make the
available sign choices in the mutation so that
\(
        \operatorname{sgn}_r(Z)=\operatorname{sgn}_r(Y).
\)
Suppose that the signs at one or more further odd columns \(r'>r\)
must also be compared.  This can occur only when
\(C_{r'}(Z)=C_{r'}(Y)\). A direct calculation then shows that \(y\geqslant r'\), and that the
same rows of \(Y\), as well as the corresponding rows of \(Z\), meet
every column from \(r\) through \(r'\).  Thus the rows involved in the
definitions of \({\rm sgn}_{i}(Y)\) and \({\rm sgn}_{i}(Z)\) do not
change for \(r\leqslant i\leqslant r'\).  Hence
\[
        \operatorname{sgn}_{r'}(Y)=\operatorname{sgn}_r(Y)
       =
        \operatorname{sgn}_{r}(Z)=\operatorname{sgn}_{r'}(Z)
\]
at every such column \(r'\).  Thus all the required sign comparisons are satisfied by the same
available sign choices.

When \(k_d=1\), one may have to mutate the next block
\((\mu_{d-1}^{k_{d-1}},a_{d-1})\), or the following block as well,
but the argument is identical: either the signs of \(Y\) and \(Z\)
need to be matched at only one column, or all the relevant signs of
\(Y\) coincide and all the corresponding signs of \(Z\) coincide.

Now suppose that \(r\) is even. By the minimality of \(r\), the column sums of \(X\) and \(Y\) agree
through column \(r-1\); in particular,
\(c_{r-1}(X)=c_{r-1}(Y)\).  It follows that
\[
 C_r(X)-C_r(Y)
 =
 m_{r-1}(Y)-m_{r-1}(X)=:m,
\]
where \(m_x(A)\) denotes the multiplicity of \(x\) in the underlying
partition of \(A\).  The part \(r-1\)
is odd, so the multiplicities \(m_{r-1}(Y)\) and \(m_{r-1}(X)\) are
even; consequently, \(m\) is a positive even integer.

Thus \(Y\) has exactly \(m\) more rows of length \(r-1\) than \(X\).
Since \(m\) is even and, as \(r-1\) is odd, every row of length
\(r-1\) has sign \(+\), these additional rows do not change the
discriminant when one passes from the \((r-1)\)-st column to the
\(r\)-th column.
Therefore
\[
        {\rm sgn}_{r-1}(Y)={\rm sgn}_r(Y).
\]
Although the sign at the even index \(r\) is not itself part of the
order condition, it provides a common reference sign for the subsequent comparison between \({\rm sgn}_{t}(Y)\) and
\({\rm sgn}_{t}(Z)\) at odd indices \(t>r\).

If \(m\geqslant4\), the same column comparison as in the odd \(r\)
case shows that no new equality is created, and hence no new sign
condition arises.  The only delicate case is \(m=2\).
The appropriate mutation, possibly involving the next block
\((\mu_{d-1}^{k_{d-1}},a_{d-1})\), or also the following block when
\(k_d=1\), is chosen according to the parity and multiplicity of the
relevant blocks and the last column at which equality already holds.  If an odd column \(t>r\) becomes a new equality, equality in
the column estimates forces no row of \(Y\) to end between columns
\(r\) and \(t\).  Hence
\[
        {\rm sgn}_t(Y)
        ={\rm sgn}_r(Y)
        ={\rm sgn}_{r-1}(Y).
\]
Since \(X\leqslant Y\) and \(C_{r-1}(X)=C_{r-1}(Y)\), we have
\({\rm sgn}_{r-1}(X)={\rm sgn}_{r-1}(Y)\).  Moreover, none of the moves in the mutation can remove a box from the
\((r-1)\)-st column, since otherwise
\(C_{r-1}(Z)<C_{r-1}(Y)\).  As the mutation preserves the
product of the signs of the rows meeting this column, it follows
automatically that
\(
        {\rm sgn}_{r-1}(Z)={\rm sgn}_{r-1}(X).
\)
We then make the available sign choices, when necessary, so that
\({\rm sgn}_t(Z)={\rm sgn}_{r-1}(Z)\).  Together with
\({\rm sgn}_t(Y)={\rm sgn}_{r-1}(Y)\), this gives
\[
        {\rm sgn}_t(Z)={\rm sgn}_t(Y),
\]
as required.

We have therefore produced, whenever \(X<Y\), a nonempty sequence of
mutations ending at an orbit \(Z\leqslant Y\) whose underlying
partition is strictly larger than that of \(X\).  Iterating this
construction terminates, and the terminal orbit must be \(Y\).
\end{proof}

\begin{exmp}
Let $G=\Sp_6$.
Then $[(4,+),(2,\varepsilon(-1))]\geqslant[(3,3)]$ and $[(4,-),(2,-\varepsilon(-1))]\geqslant[(3,3)]$.
However, the three orbits $[(4,-),(2,\varepsilon(-1))]$, $[(4,+),(2,-\varepsilon(-1))]$, and $[(3,3)]$ are pairwise incomparable.
\end{exmp}

\section{Descents of symplectic groups} \label{sec4}

We introduce the two types of elementary descents used to decompose generalized
Whittaker models for symplectic groups.  They are attached to the
rational nilpotent orbits
\[
[(2\ell,a),1^{2n-2\ell}]
\quad\text{and}\quad
[(2\ell+1)^2,1^{2n-2(2\ell+1)}],
\]
and are called the Fourier--Jacobi descent and the
\((2\ell+1)^2\)-descent, respectively.

\subsection{Fourier--Jacobi descents} \label{sec4.1}
In this section, we recall the Fourier--Jacobi descents of $\Sp_{2n}$ associated with the partition $[2\ell, 1^{2n-2\ell}]$.
There are two rational nilpotent orbits 
\[
\CO=\CO_{2\ell, a}:=[(2\ell,a), 1^{2n-2\ell}], \quad a\in \{\pm\}
\]
corresponding to the partition $[2\ell, 1^{2n-2\ell}]$, where $0<\ell\leqslant n$. For convenience, we sometimes omit $1^{2n-2\ell}$, and abbreviate the corresponding signed Young diagrams to $[(2\ell,a)]$.

Throughout this paper, we define $\Sp_{2n}$ using the skew-symmetric matrix
 \begin{equation}\label{eq:J2n}
 J_{2n} := \begin{pmatrix}
 0 & w_n \\ -w_n & 0
 \end{pmatrix},
 \quad \textrm{where}\quad 
 w_n:=\ppair{\begin{smallmatrix} 0 & \cdots & 1 \\
 & \begin{sideways} $\ddots$ \end{sideways} & \\
 1 & \cdots & 0\end{smallmatrix}}_{n\times n},
 \end{equation}
and we  choose the semisimple element $H$ in the $\frak{sl}_2$-triple corresponding to $\CO$ to be the diagonal matrix with decreasing diagonal entries. Set $N_\CO := G_{\geqslant 1.5}$ to be the unipotent group of $G$ consisting of the upper triangular matrices
\begin{equation}
u = \ppair{\begin{smallmatrix}
    x & v & y  \\
      & I_{2n-2\ell} & v' \\
      & & x^*
\end{smallmatrix}}
\end{equation} 
where $x \in Z_\ell$, $ x^*:= w_{\ell} (x^{-1})^t w_{\ell}$,  $v\in M_{\ell\times (2n-2\ell)}^0$ and $y\in M'_{\ell\times \ell}$.
Here and throughout the paper, we denote by $Z_\ell$ the subgroup of
unipotent upper triangular matrices of $\GL_\ell$. For \(m'\geqslant1\) and \(n'\geqslant0\), set
\begin{equation}\label{M0}
M_{m'\times 2n'}^0
:=
\set{v\in M_{m'\times 2n'} \mid
v_{m',1}=\cdots=v_{m',n'}=0}.
\end{equation}
For $\ell\geqslant 1$, set
\[
 M'_{\ell\times \ell}
 :=
 \set{y\in M_{\ell\times \ell}\mid w_{\ell}y=y^t w_{\ell}}.
\]
The character $\psi_{\CO}$ of $N_\CO^F$ is given by
\begin{equation}\label{eq:psi-F-J}
\psi_{\CO}(u):= \psi(x_{1,2}+\cdots+x_{\ell-1,\ell}+\Fa \cdot y_{\ell,1}),
\end{equation}
where $\Fa\in \mathbbm{k}^\times$ is a non-square element if $a=-$, and is a square if $a=+$, i.e., $\disc(\Fa)=a$. 

Then
\(
        M_{\CO}\simeq \GL_1^\ell\times \Sp_{2n-2\ell},
\)
where the $\GL_1$-factor is embedded diagonally in the product of the
$\GL_1$-factors arising from the grading.  The stabilizer of
$\psi_\CO$ in $M_\CO$ is
\[
        M_{\psi_\CO}\simeq \RO_1\times \Sp_{2n-2\ell}.
\]
For a representation $\pi$ of $G^F$, we define the descent
$\CD^{\rm FJ}_{2\ell,a}(\pi)$ of $\pi$ relative to the orbit
$[(2\ell,a)]$ by
\[
        \CD^{\rm FJ}_{2\ell,a}(\pi)
        :=
        \operatorname{Wh}_{\CO_{2\ell,a}}(\pi)
        \qquad(\ell>0),
\]
and we regard $\CD^{\rm FJ}_{2\ell,a}(\pi)$ as a representation of $\rm{Sp}_{2(n-\ell)}(\mathbbm{k})$.
For \(\ell=0\), the symbol \(\CO_{0,a}\) is formal.  We define its descent by
extending the multiplicity formula for positive Fourier--Jacobi descents.  For
\(\ell>0\), if \(\CO=\CO_{2\ell,a}\) and
\(\sigma\in\Irr(M_{\psi_\CO}^F)\), then
\[
        \dim\Hom_{M_{\psi_\CO}^F}
        \left(\sigma,\CD^{\rm FJ}_{2\ell,a}(\pi)\right)
        =
        \dim
        \Hom_{M_{\psi_\CO}^F\ltimes G_{\geqslant 1}^F}
        \left(\sigma\otimes\omega_{\psi_\CO},\pi\right).
\]
For \(\ell=0\), we keep the right-hand side of this formula in the formal
case, and obtain
\[
        \Hom_{G^F}
        \left(\sigma\otimes\omega_{n,\psi_a},\pi\right)
        \simeq
        \Hom_{G^F}
        \left(\sigma,\pi\otimes\omega_{n,\psi_a}^{\vee}\right).
\]
Since
\(
        \omega_{n,\psi_a}^{\vee}
        \simeq
        \omega_{n,\psi_{\varepsilon(-1)a}},
\)
we define
\begin{equation}\label{descent0}
        \CD^{\rm FJ}_{0,a}(\pi)
        :=
        \operatorname{Wh}_{\CO_{0,a}}(\pi)
        :=
        \pi\otimes\omega_{n,\psi_{\varepsilon(-1)a}}.
\end{equation}

\begin{defn}
Let $\pi$ be a finite-dimensional representation of $\Sp_{2n}(\mathbbm{k})$.
The largest $\ell\ge 0$ such that $\CD^{\rm FJ}_{2\ell, a}(\pi)\neq 0$ for at least one \(a\in\{\pm\}\) is called the first descent index of
\(\pi\).  At this maximal value of $\ell$,
$\CD^{\rm FJ}_{2\ell,a}(\pi)$ is called a first descent of $\pi$
for every $a\in\{\pm\}$ for which it is nonzero.
\end{defn}

We next turn to compositions of descents.  Since a descent
$\CD^{\rm FJ}_{2\ell,a}(\pi)$ need not be irreducible, we keep track of
irreducible constituents at each step.

A {\it descent sequence} of an irreducible representation
$\pi\in\Irr(\Sp_{2n}(\mathbbm{k}))$ is a sequence
\[
\pi=\pi_0
\xrightarrow{(\ell_1,a_1)}
\pi_1
\xrightarrow{(\ell_2,a_2)}
\cdots
\xrightarrow{(\ell_r,a_r)}
\pi_r=\mathbbm{1},
\]
where, for each $i$ with $\ell_i>0$, there exists
$\chi_i\in\Irr(\RO_1^F)$ such that
$\chi_i\boxtimes\pi_i$ is an irreducible constituent of
$\CD^{\rm FJ}_{2\ell_i,a_i}(\pi_{i-1})$.  If $\ell_i=0$, then $\pi_i$ is an
irreducible constituent of $\CD^{\rm FJ}_{0,a_i}(\pi_{i-1})$.  In both cases,
$\pi_i$ is regarded as a representation of the symplectic factor, and
$\mathbbm{1}$ is the trivial representation of the trivial group
$\Sp_0(\mathbbm{k})$. We require the sequence to terminate when it first reaches
$\Sp_0(\mathbbm{k})$.
 The corresponding {\it descent sequence index} is
\[
        \Gamma
        =
        \llb
        (2\ell_1,a_1),(2\ell_2,a_2),\ldots,(2\ell_r,a_r)
        \rrb .
\]
Note that $[2\ell_1,\ldots,2\ell_r]$ need not be a partition.

For two descent sequence indexes
\[
\Gamma=\llb(2\ell_1,a_1),(2\ell_2,a_2),\ldots\rrb,\qquad
\Gamma'=\llb(2\ell'_1,a'_1),(2\ell'_2,a'_2),\ldots\rrb,
\]
we write \(\Gamma\leqslant\Gamma'\) if \(\Gamma'\) can be obtained from
\(\Gamma\) by a finite sequence of elementary sequence mutations of the
following form:
\[
 (2\ell_i,a_i),(2\ell_{i+1},a_{i+1})
 \longmapsto
 (2\ell_i+2,b_i),(2\ell_{i+1}-2,b_{i+1}),
\]
where \(a_i a_{i+1}=b_i b_{i+1}\) and \(\ell_{i+1}>0\).  A descent sequence index of \(\pi\) is called maximal if it is maximal with respect to this order among the descent sequence indexes of \(\pi\).  A descent sequence is called maximal if its index is maximal.

\begin{defn}\label{defn:good}
Let
\(
\Gamma=
\llb(2\ell_1,a_1),(2\ell_2,a_2),\ldots,(2\ell_r,a_r)\rrb
\)
be a descent sequence index.  We say that $\Gamma$ is {\it good} if the
following conditions hold.  We use the convention
\(
        \ell_0=+\infty, \ell_{r+1}=0.
\)
\begin{enumerate}
\item[(a)] For each $i$, one of the following three possibilities holds:
 \begin{itemize}
 \item $\ell_{i-1}\geqslant \ell_i\geqslant \ell_{i+1}$;
 \item $\ell_i=\ell_{i+1}-1$ and $\ell_{i-1}>\ell_i$;
 \item $\ell_i=\ell_{i-1}+1$ and $\ell_{i+1}<\ell_i$.
 \end{itemize}
\item[(b)] If $\ell_i=\ell_{i+1}-1$, then
\[
        a_i=\varepsilon(-1)a_{i+1}.
\]
\end{enumerate}
\end{defn}

Roughly speaking, a descent sequence index is good if it is sufficiently
close to an $F$-rational nilpotent orbit.  When
$\ell_1>\ell_2>\cdots>\ell_r$, the descent sequence index directly
corresponds to a rational nilpotent orbit with even parts.  Since descent
sequence indexes record only even numbers, an odd pair
\[
        (2k+1)^2
\]
in a nilpotent orbit is represented by the adjacent pair
\[
        (2k,a),\quad (2k+2,\varepsilon(-1)a).
\]

The order \(\leqslant\) defined above is imposed on all descent sequence
indexes, not only on good ones.  Thus, the intermediate indexes in a mutation
chain may fail to be good.

We now attach a rational nilpotent orbit \(\CO(\Gamma)\) to a good descent
sequence index \(\Gamma\). For each \(i\), define a part \(p_i\) as follows:
\begin{itemize}
\item If \(\ell_{i-1}\geqslant\ell_i\geqslant\ell_{i+1}\), then set
\(
p_i=(2\ell_i,a_i).
\)
\item If \(\ell_i=\ell_{i+1}-1\) and \(\ell_{i-1}>\ell_i\), then set
\(
p_i=2\ell_i+1.
\)
\item If \(\ell_i=\ell_{i-1}+1\) and \(\ell_{i+1}<\ell_i\), then set
\(
p_i=2\ell_i-1.
\)
\end{itemize}
The rational nilpotent orbit \(\CO(\Gamma)\) is obtained from the list
\[
p_1,p_2,\ldots,p_r
\]
by omitting all zero parts and combining equal even parts according to the
convention
\[
[(\lambda,a_1),\ldots,(\lambda,a_m)]
=
[(\lambda^m,
\varepsilon(-1)^{\frac{m(m-1)}2}a_1\cdots a_m)].
\]
The odd parts occur in equal consecutive pairs by the definition of
goodness, so this gives a well-defined rational nilpotent orbit.
Conversely, a rational nilpotent orbit may give rise to more than one good
sequence index.  We denote by \(\mathfrak G(\CO)\) the set of all good
sequence indexes attached to \(\CO\). 

\begin{lem}\label{lem:orbit-sequence-order}
Let \(\CO\) and \(\CO'\) be rational nilpotent orbits.
\begin{enumerate}
\item[(i)]
If \(\CO\leqslant\CO'\), then for every
\(\Gamma\in\mathfrak G(\CO)\), there exists
\(\Gamma'\in\mathfrak G(\CO')\) such that
\(
        \Gamma\leqslant\Gamma'.
\)
\item[(ii)]
If \(\Gamma\) and \(\Gamma'\) are good sequence indexes and
\(
        \Gamma\leqslant\Gamma',
\)
then
\(
        \CO(\Gamma)\leqslant\CO(\Gamma').
\)
\end{enumerate}
\end{lem}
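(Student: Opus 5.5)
Part~(i) will follow from Lemma~\ref{mutation} by reducing to single mutations, and part~(ii) from a direct comparison of column sums and signs for one elementary sequence mutation. These two parts are independent.

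\emph{Part (i).} By Lemma~\ref{mutation}, a relation $\CO\leqslant\CO'$ is realized by a chain of local moves $\rightsquigarrow_{\rm orb}$ of types (1)--(6). By induction on the length of the chain, it suffices to treat a single move $X\rightsquigarrow_{\rm orb}Y$: for every $\Gamma\in\mathfrak G(X)$ we must produce some $\Gamma'\in\mathfrak G(Y)$ with $\Gamma\leqslant\Gamma'$. We work in the skew-symmetric setting, where $\lambda,\delta$ are even. In a good index, an even part $(\lambda,a)$ is a single entry $(\lambda,a)$, and an odd pair $(\mu)^2$ is an adjacent pair $(\mu-1,a),(\mu+1,\varepsilon(-1)a)$.

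We translate each move type into a sequence of elementary mutations:
\begin{itemize}
\item Move (1), $(\lambda-1)^2\to(\lambda,a),(\lambda-2,\varepsilon(-1)a)$: the pair $(\lambda-2,b),(\lambda,\varepsilon(-1)b)$ is sent to $(\lambda,a),(\lambda-2,\varepsilon(-1)a)$. This is one mutation followed by a reordering. Since the mutation order only raises earlier entries, we instead realize it as the single mutation $(\lambda-2,b),(\lambda,\varepsilon(-1)b)\mapsto(\lambda,a),(\lambda-2,\varepsilon(-1)a)$, which is legitimate because $b\cdot\varepsilon(-1)b=a\cdot\varepsilon(-1)a$.
\item Moves (2) and (3): these are a mutation of the two entries $(\lambda,a),(\delta,b)$. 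If the two entries are not adjacent, we first move through the intermediate entries using a telescoping chain of mutations that shift $2$ along, with sign products preserved.
\item Moves (4)--(6): these expand into two or three mutations of the same kind.
\end{itemize}

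The main obstacle in part~(i) is twofold. First, reordering: an elementary mutation only increases the earlier position, so we must choose $\Gamma'$ (a suitable element of $\mathfrak G(Y)$, not necessarily sorted) with entries placed so that only forward moves are needed. Second, the sign bookkeeping under the merging convention $\varepsilon(-1)^{m(m-1)/2}$. We would use the freedom in choosing $\Gamma'$, and in splitting equal parts, to absorb the signs.

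\emph{Part (ii).} Again it suffices to treat one elementary mutation, provided intermediate indices may be non-good. To handle this, we extend the invariants $C_r$ and ${\rm sgn}_r$ to arbitrary indices. Define $C_r(\Gamma)$ as the column count of the multiset of rows $2\ell_i$, with odd pairs split as $(2\ell)+(2\ell+2)$. The point is that for a good index, $(2\ell)+(2\ell+2)$ and $(2\ell+1)^2$ have the same $C_r$ at odd $r$ and differ only at even $r$. The partial order for the skew-symmetric case uses the sign condition only at odd columns, and the column condition at even $r$ follows from the odd ones via $C_{2s}\geqslant$ comparisons. Define ${\rm sgn}_{2s-1}(\Gamma)$ as the product of $a_i$ over $\ell_i\geqslant s$, with the $\varepsilon(-1)$ correction; for good $\Gamma$ this matches ${\rm sgn}_{2s-1}(\CO(\Gamma))$, using condition~(b).

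We then check monotonicity under one mutation $(2\ell_i,a_i),(2\ell_{i+1},a_{i+1})\mapsto(2\ell_i+2,b_i),(2\ell_{i+1}-2,b_{i+1})$. This mutation moves boxes from shorter rows to longer rows, so $C_r$ weakly decreases. Whenever $C_{2s-1}$ is unchanged, the set of rows meeting column $2s-1$ either is unchanged or changes by exchanging $a_ia_{i+1}$ for $b_ib_{i+1}$. These products are equal, so the sign is preserved.

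Chaining the mutations gives $\CO(\Gamma)\leqslant\CO(\Gamma')$ through the extended invariants. The delicate point is verifying that the extended $C_{2s}$ inequality for good indices implies the genuine one, since odd pairs differ at even columns. We expect this to reduce to the parity of odd multiplicities, as in the even-$r$ case of Lemma~\ref{mutation}.
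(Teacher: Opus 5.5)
\textbf{Part (i).} This is the paper's argument: reduce to a single orbit move via Lemma~\ref{mutation}, lift it by an elementary mutation on suitably split parts, and telescope through intervening entries. The sign choices you leave open are exactly the ones recorded in the paper's table. There, moves (4)--(6) are each a \emph{single} mutation, for example $(\lambda,ac),(\lambda,\varepsilon(-1)c),(\delta,b)\mapsto(\lambda,ac),(\lambda+2,\varepsilon(-1)ac),(\delta-2,ab)$.

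\textbf{Part (ii).} Here you take a different route from the paper, using monotone invariants instead of regrouping the chain until a good index is reached. As set up, however, it does not work, for two reasons.

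\emph{The parity bookkeeping is reversed.} The rows $\{2\ell,2\ell+2\}$ and $(2\ell+1)^2$ have equal $C_r$ for every $r\neq 2\ell+1$, and differ exactly at the odd column $r=2\ell+1$ ($4\ell+1$ versus $4\ell+2$). So the even-column comparison you flag as delicate is automatic. Your multiset invariants instead disagree with those of $\CO(\Gamma)$ precisely at the columns carrying the sign condition. Correspondingly, $\prod_{\ell_i\geqslant s}a_i$ (with any fixed correction) is not ${\rm sgn}_{2s-1}(\CO(\Gamma))$ when $\CO(\Gamma)$ contains $(2s-1)^2$. The representing pair $(2s-2,a),(2s,\varepsilon(-1)a)$ contributes the free factor $\varepsilon(-1)a$, on which the orbit sign does not depend.

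\emph{The monotonicity claim is false.} An elementary mutation is positional and does not require $\ell_i\geqslant\ell_{i+1}$. Intermediate indexes need not be good, and a mutation can then move boxes from a longer row to a shorter one. For example, the good index $\llb(4,x),(2,y),(4,z)\rrb$ with $y=\varepsilon(-1)z$ (orbit $[(4,x),3^2]$) mutates to $\llb(6,\cdot),(0,\cdot),(4,\cdot)\rrb$ and then to the good index $\llb(6,\cdot),(2,\cdot),(2,\cdot)\rrb$. Along this chain your $C_1$ goes $3\to2\to3$. So ``$C_r$ weakly decreases'' fails, and the step ``equal $C_{2s-1}$ at the endpoints forces constancy along the chain, so each step preserves the sign'' has no basis.

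\textbf{How to repair it.} Use positional invariants $S_j(\Gamma)=\sum_{m\leqslant j}2\ell_m$ and $\sigma_j(\Gamma)=\prod_{m\leqslant j}a_m$.
\begin{itemize}
\item A mutation at $(i,i+1)$ raises $S_i$ by $2$ and changes no other $S_j$, and no $\sigma_j$ with $j\neq i$. Hence $\Gamma\leqslant\Gamma'$ gives $S_j(\Gamma)\leqslant S_j(\Gamma')$ for all $j$. Moreover $\sigma_j(\Gamma)=\sigma_j(\Gamma')$ whenever $S_j(\Gamma)=S_j(\Gamma')$, since no mutation at $(j,j+1)$ can then have occurred.
\item For a good index, the positions list the rows of $\CO(\Gamma)$ in nonincreasing order. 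Its row prefix sums equal $S_j$, plus $1$ exactly when $j$ starts an odd pair.
\item Condition~(b) gives ${\rm sgn}_{2s-1}(\CO(\Gamma))=\varepsilon(-1)^{j(j-1)/2}\sigma_j(\Gamma)$ for $j$ the number of rows of length $\geqslant 2s$. The same holds for $j$ plus an even number of rows of length $2s-1$.
\item For the dominance part, the $+1$ corrections are handled by evenness of $S_j$ and by goodness of $\Gamma'$. A tie $S_j(\Gamma)=S_j(\Gamma')$ at the start of an odd pair of $\Gamma$ forces a rise, hence an odd pair, at $j$ in $\Gamma'$.
\item For the sign part, an equality $C_{2s-1}(\CO(\Gamma))=C_{2s-1}(\CO(\Gamma'))$ forces equal row prefix sums at $j=\#\{\text{rows of }\CO(\Gamma)\text{ of length}\geqslant 2s\}$. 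This $j$ also lies, for $\CO(\Gamma')$, between the rows of length $\geqslant 2s$ and those of length $\geqslant 2s-1$. It cannot split an odd pair of $\CO(\Gamma')$, since that would give $S_j(\Gamma)=S_j(\Gamma')+1$.
\end{itemize}
This proves (ii) directly from the definition of $\leqslant$, without the paper's rearrangement of the chain.
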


\begin{proof}
We first prove~(i). By Lemma~\ref{mutation}, it is enough to lift a single
orbit mutation. Fix an orbit mutation
\(
\CO_1\rightsquigarrow_{\rm orb}\CO_2
\)
and a good sequence index
\(\Gamma_1\in\mathfrak G(\CO_1)\). An orbit mutation is an unordered local
operation on the signed Young diagram; the parts involved in the mutation
need not be adjacent and may consist of more than two parts. By contrast, a
sequence index is ordered, and equal parts may be split among several
parts. Nevertheless, after choosing \(\Gamma_1\), the orbit mutation can
be realized by changing two parts of the sequence index, and these two
parts need not be adjacent.

When the two parts of \(\Gamma_1\) whose change realizes the orbit mutation are adjacent, the required operation is a single elementary sequence mutation. The following table lists all possible cases. In each row, only these two parts are changed; the other displayed parts record possible splittings of equal parts and remain unchanged. Here \(a,b,c\in  \{\pm\}\).

\begin{center}
\footnotesize
\setlength{\tabcolsep}{3pt}
\renewcommand{\arraystretch}{1.25}
\begin{tabular}{@{}p{0.34\linewidth}@{\hspace{0.02\linewidth}}p{0.60\linewidth}@{}}
\hline
\makebox[\linewidth][c]{\textbf{Orbit mutation}}
&
\makebox[\linewidth][c]{\textbf{Local lift on sequence indexes}}
\\
\hline

\makebox[\linewidth][c]{$
(\lambda-1)^2
\longrightarrow
(\lambda,a),(\lambda-2,\varepsilon(-1)a)
$}
&
\makebox[\linewidth][c]{$
(\lambda-2,b),(\lambda,\varepsilon(-1)b)
\longrightarrow
(\lambda,a),(\lambda-2,\varepsilon(-1)a)
$}
\\
\hline

\makebox[\linewidth][c]{$
(\lambda,a),(\delta,b)
\longrightarrow
(\lambda+2,a),(\delta-2,b)
$}
&
\makebox[\linewidth][c]{$
(\lambda,a),(\delta,b)
\longrightarrow
(\lambda+2,a),(\delta-2,b)
$}
\\
\hline

\makebox[\linewidth][c]{$
(\lambda,a),(\delta,b)
\longrightarrow
(\lambda+2,-a),(\delta-2,-b)
$}
&
\makebox[\linewidth][c]{$
(\lambda,a),(\delta,b)
\longrightarrow
(\lambda+2,-a),(\delta-2,-b)
$}
\\
\hline

\makebox[\linewidth][c]{$
\begin{array}{c}
(\lambda^2,a),(\delta,b)\\[-2pt]
{}\longrightarrow(\lambda+1)^2,(\delta-2,ab)
\end{array}
$}
&
\makebox[\linewidth][c]{$
\begin{array}{c}
(\lambda,ac),(\lambda,\varepsilon(-1)c),(\delta,b)\\[-2pt]
{}\longrightarrow
(\lambda,ac),(\lambda+2,\varepsilon(-1)ac),(\delta-2,ab)
\end{array}
$}
\\
\hline

\makebox[\linewidth][c]{$
\begin{array}{c}
(\lambda,a),(\delta^2,b)\\[-2pt]
{}\longrightarrow(\lambda+2,ab),(\delta-1)^2
\end{array}
$}
&
\makebox[\linewidth][c]{$
\begin{array}{c}
(\lambda,a),(\delta,bc),(\delta,\varepsilon(-1)c)\\[-2pt]
{}\longrightarrow
(\lambda+2,ab),(\delta-2,c),(\delta,\varepsilon(-1)c)
\end{array}
$}
\\
\hline

\makebox[\linewidth][c]{$
\begin{array}{c}
(\lambda^2,a),(\delta^2,a)\\[-2pt]
{}\longrightarrow(\lambda+1)^2,(\delta-1)^2
\end{array}
$}
&
\makebox[\linewidth][c]{$
\begin{array}{c}
(\lambda,b),(\lambda,\varepsilon(-1)ab),
(\delta,c),(\delta,\varepsilon(-1)ac)\\[-2pt]
{}\longrightarrow
(\lambda,b),(\lambda+2,\varepsilon(-1)b),
(\delta-2,ac),(\delta,\varepsilon(-1)ac)
\end{array}
$}
\\
\hline
\end{tabular}
\end{center}

We now consider the non-adjacent case. Let the \(i\)-th and \(j\)-th
parts
\[
(2\ell_i,a_i)
\qquad\text{and}\qquad
(2\ell_j,a_j),
\qquad i<j-1,
\]
be the two parts whose change realizes the given orbit mutation. We apply
elementary sequence mutations successively to the adjacent pairs
\[
(j-1,j),\ (j-2,j-1),\ \ldots,\ (i,i+1).
\]
At each step, we choose the new signs so that every intervening part
recovers its original sign, while the two originally selected parts acquire
the signs prescribed by the corresponding row of the table. The resulting
sequence index therefore belongs to \(\mathfrak G(\CO_2)\). Applying this
construction along an orbit-mutation chain proves~(i).

We prove~(ii) by induction on the length of a chain
\[
\Gamma=\Gamma_0\leqslant\Gamma_1\leqslant\cdots
\leqslant\Gamma_N=\Gamma'
\]
of elementary sequence mutations, where \(\Gamma_0\) and \(\Gamma_N\)
are good. The assertion is clear when \(N=0\).

Suppose that the first mutation acts on the \(i\)-th and
\((i+1)\)-st parts:
\[
(2\ell_i,a_i),(2\ell_{i+1},a_{i+1})
\longmapsto
(2\ell_i+2,b_i),(2\ell_{i+1}-2,b_{i+1}),
\qquad
b_ib_{i+1}=a_ia_{i+1}.
\]
If \(\Gamma_1\) is good, comparison with the table shows that this
mutation realizes one orbit mutation, and the result follows by induction.

Assume that \(\Gamma_1\) is not good. Since \(\Gamma_0\) is good, the \(i\)-th and \((i+1)\)-st parts of
\(\Gamma_1\) still satisfy the decreasing case in the definition of
goodness. Thus \(\Gamma_1\) can fail to be good only through the conditions involving
the neighboring pairs \(i-1,i\) or \(i+1,i+2\). Suppose first that the
conditions fail at \(i-1,i\). A direct check of the three cases in
Definition~\ref{defn:good} shows that the first subsequent mutation which
can correct these conditions must act on \(i-1,i\). If the resulting
sequence is still not good, the same argument applies to the next adjacent
pair. The \(i+1,i+2\) case is treated in the same way; if both occur, the
argument is applied to both successively.

These successive neighboring mutations can be placed consecutively in the
original chain. Mutations on disjoint pairs commute, while mutations on
overlapping pairs may be interchanged whenever both orders are legal:
both orders change
\[
(x,y,z)\quad\text{to}\quad(x+1,y,z-1),
\]
and the intermediate signs may be chosen to give the same final signs
because each mutation preserves the product of the affected signs. If a
zero prevents one of the two orders, goodness places it in a terminal
string
\(
0,1,0,1,\ldots;
\)
the mutations on its pairs \((0,1)\) are included among these consecutive
mutations, so no illegal interchange is needed.

Let \(\Gamma_M\) be the first good sequence index reached after these
mutations have been placed consecutively. Replacing each adjacent rise
\(k,k+1\) occurring in \(\Gamma_0\) and \(\Gamma_M\) by the corresponding
odd pair \((2k+1)^2\), a direct comparison with the table, together with
condition~(b) in Definition~\ref{defn:good} and the preservation of sign
products, shows that these mutations decompose into finitely many
non-adjacent versions of the local lifts listed there. Hence
\(
\CO(\Gamma_0)\leqslant\CO(\Gamma_M).
\)
Applying the induction hypothesis to the remaining chain proves~(ii).
\end{proof}

\subsection{$(2\ell+1)^2$-descent}

There is a unique rational nilpotent orbit corresponding to the partition
\[
        [(2\ell+1)^2,1^{2n-4\ell-2}],
\]
and we denote it by
\[
        \CO=\CO_{(2\ell+1)^2}.
\]
Here $0\leqslant\ell\leqslant (n-1)/2$. 
 For convenience, we sometimes omit $1^{2n-4\ell-2}$ and denote signed Young diagrams of the above form by $[(2\ell+1)^2]$. In particular, if $\ell=0$, then $[(2\ell+1)^2]=[1^{2n}]$.

The case \(\ell=0\) will be treated separately at the end of the section. we assume that \(\ell>0\).
Note that $\Fg_1= 0$. Set $N_\CO :=G_{\geqslant 1.5} = G_{\geqslant 2}$, which is the unipotent subgroup of $G$ consisting of the upper triangular matrices
\begin{equation}
u =\ppair{\begin{smallmatrix}
I_2& x_1&\cdots&\cdots&\vdots&&&\\
&I_2&\cdots&\cdots&\vdots&&&\\
&&\ddots& x_{\ell-1}&\vdots&&&\\ 
&&&I_2& y &&&\\
&&& &I_{2n-4\ell}&y'&&\\
&&& &&I_2& x'_{\ell-1} &  \\
&&& &&&\ddots& \\
&&& &&&&I_2  	
\end{smallmatrix}}.
\end{equation}
Then the associated Levi factor \(M_\CO\) is isomorphic to
\(
        \GL_2^\ell\times \Sp_{2n-4\ell-2},
\)
where the \(\GL_2\)-factor is embedded diagonally in the product of the
\(\GL_2\)-factors arising from the grading.
The non-degenerate character $\psi_\CO$ of $N_\CO^F$ is given by
\begin{equation}\label{eq:psi-2l+1}
\psi_\CO(u)=\psi(\tr(x_1+x_2+\cdots+x_{\ell-1})+y_{1,n-2\ell}+y_{2,n-2\ell+1}).
\end{equation}
Note that all non-degenerate characters of $N_\CO^F$ are conjugate to $\psi_\CO$ under the action of $M_\CO^F$, and the stabilizer of $\psi_\CO$ in $M_\CO$ is
\[
M_{\psi_\CO}=\SL_2^\triangle\times \Sp_{2n-4\ell-2}.
\]
For a representation $\pi$ of $G^F$ and $\ell>0$, we define its
{\bf $(2\ell+1)^2$-descent} by
\[
\CD_{(2\ell+1)^2}(\pi)
:=
\operatorname{Wh}_{\CO}(\pi),
\]
regarded as an $M_{\psi_\CO}^F$-module. If $\ell=0$, we define the $1^2$-descent by the formal convention
\[
\CD_{1^2}(\pi)
:=\CD^{\rm FJ}_{2,\varepsilon(-1)}(\CD^{\rm FJ}_{0,+}(\pi)).
\]

 \section{Composition Law}\label{sec5}

The purpose of this section is to prove the composition law announced in the
introduction. The known composition laws handle even one-block orbits
\(\CO_{2\ell,a}\), while the remaining case is an odd pair
\(\CO_{(2\ell+1)^2}\). The calculation gives an identity relating
the dimensions of the generalized Whittaker models attached to
\(\CO_{(2\ell+1)^2}\), the composition of the two adjacent
Fourier--Jacobi descents
\[
\CO_{2\ell,a}
\quad\textrm{and}\quad
\CO_{2\ell+2,\varepsilon(-1)a},
\]
and the generalized Whittaker models attached to
\[
\CO_{(2\ell+2,a),(2\ell,\varepsilon(-1)a)}
\quad\textrm{and}\quad
\CO_{(2\ell+2,-a),(2\ell,-\varepsilon(-1)a)}.
\]
Under the first-descent assumption, the latter two models vanish.
Theorem~\ref{2l+1} proves this identity, and
Theorem~\ref{thm5.1} is obtained by composing each term with the
remaining orbit \(\CO\).

 In Section \ref{ssec:PO} we have parametrized rational nilpotent orbits by signed Young diagrams. Let $\CO = [(d_1, a_1), \ldots, (d_k, a_k)]$ be a rational nilpotent orbit of $\Sp_{2n-4\ell-2}$, with
 $\ell \leqslant (n-1)/2$. Assume that $2\ell+1 \geqslant d_1$. Define the composition of signed Young diagrams
 \[
 [(2\ell+2, a')] *  [(d_1, a_1), \ldots, (d_k, a_k)] := [ (2\ell+2, a'), (d_1, a_1), \ldots, (d_k, a_k)],\quad a'\in\{\pm\}, 
 \]
 which corresponds to the  rational nilpotent orbit of 
\[
 G':=\Sp_{2n-2\ell}.
\]
We denote this orbit by $\CO_{2\ell+2,a'} * \CO$.
 Similarly
 \[
 [(2\ell+1)^2]* [(d_1, a_1), \ldots, (d_k, a_k)] := [ (2\ell+1)^2, (d_1, a_1), \ldots, (d_k, a_k)]
 \]
 corresponds to a rational nilpotent orbit \(\CO_{(2\ell+1)^2}*\CO\) of \(G=\Sp_{2n}\). The Fourier--Jacobi descent 
 \[
 \CD^{\rm FJ}_{2\ell, a}(\pi) = {\rm Wh}_{\CO_{2\ell, a}}(\pi)
 \]
 is a representation of $G'^F = \Sp_{2n-2\ell}(\mathbbm{k})$. 
 Consider the composition of generalized Whittaker models
 \[
{\rm Wh}_{\CO_{2\ell+2,\varepsilon(-1)a}*\CO}  \left( {\rm Wh}_{\CO_{2\ell, a}}(\pi)\right), \quad a\in\{\pm\}.
 \]

\begin{thm}\label{thm5.1}
Let \(\pi\) be a finite-dimensional representation of \(G^F\), and let
\(0<\ell\leqslant (n-1)/2\). Let
\(\CO=[(d_1,a_1),\ldots,(d_k,a_k)]\) be a rational nilpotent orbit of
\(\Sp_{2n-4\ell-2}\) with \(d_1\leqslant 2\ell+1\). Then, for
\(a\in \{\pm\}\),
\[
\begin{aligned}
2\dim{\rm Wh}_{\CO_{(2\ell+1)^2}*\CO}(\pi)
={}&
\dim{\rm Wh}_{\CO_{2\ell+2,\varepsilon(-1)a}*\CO}
\left({\rm Wh}_{\CO_{2\ell,a}}(\pi)\right)\\
&+\frac{q+1}{2}
\dim{\rm Wh}_{\CO}\left(
{\rm Wh}_{\CO_{(2\ell+2,a),(2\ell,\varepsilon(-1)a)}}(\pi)\right)\\
&+\frac{q-1}{2}
\dim{\rm Wh}_{\CO}\left(
{\rm Wh}_{\CO_{(2\ell+2,-a),(2\ell,-\varepsilon(-1)a)}}(\pi)\right).
\end{aligned}
\]
In particular, if \(\ell\) is the first descent index of \(\pi\), then
\[
\dim{\rm Wh}_{\CO_{2\ell+2,\varepsilon(-1)a}*\CO}
\left({\rm Wh}_{\CO_{2\ell,a}}(\pi)\right)
=
2\dim{\rm Wh}_{\CO_{(2\ell+1)^2}*\CO}(\pi).
\]
The latter identity also holds when the first descent index is
\(\ell=0\), under the convention \eqref{descent0}.
\end{thm}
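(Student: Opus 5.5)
The plan is to prove the identity first for the trivial orbit \(\CO\), so that \(\CO_{(2\ell+1)^2}*\CO=\CO_{(2\ell+1)^2}\), and then obtain the general case by composing with \(\CO\). So the core statement is the \(\CO=\varnothing\) version of the displayed identity, and I would prove it as an equality of \(M^F\)-modules, where \(M=\SL_2^\triangle\times\Sp_{2n-4\ell-2}\) (or its relevant subgroup). In that version all four models are computed relative to unipotent subgroups inside a parabolic whose Levi contains \(\Sp_{2n-4\ell-2}\). Since \(d_1\leqslant 2\ell+1\), the unipotent subgroup \(N_\CO\subset\Sp_{2n-4\ell-2}\) normalizes every subgroup used in the manipulation and fixes every character appearing there. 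Applying \(\Hom_{N_\CO^F}(\psi_\CO,-)\) to each term of an \(\Sp_{2n-4\ell-2}^F\)-equivariant decomposition then gives the general statement.

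For the core identity, I would write both sides as Hom spaces out of characters of unipotent subgroups of \(G^F\). The composite \(\operatorname{Wh}_{\CO_{2\ell+2,\varepsilon(-1)a}}(\operatorname{Wh}_{\CO_{2\ell,a}}(\pi))\) becomes \(\Hom_{U_1^F}(\psi_1,\pi)\), with \(U_1=N_{\CO_{2\ell,a}}\rtimes N'\); here \(N'\) is the unipotent group of the second descent, embedded in \(\Sp_{2n-2\ell}\), and the Heisenberg/Weil part is unfolded. The odd-pair side is \(\Hom_{N_{\CO_{(2\ell+1)^2}}^F}(\psi_{\CO_{(2\ell+1)^2}},\pi)\). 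After conjugating by a suitable Weyl element, I would match the two groups root by root. I would apply the exchange-of-roots lemma (Stone--von Neumann for the pairing \((x,y)\mapsto\psi([x,y])\)) to every pair of root subgroups \(X,Y\) satisfying the usual hypotheses. What should remain is a common group \(C\) together with a small abelian residual group \(R\), which I expect to be \(R^F\cong\mathbbm{k}^2\): one group contains \(R\) with trivial character and the other does not contain it at all.

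Next I would Fourier expand along \(R^F\). This gives \(\Hom_{C^F}(\psi_C,\pi)=\bigoplus_{\xi\in\widehat{R^F}}\Hom_{(CR)^F}(\psi_C\otimes\xi,\pi)\). A Levi subgroup (a \(\GL_1\times\GL_1\) or \(\GL_2\)-type torus/Levi stabilizing \(\psi_C\)) acts on \(\widehat{R^F}\cong\mathbbm{k}^2\). I would classify the resulting characters \(\psi_C\otimes\xi\) up to conjugacy by identifying the nilpotent element they define. The value of an associated binary quadratic form \(Q(\xi)\) should decide the orbit. The classes I expect are the following.
\begin{itemize}
\item The two open orbits each give a character conjugate to \(\psi_{\CO_{(2\ell+1)^2}}\), and together they produce the factor \(2\).
\item The locus where \(Q(\xi)\) lies in one square class gives a conjugate of the character of \(\CO_{(2\ell+2,a),(2\ell,\varepsilon(-1)a)}\), with \(\frac{q+1}{2}\) points modulo the stabilizer.
\item The locus where \(Q(\xi)\) lies in the other square class gives \(\CO_{(2\ell+2,-a),(2\ell,-\varepsilon(-1)a)}\), with \(\frac{q-1}{2}\) points modulo the stabilizer.
\item The remaining degenerate \(\xi\) should give characters that are either killed by a further root exchange or absorbed into the left-hand side.
\end{itemize}
The main obstacle is this step. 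It requires finding the right residual \(R\) and Levi action, and checking the rational signs (the appearance of \(\varepsilon(-1)\) and of \(a\) versus \(-a\)) so that the counts come out exactly as \((q\pm1)/2\). I would first test it in \(\Sp_6\) with \(\ell=1\) by explicit matrices, and then generalize.

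For the ``in particular'' statement, assume \(\ell\) is the first descent index. Then \(\operatorname{Wh}_{\CO_{2\ell+2,b}}(\pi)=0\) for both signs \(b\). By the known composition law for a leading even block, \(\operatorname{Wh}_{\CO_{(2\ell+2,b),(2\ell,\pm)}}(\pi)\) is a further descent of \(\operatorname{Wh}_{\CO_{2\ell+2,b}}(\pi)\), so both correction terms vanish. For \(\ell=0\), I would repeat the computation with \(\operatorname{Wh}_{\CO_{0,a}}(\pi)=\pi\otimes\omega_{n,\psi_{\varepsilon(-1)a}}\) realized in the Schrödinger model. There the residual Fourier analysis is on the Weil representation itself, and the vanishing of the \(\CO_{2,\pm}\)-models of \(\pi\) kills the correction terms as before.
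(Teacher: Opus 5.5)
\paragraph{Verdict.} The proposal has two genuine gaps: the mechanism you expect in the core step cannot produce the identity, and the reduction from general \(\CO\) to trivial \(\CO\) omits a necessary identification.

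\paragraph{The core step.} Your architecture is the right one: reduce to trivial \(\CO\), exchange roots, Fourier-expand along a residual abelian group, sort the terms by a Levi action and square classes, and kill the corrections by Lemma~\ref{erL3}. The problem is the step you flag as the obstacle. It is not merely unexecuted; the structure you anticipate cannot give the identity.

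In an expansion \(\dim\Hom_{C^F}(\psi_C,\pi)=\sum_{\xi}\dim\Hom_{(CR)^F}(\psi_C\otimes\xi,\pi)\), each \(\xi\) contributes with coefficient one. So the coefficients are numbers of characters, not counts ``modulo the stabilizer'', and one expansion writes one model as a nonnegative integral combination of others.

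Write \(\mathrm{comp}\), \(\mathrm{odd}\), \(R_{\pm a}\) for the dimensions in Theorem~\ref{2l+1}. Solved for \(\mathrm{comp}\), the identity reads \(\mathrm{comp}=2\,\mathrm{odd}-\tfrac{q+1}{2}R_a-\tfrac{q-1}{2}R_{-a}\). Solved for \(\mathrm{odd}\), it has coefficient \(\tfrac12\) on \(\mathrm{comp}\). Hence it cannot come from a single expansion of either side into these models. Your list is also internally inconsistent: for a nondegenerate binary form, the open orbits of the relevant group are the anisotropic square-class loci themselves.

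The missing idea is an auxiliary model that occurs in two expansions and is then eliminated. It lives exactly in the isotropic locus you propose to kill or absorb. In the paper:
\begin{itemize}
\item The Weyl conjugation and \(\ell-1\) applications of Lemma~\ref{ex2} turn \(\mathrm{comp}\) into the model on \(N^{[\ell]}\) with character \(\psi(u_{1,3}+n_u+\mathfrak a u_2-\mathfrak a u_3)\).
\item A separate exchange, \eqref{eq4}, turns \(\mathrm{odd}\) into the model on the same group with \(\psi(u_{1,3}+n_u+u_1+u_4)\). The two characters differ only on the cross-middle block \(n_\ell\), so neither model is a Fourier coefficient of the other.
\item Both are expanded along the same one-parameter group \(b(t)\), the \((1,2)\)-entry, so the residual is one-dimensional. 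The group \(\GL_1\times\RO_2(W)\), with \(W\) carrying \(\mathrm{diag}(\mathfrak a,-\mathfrak a)\), sorts the terms.
\item On the composite side, the two isotropic values \(k=\pm1\) each give the same model \(\mathcal I\) (on \(N'\), character \(\psi(u_{1,3}+n_u+u_1+u_4)\)). The remaining \(k\) give \(R_a\) or \(R_{-a}\) according to the square class of \(k^2-1\), namely \(\tfrac{q-3}{2}\) and \(\tfrac{q-1}{2}\) times.
\item On the odd side, \(k=0\) gives \(\mathcal I\), and each \(k\neq0\) gives \(R_{\pm a}\) according to the square class of \(-2k\), \(\tfrac{q-1}{2}\) times each.
\end{itemize}
Thus \(\mathrm{comp}=2\mathcal I+\tfrac{q-3}{2}R_a+\tfrac{q-1}{2}R_{-a}\) and \(\mathrm{odd}=\mathcal I+\tfrac{q-1}{2}(R_a+R_{-a})\). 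Eliminating \(\mathcal I\) gives the theorem. The factor \(2\) comes from the two isotropic points, and \(\tfrac{q+1}{2}=(q-1)-\tfrac{q-3}{2}\) is a difference of counts from two expansions.

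\paragraph{The reduction to trivial \(\CO\).} The core identity can only be \(\Sp_{2n-4\ell-2}(\mathbbm{k})\)-equivariant, not \(\SL_2^{\triangle}\)-equivariant. Applying \(\Hom_{N_\CO^F}(\psi_\CO,-)\) to it produces \(\mathrm{Wh}_\CO(\mathrm{Wh}_{\CO_{(2\ell+1)^2}}(\pi))\) and \(\mathrm{Wh}_\CO(\mathrm{Wh}_{\CO_{2\ell+2,\varepsilon(-1)a}}(\mathrm{Wh}_{\CO_{2\ell,a}}(\pi)))\). The statement, however, concerns the generalized Gelfand--Graev models of the combined orbits \(\CO_{(2\ell+1)^2}*\CO\) and \(\CO_{2\ell+2,\varepsilon(-1)a}*\CO\). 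Their groups come from a different \(\mathfrak{sl}_2\)-grading and are not \(N_\CO\cdot N_{\CO_{(2\ell+1)^2}}\).

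The identification is Theorem~\ref{erl2}, a graded, piece-by-piece root exchange, together with Lemma~\ref{erL3}. This is where \(d_1\leqslant 2\ell+1\) is used; \(\Sp_{2n-4\ell-2}\) normalizes all the groups and fixes all the characters for every \(\CO\), so the hypothesis plays no role in the normalization you cite.

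\paragraph{Remaining parts.} Your deduction of the first-descent case is fine. For \(\ell=0\), no Schr\"odinger-model computation is needed: Proposition~\ref{trivial}(i) forces every constituent of \(\pi\) to be trivial, and Proposition~\ref{trivial}(ii) gives \(\mathrm{Wh}_{\CO_{2,b}}(\omega_{n,\psi_b})=\mathbbm{1}\oplus\mathbbm{1}\).
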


\begin{proof}
By Lemma~\ref{erL3},
\[
\begin{aligned}
&\dim{\rm Wh}_{\CO_{2\ell+2,\varepsilon(-1)a}*\CO}
\left({\rm Wh}_{\CO_{2\ell,a}}(\pi)\right)\\
=&
\dim{\rm Wh}_{\CO}\left(
{\rm Wh}_{\CO_{2\ell+2,\varepsilon(-1)a}}
\left({\rm Wh}_{\CO_{2\ell,a}}(\pi)\right)\right).
\end{aligned}
\]
All root exchanges and Fourier expansions in the proof of
Theorem~\ref{2l+1} are equivariant under the residual factor
\(\Sp_{2n-4\ell-2}(\mathbbm{k})\). Performing the same calculation
after applying the \(\CO\)-Whittaker projector therefore gives
\[
\begin{aligned}
&2\dim{\rm Wh}_{\CO}\left(
{\rm Wh}_{\CO_{(2\ell+1)^2}}(\pi)\right)\\
={}&
\dim{\rm Wh}_{\CO}\left(
{\rm Wh}_{\CO_{2\ell+2,\varepsilon(-1)a}}
\left({\rm Wh}_{\CO_{2\ell,a}}(\pi)\right)\right)\\
&+\frac{q+1}{2}
\dim{\rm Wh}_{\CO}\left(
{\rm Wh}_{\CO_{(2\ell+2,a),
(2\ell,\varepsilon(-1)a)}}(\pi)\right)\\
&+\frac{q-1}{2}
\dim{\rm Wh}_{\CO}\left(
{\rm Wh}_{\CO_{(2\ell+2,-a),
(2\ell,-\varepsilon(-1)a)}}(\pi)\right).
\end{aligned}
\]
Finally, Theorem~\ref{erl2} gives
\[
\dim{\rm Wh}_{\CO}\left(
{\rm Wh}_{\CO_{(2\ell+1)^2}}(\pi)\right)
=
\dim{\rm Wh}_{\CO_{(2\ell+1)^2}*\CO}(\pi),
\]
and the asserted identity follows.

If \(\ell\) is the first descent index of \(\pi\), then
\({\rm Wh}_{\CO_{2\ell+2,\pm}}(\pi)=0\). By
Lemma~\ref{erL3}, the two additional terms therefore vanish, proving
the last assertion. The case of $\ell=0$ follows immediately that of Theorem~\ref{2l+1}.
\end{proof}

\begin{cor}\label{od}
    Let \(\pi\in\Irr(G^F)\), and let \(\CO\) be an
\(F\)-rational nilpotent orbit.
 \begin{enumerate} 
    \item[(i)] If 
    $ {\rm Wh}_{\CO}(\pi)\ne 0,$
then every
\(\Gamma\in\mathfrak G(\CO)\) is a descent sequence index of \(\pi\);
\item[(ii)] If $\Gamma$ is a good descent sequence index of $\pi$, then 
$     {\rm Wh}_{\CO(\Gamma)}(\pi)\ne 0.$
 \end{enumerate}
\end{cor}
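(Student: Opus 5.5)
We argue by induction on the number of parts of \(\Gamma\), peeling off the first step (or, for an odd pair, the first two steps) of a good index. The two tools are the even-block composition law (Lemma~\ref{erL3} together with Theorem~\ref{erl2}) and the odd-pair composition law, Theorem~\ref{thm5.1}. Write \(\Gamma=\llb(2\ell_1,a_1),\ldots,(2\ell_r,a_r)\rrb\) and let \(\CO=\CO(\Gamma)\). By goodness, the first part \(p_1\) of \(\CO(\Gamma)\) is either an even part \((2\ell_1,a_1)\) with \(\ell_1\geqslant\ell_2\), or the beginning of an odd pair \((2\ell_1+1)^2\). In the odd case \(\ell_2=\ell_1+1\) and \(a_1=\varepsilon(-1)a_2\).

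\emph{Even case.} We have \(\CO=\CO_{2\ell_1,a_1}*\CO'\) with \(\CO'=\CO(\Gamma')\), where \(\Gamma'\) is the tail of \(\Gamma\); this tail is again good because the first entry was decreasing. The even composition law gives
\[
\dim{\rm Wh}_{\CO}(\pi)=\dim{\rm Wh}_{\CO'}\bigl(\CD^{\rm FJ}_{2\ell_1,a_1}(\pi)\bigr).
\]
Decompose \(\CD^{\rm FJ}_{2\ell_1,a_1}(\pi)\) into \(\RO_1^F\times\Sp\)-irreducibles \(\chi\boxtimes\pi_1\). The right-hand side is nonzero if and only if \({\rm Wh}_{\CO'}(\pi_1)\ne0\) for some constituent \(\pi_1\). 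Then (i) follows from induction applied to \(\pi_1\) and \(\CO'\), and (ii) from induction applied to the given \(\pi_1\).

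\emph{Odd case.} We have \(\CO=\CO_{(2\ell+1)^2}*\CO'\) with \(\ell=\ell_1\), where \(\CO'=\CO(\Gamma'')\) and \(\Gamma''\) is \(\Gamma\) with its first two parts removed. The main obstacle is that the clean identity in Theorem~\ref{thm5.1} is stated only when \(\ell\) is the first descent index. In general the three-term identity of Theorem~\ref{thm5.1} must be used, and we exploit the signs of its terms.

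For (ii), the descent sequence gives \({\rm Wh}_{\CO_{2\ell+2,\varepsilon(-1)a}*\CO'}({\rm Wh}_{\CO_{2\ell,a}}(\pi))\ne0\), with \(a=a_1\). This uses the even composition law for the second step, applied to the induction hypothesis for \(\pi_2\). All terms in the identity are nonnegative, so the left side \(2\dim{\rm Wh}_{\CO}(\pi)\) is positive.

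For (i), suppose \({\rm Wh}_{\CO}(\pi)\ne0\) and fix any \(a\). If the composite term is nonzero, we descend and invoke induction for \(\Gamma''\). If instead the composite term vanishes, then one of the models \({\rm Wh}_{\CO'}({\rm Wh}_{\CO_{(2\ell+2,\pm a),(2\ell,\pm\varepsilon(-1)a)}}(\pi))\) is nonzero. In that situation we argue that the composite term is still nonzero, by comparing the identity for \(a\) with the identity for \(-a\) and using the even composition law on the orbits \([(2\ell+2,b),(2\ell,\varepsilon(-1)b)]*\CO'\). Those orbits decompose as a Fourier--Jacobi descent of index \(\ell+1\) followed by one of index \(\ell\). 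If this route is blocked, the fallback is to induct on the descent-index filtration: first handle all \(\pi\) whose first descent index equals \(\ell\), where Theorem~\ref{thm5.1} applies cleanly, and treat larger first descent indexes by reducing through \(\CD^{\rm FJ}_{2\ell+2,\pm}\). This sign and positivity bookkeeping is the step I expect to be hardest.

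Finally, the base cases are \(\Sp_0\), which is trivial, and \(\ell=0\), which follows from convention \eqref{descent0} and the last assertion of Theorem~\ref{thm5.1}. The description of the parts \(p_i\) shows that the orbit \(\CO(\Gamma)\) is recovered by the same peeling, which completes the induction.
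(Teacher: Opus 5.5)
\textbf{Where your proposal agrees with the paper.} Your even case and your part (ii) follow the paper's argument. For (ii) with $\ell=0$ you need nothing: goodness forces $\CO(\Gamma)=[1^{2n}]$, and ${\rm Wh}_{[1^{2n}]}(\pi)=\pi\neq0$.

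\textbf{The gap in the odd-pair case of (i).} It sits exactly at the step you flagged. Write $C_b$ for the composite term and put
$R_b=\dim{\rm Wh}_{\CO'}\bigl({\rm Wh}_{\CO_{(2\ell+2,b),(2\ell,\varepsilon(-1)b)}}(\pi)\bigr)$.
Comparing the identities of Theorem~\ref{thm5.1} for $a$ and $-a$ yields only $C_a+R_a=C_{-a}+R_{-a}$. The values $C_{\pm a}=0$, $R_{\pm a}=2$, $\dim{\rm Wh}_{\CO}(\pi)=q$ satisfy both identities. So no sign or positivity bookkeeping can force $C_a\neq0$.

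Your fallback does not close the gap either. When the first descent index exceeds $\ell$, reducing through $\CD^{\rm FJ}_{2\ell+2,\pm}$ only produces descent sequences whose first index is $2\ell+2$. Nothing in your toolkit converts these into a sequence starting with $(2\ell,a)$.

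\textbf{The missing idea: downward closure under an elementary mutation (Lemma~\ref{lem7.7}).} The paper uses $R_b\neq0$ in three steps.
\begin{enumerate}
\item[(1)] By Lemma~\ref{erL3} and induction, $R_b\neq0$ gives a descent sequence of $\pi$ with index $\llb(2\ell+2,b),(2\ell,\varepsilon(-1)b),\ldots\rrb$ and the same tail as $\Gamma$.
\item[(2)] This index is obtained from $\Gamma$ by one elementary mutation; both sign products equal $\varepsilon(-1)$.
\item[(3)] Lemma~\ref{lem7.7} then produces a descent sequence with index exactly $\Gamma$.
\end{enumerate}
That lemma is genuinely representation-theoretic and cannot be extracted from the multiplicity identities. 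The intermediate representation $\pi'$ is replaced by a Deligne--Lusztig induction from $\mathrm{GL}_1\times\Sp$ or $\mathrm{U}_1\times\Sp$. This uses an auxiliary generic character whose eigenvalue avoids the relevant eigenvalues of $\pi,\pi',\pi''$. Both new arrows are then checked with the finite Gan--Gross--Prasad criterion, Theorem~\ref{ggp}(i).

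\textbf{The same gap in your base case $\ell=0$ of (i).} The last assertion of Theorem~\ref{thm5.1} at $\ell=0$ applies only when the first descent index of $\pi$ is $0$. By Proposition~\ref{trivial}(i), that means $\pi$ is trivial. For nontrivial $\pi$ you still have to show that $\pi$ has a descent sequence beginning with $(0,a),(2,\varepsilon(-1)a)$ for every $a$. The paper does this in three steps:
\begin{enumerate}
\item[(1)] repeated use of Lemma~\ref{lem7.7} gives ${\rm Wh}_{\CO_{2,b}}(\pi)\neq0$;
\item[(2)] by \eqref{descent0}, one appends a $(0,\varepsilon(-1)b)$ step;
\item[(3)] one more application of Lemma~\ref{lem7.7} swaps the pair into $(0,a),(2,\varepsilon(-1)a)$.
\end{enumerate}
There is no circularity: the paper notes that Lemma~\ref{lem7.7} and Proposition~\ref{trivial} do not depend on Section~\ref{sec5}.
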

\begin{proof}
We argue by induction on the length of \(\Gamma\).

For~\textup{(i)}, let
\(
\Gamma=\llb(2\ell_1,a_1),(2\ell_2,a_2),\ldots\rrb
\in\mathfrak G(\CO).
\)
By definition, \(\Gamma\) is good; hence either
\(\ell_1\geqslant\ell_2\) or \(\ell_1=\ell_2-1\).

Suppose that $\ell_1\geqslant\ell_2$. Then the first part of $\CO$ is $(2\ell_1,a_1)$. Write
\(\CO=\CO_{2\ell_1,a_1}*\CO'\). Then this case follows immediately from Lemma~\ref{erL3} and the induction hypothesis.

Suppose now that \(\ell_1=\ell_2-1\). Goodness gives
\(
(2\ell_1,a_1),(2\ell_2,a_2)
=(2\ell,a),(2\ell+2,\varepsilon(-1)a),
\)
which represents the odd pair \((2\ell+1)^2\). Write
\(
\CO=\CO_{(2\ell+1)^2}*\CO'.
\) 
If \(\ell>0\), Theorem~\ref{thm5.1} shows that either
\[
{\rm Wh}_{\CO_{2\ell+2,\varepsilon(-1)a}*\CO'}
\left({\rm Wh}_{\CO_{2\ell,a}}(\pi)\right)\ne0,
\]
in which case the induction hypothesis applied to the sequence obtained
from \(\Gamma\) by deleting its first term immediately proves the
assertion, or
\[
{\rm Wh}_{\CO'}\left(
{\rm Wh}_{\CO_{(2\ell+2,b),(2\ell,\varepsilon(-1)b)}}(\pi)
\right)\ne0
\]
for some \(b\in\{\pm\}\). In the latter case, Lemma~\ref{erL3} and induction give
a descent sequence of \(\pi\) whose first two terms are
\(
(2\ell+2,b),(2\ell,\varepsilon(-1)b)
\)
and remaining terms agree with those of \(\Gamma\). Applying
Lemma~\ref{lem7.7}, we obtain another descent sequence of \(\pi\) with
the same remaining terms and first two terms
\(
(2\ell,a),(2\ell+2,\varepsilon(-1)a).
\)
Its index is precisely \(\Gamma\), proving \textup{(i)} in this case.

It remains to treat \(\ell=0\). In this case, goodness forces all
numerical indices of \(\Gamma\) to be \(0\) or \(1\), and hence
\(\CO=[1^{2n}]\). We claim that every
\(\sigma\in\Irr(\Sp_{2m}(\mathbbm{k}))\) admits a descent sequence
beginning with
\(
(0,a),(2,\varepsilon(-1)a).
\)
If \(\sigma\) is trivial, this follows from
Proposition~\ref{trivial}\textup{(ii)}. Otherwise,
Proposition~\ref{trivial}\textup{(i)}, together with repeated
applications of Lemma~\ref{lem7.7}, gives
\[
{\rm Wh}_{\CO_{2,b}}(\sigma)\ne0
\]
for some \(b\in\{\pm\}\). Choose a nonzero irreducible constituent
\(\sigma'\) of this descent. By \eqref{descent0},
\(
{\rm Wh}_{\CO_{0,\varepsilon(-1)b}}(\sigma')\ne0.
\)
Thus \(\sigma\) has a descent sequence beginning with
\(
(2,b),(0,\varepsilon(-1)b).
\)
Applying Lemma~\ref{lem7.7} once more gives a descent sequence beginning
with
\(
(0,a),(2,\varepsilon(-1)a).
\)
Then the assertion then follows immediately from the induction hypothesis applied after these two descents to the sequence obtained from \(\Gamma\) by deleting its first two terms.

For~\textup{(ii)}, induction applies after an even first contribution
by Lemma~\ref{erL3}, and after an odd pair \((2\ell+1)^2\) with
\(\ell>0\) by Theorem~\ref{thm5.1}. If \(\ell=0\), goodness gives
\(\CO(\Gamma)=[1^{2n}]\), and hence
\[
{\rm Wh}_{\CO(\Gamma)}(\pi)
={\rm Wh}_{[1^{2n}]}(\pi)=\pi\ne0.
\]

Although Proposition~\ref{trivial} and Lemma~\ref{lem7.7} are stated
later, the proof of Proposition~\ref{trivial} uses only the explicit
first-descent results, while the proof of Lemma~\ref{lem7.7} uses only
the results of Section~\ref{sec6}. Neither proof uses any result of
the present section.
\end{proof}

\subsection{Exchanging Roots Lemma} \label{sec:exchange}
Let $C\subset G$ be an $F$-stable subgroup of a maximal unipotent subgroup of $G$, and
let $\psi_C$ be a nontrivial character of $C^F$. Assume that there are two unipotent
$F$-stable subgroups $X$, $Y$, such that the following conditions are satisfied.
 \begin{enumerate}
 \item    $X$ and $Y$ normalize $C$;
\item   $X\cap C$ and $Y \cap C$ are normal in $X$ and $Y$, respectively, and $(X\cap C)\backslash X$ and
$(Y \cap C)\backslash Y $ are abelian;
\item  $X^F$ and $Y^F$ preserve $\psi_C$ (when acting by conjugation);
\item   $\psi_C$ is trivial on $(X \cap C)^F$ and on $(Y \cap C)^F$;
\item   $[X, Y ] \subset C$;
\item  The pairing $(X \cap C)^F\backslash X^F \times(Y \cap C)^F\backslash Y^F \to \bb{C}^\times$, given by
\[
(x, y) \longmapsto \psi_C([x, y]),
\]
is multiplicative in each coordinate, non-degenerate, and identifies $(Y \cap C)^F\backslash Y^F$
with the dual of $(X \cap C)^F\backslash X^F$ and $(X \cap C)^F\backslash X^F$ with the dual of $(Y \cap C)^F\backslash Y^F$.
\end{enumerate}
We represent the above setup by the following diagram
\begin{equation}\label{er}
\begin{smallmatrix}
&&A &&\\
&\nearrow&&\nwarrow&\\
B=CY&&&&D=CX\\
&\nwarrow&&\nearrow&\\
&&C&&
\end{smallmatrix}.
\end{equation}
Here, $A = BX = DY = CXY $. Extend the character $\psi_C$ to a character $\psi_B$ of
$B^F$, and to a character $\psi_D$ of $D^F$, by making it trivial on $Y^F$ and on $X^F$.

\begin{lem}[{Exchanging Roots Lemma \cite[Lemma 7.5]{PW23}}]\label{erl}
For a finite-dimensional representation $\pi$ of $G^F$, 
\begin{align*}
    \langle \pi,\psi_{B}\rangle_{B^F} 
    & =\frac{1}{|B^F|}\sum_{u\in B^F}\pi(u)\psi_B(u^{-1})\\
    & =\frac{1}{|D^F|\,\bigl|(Y \cap C)^F \backslash Y^F\bigr|}
      \sum_{y\in (Y \cap C)^F \backslash Y^F}\sum_{u\in D^F}\pi(uy)\psi_D(u^{-1}).
\end{align*}
\end{lem}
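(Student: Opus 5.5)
The plan is to prove the second equality directly, by averaging over cosets of $Y$ and then using the Stone--von Neumann structure furnished by hypothesis (6). The first equality is just the definition of the multiplicity $\langle\pi,\psi_B\rangle_{B^F}=\dim\Hom_{B^F}(\psi_B,\pi)$ as the inner product of characters. For the second, let $P_B=\frac{1}{|B^F|}\sum_{u\in B^F}\psi_B(u^{-1})\pi(u)$ be the projector onto the $\psi_B$-isotypic space, so that $\langle\pi,\psi_B\rangle_{B^F}=\tr P_B$. Define $P_D$ similarly. The claimed right-hand side is
\[
\frac{1}{|(Y\cap C)^F\backslash Y^F|}\sum_{y}\tr\bigl(P_D\,\pi(y)\bigr).
\]
This expression is well defined on cosets: for $c\in(Y\cap C)^F\subset D^F$ we have $\psi_D(c)=\psi_C(c)=1$ by hypothesis (4), so $P_D\pi(c)=P_D$.

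First step: write $P_C$ for the $\psi_C$-projector of $C^F$. Since $X$ and $Y$ normalize $C$ and preserve $\psi_C$ (hypotheses (1) and (3)), the image $W:=P_C\pi$ is stable under $A^F=C^FX^FY^F$. On $W$, the quotient groups $\bar X=(X\cap C)^F\backslash X^F$ and $\bar Y=(Y\cap C)^F\backslash Y^F$ act. By hypotheses (4)--(6), these actions satisfy the commutation relation
\[
\pi(x)\pi(y)=\psi_C([x,y])\,\pi(y)\pi(x)
\]
on $W$, with a nondegenerate pairing. Thus $W$ is a representation of a finite Heisenberg-type group with fixed central character, and by Stone--von Neumann it is a multiple $W\cong \mathcal H\otimes M$ of the unique irreducible representation $\mathcal H$, of dimension $|\bar X|=|\bar Y|$. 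On $\mathcal H$, the trivial-$\bar Y$-isotypic space and the trivial-$\bar X$-isotypic space are both one-dimensional. Hence $\tr P_B=\dim W^{\bar Y}=\dim M=\dim W^{\bar X}=\tr P_D$.

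Second step: convert this equality into the stated sum. Since $P_D$ commutes with $P_C$ and projects onto $W^{\bar X}$, we have
\[
\frac{1}{|\bar Y|}\sum_{y\in\bar Y}\tr\bigl(P_D\pi(y)\bigr)=\tr\bigl(P_D\,\pi(e_{\bar Y})\bigr),
\]
where $\pi(e_{\bar Y})$ is the averaging operator over $\bar Y$ on $W$, that is, the projector onto $W^{\bar Y}$. On each copy of $\mathcal H$, with $v_X$ the $\bar X$-fixed line and $v_Y$ the $\bar Y$-fixed line, the quantity $\tr(P_{v_X}P_{v_Y})$ equals $|\langle v_X,v_Y\rangle|^2$ for unit vectors. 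In the Schrödinger model, $v_X$ is a delta function and $v_Y$ is the constant function. This naive computation gives $1/|\bar X|$, so the normalization must be checked carefully.

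I expect the main obstacle to be exactly this normalization bookkeeping: whether the stated formula carries the factor $1/|\bar Y|$ correctly or needs a compensating factor. To settle it, I would redo the computation with $\pi(y)$ inserted, writing $\sum_{u\in D^F}\pi(uy)\psi_D(u^{-1})$ as $|D^F|P_D\pi(y)$. I would then expand over the group $B^F$ via $B^F=C^FY^F$ and compare with $|B^F|=|C^F|\,|\bar Y|\,|(Y\cap C)^F|$ (as sets modulo overlaps). The cleanest route is to prove the identity directly at the level of character sums. Expand $\tr P_B$ by Fourier analysis on $\bar X$: insert $\frac{1}{|\bar X|}\sum_{x}\pi(x)(\cdot)\pi(x)^{-1}$, which leaves $P_B$'s trace unchanged. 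Then use the conjugation relation $x\,u\,x^{-1}$ for $u\in CY$, together with the orthogonality $\sum_{y}\psi_C([x,y])=|\bar Y|\delta_{x}$ from hypothesis (6). This converts the $Y$-sum inside $B$ into the $X$-sum inside $D$, with one $Y$-average left over, which is precisely the displayed formula; this is the argument of \cite[Lemma 7.5]{PW23}, which I would follow verbatim.
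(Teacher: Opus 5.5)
The paper does not prove this lemma; it quotes it from \cite[Lemma 7.5]{PW23}. I therefore judge your argument on its own terms.

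\textbf{What is correct.} Your first step is fine. On $W=P_C\pi$ the quotients $\bar X=(X\cap C)^F\backslash X^F$ and $\bar Y=(Y\cap C)^F\backslash Y^F$ act through a Heisenberg group. So $W\cong\mathcal H\otimes M$ with $\dim\mathcal H=|\bar X|=|\bar Y|$, and
\[
\langle\pi,\psi_B\rangle_{B^F}=\dim W^{\bar Y}=\dim M=\dim W^{\bar X}=\langle\pi,\psi_D\rangle_{D^F}.
\]
Your ``naive'' overlap computation is also correct.

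\textbf{Where it breaks.} That overlap computation shows the displayed right-hand side equals $\dim M/|\bar X|$, not $\dim M$. Indeed,
\[
\frac{1}{|D^F|}\sum_{u\in D^F}\pi(uy)\psi_D(u^{-1})=\tr(P_D\pi(y))=\frac{1}{|\bar X|}\sum_{x\in\bar X}\tr_W(\pi(x)\pi(y)).
\]
Moreover $\tr_W(\pi(x)\pi(y))=0$ unless $x$ and $y$ are both trivial. If $y\neq1$, conjugate by some $x'\in\bar X$ with $\psi_C([x',y])\neq1$; symmetrically if $x\neq1$. Either way $\pi(x)\pi(y)$ is multiplied by a scalar $\neq1$. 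Hence
\[
\frac{1}{|D^F|\,|\bar Y|}\sum_{y\in\bar Y}\sum_{u\in D^F}\pi(uy)\psi_D(u^{-1})
=\frac{\dim W}{|\bar X|\,|\bar Y|}
=\frac{\langle\pi,\psi_B\rangle_{B^F}}{|\bar Y|}.
\]
So the displayed identity is off by exactly the factor $|\bar Y|$. It fails whenever $\langle\pi,\psi_B\rangle_{B^F}\neq0$ and $Y^F\not\subset C^F$.

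A quick check, using that both sides depend only on $\pi|_{A^F}$: take the $3\times3$ unipotent Heisenberg group, $C$ its centre, $X,Y$ the two simple root groups. Let $\pi$ be the $q$-dimensional irreducible representation with central character $\psi_C$; its character vanishes off the centre. The left side is $1$, while the right side is $q^2/(q^2\cdot q)=1/q$.

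\textbf{Consequence for your last paragraph.} You assert that a Fourier expansion over $\bar X$ leaves ``one $Y$-average'' and gives ``precisely the displayed formula''. This cannot be carried out, and deferring to \cite{PW23} ``verbatim'' does not resolve the normalization you yourself flagged. Done correctly, that route shows that only the identity term survives in both
\[
\sum_{y\in\bar Y}\sum_{c\in C^F}\pi(cy)\overline{\psi_C(c)}
\quad\text{and}\quad
\sum_{x\in\bar X,\,y\in\bar Y}\sum_{c\in C^F}\pi(cxy)\overline{\psi_C(c)}.
\]
Since $|B^F|=|C^F|\,|\bar Y|=|C^F|\,|\bar X|=|D^F|$, what you actually prove is
\[
\langle\pi,\psi_B\rangle_{B^F}
=\frac{1}{|D^F|}\sum_{y\in(Y\cap C)^F\backslash Y^F}\ \sum_{u\in D^F}\pi(uy)\psi_D(u^{-1}),
\]
with a $Y$-sum rather than a $Y$-average. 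You should state and prove this corrected identity instead of the displayed one.

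For the rest of the paper the slip is harmless. Lemma~\ref{ex2} and \eqref{eq4} use only the multiplicity equality $\langle\pi,\psi_B\rangle_{B^F}=\langle\pi,\psi_D\rangle_{D^F}$, which your first step already establishes. Step~3 of the proof of Theorem~\ref{erl2} compares two expressions carrying the same spurious factor, so its conclusion is unaffected.
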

Lemma~\ref{erl} is a finite-field analogue of Lemma~7.1 in \cite{GRS11}.
In particular, Lemma~\ref {erl} allows us to compose the corresponding generalized Whittaker models.

\begin{lem}[\cite{PW23}, Lemma 7.6]\label{erL3} 
Let \(\ell>0\). Let $\pi$ be a finite-dimensional representation of $\Sp_{2n}(\mathbbm{k})$, and let $\CO = [(d_1, a_1),\ldots, (d_k, a_k)]$ be a rational nilpotent orbit of $\Sp_{2n-2\ell}$ with $d_1\leqslant 2\ell$. Then
\[
 \dim {\rm Wh}_{\CO}  \left( {\rm Wh}_{\CO_{2\ell,a}}(\pi) \right) 
=  \dim {\rm Wh}_{\CO_{2\ell,a}*\CO}(\pi).
\]
\end{lem}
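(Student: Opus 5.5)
The plan is to realize both sides of the identity as multiplicities of characters of unipotent subgroups, and then pass from one to the other with the Exchanging Roots Lemma~\ref{erl}. This follows the pattern of the composition laws in \cite{GRS11,PW23}.

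\textbf{Setup.} On the left-hand side, ${\rm Wh}_{\CO}({\rm Wh}_{\CO_{2\ell,a}}(\pi))$ is the space of $(N',\psi')$-equivariant vectors in $\pi$. Here $N'$ is generated by two groups:
\begin{itemize}
\item $N_{\CO_{2\ell,a}}$, whose character is $\psi_{\CO_{2\ell,a}}$;
\item the subgroup $N_\CO\subset\Sp_{2n-2\ell}$, embedded in the Levi factor $M_{\psi_{\CO_{2\ell,a}}}$ of $G=\Sp_{2n}$.
\end{itemize}
One point needs care: the Fourier--Jacobi model is defined through the Weil representation $\omega_{\psi_{\CO_{2\ell,a}}}$. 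I would first unfold that Weil representation into a Lagrangian induction. This is the standard Schr\"odinger model of the Heisenberg group, and it turns the composite into a plain character-multiplicity $\langle\pi,\psi'\rangle_{N'^F}$, where $N'$ is a unipotent subgroup of the upper-triangular unipotent radical.

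On the right-hand side, I would choose the $\mathfrak{sl}_2$-triple for $\CO_{2\ell,a}*\CO$ with $H$ diagonal and decreasing. Since $d_1\leqslant 2\ell$, the largest eigenvalues of $H$ come from the block of size $2\ell$. The group $N_{\CO_{2\ell,a}*\CO}=G_{\geqslant 1.5}$ then contains the root groups of $N_{\CO_{2\ell,a}}$. It also contains the root groups of the image of $N_\CO$, together with some extra root groups pairing the $2\ell$-block with the $\CO$-part. The character $\psi_{\CO_{2\ell,a}*\CO}$ restricts to $\psi_{\CO_{2\ell,a}}\cdot\psi_\CO$ on the common part and is trivial on the extra root groups. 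Because $d_1\leqslant 2\ell$, the relevant degrees of $H$ do not interleave badly, which I expect to make this true.

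\textbf{Exchanging roots.} Next I would compare the two groups $N'$ and $G_{\geqslant 1.5}$. They share a large subgroup $C$, and they differ in two families of root groups:
\begin{itemize}
\item a family $Y$, contained in $N'$ but not in $G_{\geqslant1.5}$, namely the Lagrangian part coming from the Weil-representation unfolding;
\item a family $X$, contained in $G_{\geqslant1.5}$, namely the root groups in $\Fg_{\geqslant 2}$ linking the $2\ell$-block to the rows of the $\CO$-block of degree $\geqslant 1$.
\end{itemize}
I would then verify the hypotheses (1)--(6) of Lemma~\ref{erl} for $C$, $X$, $Y$ with $\psi_C$ the common character. The key condition is (6): the pairing $(x,y)\mapsto\psi_C([x,y])$ must be perfect. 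I would check it by a root-by-root computation. The commutator of the root groups in $X$ and $Y$ lands in the root group $y_{\ell,1}$ carrying the coefficient $\Fa$, or in the superdiagonal entries $x_{i,i+1}$ of $Z_\ell$, and on these the character is nondegenerate.

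Once the hypotheses hold, Lemma~\ref{erl} gives $\langle\pi,\psi_B\rangle_{B^F}=\langle\pi,\psi_D\rangle_{D^F}$ up to an averaging over $Y$. The remaining averaging over $Y^F$ is absorbed because $\psi_D$ is $Y^F$-invariant, and together this yields equality of dimensions. If a single exchange does not suffice, I would iterate it column by column, following the inductive scheme of \cite[Lemma 7.6]{PW23}, until $N'$ is transformed into $G_{\geqslant1.5}$.

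\textbf{Main obstacle.} I expect the hard step to be the bookkeeping of the Weil-representation unfolding. The Lagrangian chosen inside $\Fg_1$ for $\CO_{2\ell,a}*\CO$ must match the one used for $\CO$ inside $\Sp_{2n-2\ell}$, combined with the Schr\"odinger model for $\omega_{\psi_{\CO_{2\ell,a}}}$. The parity of the $d_i$ also matters here: odd rows of $\CO$ contribute to $\Fg_1$. I would handle this by choosing all Lagrangians compatibly, taking the ``upper half'' in each row, and by using the independence of $\Gamma_\CO$ from the choice of Lagrangian.
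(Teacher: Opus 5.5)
\textbf{A genuine gap: the two groups are misidentified.} Your tool is the right one. Your reduction of the left-hand side to one character multiplicity on $N_\CO\cdot N_{\CO_{2\ell,a}}$ is also fine: the upper unipotent radical of $\Sp_{2n-2\ell}$ preserves both the condition $M^0$ and $\psi_{\CO_{2\ell,a}}$. The paper quotes the lemma from \cite[Lemma 7.6]{PW23}, and the successive root exchange used there is mirrored in the proof of Theorem~\ref{erl2}. But your description of how the two groups differ, which is the actual content of the proof, is wrong, so the exchange as you set it up cannot be carried out.

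To compare the groups you must first conjugate $G_{\geqslant1.5}$ of $\CO_{2\ell,a}*\CO$ by the Weyl element $w$ that puts the $2\ell$-block first, as in the proof of Theorem~\ref{erl2}. Then the first $\ell$ coordinates carry the weights $t_i=2\ell+1-2i$, and the middle coordinates carry the weights $h_j$ of $h_\CO$. A root group $e_{i,j}$ in the $v$-block of $N_{\CO_{2\ell,a}}$ has weight $t_i-h_j$, and for $i<\ell$ the condition $M^0$ imposes nothing. Take $d_1=2\ell$ and $h_j=2\ell-1$: the weight is $0$ for $i=1$, and $-2$ for $i=2$ when $\ell\geqslant3$. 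So $G_{\geqslant1.5}$ does \emph{not} contain $N_{\CO_{2\ell,a}}$. Conversely, $wG_{\geqslant1.5}w^{-1}$ is not upper triangular: it contains the root groups $e_{j,i}$ from $\CO$-rows into the $2\ell$-block with $h_j-t_i\geqslant2$, and a Lagrangian half of those with $h_j-t_i=1$. The hypothesis $d_1\leqslant2\ell$ only places the top weight in the $2\ell$-block; the two weight strings do interleave, and that interleaving is exactly what must be exchanged. Note also that both groups have dimension $\tfrac12\dim(\CO_{2\ell,a}*\CO)$, so the containment you assert would make them equal and leave nothing to exchange.

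\textbf{The correct exchange.} The two families are therefore not ``the Lagrangian part'' and ``$\Fg_{\geqslant2}$-links''. Choose the composite Lagrangian to contain the upper weight-one root groups; this is allowed, and it makes the last row $i=\ell$ and all weight-one data agree. Then the families are
\[
X=\{e_{i,j}: i<\ell,\ t_i-h_j\leqslant 0\}\subset N_{\CO_{2\ell,a}},
\qquad
Y=\{e_{j,i+1}: h_j-t_{i+1}\geqslant 2\}\subset wG_{\geqslant1.5}w^{-1}.
\]
They are paired by $e_{i,j}\leftrightarrow e_{j,i+1}$. The commutator lies in the superdiagonal coordinate $x_{i,i+1}$, where $\psi$ is nontrivial, and the weights match since $h_j-t_{i+1}=2-(t_i-h_j)$. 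These $X$ and $Y$ do not in general satisfy the hypotheses of Lemma~\ref{erl} simultaneously; for instance, $X$ need not normalize $C$. One must split them by weight and exchange $X_r$ against $Y_r$ successively inside $C_r=C\prod_{s<r}X_s\prod_{s>r}Y_s$, as in Step~2 of the proof of Theorem~\ref{erl2}. So the Lagrangian bookkeeping you flag as the main obstacle is not where the difficulty lies.

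\textbf{The final justification is backwards.} The extra $Y$-sum in Lemma~\ref{erl} is not removed by $Y^F$-invariance of $\psi_D$, because $\psi_D$ is not $Y^F$-invariant. By condition~(6), $Y^F$ permutes the extensions of $\psi_C$ to $D^F$ simply transitively. Hence for $y\notin C^F$, $\pi(y)$ moves the $\psi_D$-isotypic subspace onto that of a different character. This Stone--von Neumann mechanism is what gives $\operatorname{Ind}_{B^F}^{G^F}\psi_B\simeq\operatorname{Ind}_{D^F}^{G^F}\psi_D$, and hence the equality of multiplicities.
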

\begin{rmk}
Although \cite[Lemma 7.6]{PW23} is stated for irreducible representations, the same proof applies to finite-dimensional representations.
\end{rmk}

\subsection{The two adjacent even descents}

\begin{thm}\label{2l+1}
Let \(\pi\) be a finite-dimensional representation of \(G^F\), and let
\(1\leqslant\ell\leqslant(n-1)/2\). Then, for \(a\in\{\pm\}\),
\[
\begin{aligned}
2\dim {\rm Wh}_{\CO_{(2\ell+1)^2}}(\pi)
={}&
\dim {\rm Wh}_{\CO_{2\ell+2,\varepsilon(-1)a}}
\left({\rm Wh}_{\CO_{2\ell,a}}(\pi)\right)\\
&+\frac{q+1}{2}
\dim {\rm Wh}_{\CO_{(2\ell+2,a),(2\ell,\varepsilon(-1)a)}}(\pi)\\
&+\frac{q-1}{2}
\dim {\rm Wh}_{\CO_{(2\ell+2,-a),(2\ell,-\varepsilon(-1)a)}}(\pi).
\end{aligned}
\]
In particular, if \(\ell\) is the first descent index of \(\pi\), then
\[
\dim {\rm Wh}_{\CO_{2\ell+2,\varepsilon(-1)a}}
\left({\rm Wh}_{\CO_{2\ell,a}}(\pi)\right)
=
2\dim {\rm Wh}_{\CO_{(2\ell+1)^2}}(\pi).
\]
The latter identity also holds when the first descent index is
\(\ell=0\), under the convention \eqref{descent0}.
\end{thm}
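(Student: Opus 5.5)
The plan is to realize both sides as inner products of $\pi$ against characters of unipotent groups, and to pass from one to the other with Lemma~\ref{erl} and a Fourier expansion. First, using Lemma~\ref{erL3} and its proof, rewrite the left term
\[
\dim {\rm Wh}_{\CO_{2\ell+2,\varepsilon(-1)a}}\bigl({\rm Wh}_{\CO_{2\ell,a}}(\pi)\bigr)
\]
as $\apair{\pi,\psi_{B}}_{B^F}$ for a single unipotent group $B\subset\Sp_{2n}$. This $B$ is built from $N_{\CO_{2\ell,a}}$ and the embedded $N_{\CO_{2\ell+2,\varepsilon(-1)a}}\subset\Sp_{2n-2\ell}$, and it carries the product character. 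Choose coordinates so that the first block of size $\ell$ and the next block of size $\ell+1$ sit inside the upper triangular matrices attached to $J_{2n}$. Then $\psi_B$ is supported on two interleaving chains of simple-root coordinates, each ending in the long-root entries $\Fa y_{\ell,1}$ and $\Fa' y'_{\ell+1,1}$ with $\disc(\Fa)=a$ and $\disc(\Fa')=\varepsilon(-1)a$. On the other side, ${\rm Wh}_{\CO_{(2\ell+1)^2}}(\pi)$ is $\apair{\pi,\psi_\CO}_{N_\CO^F}$ with $\psi_\CO$ given by \eqref{eq:psi-2l+1}.

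Second, conjugate by a Weyl element that interleaves the two chains into $2\times 2$ blocks. The target shape matches the $\GL_2^\ell$ grading of $M_{\CO_{(2\ell+1)^2}}$. After this conjugation, identify subgroups $C,X,Y$ satisfying conditions (1)--(6) of Section~\ref{sec:exchange}, and apply Lemma~\ref{erl} repeatedly, one root group at a time. This exchanges the roots in $B$ that lie outside $N_\CO$ for roots of $N_\CO$ that are missing from $B$. Not every root can be exchanged, because the pairing in (6) degenerates on a small residual piece. After all possible exchanges, the two sides differ only by a residual abelian group $R^F$, essentially a space of $2\times2$ symmetric matrices, or of $2\times 1$ coordinates in the last block, coupled with the long-root entries. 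This gives an identity of the form
\[
\apair{\pi,\psi_B}_{B^F}=\frac{1}{|R^F|}\sum_{\chi}\apair{\pi,\psi_{N_\CO}\chi}_{(N_\CO R)^F},
\]
where $\chi$ runs over the characters of $R^F$ obtained by Fourier expansion along $R$.

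Third, group the characters $\chi$ into orbits of the residual Levi subgroup, essentially the torus in $\GL_2$ together with $\SL_2^\triangle$, acting on the corresponding space of binary quadratic forms. Each orbit contributes a generalized Whittaker model of a single rational orbit:
\begin{itemize}
\item Nondegenerate forms with the appropriate discriminant make up the two open orbits. Each is conjugate to $\psi_{\CO_{(2\ell+1)^2}}$, so together they give $2\dim{\rm Wh}_{\CO_{(2\ell+1)^2}}(\pi)$.
\item Forms of rank one split according to the square class of their nonzero coefficient. They give the models for $[(2\ell+2,a),(2\ell,\varepsilon(-1)a)]$ and $[(2\ell+2,-a),(2\ell,-\varepsilon(-1)a)]$. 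The orbit sizes divided by the relevant stabilizers produce the weights $\frac{q\pm1}{2}$, by counting split versus nonsplit tori in $\GL_2(\mathbbm{k})$.
\item The zero character gives back the composed descent.
\end{itemize}
Rearranging yields the displayed identity. I expect this orbit analysis, with the correct tracking of the signs $\varepsilon(-1)$ and of the multiplicities $\frac{q+1}{2}$ and $\frac{q-1}{2}$, to be the main obstacle. I would first verify it directly for $n=3$, $\ell=1$ before writing the general block computation.

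For the \emph{in particular}, suppose $\ell$ is the first descent index, so ${\rm Wh}_{\CO_{2\ell+2,\pm}}(\pi)=0$. Lemma~\ref{erL3} applied with the orbit $[(2\ell,\cdot)]$ of $\Sp_{2n-2\ell-2}$ shows that both two-block models vanish, and the extra terms drop out. For $\ell=0$, the identity is the definition $\CD_{1^2}$ combined with \eqref{descent0}. The relevant multiplicity is $\dim{\rm Wh}_{\CO_{2,\varepsilon(-1)a}}(\pi\otimes\omega_{n,\psi_{\varepsilon(-1)a}})$. I would compute it by realizing $\omega$ on a Schrödinger model and applying the same Fourier expansion in the Heisenberg variable. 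The vanishing of ${\rm Wh}_{\CO_{2,\pm}}(\pi)$ kills every term except twice the $\pi$-coinvariant term ${\rm Wh}_{[1^{2n}]}(\pi)$.
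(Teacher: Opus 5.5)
There is a genuine gap in the central step for $\ell\geqslant1$: the Fourier expansion and orbit bookkeeping you describe cannot produce the identity. Read literally, your displayed formula collapses by Fourier inversion over $\widehat{R^F}$ to $\frac{1}{|R^F|}\dim\mathrm{Wh}_{\CO_{(2\ell+1)^2}}(\pi)$. More seriously, you expand the composed descent and then recover the composed descent itself as the zero-character term. The odd-pair model (from the open orbits) and the two-block models then appear alongside it with positive coefficients. This is incompatible with the target identity, where $2\dim\mathrm{Wh}_{\CO_{(2\ell+1)^2}}(\pi)$ equals the composed descent \emph{plus} positive multiples of $R_{\pm a}:=\dim\mathrm{Wh}_{\CO_{(2\ell+2,\pm a),(2\ell,\pm\varepsilon(-1)a)}}(\pi)$. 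The assignment is also reversed. In the actual computation the generic (anisotropic) characters give the two-block models, not the odd-pair model. The odd-pair model is never a single orbit term in an expansion of the composed descent, and the weights $\frac{q\pm1}{2}$ do not arise as orbit sizes over stabilizers of tori.

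The missing idea is the following. After root exchange (Lemma~\ref{ex2} on the composed side, one application of Lemma~\ref{erl} on the odd-pair side), both quantities are integrals over the same group $N^{[\ell]}$. They carry different characters on the $2\times2$ block $n_\ell$, namely $\Fa(u_2-u_3)$ versus $u_1+u_4$, each together with $u_{1,3}$. These characters are $\GL_2$-conjugate, but the conjugating element mixes $u_{1,3}$ with the root coordinate $u_{1,2}$, which lies outside $N^{[\ell]}$. So you must Fourier-expand \emph{both} integrals along $u_{1,2}$ into $N'=N_{\CO_{(2\ell+2,\cdot),(2\ell,\cdot)}}$. The resulting characters are indexed by vectors of a two-dimensional quadratic space $W$ with form $\diag(\Fa,-\Fa)$. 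An anisotropic vector gives the two-block model whose sign is the square class of its norm. Every isotropic vector gives the same degenerate term, which is never computed. The composed descent is the sum over the affine line $\{(k,1)\}$, which meets the isotropic cone twice ($k=\pm1$). The odd-pair model is the sum over a line parallel to an isotropic line, which meets the cone once. Writing $\phi_{\rm iso}$ for the sum of the two isotropic terms on the first line, one gets
\begin{gather*}
\dim\mathrm{Wh}_{\CO_{2\ell+2,\varepsilon(-1)a}}\bigl(\mathrm{Wh}_{\CO_{2\ell,a}}(\pi)\bigr)=\phi_{\rm iso}+\tfrac{q-3}{2}R_a+\tfrac{q-1}{2}R_{-a},\\
2\dim\mathrm{Wh}_{\CO_{(2\ell+1)^2}}(\pi)=\phi_{\rm iso}+(q-1)R_a+(q-1)R_{-a}.
\end{gather*}
Subtracting eliminates the unknown $\phi_{\rm iso}$. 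The factor $2$ reflects the two isotropic points on the composed line. The weights $\frac{q+1}{2}=(q-1)-\frac{q-3}{2}$ and $\frac{q-1}{2}$ are differences of point counts, such as $\#\{k\neq\pm1: k^2-1\in\mathbbm{k}^{\times2}\}=\frac{q-3}{2}$.

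Your treatment of the ``in particular'' clause via Lemma~\ref{erL3} is correct. Your Schr\"odinger-model computation for $\ell=0$ is also sound: the surviving terms sum to $2\dim\pi$. That route is more direct than the paper's reduction through Proposition~\ref{trivial}.
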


 The proof of this theorem invokes the following lemma.
\begin{lem}\label{ex2}
Let $1\leqslant i\leqslant \ell$ and $N^{[i]}$ be the unipotent subgroup of $\Sp_{2n}$ consisting of the elements
\[
u^{[i]} = 
\ppair{\begin{smallmatrix}
     x^{[i]}_1 &  a^{{[i]}} & b^{[i]} & c^{[i]} & y^{[i]}_1 \\
     & x_2^{[i]} & v^{[i]} & y_2^{[i]} & c^{ {[i]\prime}} \\
     & & n^{[i]} & v^{ {[i]}\prime} & b^{ {[i]}\prime} \\
     & & & x_2^{{[i]}*} & a^{ {[i]}\prime} \\
     & & & & x_1^{ {[i]}*}
\end{smallmatrix}}
\]
where  $x^{[i]}_1\in Z_{\ell-i}$, $x_2^{[i]}\in Z_{\ell-i+1}$, $(a^{[i]},b^{[i]},c^{[i]})\in M_{(\ell-i)\times (2n-2\ell+2i)}$,  
\[
v^{[i]}\in \set{v\in M_{\ell-i+1, 2n-4\ell+4i-2} | v_{k,1}=0 \text{ for } 1\leqslant k\leqslant \ell-i+1},
\]
and $n^{[i]}\in N_{\CO_{(2i)^2,a}}\subset \Sp_{2n-4\ell+4i-2}$. Let $\psi_{[i]}$ be the character of $N^{[i] F}$ defined as follows:
\begin{itemize}
    \item On $x_j^{[i]}=((x_j^{[i]})_{i',j'})$, $j=1,2$,  let 
$\psi_{[i]}(x^{[i]}_j):=\psi(\sum_k(x^{[i]}_j)_{k,k+1})$.
\item 
 On $n^{[i]}$, the character $\psi_{[i]}$ coincides with  $\psi_{\CO_{(2i)^2,a}}$. 
 \item On $b^{[i]}$,  let $\psi_{[i]}(b^{[i]}):=\psi((b^{[i]})_{\ell-i,1})$.
 \item On $v^{[i]}$, let $\psi_{[i]}(v^{[i]}):=\psi((v^{[i]})_{\ell-i+1,2})$.
\item On all remaining variables of \(u^{[i]}\), the character $\psi_{[i]}$ is trivial. 
\end{itemize}
Then for any finite-dimensional representation $\pi$ of $\Sp_{2n}(\mathbbm{k})$ and $1\leqslant i\leqslant \ell-1$, we have
\[
\frac{1}{|N^{{[i]} F}|}
\sum_{u^{[i]}\in N^{{[i]} F}}\pi(u^{{[i]} })\bar\psi_{[i]}(u^{[i]})=\frac{1}{|N^{{[i+1]} F}|}
\sum_{u^{[i+1]}\in N^{{[i+1]} F}}\pi(u^{{[i+1]} })\bar\psi_{[i+1]}(u^{[i+1]}).
\]

\end{lem}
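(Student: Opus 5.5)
The identity will follow from a single root exchange, using the Exchanging Roots Lemma~\ref{erl} on the projector
\[
P_{[i]}:=\frac{1}{|N^{[i]F}|}\sum_{u\in N^{[i]F}}\pi(u)\bar\psi_{[i]}(u).
\]
Between levels $i$ and $i+1$, one diagonal $\GL_1$-step in the first block $x_1$ is absorbed into the second block $x_2$. The inner datum $n^{[i]}\in N_{\CO_{(2i)^2,a}}$ becomes $n^{[i+1]}\in N_{\CO_{(2i+2)^2,a}}$, while $v^{[i]}$ is enlarged accordingly. I would first compare $N^{[i]}$ and $N^{[i+1]}$ as sets of root subgroups with respect to the fixed torus of diagonal matrices. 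The aim is to exhibit a common subgroup $C=N^{[i]}\cap N^{[i+1]}$, possibly after a Weyl element conjugation $w$ which permutes the coordinates $\ell-i$ and the new coordinates of the middle block. I expect $w$ to preserve both $\pi$-traces and to map $\psi_{[i]}$ to a character agreeing with $\psi_{[i+1]}$ on $C$. I would then write
\[
N^{[i]}=CY,\qquad w N^{[i+1]}w^{-1}=CX,
\]
where $Y$ consists of the root subgroups in $N^{[i]}$ absent from $N^{[i+1]}$, and $X$ is the complementary set. Concretely, $Y$ should be the column of $a^{[i]}$ (and its symmetric partner) linking row $\ell-i$ to the columns of $x_2$. $X$ should be the corresponding row entries feeding the new $v$-coordinates, namely the first column of $v^{[i+1]}$ that was previously forced to zero.

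Next I would verify hypotheses (1)--(6) of the exchange setup (\ref{er}). Conditions (1)--(5) reduce to commutator relations among root subgroups. Their check is routine once the root sets are listed, using that $[X,Y]$ lands in the root subgroup carrying $(b^{[i]})_{\ell-i,1}$, respectively $(v)_{\ell-i+1,2}$, where the character is nontrivial. Condition (6), the nondegeneracy of $(x,y)\mapsto\psi_C([x,y])$, is the point where the specific choice of the nontrivial entries of $\psi_{[i]}$ matters: the pairing is a perfect pairing between $X^F$ and $Y^F$ given by $\psi$ of a bilinear form whose matrix is (anti)diagonal. Lemma~\ref{erl} then yields
\[
P_{[i]}=\frac{1}{|D^F|}\sum_{y\in Y^F}\sum_{u\in D^F}\pi(uy)\psi_D(u^{-1}).
\]

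The remaining step, and the main obstacle, is to show that the extra sum over $y\in Y^F$ is harmless. I would show that $y$ normalizes $D$ and fixes $\psi_D$, so that conjugating by $y$ and re-averaging (equivalently, using that $P$ is an idempotent-type projector, so that $\pi(uy)$ summed over $u$ reproduces the projector times $\pi(y)$) allows the $Y$-variables to be absorbed into the $x_2$-block of $N^{[i+1]}$. There the character is extended by the appropriate superdiagonal entry, turning $\psi_D$ into $\psi_{[i+1]}$. This may require a second application of Lemma~\ref{erl}, or a Fourier expansion along a one-dimensional abelian quotient in which only the generic term survives because the trivial-character terms are killed by the $x_2$ character. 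Getting the signs right, so that the inner character stays $\psi_{\CO_{(2i+2)^2,a}}$ with the same $a$ and the $\SL_2^\triangle$-compatible normalization, is where care is needed. I would try first to choose $w$ so that the two diagonal $2\times2$ blocks are matched exactly.
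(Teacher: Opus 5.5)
Your skeleton is the paper's. You use one root exchange, with $C$ the subgroup of $N^{[i]}$ on which the last row of $a^{[i]}$ vanishes. The two exchanged groups are that last row (your $Y$, the paper's $X$) and the forced-zero first column of $v^{[i]}$ (your $X$, the paper's $Y$; it is $v^{[i]}$, not $v^{[i+1]}$). They are paired by $\bar\psi(\sum_k\alpha_k\beta_k)$ through the single entry $(b^{[i]})_{\ell-i,1}$, and the exchange is followed by the Weyl element $w^{[i]}$. That element moves coordinate $\ell-i$ next to coordinate $2\ell-2i+1$, and these two become the new first $I_2$ block of $n^{[i+1]}$. So both $x_1$ and $x_2$ lose a coordinate; it is not that $x_1$ feeds $x_2$.

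The genuine problem is the step you call the main obstacle. There is nothing to absorb, and the mechanism you propose is false. An element $y\in Y^F\setminus C^F$ does \emph{not} fix $\psi_D$: conjugation by $y$ twists $\psi_D$ on $X^F$ by $x\mapsto\psi_C([x,y])$, and hypothesis (6) says exactly that this character is nontrivial. Nor can your $Y$ be absorbed into the $x_2$-block of $N^{[i+1]}$. By your own definition it consists of roots absent from $wN^{[i+1]}w^{-1}$; after conjugation they lie below the diagonal. Pursuing that plan, or a second exchange or Fourier expansion, would not close the argument.

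What actually closes the proof is the following. The sums in the lemma are character values, since the paper identifies $\pi$ with its character. They equal $\dim\Hom_{N^{[i]F}}(\psi_{[i]},\pi)$ and its analogue at level $i+1$. The exchange principle gives equality of the normalized sums over $B^F$ and $D^F$ directly; it rests on Stone--von Neumann for the Heisenberg quotient acting on the $\psi_C$-isotypic part of $\pi$. This is how the paper applies Lemma~\ref{erl}.

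You can see this concretely in your displayed formula. For $x\in X^F$, conjugating by $x$ and reparametrizing $u\in D^F$ gives $\sum_{u\in D^F}\pi(uy)\bar\psi_D(u)=\psi_C(y^{-1}x^{-1}yx)\sum_{u\in D^F}\pi(uy)\bar\psi_D(u)$. By (6), every term with $y\notin C$ therefore vanishes, and only $y=1$ survives.

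The last step is then conjugation by $w^{[i]}$ together with class-function invariance. The character check you were worried about is mechanical:
\begin{itemize}
\item $(x_1^{[i]})_{\ell-i-1,\ell-i}$ becomes $(b^{[i+1]})_{\ell-i-1,1}$;
\item the last superdiagonal entry of $x_2^{[i]}$ becomes $(v^{[i+1]})_{\ell-i,2}$;
\item $(b^{[i]})_{\ell-i,1}$ and $(v^{[i]})_{\ell-i+1,2}$ become the two diagonal entries of the new first $2\times2$ block of $n^{[i+1]}$, i.e.\ its trace term.
\end{itemize}
The block carrying the sign $a$ is untouched.
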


\begin{proof}
   To apply Lemma \ref{erl}, we must first specify some unipotent subgroups $A, B, C, X, Y$ as in the Exchanging Roots Lemma.
Let
\[
C:=\set{u^{[i]}\in N^{[i]} | (a^{[i]})_{\ell-i,k}=0 {\text{ for } 1\leqslant k\leqslant \ell-i+1}}.
\]
In other words, for $u^{[i]}\in C$, the last row of $a^{[i]}$ is zero.  Let 
    \[
    X:=\left\{I_{2n}+\sum_{k=\ell-i+1}^{2\ell-2i+1}\alpha_k(e_{\ell-i,k}-e_{2n+1-k,2n+1-\ell+i})\right\}
    \]
    where $e_{i,j}$ is the $2n\times 2n$ matrix with one at the $(i,j)$ position and zero otherwise. The group $X$ is a complement to $C$ in $N^{[i]}$, and
\[
N^{[i]}=XC.
\]
Let 
\[
Y:=\left\{I_{2n}+\sum_{k=\ell-i+1}^{2\ell-2i+1}\beta_k(e_{k,2\ell-2i+2}-e_{2n-2\ell+2i-1,2n+1-k})\right\}.
\]
The group $Y$ corresponds to the zeros of $v^{[i]}$ and $v^{[i]\prime}$.
It is easy to check that the groups $C$, $X$ and $Y$ satisfy the conditions (1)--(4) in the Exchanging Roots Lemma. 
For $x\in X$ and $y\in Y$ with matrix entries 
$\alpha_k$ and $\beta_k$ as above, one has
\[
 [x,y] = I_{2n}+\left(\sum_{k=\ell-i+1}^{2\ell-2i+1}\alpha_k\beta_k\right)(e_{\ell-i,2\ell-2i+2}-e_{2n-2\ell+2i-1,2n+1-\ell+i})
\in C.
\]
This commutator corresponds to an entry in the bottom left corner of $b^{[i]}$ and an entry in the top right corner of $b^{[i]\prime}$. Moreover
\[
\bar\psi_{[i]}([x,y])=\bar\psi\left(\sum_{k=\ell-i+1}^{2\ell-2i+1}\alpha_k\beta_k\right).
\]
Hence conditions~(5) and~(6) are satisfied. Applying Lemma \ref{erl}, we have
\begin{align*}
\frac{1}{|N^{{[i]} F}|}
\sum_{u^{[i]}\in N^{{[i]} F}}\pi(u^{{[i]} })\bar\psi_{[i]}(u^{[i]}) & =
\frac{1}{|X^FC^F|}
\sum_{x\in X^{ F},c\in C^F}\pi(xc)\bar\psi_{[i]}(xc) \\
& =\frac{1}{|Y^FC^F|}
\sum_{y\in Y^{ F},c\in C^F}\pi(yc)\bar\psi_{[i]}(yc).
\end{align*}

Let $w^{[i]}$ be the Weyl group element 
\[
\diag\{I_{\ell-i-1}, 
\ppair{\begin{smallmatrix} 0 & I_{\ell-i} \\ 1 & 0 \end{smallmatrix}},
I_{2n-4\ell+4i},
\ppair{\begin{smallmatrix} 0 & 1 \\ I_{\ell-i} & 0 \end{smallmatrix}}, I_{\ell-i-1}\}.
\]
The lemma follows from the fact that  $w^{[i]}$ conjugates $YC$ (resp. $\psi_{[i]}$) to $N^{[i+1]}$ (resp. $\psi_{[i+1]}$).
\end{proof}

 \begin{proof}[Proof of Theorem \ref{2l+1}]

For the additional case \(\ell=0\) in the last assertion,
Proposition~\ref{trivial} (i) shows that every irreducible constituent
of \(\pi\) is trivial. The assertion then follows from
Proposition~\ref{trivial} (ii) and the convention \eqref{descent0}.
We henceforth assume that \(\ell>0\).

For the case of \(\ell>0\), the proof follows the strategy of
\cite[Lemma~2.6]{GRS03}. Unlike in loc. cit., however, exchanging
roots does not directly relate the Whittaker model
\({\rm Wh}_{\CO_{2\ell+2,\varepsilon(-1)a}}
({\rm Wh}_{\CO_{2\ell,a}}(\pi))\) to
\({\rm Wh}_{\CO_{(2\ell+1)^2}}(\pi)\). Instead, after several root
exchanges, we Fourier-expand the two models. The two expansions have
the same isotropic contribution, while their anisotropic terms
correspond to
\({\rm Wh}_{\CO_{(2\ell+2,a),(2\ell,\varepsilon(-1)a)}}(\pi)\) and
\({\rm Wh}_{\CO_{(2\ell+2,-a),(2\ell,-\varepsilon(-1)a)}}(\pi)\).
Under the first-descent assumption, these anisotropic terms vanish.

 For \(a\in\{\pm\}\), put
\(
R_a:=
\dim {\rm Wh}_{\CO_{(2\ell+2,a),(2\ell,\varepsilon(-1)a)}}(\pi).
\)
We need to show that
\begin{equation}\label{eq1}
\begin{aligned}
& \frac{1}{ |N_{\CO_{2\ell+2,\varepsilon(-1)a}}^F| \cdot | N_{\CO_{2\ell,a}}^F| } \sum_{u_1\in N_{\CO_{2\ell,a}}^F, u_2\in 
N_{\CO_{2\ell+2,\varepsilon(-1)a}}^F} \pi(u_1 u_2) \bar \psi_{\CO_{2\ell, a}}(u_1) \bar \psi_{\CO_{2\ell+2,\varepsilon(-1)a}}(u_2)\nonumber\\
= \ &
\frac{2}{| N_{\CO_{(2\ell+1)^2}}^F|}
\sum_{u\in N_{\CO_{(2\ell+1)^2}}^F}
\pi(u)\bar\psi_{\CO_{(2\ell+1)^2}}(u)\\
&-\frac{q+1}{2}R_a-\frac{q-1}{2}R_{-a}.
\end{aligned}
\end{equation}
The unipotent subgroup $N_{\CO_{2\ell+2,\varepsilon(-1)a}} \cdot  N_{\CO_{2\ell,a}}$ consists of the elements 
\begin{equation} \label{mat:comp}
u = 
\ppair{\begin{smallmatrix}
     x_1 &  a & b & c & y_1 \\
     & x_2 & v & y_2 & c' \\
     & & I_{2n-4\ell-2} & v' & b' \\
     & & & x_2^* & a' \\
     & & & & x_1^*
\end{smallmatrix}}
\end{equation}
where $x_1\in Z_{\ell}$, $x_2\in Z_{\ell+1}$, $(a,b,c)\in M_{\ell\times (2n-2\ell)}^0$, and $v\in 
M_{\ell+1, 2n-4\ell-2}^0$ (see \eqref{M0}).

Take a Weyl group element 
\[
w:=\diag\{I_{\ell-1}, 
\ppair{\begin{smallmatrix} 0 & I_{\ell} \\ 1 & 0 \end{smallmatrix}},
I_{2n-4\ell},
\ppair{\begin{smallmatrix} 0 & 1 \\ I_{\ell} & 0 \end{smallmatrix}}, I_{\ell-1}\}.
\]
Conjugation by $w$ takes \(N_{\CO_{2\ell+2,\varepsilon(-1)a}}\cdot N_{\CO_{2\ell,a}}\) to the group $N^{[1]}$ of elements
\[
u = 
\ppair{\begin{smallmatrix}
     x^{[1]}_1 &  a^{{[1]}} & b^{[1]} & c^{[1]} & y^{[1]}_1 \\
     & x_2^{[1]} & v^{[1]} & y_2^{[1]} & c^{\prime {[1]}} \\
     & & n^{[1]} & v^{\prime {[1]}} & b^{\prime {[1]}} \\
     & & & x_2^{*{[1]}} & a^{\prime {[1]}} \\
     & & & & x_1^{* {[1]}}
\end{smallmatrix}}
\]
where  $x^{[1]}_1\in Z_{\ell-1}$, $x_2^{[1]}\in Z_{\ell}$, $(a^{[1]},b^{[1]},c^{[1]})\in M_{(\ell-1)\times (2n-2\ell+2)}$,  
\[
v^{[1]}\in 
\set{x\in M_{\ell, 2n-4\ell+2} | \textrm{ the first column of $x$ is zero}}
\]
and $n^{[1]}\in N_{\CO_{2^2,+}}\subset \Sp_{2n-4\ell+2}$. 
By the normalization of the rational signs in \eqref{eq:psi-F-J}, after the
conjugation by \(w\), the characters on the left hand side of \eqref{eq1}
agree with \(\psi_{\CO_{2^2,+}}\) on \(N_{\CO_{2^2,+}}\) in the Levi factor
\(\Sp_{2n-4\ell+2}\).

Applying Lemma \ref{ex2} $\ell-1$ times, we obtain the following equation
\begin{equation}\label{eq2}
   \textrm{LHS of (\ref{eq1})}
=\frac{1}{|N^{{[1]} F}|}
\sum_{u\in N^{{[1]} F}}\pi(u)\bar\psi_{[1]}(u)
=\frac{1}{|N^{{[\ell]} F}|}
\sum_{u\in N^{{[\ell]} F}}\pi(u)\bar\psi_{[\ell]}(u),
\end{equation}
where the character $\psi_{[\ell]}$ on the variable $n^{[\ell]}$ in the Levi factor $\Sp_{2n-2}$ equals   $\psi_{\CO_{(2\ell)^2,+}}$.
Let us describe the RHS of (\ref{eq2}) precisely. The unipotent subgroup $N^{[\ell]}$ consists of the elements
\begin{equation}\label{eq:Nell-u}
u=\ppair{\begin{smallmatrix}
    1  & v^{[\ell]} & \cdots& \\
       & I_2 & n_1 &\cdots  \\
       &     &\ddots &\ddots  \\
     &    &     &I_2 &n_{\ell-1}&\cdots  \\
     &  &     &     &I_2  & v_\ell &n_\ell  \\
     &  &     &      &  &I_{2n-4\ell-2}&v_\ell' &\vdots   \\
     & &      &     & &   & I_2 &n_{\ell-1}'&  & \\
     & &      &   &   &  &    & I_2 & \ddots & \vdots&  \\
        & &      &   &   &    &  & & \ddots &  n_1'& \vdots \\
        &    &   &   &      &   &  &  & & I_2 & v^{[\ell]\prime}\\
      & &      &  &  &  &   &  &  & & 1
\end{smallmatrix}}
\end{equation}
where  $v^{[\ell]}=(0,u_{1,3})$ with $u_{1,3}\in\mathbbm{k}$, 
\[
v_\ell\in\set{x\in M_{2\times (2n-4\ell-2)} | x_{ij}=0 {\textrm{ if $j\leqslant n-2\ell-1$ }}},
\]
and $n_i\in M_{2\times 2}$. Write $n_u:=\sum_{i=1}^{\ell-1}{\rm{tr}}(n_i)$ and
 $n_\ell=\ppair{\begin{smallmatrix}
  u_1&u_2\\
   u_3&u_4
\end{smallmatrix}}$.
The character $\psi_{[\ell]}$ in (\ref{eq2}) is  given by
\[
\psi_{[\ell]}(u)=\psi(u_{1,3}+n_u+{\Fa}u_2-{\Fa}u_3),\quad  u\in N^{[\ell]},
\]
where $u_{1,3}$ refers to the $(1,3)$-th entry of $u$, and $\disc(\Fa)=a$ as in \eqref{eq:psi-F-J}.

Set \[
\phi(t):=\frac{1}{|N^{{[\ell]} F}|}
\sum_{u\in N^{{[\ell]} F}}\pi(b(t)u)\bar\psi_{[\ell]}(u)
\]
 where $t\in \mathbbm{k}$ and
 $
 b(t):=I_{2n}+t\bigl(e_{1,2}-e_{2n-1,2n}\bigr).
 $ 
The last term in \eqref{eq2} is \(\phi(0)\), which will be evaluated below.

Since the group \(\{b(t)\mid t\in\mathbbm{k}\}\) normalizes \(N^{[\ell]}\),
write \(        N':=
        N_{\CO_{(2\ell+2,\varepsilon(-1)a),(2\ell,a)}}.\) Then \( N^{\prime F}=\{b(t)\mid t\in\mathbbm{k}\}\ltimes N^{[\ell]F}.\)
By Fourier expansion on the group $\{b(t)\}$, 
we combine the variables $t$ and $u$, and  obtain
\begin{equation}\label{fourier}
\begin{aligned}
\phi(0)=\ &\frac{1}{|\mathbbm{k}|\cdot |N^{{[\ell]} F}|}\sum_{k\in \mathbbm{k}}\left(\sum_{t\in \mathbbm{k}}\sum_{u\in N^{{[\ell]} F}}\pi(b(t)u)\bar\psi_{[\ell]}(u)\bar\psi(kt)\right)\\
=\ &
\sum_{k\in \mathbbm{k}}\left(\frac{1}{|N^{{\prime} F}|}\sum_{u\in   N^{\prime F}}\pi(u)\bar\psi_{(k,1)}(u)\right),
\end{aligned}
\end{equation}
where 
\[
\psi_{(k,t)}(u)
=\psi(ku_{1,2}+tu_{1,3}+n_u+{\Fa}u_2-{\Fa}u_3).
\]

We consider the factor $\GL_1\times\GL_2^{\times \ell }$ in the Levi subgroup $M_{\CO_{(2\ell+2,\varepsilon(-1)a),(2\ell,a)}}$, which normalizes $N'$.
It induces an action of
\(\GL_1^F\times(\GL_2^{\times\ell})^F\) on the Pontryagin dual
\(\widehat{N^{\prime F}}\) by conjugation, and hence on
\(\{\psi_{(k,1)}\mid k\in\mathbbm{k}\}\).
Its action on $\set{\psi_{(k,1)} | k\in \mathbbm{k}}$ meets two {\it stable} orbits, according as the vector
\((k,1)\in W\) is isotropic or anisotropic.  The anisotropic orbit splits into the two rational orbits distinguished below by the square class of \(k^2-1\).
In particular, this action restricts to the action of $\GL_1^F\times \RO_2(W)^F$ on $\set{\psi_{(k,1)} |  k\in \mathbbm{k}}$ via
\[
(z,g)\cdot (k,1):=(zk,z)g^{t}\quad \text{ for }(z,g)\in \GL_1^F\times \RO_2(W)^F,
\]
where
$\GL_1\times \RO_2(W)$ is the subgroup of $\GL_1\times\GL_2^{\times \ell }$ with $\RO_2(W)$ diagonally embedded into $\GL_2^{\times \ell}$ and $W$ is a 2-dimensional quadratic space with the quadratic form defined by $\diag\{\Fa,-\Fa\}$.
If we consider $(k,1)$ as vectors in $W$, then
the two stable orbits are classified by whether $(k,1)$ is isotropic (equivalently, $k=\pm 1$) or anisotropic.

Now suppose that \(k\ne\pm1\). Set
\[
A_k=\begin{pmatrix}k&1\\1&k\end{pmatrix}
\quad\text{and}\quad
g_k=\diag\{1,A_k\otimes I_\ell,I_{2n-4\ell-2},
A_k^*\otimes I_\ell,1\}.
\]
Since \(g_k \in \GL_1^F\times(\GL_2^{\times \ell})^F\) normalizes \(N'\), changing variables by
\(u\mapsto g_kug_k^{-1}\) and computing the action on
\((u_{1,2},u_{1,3})\) and \(n_\ell\) gives
\[
\begin{aligned}
&\frac{1}{|N^{\prime F}|}\sum_{u\in N^{\prime F}}
\pi(u)\bar\psi_{(k,1)}(u)\\
=&
\frac{1}{|N^{\prime F}|}\sum_{u\in N^{\prime F}}
\pi(u)\bar\psi\left(
u_{1,2}+n_u+\Fa(k^2-1)u_2-\Fa(k^2-1)u_3
\right).
\end{aligned}
\]
Thus this is the dimension of the generalized Whittaker model attached to
\[
\CO_{(2\ell+2,b_k),(2\ell,\varepsilon(-1)b_k)},
\qquad
b_k=\disc\bigl(\Fa(k^2-1)\bigr)
=a\,\disc(k^2-1).
\]
Consequently, the inner sum is \(R_a\) if \(k^2-1\) is a square and
\(R_{-a}\) if \(k^2-1\) is a nonsquare. There are \((q-3)/2\)
elements \(k\in\mathbbm{k}\setminus\{\pm1\}\) of the first kind and
\((q-1)/2\) of the second kind. Let \(\phi_{\rm iso}\) denote the contribution of the two terms
\(k=\pm1\). It follows that
\begin{equation}\label{fourier1}
\phi(0)
=
\phi_{\rm iso}
+\frac{q-3}{2}R_a
+\frac{q-1}{2}R_{-a}.
\end{equation}

Now, consider $k=\pm 1$. 
Take the following element in $\GL_1^F\times(\GL_2^{\times \ell})^F$
\[
g=\diag\{1, \ppair{\begin{smallmatrix}
 \frac{1}{2} & -\Fa^{-1} \\
 \frac{1}{2} & \Fa^{-1} \\
\end{smallmatrix}}\otimes I_{\ell},I_{2n-4\ell-2},\ppair{\begin{smallmatrix}
 \frac{\Fa}{2} & 1 \\
 -\frac{\Fa}{2} & 1 \\
\end{smallmatrix}}\otimes I_{\ell},1\}.
\]
Since $g$ normalizes $N'$, we may change variables by $u\mapsto gug^{-1}$  in the sum over $N^{\prime F}$ and obtain 
\[
\begin{aligned}
\phi_{\rm iso}=&\frac{1}{|N^{\prime F}|}\sum_{k=\pm 1}\sum_{u\in   N^{\prime F}}\pi(u)\bar\psi(ku_{1,2}+u_{1,3}+n_u+\Fa u_2- \Fa u_3)\\
 =&\frac{1}{|N^{\prime F}|}\sum_{k=\pm 1}\sum_{u\in   N^{\prime F}}\pi(gug^{-1})\bar\psi((1+k)u_{1,2}+\frac{\Fa(1-k)}{2}u_{1,3}+n_u+u_1 + u_4)\\
=&\frac{1}{|N^{\prime F}|}\sum_{k=\pm 1}\sum_{u\in   N^{\prime F}}\pi(u)\bar\psi((1+k)u_{1,2}+\frac{\Fa(1-k)}{2}u_{1,3}+n_u+u_1 + u_4).
\end{aligned}
\]
By conjugating by a suitable element in $\GL_1^F\times \RO_2(W)^F$ for $k=\pm 1$ respectively, where
$\RO_2(W)\cong \RO_2^+$ is defined using the matrix $w_2$ in \eqref{eq:J2n}, one has
\begin{equation}\label{fourier2}
\phi_{\rm iso}=\frac{2}{|N^{{\prime} F}|}\sum_{u\in   N^{\prime F}}\pi(u)\bar\psi(u_{1,3}+n_{u}+u_1+ u_4).     
\end{equation}

We now evaluate the odd-pair side
\({\rm Wh}_{\CO_{(2\ell+1)^2}}(\pi)\) by a Fourier expansion
and compare it with \eqref{fourier1}. Set
\begin{equation}\label{f3}
\Psi:=
\frac{2}{|N^{[\ell]F}|}
\sum_{u'\in N^{[\ell]F}}
\pi(u')\bar\psi(u'_{1,3}+n_{u'}+u'_1+u'_4).
\end{equation}
By Fourier expansion for the variable $u_{1,2}$ on the right hand side of \eqref{f3},  we have 
\[
\begin{aligned}
 \Psi=\ \frac{2}{|N^{\prime F}|}\sum_{k\in \mathbbm{k}}\sum_{u'\in   N^{\prime F}}\pi(u')\bar\psi(ku'_{1,2}+u'_{1,3}+n_{u'}+u'_1+ u'_4).
\end{aligned}
\]
For \(k\ne0\), set
\[
s_k=\frac{k}{2}-\Fa^{-1},
\qquad
t_k=\frac{k}{2}+\Fa^{-1}
\]
and
\[
B_k=\begin{pmatrix}s_k&t_k\\ t_k&s_k\end{pmatrix}
\quad\text{and}\quad
g'_k=\diag\{1,B_k\otimes I_\ell,I_{2n-4\ell-2},
B_k^*\otimes I_\ell,1\}.
\]
By conjugation with \(g'_k\), the \(k\)-th inner sum is transformed
into the standard Whittaker model with coefficient
\[
\Fa(s_k^2-t_k^2)=-2k.
\]
It is therefore \(R_a\) when \(\disc(-2k)=a\), and \(R_{-a}\) when
\(\disc(-2k)=-a\). As \(k\) ranges over \(\mathbbm{k}^{\times}\), each case occurs
\((q-1)/2\) times. The contribution of \(k=0\), including the outer
factor \(2\), is \(\phi_{\rm iso}\). Hence
\begin{equation}\label{eq4psi}
\Psi=\phi_{\rm iso}+(q-1)R_a+(q-1)R_{-a}.
\end{equation}

It remains to show that
\(\Psi=2\dim{\rm Wh}_{\CO_{(2\ell+1)^2}}(\pi)\). Namely,
\begin{equation}\label{eq4}
\Psi
=\frac{2}{| N_{\CO_{(2\ell+1)^2}}^F|}\sum_{u\in N_{\CO_{(2\ell+1)^2}}^F}\pi(u) \bar\psi_{\CO_{(2\ell+1)^2}}(u).    
\end{equation}
Consider the subgroup $C\subset N^{[\ell]}$ consisting of the elements with  $(n_i)_{2,1}=0$ for $i=1,\cdots,\ell$, and $(v_\ell)_{2,j}=0$ for $j> n-2\ell-1$ in \eqref{eq:Nell-u}. The characters on both sides of \eqref{eq4} have the same restriction
to \(C^F\); denote this common restriction by \(\psi_C\). Let 
\begin{align*}
X:=& \left\{
I_{2n}+\alpha_\ell e_{2\ell+1,2n-2\ell}+\sum_{i=1}^{\ell-1}\alpha_i (e_{2i+1,2i+2}-e_{2n-2i-1,2n-2i})\right. \\
& \left.+\sum_{i=1}^{n-2\ell-1}\alpha'_i (e_{2\ell+1,n+i}+e_{n-i+1,2n-2\ell})
\right\}
\end{align*}
be a complement to $C$ in $N^{[\ell]}$
and
\begin{align*}
Y:=&\left\{
I_{2n}+\sum_{i=1}^{\ell}\beta_i (e_{2i,2i+1}-e_{2n-2i,2n-2i+1})\right. \\
&\left.+\sum_{i=1}^{n-2\ell-1}\beta'_i(e_{2\ell,2
\ell+i+1}+e_{2n-2\ell-i,2n-2\ell+1})
\right\}
\end{align*}
be a complement to $C$ in $N_{\CO_{(2\ell+1)^2}}$.
For any $x\in X$ and $y\in Y$ with matrix entries $\alpha_i$, $\alpha_i'$ and $\beta_i$, 
$\beta_i'$ as above, a direct commutator calculation modulo \(\ker\psi_C\) gives
\[
\bar\psi_C([x,y])=\bar\psi\left(\sum_{i=1}^{\ell-1}\alpha_i(\beta_{i+1}-\beta_i)-2\left(\alpha_\ell\beta_\ell +\sum_{i=1}^{n-2\ell-1}\alpha_i'\beta_{n-2\ell-i}'\right)\right).
\]
Hence the groups \(C,X,Y\) satisfy conditions (1)--(6) in the Exchanging
Roots Lemma.  Moreover,
\[
        XC=N^{[\ell]},\qquad N_{\CO_{(2\ell+1)^2}}=YC.
\]
Thus \eqref{eq4} follows from Lemma~\ref{erl}.  Combining
\eqref{eq2}, \eqref{fourier1}, \eqref{eq4psi}, and \eqref{eq4}, we get
\[
\begin{aligned}
2\dim {\rm Wh}_{\CO_{(2\ell+1)^2}}(\pi)
={}&
\dim {\rm Wh}_{\CO_{2\ell+2,\varepsilon(-1)a}}
\left({\rm Wh}_{\CO_{2\ell,a}}(\pi)\right)\\
&+\frac{q+1}{2}R_a
+\frac{q-1}{2}R_{-a}.
\end{aligned}
\]
This proves the first assertion.

Suppose now that \(\ell\) is the first descent index of \(\pi\).
By Lemma~\ref{erL3},
\[
R_a
=
\dim {\rm Wh}_{\CO_{2\ell,\varepsilon(-1)a}}
\left({\rm Wh}_{\CO_{2\ell+2,a}}(\pi)\right).
\]
By the definition of the first descent index,
\(
{\rm Wh}_{\CO_{2\ell+2,b}}(\pi)=0,
\)
and hence \(R_{a}=R_{-a}=0\).  The general identity therefore reduces to
\[
\dim {\rm Wh}_{\CO_{2\ell+2,\varepsilon(-1)a}}
\left({\rm Wh}_{\CO_{2\ell,a}}(\pi)\right)
=
2\dim {\rm Wh}_{\CO_{(2\ell+1)^2}}(\pi).
\]
This proves the second assertion.
\end{proof}

\subsection{Composing the odd-pair model with the remaining orbit}

\begin{thm}\label{erl2}
    Let \(1\leqslant\ell\leqslant(n-1)/2\).
Let \(\pi\) be a finite-dimensional representation of
\(\Sp_{2n}(\mathbbm{k})\), and let
\(\CO=[(d_1,a_1),\ldots,(d_k,a_k)]\) be a rational nilpotent orbit of
\(\Sp_{2n-4\ell-2}\) with \(d_1\leqslant 2\ell+1\).
Then
\[
\dim {\operatorname{Wh}}_{\CO_{(2\ell+1)^2}*\CO}(\pi)=\dim {\operatorname{Wh}}_\CO({\operatorname{Wh}}_{\CO_{(2\ell+1)^2}}(\pi)).
\]
\end{thm}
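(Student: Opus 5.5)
The plan is to imitate the proof of Lemma~\ref{erL3} (\cite[Lemma~7.6]{PW23}), with the odd block $(2\ell+1)^2$ playing the role of the even block. First write both sides as averaged character sums. The right-hand side is an iterated projector. The inner projector is attached to $N_{\CO_{(2\ell+1)^2}}\subset\Sp_{2n}$ with character $\psi_{\CO_{(2\ell+1)^2}}$ from \eqref{eq:psi-2l+1}. The outer projector is attached to $N_{\CO}\subset\Sp_{2n-4\ell-2}$, embedded in the Levi $M_{\psi_{\CO_{(2\ell+1)^2}}}=\SL_2^\triangle\times\Sp_{2n-4\ell-2}$, with character $\psi_\CO$. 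Since $\Fg_1=0$ for the odd-pair orbit, $\operatorname{Wh}_{\CO_{(2\ell+1)^2}}(\pi)$ is just the $\psi$-isotypic part for $N_{\CO_{(2\ell+1)^2}}$. Hence
\[
\dim\operatorname{Wh}_\CO\bigl(\operatorname{Wh}_{\CO_{(2\ell+1)^2}}(\pi)\bigr)=\langle\pi,\psi_{R}\rangle_{R^F},
\]
where $R:=N_{\CO}\ltimes N_{\CO_{(2\ell+1)^2}}$ and $\psi_R$ is the product character. (If $\CO$ has an $\Fg_1$ part, one first replaces $N_\CO$ by its $G_{\geqslant1.5}$ with a Lagrangian.) The left-hand side is $\langle\pi,\psi_{\CO_{(2\ell+1)^2}*\CO}\rangle$ on $N_{\CO_{(2\ell+1)^2}*\CO}=G_{\geqslant1.5}$ for the composite orbit.

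Next, compare the two unipotent groups after choosing compatible $\mathfrak{sl}_2$-triples. Take the triple of the composite orbit to be the direct sum of the triple of $(2\ell+1)^2$ on the first and last $2\ell+1$ coordinate pairs with that of $\CO$ on the middle block. The hypothesis $d_1\leqslant 2\ell+1$ is used here: it guarantees that this block-diagonal arrangement produces a dominant $H$, up to a Weyl element $w$ that permutes the eigenvalue blocks. After conjugating by $w$, the two groups $R$ and $N_{\CO_{(2\ell+1)^2}*\CO}$ share a large common subgroup $C$ on which the characters agree. They differ only in the entries linking the $(2\ell+1)^2$-block to the middle block. In $R$ these lie in the $y,y'$ columns and are constrained by $\psi$ only through $y_{1,n-2\ell},y_{2,n-2\ell+1}$. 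In the composite $G_{\geqslant1.5}$, the $\operatorname{ad}H$-grading instead keeps only the entries of degree $\geqslant2$, or those in the Lagrangian part of degree $1$.

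Then, as in the proof of Lemma~\ref{ex2} and Theorem~\ref{2l+1}, apply the Exchanging Roots Lemma~\ref{erl}, possibly several times and row by row. At each step, choose $X\subset R$ to be the linking entries present in $R$ but absent from $N_{\CO_{(2\ell+1)^2}*\CO}$, and $Y$ to be the complementary linking entries. Check conditions (1)--(6); the key point is that the pairing $(x,y)\mapsto\psi_C([x,y])$ is nondegenerate. The commutators land on the entries where $\psi_{\CO_{(2\ell+1)^2}}$ is nontrivial, namely the two entries of the $y$-block, and on the $\psi_\CO$-entries in the middle block. This gives a perfect pairing of the usual form $\sum\alpha_i\beta_i$, exactly as in the displayed commutator formulas earlier. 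After a final Weyl conjugation, $YC$ with its character becomes $N_{\CO_{(2\ell+1)^2}*\CO}$ with $\psi_{\CO_{(2\ell+1)^2}*\CO}$, which yields the identity.

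The main obstacle is the bookkeeping of the $\mathfrak{sl}_2$-grading when $d_1=2\ell+1$ or $d_1=2\ell$. In these cases parts of $\CO$ have the same or adjacent $H$-eigenvalues as the odd pair, so the composite $G_{\geqslant1.5}$ contains degree-one pieces. A Lagrangian must then be chosen compatibly, and the linking entries do not split cleanly into $X$ and $Y$ as abelian quotients. To handle this, I would first choose the Lagrangian $\Fl$ to contain the linking degree-one entries on the $R$ side. If that fails, I would do the exchange in two stages, first equalizing the degree $\geqslant2$ entries and then treating the degree-one Heisenberg part via the Weil-representation description $\operatorname{Wh}_\CO(\pi)\cong\Hom_{G_{\geqslant1}^F}(\omega_{\psi_\CO},\pi)$. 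That description is independent of $\Fl$, which lets me move freely between Lagrangians. I would also verify that the rational sign of the odd pair is unaffected, which holds because the odd-pair orbit is unique.
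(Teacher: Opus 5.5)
Your plan is essentially the paper's proof. Conjugate by \(w\) to \(\hbar=\diag\{\mathtt T,h_\CO,\mathtt T'\}\), take \(C\) to be the common part, let \(X\) be the entries of \(N_\CO\cdot N_{\CO_{(2\ell+1)^2}}\) missing from \(N'\) and \(Y\) the lower-left and row-\((2\ell+1)\) entries of \(N'\), and exchange them one \(\hbar\)-graded piece at a time, with nondegeneracy coming from commutators that land on the \(\psi\)-support of the odd pair.

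Two details to correct when you write it out. First, that support is \(e_{1,3},e_{2,4},\ldots,e_{2\ell-1,2\ell+1},e_{2\ell,2n-2\ell}\), so the trace entries of the \(x_i\), not just the two \(y\)-entries, are what pair the deeper rows. Second, degree-one linking entries occur whenever \(\CO\) has an even part, not only when \(d_1\in\{2\ell,2\ell+1\}\). The paper handles them uniformly with the odd-row/odd-column Lagrangian, while your choice of putting all degree-one linking entries of rows \(1,\ldots,2\ell\) into \(\Fl\) is also a valid Lagrangian and removes the degree-one exchange. In either version, \(d_1\leqslant 2\ell+1\) is used to rule out unpaired lower entries in columns \(1,2\), not for dominance of \(H\).
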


\begin{proof}

To prove this theorem, we only need to show that
\begin{equation}\label{ex3}
\begin{aligned}
&\frac{1}{ |N_{\CO}^F| \cdot | N_{\CO_{(2\ell+1)^2}}^F| } \sum_{u\in N_{\CO_{(2\ell+1)^2}}^F, u_1\in
N_{\CO}^F} \pi(u u_1) \bar \psi_{\CO_{(2\ell+1)^2}}(u) \bar \psi_{\CO}(u_1)\\
 =&\frac{1}{| N_{\CO_{(2\ell+1)^2}*\CO}^F|}\sum_{u_2\in N_{\CO_{(2\ell+1)^2}*\CO}^F}\pi(u_2) \bar\psi_{\CO_{(2\ell+1)^2}*\CO}(u_2).
\end{aligned}
\end{equation}

The proof follows the successive exchanging-roots argument in
\cite[Lemma 7.6]{PW23}. We recall the relevant unipotent subgroups and
verify the conditions needed at each exchange.

{\it Step} 1: Define the unipotent subgroups involved in the exchanges.

Choose \(\frak{sl}_2\)-triples corresponding to \(\CO\) and
\(\CO_{(2\ell+1)^2}*\CO\), and denote their semisimple elements by
\(h_\CO\) and \(h_{\CO_{(2\ell+1)^2}*\CO}\), respectively.  Arrange
the diagonal entries of \(h_\CO\) in decreasing order.  Let \(w\) be
the Weyl group element such that
\begin{equation}\label{eq:ht}
\hbar
:=
w h_{\CO_{(2\ell+1)^2}*\CO}w^{-1}
=
\diag\{\mathtt T,h_\CO,\mathtt T'\},
\end{equation}
where
\begin{align*}
\mathtt T
&=
\diag\{2\ell,2\ell,2\ell-2,2\ell-2,\ldots,2,2,0\}
\in\frak{gl}_{2\ell+1},\\
\mathtt T'
&=
\diag\{0,-2,-2,\ldots,-(2\ell-2),-(2\ell-2),-2\ell,-2\ell\}
\in\frak{gl}_{2\ell+1}.
\end{align*}

Conjugation by $w$ takes RHS of (\ref{ex3}) to 
\begin{equation}\label{w}
 \begin{aligned}
\frac{1}{| N^{\prime F}|}\sum_{u_2\in N^{\prime F}}\pi(u_2) \bar{\psi'}(u_2)
\end{aligned}   
\end{equation}
where $N':=wN_{\CO_{(2\ell+1)^2}*\CO}w^{-1}$ and $\psi'(u):=\psi_{\CO_{(2\ell+1)^2}*\CO}(w^{-1}uw)$ for $u\in N^{\prime F}$.
To write down $ N'$ explicitly, we let
\begin{itemize}
 \item $z_i$ be the number of diagonal entries of \(h_\CO\) which are strictly greater than $2\ell -2{\lceil \frac{i}{2}\rceil}$;
\item  $z^1_i$ the number of diagonal entries of \(h_\CO\) which are strictly greater than $2\ell -2{\lceil \frac{i}{2}\rceil}+1$;
\item $z'_j$ the number of diagonal entries of \(h_\CO\) which are greater than or equal to $  2\ell +4-2{\lceil \frac{j}{2}\rceil}$;
\item $z^{\prime 1}_j$ the number of diagonal entries of \(h_\CO\) which are greater than or equal to  $2\ell +3-2{\lceil \frac{j}{2}\rceil}$.
\end{itemize}
Write
\[
 N'=\left\{\ppair{\begin{smallmatrix}
u&h+e&g\\
&v&h^*+e^*\\
&&u^*
\end{smallmatrix}}
\ppair{\begin{smallmatrix}
I_{2\ell+1}&h'+e'& (h'+e')^\star\\
k+f&I_{2(n-2\ell-1)}&h^{\prime *}+e^{\prime *}\\
(k+f)^\star&k^*+f^*&I_{2\ell+1}
\end{smallmatrix}}\right\}
\]
where $v\in N_{\CO}$ (the unipotent subgroup of  $\Sp_{2(n-2\ell-1)}$ associated with $\CO$), 
\begin{itemize}
\item $u$ runs over $\set{z\in Z_{2\ell+1}| z_{i,i+1}=0 \textrm{ for }i=1,3,5,\cdots,2\ell-1}$;
 \item $h$ runs over $\set{(\alpha_{i,j})_{(2\ell +1)\times2(n-2\ell -1)} | \alpha_{i,j}=0\textrm{ if }j\leqslant z_i\textrm{ or }i=2\ell+1}$;
 \item $h'$ runs over $\set{(\alpha_{i,j})_{(2\ell +1)\times2(n-2\ell -1)} | \alpha_{i,j}=0\textrm{ if }j\leqslant z_i\textrm{ or }i<2\ell+1}$;
\item $e $ runs over $\set{ (\alpha_{i,j})_{(2\ell +1)\times2(n-2\ell -1)} \in M_{1.5}| \alpha_{i,j}=0 \textrm{ if } i=2\ell+1  }$;
\item $e' $ runs over $\set{ (\alpha_{i,j})_{(2\ell +1)\times2(n-2\ell -1)} \in M_{1.5}| \alpha_{i,j}=0 \textrm{ if } i<2\ell+1  }$;
\item $ M_{1.5}=\set{(\alpha_{i,j})_{(2\ell +1)\times2(n-2\ell -1)} | \alpha_{i,j}=0\textrm{ if }j\leqslant {z^1_i},\textrm{ or $i$ even, or }j>z_i}$;
\item  no additional support condition is imposed on \(g\); it is still
subject to the condition that the resulting block matrix lies in
\(\Sp_{2n}\);
\item $k$ runs over $ M_{\geqslant 2}':=\set{(\alpha_{i,j})_{ 2(n-2\ell -1)\times (2\ell +1)} | \alpha_{i,j}=0\textrm{ if }i> z'_{j}}$;
\item $f$ runs over  \[ M_{1.5}'=\set{(\alpha_{i,j})_{ 2(n-2\ell -1)\times (2\ell +1)} | \alpha_{i,j}=0\textrm{ if }i> {z^{\prime 1}_{j}},\textrm{ or $j$ is odd, or }i\leqslant z^{\prime }_{j}};\]
\item $(k+f)^\star$ is determined by $k+f$.
\end{itemize}
In fact, the condition $i$ even (resp. $j$ odd) in $M_{1.5}$ (resp. $M'_{1.5}$) picks up a maximal isotropic subspace in the eigenspace of \(\hbar\) acting on \(\mathfrak{sp}_{2n}\)
with the eigenvalue 1.
If $\CO$ has only odd parts, then $M_{1.5}$ and $M_{1.5}'$ are trivial.

On the other hand, we have
\[
  N_{\CO_{(2\ell+1)^2}}
=
\left\{\ppair{\begin{smallmatrix}
u&h+e&g\\
&I&h^{ *}+e^{ *}\\
&&u^*
\end{smallmatrix}}
\ppair{\begin{smallmatrix}
I_{2\ell+1}&d&d^\star\\
&I_{2n-4\ell-2}&d^{*}\\
&&I_{2\ell+1}
\end{smallmatrix}}
\right\}
\]
where
\begin{itemize}
\item $u$, $g$, $h$ and $e$ run over the same sets as above;
\item $d$ runs over $\set{(\alpha_{ij}) | \alpha_{ij}=0\textrm{ if $i$ odd and }j>{z^1_i},\textrm{ or $i$ even and }j>z_i, \textrm{ or }i=2\ell+1 }$.
 \end{itemize}

We set
\[
C:=\left\{\ppair{\begin{smallmatrix}
u&h+e&g\\
&v&h^*+e^*\\
&&u^*
\end{smallmatrix}}\right\},\quad X:=\left\{\ppair{\begin{smallmatrix}
I&d&d^\star\\
&I&d^{ *}\\
&&I
\end{smallmatrix}}
\right\}
\]
and
\[
Y:=\left\{\ppair{\begin{smallmatrix}
I&h'+e'&\\
k+f&I&h^{\prime *}+e^{\prime *}\\
(k+f)^\star&k^*+f^*&I
\end{smallmatrix}}\right\},
\]
where $v$, $k$,  $f$, $h'$ and $e'$ run over the same sets as above.
Denote by $\frak{x}$ and $\frak{y}$  the Lie algebras of $X$ and $Y$, respectively. 
Note that $X$ and $Y$ intersect $C$ trivially. 
Let
\[
B:=CY= N',\ D:=CX=N_{\CO} \cdot   N_{\CO_{(2\ell+1)^2}}\textrm{ and }A:=CXY.
\]
Denote by $\psi_C$ the restriction of $\psi'$ to $C^F$.  We record the
part of this character which will be used below.  The character \(\psi_C\)
factors through the quotient $C^F/(C^F\cap G_{>2}^F)$.
On the middle block \(v\in N_\CO^F\), it is the character \(\psi_\CO(v)\).
On the \((2\ell+1)^2\)-part, with respect to the full \(2n\times 2n\)
matrix coordinates, it is nontrivial precisely on the degree-two
coordinates belonging to the two Jordan strings, namely
\[
e_{1,3},e_{2,4},\ldots,e_{2\ell-1,2\ell+1},
        \quad e_{2\ell,2n-2\ell}.
\]
Equivalently, if \(c=1+Z\in C^F\), then, up to the fixed normalization of the
rational nilpotent orbit,
\[
 \psi_C(c)
 =
 \psi_\CO(v)\,
 \psi\left(
       z_{1,3}+z_{2,4}+\cdots+z_{2\ell-1,2\ell+1}
       +z_{2\ell,2n-2\ell}
      \right).
\]
All other coordinates of the \((2\ell+1)^2\)-part of \(C\) either lie in
\(C\cap G_{>2}\) or are in the kernel of \(\psi_C\).

{\it Step} 2: Apply the Exchanging Roots Lemma successively.

Our goal is to exchange
\[
CY=N'
\qquad\textrm{and}\qquad
CX=N_{\CO}\cdot N_{\CO_{(2\ell+1)^2}}.
\]
The Exchanging Roots Lemma cannot be applied directly to the whole
groups \(X\) and \(Y\); instead, we exchange their graded pieces one
at a time.

Set
\(
\Fg_r:=\{x\in\mathfrak{sp}_{2n}\mid[\hbar,x]=rx\}.
\)
For \(1\leqslant r\leqslant2\ell\), let \(X_r\) and \(Y_r\) be the
subgroups with Lie algebras
\[
\mathfrak x_r=\mathfrak x\cap\Fg_{2-r},
\qquad
\mathfrak y_r=\mathfrak y\cap\Fg_r.
\]
Since \(X\) and \(Y\) are abelian,
\[
X=\prod_{r=1}^{2\ell}X_r,
\qquad
Y=\prod_{r=1}^{2\ell}Y_r.
\]
For \(1\leqslant r\leqslant2\ell\), set
\[
C_r:=C\prod_{s<r}X_s\prod_{s>r}Y_s.
\]
We first apply the Exchanging Roots Lemma to
\(C_1,X_1,Y_1\), since \(C_1Y_1=CY\).  The resulting group is
\(C_1X_1=C_2Y_2\).  Continuing in this way, we apply the lemma
successively to \(C_r,X_r,Y_r\), for
\(r=1,\ldots,2\ell\), and finally obtain
\(C_{2\ell}X_{2\ell}=CX\).

A direct calculation from the definitions shows that \(\psi_C\)
extends trivially to a character \(\psi_r\) of \(C_r^F\), and that
\((C_r,X_r,Y_r,\psi_r)\) satisfies conditions (1)--(5) of the
Exchanging Roots Lemma.  It remains to verify condition (6), namely
the non-degeneracy of
\[
X_r^F\times Y_r^F\longrightarrow\mathbb C^\times,
\qquad
(x_r,y_r)\longmapsto\bar\psi_C([x_r,y_r]).
\]
We now calculate this pairing. Let
\[
\cal{X}_r:=\set{(i,j) | e_{i,j}-e_{2n+1-j,\,2n+1-i}\in \mathfrak{x}_r} 
\]
and
\[
\cal{Y}_r:=\set{(i,j) | e_{i,j}-e_{2n+1-j,\,2n+1-i}\in \frak{y}_r}.
\]
Denote by
$\beta_i$ the $i$-th diagonal entry in the element given by
\eqref{eq:ht}, and  put $\delta(r,i):=\frac{2\ell -\beta_i+r}{2}$. We describe the set $\cal{X}_r$ and $\cal{Y}_r$ explicitly, as follows.
\begin{itemize}
\item If $r>1$, then 
\begin{align*}
\cal{X}_r & =\set{(i,j) |  2\ell +2\leqslant j\leqslant n,\ \lceil \frac{i}{2}\rceil=\delta(r,j)\textrm{ and }i<2\ell +1}, \\
\cal{Y}_r &=\set{(i,j) | \bigl(2\ell +2\leqslant i\leqslant n,\ \lceil \frac{j}{2}\rceil=\delta(r,i)+1,\ j\leqslant 2\ell +1\bigr) \textrm{ or } \bigl(i=2\ell+1,\ \beta_j=-r\bigr)};
\end{align*}

\item If $r=1$, then
\begin{align*}
\cal{X}_r & =\set{(i,j) | 2\ell +2\leqslant j\leqslant n,\ \lceil \frac{i}{2}\rceil=\delta(r,j)\textrm{, $i$ even, and }i<2\ell +1}, \\
\cal{Y}_r
&=\set{(i,j) |
\bigl(2\ell +2\leqslant i\leqslant n,\
\lceil \frac{j}{2}\rceil=\delta(r,i)+1,\
j\textrm{ even},\
j\leqslant 2\ell +1\bigr)
\textrm{ or }
\bigl(i=2\ell+1,\ \beta_j=-1\bigr)}.
\end{align*}
 \end{itemize}
We write a typical $x_r\in X_r $ (resp. $y_r\in Y_r $) as follows:
\[
x_r=I+\sum_{(i,j)\in\cal{X}_r} \alpha_{i,j}(e_{i,j}-e_{2n+1-j,2n+1-i}) \ \textrm{ (resp. } y_r=I+\sum_{(i,j)\in\cal{Y}_r} \alpha_{i,j}'(e_{i,j}-e_{2n+1-j,2n+1-i})\textrm{)}
\]
for some $\{\alpha_{i,j}\}$ (resp. $\{\alpha'_{i,j}\}$).
Let \(B_r:=\set{k\in\{2\ell+2,\ldots,n\}\mid\beta_k=r}\)
and $k_r:=\max(B_r\cup \{2\ell+1\})$. 
Then
\[
\begin{aligned}
&\bar\psi_C([x_r,y_r])\\
=&
\begin{cases}
 \bar\psi\left(
 \begin{matrix}
\sum_{k\in B_r}
 \left(\alpha_{2\delta(r,k)-1,k}\cdot\alpha'_{k,2\delta(r,k)+1}
 - \alpha_{2\delta(r,k),k}\cdot\alpha'_{2\ell+1,2n+1-k}\right)    \\
  +\sum_{k=k_r+1}^{n }\left(\alpha_{2\delta(r,k)-1,k}\cdot\alpha'_{k,2\delta(r,k)+1}+\alpha_{2\delta(r,k),k}\cdot\alpha'_{k,2\delta(r,k)+2}\right)
 \end{matrix}\right),    &  \textrm{if }r >1;\\
 \bar\psi(
 \sum_{k\in B_1}
 \left(- \alpha_{2\delta(r,k),k}\cdot\alpha'_{2\ell+1,2n+1-k}\right)
 +\sum_{k=k_1+1}^{n }\left(
 \alpha_{2\delta(r,k),k}\cdot\alpha'_{k,2\delta(r,k)+2})\right),     &  \textrm{if }r =1.
\end{cases}
\end{aligned}
\]
Here, we use full \(2n\times 2n\) matrix coordinates.  For \(k\in B_r\) one has
\(\delta(r,k)=\ell\).  Thus the variable
\(\alpha_{2\delta(r,k),k}=\alpha_{2\ell,k}\) is paired with the variable
\(\alpha'_{2\ell+1,2n+1-k}\).  Indeed, the corresponding element of
\(\frak y_r\) contains the term
\[
        e_{2\ell+1,2n+1-k}-e_{k,2n-2\ell},
\]
and hence its commutator with \(e_{2\ell,k}\) contributes
\[
        -e_{2\ell,2n-2\ell}.
\]
This is exactly the last coordinate in the \((2\ell+1)^2\)-part on which
\(\psi_C\) is nontrivial.
It follows that the pairing $X_r^F \times  Y_r^F \to \bb{C}^\times$ given by
\[
(x_r, y_r) \mapsto \bar\psi_C([x_r, y_r]),
\]
is multiplicative in each variable, non-degenerate, and identifies $Y_r^F$
with the dual of $X_r^F$ and $X_r^F$ with the dual of $Y_r^F$ respectively.

{\it Step} 3: Complete the proof of \eqref{ex3}.

The successive exchanges in Step 2 show that the LHS of
\eqref{ex3} and \eqref{w} are equal, respectively, to
\[
\frac{1}{|C^F||X^F||Y^F|}
\sum_{u\in C^F}\sum_{x\in X^F}\sum_{y\in Y^F}
\pi(uyx)\bar\psi_C(u)
\]
and to the same expression with \(yx\) replaced by \(xy\).
Since \(\pi\) is a class function and conjugation by \(X^F\) preserves
both \(C^F\) and \(\psi_C\), a change of variables in \(C^F\) shows
that the two expressions are equal.  This proves \eqref{ex3} and completes the proof.
 \end{proof}

\section{Descendants}\label{sec6}

This section records the explicit branching law which computes Fourier--Jacobi
descents of irreducible representations of finite symplectic groups.  If
\(\pi\in\Irr(\Sp_{2n}(\mathbbm{k}))\), then its Fourier--Jacobi descent
\(\CD^{\rm FJ}_{2\ell,a}(\pi)\) is a representation of
\(\Sp_{2n-2\ell}(\mathbbm{k})\).  Thus determining its irreducible constituents
amounts to deciding, for
\(\pi'\in\Irr(\Sp_{2n-2\ell}(\mathbbm{k}))\), when
\[
        \Hom_{\Sp_{2n-2\ell}(\mathbbm{k})}
        \left(\pi',\CD^{\rm FJ}_{2\ell,a}(\pi)\right)
        \neq 0 .
\]
This is precisely the finite Gan--Gross--Prasad branching problem for
Fourier--Jacobi models, and an explicit answer is given in
\cite{Wang21,Wang23}. The full statement is combinatorial, involving Lusztig symbols, signs,
and interlacing conditions. In the later arguments, part~\textup{(i)}
will be used to test general descents, while the explicit calculation
of first descents relies on part~\textup{(ii)}. After applying
Alvis--Curtis duality, the latter becomes the simple recipe given
below, together with the formula for the first descent index in
Corollary~\ref{fdi}.  

We first recall the relevant results from \cite{Wang23} in the
notation of this paper.

\begin{thm}[{\cite[Theorem 7.2 (1), Section 7.3]{Wang23}}]\label{ggp}
Let $\prll\in \Irr(\Sp_{2n}(\mathbbm{k}))$ and $\prllp \in \Irr(\Sp_{2n'}(\mathbbm{k}))$, where $n\geqslant n'$, and let $\ell= n-n'$.  Let
$\epsilon_{-1}\in\{\pm1\}$ be determined by $\pi_{\Lambda_{-1}}\in \Irr(\RO^{\epsilon_{-1}}_{2m}(\mathbbm{k}))$, where $\Lambda_{-1}$ is the (-1)-symbol coming from $\prll$.  When \(m=0\), we use the convention \(\epsilon_{-1}=+\).  Write
$
\rho=\prod_{[b]\ne [\pm 1]}\pi[b]
$
and
$
\rho'=\prod_{[b]\ne [\pm 1]}\pi'[b],
$ 
where $\pi[b]$ (resp. $\pi'[b]$) is the irreducible unipotent representation corresponding to the partition $\lambda[b]$ (resp. $\lambda'[b]$). 
\begin{enumerate}
\item[(i)] 
$
\Hom_{\Sp_{2n'}(\mathbbm{k})}(\prllp,\CD^{\rm FJ}_{2\ell, a}(\prll))\ne 0
$
if and only if $\prll$ and $\prllp$ satisfy the following conditions:
\begin{enumerate}
\item there  are $\ep,\ev\in\{\pm\}$ such that
\begin{align*}
& a=\ep\ev\er\erp,\\
& \Upsilon(\Lambda_{-1}')^{-\ep\ev{\epsilon_{-1}}\varepsilon(-1) }\preccurlyeq \Upsilon(\Lambda_1)^{\ep }, \quad \Upsilon(\Lambda_1)^{-\ep } \preccurlyeq \Upsilon(\Lambda'_{-1})^{\ep\ev{\epsilon_{-1}}\varepsilon(-1)}, \\
& \Upsilon(\Lambda_{1}')^{\epsilon_{-1}} \preccurlyeq \Upsilon(\Lambda_{-1})^{-\ev}, \quad \Upsilon(\Lambda_{-1})^{\ev } \preccurlyeq \Upsilon(\Lambda'_{1})^{-\epsilon_{-1}}, \\
& {\mathrm{def}}(\Lambda_1')=-\epsilon_{-1}\ev{\mathrm{def}}(\Lambda_{-1})+\epsilon_{-1}, \\
& {\mathrm{def}}(\Lambda_{-1}')=-\epsilon_{-1}\ev\varepsilon(-1){\mathrm{def}}(\Lambda_1)-\ep\ev\epsilon_{-1}\varepsilon(-1).
\end{align*}
\item  For every \([b]\ne[\pm1]\),
\[
\begin{cases}
 \textrm{$\lambda[b]$ and $\lambda'[-b]$ are 2-transverse}, &\textrm{ if $\#[b]$ is odd};\\
   \textrm{$\lambda[b]$ and $\lambda'[-b]$ are close}, &\textrm{ if $\#[b]$ is even}.
\end{cases}
\]
 \end{enumerate}

\item[(ii)] Assume that $\CD^{\rm FJ}_{2\ell, a}(\prll)$ is a first descent. Then $\prllp$ appears in $\CD^{\rm FJ}_{2\ell, a}(\prll)$ if and only if its triple $(\rho',\Lambda'_{1},\Lambda'_{-1})$ satisfies the following conditions:
\begin{enumerate}
\item for $\ep=-\zeta({}^{\bd}\Lambda_1)$ and $\ev=\zeta({}^{\bd}\Lambda_{-1})$, we have
\begin{align*}
    & \Upsilon(\Lambda_{-1}')^{-\ep\ev{\epsilon_{-1}\varepsilon(-1)} } = \Upsilon(\Lambda_1)^{\ep }_-, \quad \Upsilon(\Lambda'_{-1})^{\ep\ev{\epsilon_{-1}}\varepsilon(-1)}=\Upsilon(\Lambda_1)^{-\ep }, \\
    & \Upsilon(\Lambda_{1}')^{\epsilon_{-1}}=\Upsilon(\Lambda_{-1})^{-\ev}_-,\quad
    \Upsilon(\Lambda'_{1})^{-\epsilon_{-1}}=\Upsilon(\Lambda_{-1})^{\ev}, \\
    & {\mathrm{def}}(\Lambda_1') = -\epsilon_{-1}\ev{\mathrm{def}}(\Lambda_{-1})+\epsilon_{-1}, \\
    & {\mathrm{def}}(\Lambda_{-1}')= -\epsilon_{-1}\ev\varepsilon(-1){\mathrm{def}}(\Lambda_1) -\ep\ev\epsilon_{-1}\varepsilon(-1).
\end{align*}
\item For every \([b]\ne[\pm1]\),
$
\lambda'[-b]=\lambda[b]_-.
$
\item 
$
a=-\zeta({}^{\bd}\Lambda_1)\zeta({}^{\bd}\Lambda_{-1})\epsilon_{\rho}\epsilon_{\rho^-_1}.
$
\item $\rho'=\rho_1^-$ where $\rho_1^-$ is defined in \cite[(3.4),(3.5)]{Wang23}.
\end{enumerate}
Moreover, we have
\[
\dim \Hom_{\Sp_{2n'}(\mathbbm{k})}(\prllp,\CD^{\rm FJ}_{2\ell, a}(\prll))=1.
\]
\end{enumerate}
\end{thm}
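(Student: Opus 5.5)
This statement is quoted from \cite[Theorem~7.2(1), Section~7.3]{Wang23}, so the plan is to re-derive it from the finite Gan--Gross--Prasad machinery there rather than by new Whittaker-model calculations. The route to part~(i) is to first convert the Fourier--Jacobi model into a theta-correspondence problem. Frobenius reciprocity identifies
\[
\Hom_{\Sp_{2n'}(\mathbbm{k})}\bigl(\pi',\CD^{\rm FJ}_{2\ell,a}(\pi)\bigr)
\]
with a Hom-space for the Jacobi group $\Sp_{2n'}\ltimes\mathrm{Heis}$ involving $\pi'\otimes\omega_{n',\psi_a}$. One then applies the finite-field analogue of the Gan--Gross--Prasad reduction (see-saw with the dual pairs $(\Sp_{2n},\RO_{2m+1})$ and $(\Sp_{2n'},\RO_{2m'+1})$). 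This reduces the multiplicity to a Bessel-type branching problem for odd and even orthogonal groups. In the basic case $\ell=0$ it reduces directly to the tensor product $\pi\otimes\omega$ restricted to $\Sp$.

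Next, the Lusztig parameters are tracked through these theta lifts using Theorem~\ref{thm:L-can}. Under that correspondence the $[b]\neq[\pm1]$ components move by $b\mapsto -b$, which produces condition~(b). The $\pm1$ symbols are swapped and one of them is theta-lifted, with the twist $\epsilon_{-1}\varepsilon(-1)$ and the sign character attached to $\epsilon_\rho$. The unipotent branching between the $\pm1$ symbols is then given by the known Aubert--Michel--Rouquier / Pan description of theta correspondence on symbols. That description yields the interlacing ($\preccurlyeq$) conditions and the defect formulas in condition~(a). The sign $a=\ep\ev\er\erp$ comes from which Weil representation $\omega_{\psi_a}$ occurs, together with the central-character and $\widetilde{\iota_\rho}$ bookkeeping.

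For part~(ii), the plan is to take the maximal $\ell$ permitted by (i). At that $\ell$ every inequality $\lambda\preccurlyeq\mu$ must be saturated, so the set of solutions collapses. The only freedom left is the choice of $\ep,\ev$, which is pinned down after Alvis--Curtis duality as $\ep=-\zeta({}^{\bd}\Lambda_1)$ and $\ev=\zeta({}^{\bd}\Lambda_{-1})$. Concretely, removing the first column of a partition is dual, under transposition, to removing a first row, which is the statement of Section~\ref{sec:L-s} about $\lambda^t$ and $\lambda^t_\star$. The equalities with $\Upsilon(\cdot)_-$ and $\rho'=\rho_1^-$ then follow from the uniqueness of the transverse partition noted there. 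Multiplicity one would follow from multiplicity one in the finite GGP problem when the relevant $\RO_1$-character is fixed.

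The main obstacle is the sign bookkeeping in the see-saw step: determining $a$ and the discriminants $\epsilon_{-1}$, $\epsilon_\rho$ correctly, including the degenerate conventions $m=0$ and $\ell=0$. This is where the theta-compatible normalization of Theorem~\ref{thm:L-can} must be used, and where I would check the formulas first against small rank cases such as $\Sp_2$ and $\Sp_4$.
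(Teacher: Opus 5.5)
In the paper this theorem is not proved: it is imported from \cite[Theorem~7.2(1), Section~7.3]{Wang23} and used as a black box. Your outline therefore has to stand on its own, and as written it does not, because the step carrying all the content of part~(i) is missing. A see-saw identity only trades one branching problem for another. It equates the Fourier--Jacobi multiplicity for $\Sp_{2n}\supset\Sp_{2n'}$ with a Bessel-type multiplicity for a pair of orthogonal groups, but it evaluates neither side. Likewise, the description of the unipotent theta correspondence in \cite{AMR96,Pan19}, together with the normalization of Theorem~\ref{thm:L-can}, tells you which Lusztig parameters are matched by $\Theta$. It says nothing about how a representation, or a theta lift, decomposes on restriction to a subgroup. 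So the interlacing relations $\preccurlyeq$ and the defect formulas in~(a) are asserted rather than derived. What is missing is an independent computation of the restriction multiplicities themselves. In the finite-field setting this input comes from multiplicity formulas for Deligne--Lusztig characters in the spirit of Reeder, combined with Lusztig's Jordan decomposition to pass to quadratic unipotent data. This is exactly what \cite{Wang21,Wang23} supply and your sketch leaves as an unexplained black box.

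Condition~(b) is also not produced by the relabelling $b\mapsto -b$; that only records how $\Theta$ transports the $(\ne\pm1)$-part, as in Theorem~\ref{thm:L-can}(2). The actual constraints are closeness when $\#[b]$ is even and $2$-transversality when $\#[b]$ is odd. These are branching conditions for the general linear and unitary factors and need their own multiplicity computation. More seriously, your justification of the multiplicity-one assertion in~(ii) rests on a principle that fails over finite fields. There is no general multiplicity-one theorem for Gan--Gross--Prasad type models there: already the Steinberg representation of $\GL_2(\mathbbm{k})$, restricted to $\{\diag(t,1)\}\cong\GL_1(\mathbbm{k})$, contains the trivial character twice. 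Passing to a fixed $\RO_1$-isotypic component does not supply such a theorem either. Multiplicity one in~(ii) is special to the first descent and has to be extracted from its explicit computation. The rest of your plan for~(ii) is reasonable in outline. Granting~(i), the first descent index minimizes the rank of $\pi'$, which forces $\lambda'[-b]=\lambda[b]_-$ and the sharp bipartitions. Comparing the defect contributions of the sign pairs is what singles out $(\epsilon_\alpha,\epsilon_\beta)=(-\zeta({}^{\bd}\Lambda_1),\zeta({}^{\bd}\Lambda_{-1}))$, the same bookkeeping the paper performs in Corollary~\ref{fdi}.
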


\begin{rmk}
When the largest entries in the two rows of
\({}^{\bd}\Lambda_{\pm1}\) are equal, either value of
\(\zeta({}^{\bd}\Lambda_{\pm1})\) may be chosen. The resulting descendants are illustrated in
Examples~\ref{dex2}, \ref{dex3}, and \ref{dex4}.   
\end{rmk}

\begin{rmk}
When $\ell=0$, the Fourier--Jacobi descent
$\CD^{\rm FJ}_{0,a}(\prll)$ is understood by the convention
\eqref{descent0}. The same convention is used in \cite{Wang23}, and the corresponding
result there also includes the case $\ell=0$.
\end{rmk}

\subsection{A concise description of first descents}\label{sec6.1}

We now extract from Theorem~\ref{ggp} (ii) the form used later.  After applying Alvis--Curtis duality, the first descent of the symbol part is described by the
following recipe.  It produces \(\Lambda'_1\) from \(\Lambda_{-1}\), and
\(\Lambda'_{-1}\) from \(\Lambda_1\).

 \begin{enumerate}
 \item  Take the Alvis--Curtis dual ${}^{\bd}\Lambda_{-1}$ (resp. ${}^{\bd}\Lambda_1$);
\item   Remove a largest entry of \({}^{\bd}\Lambda_{-1}\) (resp.
\({}^{\bd}\Lambda_1\)).  If the largest entry occurs in both rows, either choice
is allowed; this choice records the corresponding sign
\(\zeta({}^{\bd}\Lambda_{\pm1})\);
\item   In the construction of $\Lambda_1'$, swap the two rows if
\(
\epsilon_{-1}\ev=+.
\)
In the construction of $\Lambda_{-1}'$, swap the two rows if
\(
\epsilon_{-1}\ev\varepsilon(-1)=+;
\)
\item  Take the Alvis--Curtis dual again.
\end{enumerate}
Several signs occur in Theorem~\ref{ggp}. The following examples explain the roles of these signs and
illustrate the recipe.

\begin{exmp}\label{dex1}
    Let $\pi_{\Lambda_{-1}}=\pi_{-,-,\Lambda_{-1}}\in \Irr(\Sp_{2k^2}(\mathbbm{k}))$ with $k$ even and
   \[\Lambda_{-1}=\begin{pmatrix}
2k-1,2k-2,\cdots,1,0\\
-
\end{pmatrix}.\]
Then $\pi_{\Lambda_{-1}}$ is a cuspidal representation, and ${}^{\bd}\Lambda_{-1}=\Lambda_{-1}$. Thus $\ev=\zeta({}^{\bd}\Lambda_{-1})=+$. Removing the largest element in $\Lambda_{-1}$, we obtain the symbol
\[
\Lambda_{1}^*=\begin{pmatrix}
2k-2,\cdots,1,0\\
-
\end{pmatrix}
\]
with defect $2k-1$. However, this symbol does not correspond to a representation of a symplectic group, as $\rm{def}(\Lambda_1^*)\equiv3\pmod 4$. Since $k$ is even, we have $\epsilon_{-1}=+$.
Then $\epsilon_{-1}\ev=+$. According to the third step of the above recipe, we need to interchange the two rows of $\Lambda_1^*$ and get
\[
\Lambda_{1}'
=\begin{pmatrix}
-\\
2k-2,\cdots,1,0
\end{pmatrix}.
\]
Note that $\Lambda_1'$ is the symbol corresponding to the unique unipotent cuspidal representation of $\Sp_{2k(k-1)}(\mathbbm{k})$, and the Alvis--Curtis dual sends $\Lambda_1'$ to itself. Then we have the first descent $\CD^{\rm FJ}_{2\ell, a}(\pi_{\Lambda_{-1}})=\pi_{\Lambda_1'}$ with $\ell=k$.

Let us calculate the sign $a$ in the first descent $\CD^{\rm FJ}_{2\ell, a}(\pi_{\Lambda_{-1}})$.  There is no $\rho$-part in $\pi_{\Lambda_{-1}}$, hence $\er$ and $\epsilon_{\rho^-_1}$ are not involved.
Recall that we regard the  trivial representation of $\Sp_{0}(\mathbbm{k})$ (the trivial group) as a unipotent representation corresponding to the symbol
   \[\Lambda=\begin{pmatrix}
0\\
-
\end{pmatrix}
\quad \textrm{instead of }
 \begin{pmatrix}
-\\
-
\end{pmatrix},
\] 
since the defect of a symbol corresponding to a unipotent representation of a symplectic group has to be odd. Then We have to remove the 0 in the top array. 
Thus $\ep=-$ and $a=\ev\ep=-$.

In summary, we have the descent diagram
\[
\xymatrix{
& \pi_{\Lambda_{-1}} \ar[dl]_{(k,+)} \ar[dr]^{(k,-)} & \\
0 & & \pi_{\Lambda_1'}.
}
\]
\end{exmp}

\begin{exmp}\label{dex2}
    Let $\pi_{\Lambda_{1}}=\pi_{-,\Lambda_{1},-}\in \Irr(\Sp_{2k(k+1)}(\mathbbm{k}))$ with $k$ even and
       \[\Lambda_{1}=\begin{pmatrix}
2k,2k-1,2k-2,\cdots,1,0\\
-
\end{pmatrix}.\]
    Then $\pi_{\Lambda_1}$ is the unique unipotent cuspidal representation of $\Sp_{2k(k+1)}(\mathbbm{k})$, and $\ep=-\zeta({}^{\bd}\Lambda_1)=-$. 
    
 Since $\pi_{\Lambda_1}$ has no $\rho$-part, the signs
$\epsilon_\rho$ and $\epsilon_{\rho_1^-}$ are not involved, and hence
\(
a=\ep\ev=-\ev.
\)
Since $\pi_{\Lambda_1}$ has no $(-1)$-part, we take
$\epsilon_{-1}=+$. By the same argument as in
Example~\ref{dex1}, the first descent index is \(k\), and
\[
\CD^{\rm FJ}_{2k,a}(\pi_{\Lambda_1})
=
\begin{cases}
\pi_{\Lambda_{-1}'}, & \textrm{if }a=+,\\
\pi_{\Lambda_{-1}^{\prime t}}, & \textrm{if }a=-,
\end{cases}
\]
where
\[
\Lambda_{-1}'
=
\begin{cases}
\begin{pmatrix}
2k-1,\ldots,1,0\\
-
\end{pmatrix},
& \textrm{if }\varepsilon(-1)=+,\\[4mm]
\begin{pmatrix}
-\\
2k-1,\ldots,1,0
\end{pmatrix},
& \textrm{if }\varepsilon(-1)=-.
\end{cases}
\]
As explained in Section~\ref{sec:L-s}, once the \((-1)\)-part is exhausted, we use the convention for the exhausted
symbol and regard it as the symbol
\[
 \begin{pmatrix}
0\\
0
\end{pmatrix}.
\]
The two zeros are treated as two equal largest entries.  Thus either zero may be
removed first.  After the possible row-switch in Step~(3) of the above recipe, both
choices lead to the same remaining trivial symbol
\[
 \begin{pmatrix}
0\\
-
\end{pmatrix}.
\]
Therefore the $(-1)$-part of the descendant is still trivial, but the choice of the
row from which the first zero is removed records the sign
\(\ev=\zeta({}^{\bd}\Lambda_{-1})\).  Hence \(\ev\) can be chosen to be either
\(+\) or \(-\). Thus we have the descent diagram
\[
\xymatrix{
& \pi_{\Lambda_{1}} \ar[dl]_{(k,+)} \ar[dr]^{(k,-)} & \\
\pi_{\Lambda_{-1}^{\prime }}&& \pi_{\Lambda_{-1}^{\prime t}}=\sgn\cdot\pi_{\Lambda_{-1}^{\prime}}.
}
\]
\end{exmp}

Assume that $k$ is even. Combining Example \ref{dex1} and Example \ref{dex2}, we have the following descent diagram  
\[
\xymatrix{
& & \pi_{k(k+1)} \ar[dl]_{(k,+)} \ar[dr]^{(k,-)} & &  \\
& \pi_{k^2,\alpha} \ar[dl]_{(k,\varepsilon(-1))} \ar[dr]^{(k,-\varepsilon(-1))} & & 
\pi_{k^2,\beta} \ar[dl]_{(k,\varepsilon(-1))} \ar[dr]^{(k,-\varepsilon(-1))} & \\
0 & & \pi_{k(k-1)} \ar[dl]_{(k-1,+)} \ar[dr]^{(k-1,-)} & & 0 \\
 & \pi_{(k-1)^2,\alpha} & & \pi_{(k-1)^2,\beta} & 
}
\]
where $\pi_{k(k+1)}$ is the unique unipotent cuspidal representation of $\Sp_{2k(k+1)}(\mathbbm{k})$, and $\pi_{k^2,\alpha}$ and $\pi_{k^2,\beta}$ are the only two cuspidal representations in $\cal{E}(\Sp_{2k^2},s)$. Here $s$ has the eigenvalue $-1$ with multiplicity $2k^2$ and $C_{G^*}(s)\cong {\rm O}^{(-1)^k}_{2k^2}$.

When the largest entries in the two rows of
\({}^{\bd}\Lambda_1\) are equal, the two possible choices of
\(\ep=-\zeta({}^{\bd}\Lambda_1)\) may lead to first descents
with different signs and different constituents, as the next
example illustrates.

\begin{exmp}\label{dex3}
Consider the trivial representation $\mathbbm{1}$ of $\Sp_2(\mathbbm{k})$, which corresponds to the symbol 
\[
\Lambda_1=\begin{pmatrix}
1\\
-
\end{pmatrix}, \quad\textrm{with }
{}^{\bd}\Lambda_1=\begin{pmatrix}
1,0\\
1
\end{pmatrix}.
\]
In this case,  $\ep=-\zeta({}^{\bd}\Lambda_1)$  can be chosen to be either \(+\) or \(-\). By the same argument as in Example \ref{dex2}, $\ev$ can also be chosen to be either \(+\) or \(-\). Then we have the following descent diagram
\[
\xymatrix{
&  \mathbbm{1} \ar[dl]_{(0,\varepsilon(-1))} \ar[dr]^{(0,-\varepsilon(-1))}  &  \\
\pi_{\Lambda_{-1}^{\prime }} \oplus \pi_{\Lambda_{-1}^{\star }}  \ar[d]_{(1,+)} & & \pi_{\Lambda_{-1}^{\prime t}}\oplus\pi_{\Lambda_{-1}^{\star t }} \ar[d]^{(1,-)} \\
\mathbbm{1} \oplus \mathbbm{1} & & \mathbbm{1} \oplus \mathbbm{1}
}
\]
where
\[
\Lambda_{-1}'=\begin{pmatrix}
1\\
0
\end{pmatrix}\ 
\textrm{ and }\
\Lambda_{-1}^\star=\begin{pmatrix}
-\\
1,0
\end{pmatrix}.
\]
The representations $\pi_{\Lambda_{-1}^{\prime }}$ and $\pi_{\Lambda_{-1}^{\star}}$ in the $(0,\varepsilon(-1))$-descent of $\mathbbm{1}$ correspond to the pairs $(\ep,\ev)=(+,\varepsilon(-1))$ and $(\ep,\ev)=(-,-\varepsilon(-1))$, respectively. Note that $\pi_{\Lambda_{-1}^{\prime }}$ and $\pi_{\Lambda_{-1}^{\star}}$ have different cuspidal supports. Namely,  $\pi_{\Lambda_{-1}^{\star}}$ itself is cuspidal, while the cuspidal support of $\pi_{\Lambda_{-1}^{\prime}}$ is the trivial representation of $\Sp_0(\mathbbm{k})$. 
Thus, even within a fixed first descent, different choices of
\(\ep\) may give rise to representations with different
cuspidal supports.
\end{exmp}

Examples \ref{dex1}, \ref{dex2} and \ref{dex3} tell us that for quadratic unipotent representations, the sign $a$ plays a key role in the descent. On the other hand, if there are no quadratic unipotent parts in $\pi$, then the different choices of sign $a$ will give us the same descent.

\begin{exmp}\label{dex4}
    Let $\pi_{\rho}=\pi_{\rho,-,-}\in \Irr(\Sp_{2n}(\mathbbm{k}))$. Assume that 
    $\rho$ is a generic representation. From Example \ref{dex1} we know that 
    $\zeta({}^{\bd}\Lambda_1)=+$ if there is no unipotent part. From Example \ref{dex2} we know that 
    $\zeta({}^{\bd}\Lambda_{-1})$  can be chosen to be either \(+\) or \(-\) if there is no $(-1)$-part. Therefore the sign $a$ can also be chosen to be either $+$ or $-$, and we have the descent diagram
    \[
    \xymatrix{
    & \pi_{\rho} \ar[dl]_{(n,+)} \ar[dr]^{(n,-)} & \\
    \mathbbm{1} & & \mathbbm{1}.
    }
    \]
\end{exmp}

Based on the above first-descent recipe, we now enumerate all possible first descents in the general case. In the diagrams in Corollary~\ref{5.6} below, every arrow with a nonzero target denotes a first descent. The label of the arrow, namely the corresponding descent index and sign, is omitted, since it is determined by Theorem~\ref{ggp}. An arrow ending at zero means that the corresponding choice gives no first descent. When two vertices are connected by the symbol \(\oplus\), they are the two irreducible constituents arising from the same descent. For simplicity, we do not write explicitly the \(\rho\)-parts and symbols of all the representations appearing in the diagrams, since they are determined by Theorem~\ref{ggp}. Instead, the diagrams record the relative relations among these representations, in particular when the symbol in one branch is obtained from that in another by interchanging its two rows. Thus they display both the possible constituents of the first descent of \(\prll\) and the way in which these constituents descend further.

\begin{cor}\label{5.6}
   Let $\prll\in  \Irr(\Sp_{2n}(\mathbbm{k}))$. Assume that
   \[
   {}^{\bd}\Lambda_1=
 \begin{pmatrix}
a_1,a_2,\cdots,a_{k_1}\\
b_1,b_2,\cdots,b_{h_1}
\end{pmatrix}
   \quad\textrm{and}\quad
   {}^{\bd}\Lambda_{-1}=
 \begin{pmatrix}
a'_1,a'_2,\cdots,a'_{k_{-1}}\\
b'_1,b'_2,\cdots,b'_{h_{-1}}
\end{pmatrix}.
   \]
Following the convention of Section~\ref{sec:L-s}, the largest
entry of an empty row is regarded as \(-\infty\). In the following sets
\(\{a_2,b_2\}\) and \(\{a'_2,b'_2\}\), only entries that actually
exist are included.

\begin{enumerate}
\item[(i)] If $a_1=b_1$ and $a_1'= b_1'$, then we have the first descent diagram
\[
\xymatrix{
& & & \prll \ar[dl] \ar[dr] & & &  \\
& \prllp \ar[dl] \ar[drr] \ar@{}[r]|\bigoplus & \prllps \ar[dll] \ar[dr] & & \prllpt \ar[dl] \ar[drr] \ar@{}[r]|\bigoplus & \prllpst \ar[dll] \ar[dr] & \\
0 & & & \prllpp & & & 0
}
\]

\item [(ii)] If $a_1\ne b_1$, ${\rm min}\{a_1,b_1\}\notin\{a_2,b_2\}$ and $a_1'= b_1'$, then we have the first descent diagram
\[
\xymatrix{
& & \prll \ar[dl] \ar[dr] & &  \\
& \prllp \ar[dl] \ar[dr] & & \pi_{\rho',\Lambda_1^*,\Lambda_{-1}^{\prime t}} \ar[dl] \ar[dr] & \\
0 & & \prllpp & & 0
}
\]

 \item [(iii)] If $a_1\ne b_1$, ${\rm min}\{a_1,b_1\}\in\{a_2,b_2\}$ and $a_1'= b_1'$, then we have the first descent diagram
\[
\xymatrix{
& & \prll \ar[dl] \ar[dr] & &  \\
& \prllp \ar[d] \ar[drr] & & \pi_{\rho',\Lambda_1^*,\Lambda_{-1}^{\prime t}} \ar[dll] \ar[d] & \\
&\pi_{\rho'',\Lambda_1^{\prime*},\Lambda_{-1}^{\prime\prime t}}  & & \prllpp  &
}
\]

\item[(iv)] If $a_1=b_1$, $a_1'\ne b_1'$ and ${\rm min}\{a'_1,b'_1\}\notin\{a'_2,b'_2\}$, then we have the first descent diagram
\[
\xymatrix{
& & \prll \ar[dl] \ar[dr] & &  \\
& \prllp \ar[dl] \ar[dr] & & \pi_{\rho',\Lambda_1',\Lambda^*_{-1}} \ar[dl] \ar[dr] & \\
0 & & \prllpp & & 0
}
\]
\item[(v)]   If $a_1\ne b_1$, ${\rm min}\{a_1,b_1\}\notin\{a_2,b_2\}$, $a_1'\ne b_1'$ and ${\rm min}\{a'_1,b'_1\}\notin\{a'_2,b'_2\}$, then we have the first descent diagram
\[
\xymatrix{
& & \prll \ar[dl] \ar[dr] & &  \\
& \prllp \ar[dl] \ar[dr] & & 0  & \\
0 & & \prllpp & & 
}
\]

 \item [(vi)] If $a_1\ne b_1$, ${\rm min}\{a_1,b_1\}\in\{a_2,b_2\}$, $a_1'\ne b_1'$ and ${\rm min}\{a'_1,b'_1\}\notin\{a'_2,b'_2\}$, then we have the first descent diagram
\[
\xymatrix{
& & \prll \ar[dl] \ar[dr] & &  \\
& \prllp \ar[dl] \ar[dr] & & 0  & \\
\pi_{\rho'',\Lambda^{\prime *}_{1},\Lambda^{\prime\prime t}_{-1}} & & \prllpp & & 
}
\]
\item[(vii)] If $a_1=b_1$, $a_1'\ne b_1'$ and ${\rm min}\{a'_1,b'_1\}\in\{a'_2,b'_2\}$, then we have the first descent diagram
\[
\xymatrix{
& & \prll \ar[dl] \ar[dr] & &  \\
& \prllp \ar[d] \ar[drr] & & \pi_{\rho',\Lambda_1',\Lambda^*_{-1}} \ar[dll] \ar[d] & \\
 &\pi_{\rho'',\Lambda^{\prime \prime}_{1},\Lambda^{\prime* }_{-1}} & &\prllpp &
}
\]

\item[(viii)] If $a_1\ne b_1$, ${\rm min}\{a_1,b_1\}\notin\{a_2,b_2\}$, $a_1'\ne b_1'$ and ${\rm min}\{a'_1,b'_1\}\in\{a'_2,b'_2\}$, then we have the first descent diagram
\[
\xymatrix{
& & \prll \ar[dl] \ar[dr] & &  \\
& \prllp \ar[dl] \ar[dr] & & 0  & \\
 \pi_{\rho'',\Lambda^{\prime \prime}_{1},\Lambda^{\prime* }_{-1}}& & \prllpp & &
}
\]

\item[(ix)] If $a_1\ne b_1$, $\min\{a_1,b_1\}\in\{a_2,b_2\}$, $a_1'\ne b_1'$ and $\min\{a'_1,b'_1\}\in\{a'_2,b'_2\}$, then we have the first descent diagram
\[
\xymatrix{
&&& & \prll \ar[dl] \ar[dr] & &  \\
&&& \prllp \ar[dl] \ar[dr] & & 0  & \\
&\prllpp \ar[dl] \ar[drr]  \ar@{}[r]|\bigoplus &\pi_{\rho'',\Lambda^{\prime *}_{1},\Lambda^{\prime* }_{-1}} \ar[dll] \ar[dr]& & \pi_{\rho'',\Lambda^{\prime *}_1,\Lambda^{\prime\prime t}_{-1}} \ar[dl] \ar[drr]  \ar@{}[r]|\bigoplus &  \pi_{\rho'',\Lambda''_1,\Lambda^{\prime* t}_{-1}}  \ar[dll] \ar[dr] &\\
0& & & \pi_{\rho''',\Lambda_1''',\Lambda_{-1}'''} &  & & 0 &
}
\]
\end{enumerate}
\end{cor}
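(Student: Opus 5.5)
The plan is to derive each diagram of Corollary~\ref{5.6} by applying the first-descent recipe of Section~\ref{sec6.1}, that is, Theorem~\ref{ggp}\textup{(ii)}, twice in succession. The top level of each diagram records the possible first descents of $\prll$. By Theorem~\ref{ggp}\textup{(ii)}\textup{(a)}, the symbol part of a first descendant is obtained by deleting a largest entry of ${}^{\bd}\Lambda_{-1}$ and of ${}^{\bd}\Lambda_1$. The deleted rows record $\ev=\zeta({}^{\bd}\Lambda_{-1})$ and $\ep=-\zeta({}^{\bd}\Lambda_1)$. The new symbols are then row-swapped according to $\epsilon_{-1}\ev$ and $\epsilon_{-1}\ev\varepsilon(-1)$, and the sign of the descent is $a=\ep\ev\er\epsilon_{\rho^-_1}$. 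The $\rho$-part $\rho'=\rho_1^-$ is forced by Theorem~\ref{ggp}\textup{(ii)}\textup{(b)},\textup{(d)}, so only the symbols and signs need tracking.

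The first step is to count the choices at the first level. If $a_1=b_1$ there are two choices of $\ep$, otherwise one; similarly $a'_1=b'_1$ gives two choices of $\ev$. For a fixed sign $a$, the pairs $(\ep,\ev)$ with product $a\er\epsilon_{\rho^-_1}$ give the constituents of $\CD^{\rm FJ}_{2\ell,a}(\prll)$, each with multiplicity one.
\begin{itemize}
\item In case~\textup{(i)} there are four pairs, which split into two constituents for each sign. This produces the $\oplus$ pairs $\prllp,\prllps$ and their row-transposes. The transposition comes from the swap rule depending on $\ev$.
\item In cases~\textup{(ii)}--\textup{(iv)} and~\textup{(vii)} exactly one of the two signs of $\ep$, $\ev$ is free. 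This gives one constituent for each of the two signs.
\item In cases~\textup{(v)}, \textup{(vi)}, \textup{(viii)} and~\textup{(ix)} both signs are determined. Hence one sign gives no first descent, which is the arrow to~$0$.
\end{itemize}
Comparing the outputs for the two choices of the free sign should show that they differ only by interchanging the rows of the relevant new symbol. That is why the diagrams write $\Lambda^{\prime t}_{-1}$, $\Lambda^*_1$, and so on.

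The second step is to compute the first descents of each first-level vertex by applying the recipe again. One must first determine the Alvis--Curtis duals of the new symbols and their top two entries. Here the distinction between $\min\{a_1,b_1\}\in\{a_2,b_2\}$ and $\notin$ enters. After deleting $\max\{a_1,b_1\}$, the new top entries are equal exactly when $\min\{a_1,b_1\}$ reappears in the other row. In that case the next descent has two choices; otherwise it has one. Transposition and Alvis--Curtis duality are compatible by the definition of ${}^{\bd}\Lambda$ through $\Upsilon$, so a transposed symbol has transposed dual. With this, one checks that the two branches meet at the common vertex $\prllpp$. For example, in case~\textup{(i)} deleting $a_1$ then $b_1$ gives the same result as deleting $b_1$ then $a_1$, with the swaps cancelling. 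The remaining arrows either go to $0$, when the required sign pair is not available, or to the extra vertices listed in \textup{(iii)}, \textup{(vi)}, \textup{(vii)}, \textup{(viii)}, \textup{(ix)}. The first descent index at each stage is read off from the largest entry via Corollary~\ref{fdi}; this confirms that all arrows are first descents.

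The main obstacle is the sign bookkeeping. One has to show that the swap conditions $\epsilon_{-1}\ev=+$ and $\epsilon_{-1}\ev\varepsilon(-1)=+$ at the two levels combine consistently. This includes the new $\epsilon_{-1}$ of the descendant, determined by the defect formulas in Theorem~\ref{ggp}\textup{(ii)}\textup{(a)}, and the factor $\epsilon_{\rho^-_1}$ changing to $\epsilon_{\rho^-_2}$. The check is that the two paths produce literally the same triple, not a transpose. I would handle this by tracking the parity of the defect modulo $4$ through the two steps. This parity determines $\epsilon_{-1}$, and the goal is to show that the total number of row swaps along either path agrees. The boundary cases use the exhausted-symbol convention $\binom{0}{0}$, as in Example~\ref{dex2}, and are checked directly.
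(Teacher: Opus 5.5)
Your proposal is correct and is the paper's own argument made explicit: you apply the first-descent recipe of Theorem~\ref{ggp}\textup{(ii)} separately to ${}^{\bd}\Lambda_1$ and ${}^{\bd}\Lambda_{-1}$, split each symbol into the three possibilities (tied top entries; untied with a unique new maximum; untied with a tied new maximum), and iterate once, and your swap-parity bookkeeping (using that the descendant's $\epsilon_{-1}$ equals $-\ep$) is exactly what the paper leaves implicit when it asserts that the branches merge. One side remark needs correcting: flipping a free sign removes a different tied entry, so it generally yields a genuinely different symbol ($\Lambda^*_1$ when $\ev$ is flipped, $\Lambda^*_{-1}$ when $\ep$ is flipped), not a transpose, and the only transposition is of the new $(-1)$-symbol when $\ev$ is flipped, because both row-switch rules depend on $\ev$.
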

\begin{proof}
The assertion follows by applying the recipe above separately to
the \(1\)-part and the \((-1)\)-part, with the corresponding
descent signs determined by Theorem~\ref{ggp}.  For either symbol, there are three possibilities: the two row
maxima are equal; or they are unequal and, after the larger one
is removed, the new maximal entry is unique; or the new maximal
entry occurs in both rows. Combining the three possibilities for
the \(1\)-symbol with the three possibilities for the
\((-1)\)-symbol gives the nine cases above. The arrows in the diagrams are obtained by iterating
the same recipe.
\end{proof}

\subsection{Calculating first descent indices}

Using Theorem~\ref{ggp}\textup{(ii)} and a direct calculation,
we obtain the following formula for the first descent index.

\begin{cor}\label{fdi}
Let
\(
\pi=\pi_{\rho,\Lambda_1,\Lambda_{-1}}
\in\mathcal E(\Sp_{2n}(\mathbbm{k}),s),
\)
and let
\(
\pi'=\pi_{\rho',\Lambda'_1,\Lambda'_{-1}}
\)
be an irreducible constituent of a first descent of \(\pi\), whose
index is denoted by \(\ell\).  We define \(\rank(\rho)\) by
\(
\pi_{\rho}\in
\Irr\bigl(\Sp_{2\rank(\rho)}(\mathbbm{k})\bigr),
\)
and set
\begin{itemize}
    \item $\ell[\ne\pm1]:=\rank(\rho)-\rank(\rho')$;
    \item $\ell[1]:=\rank({}^{\bd}\Lambda_1)-\rank({}^{\bd}\Lambda'_{-1})$;
\item $\ell[-1]:=\rank({}^{\bd}\Lambda_{-1})-\rank({}^{\bd}\Lambda'_1)$.
\end{itemize}
Then
\[
\ell=\ell[\ne\pm1]+\ell[1]+\ell[-1].
\]
Assume moreover that $|{\mathrm{def}}(\Lambda_1)|=2k_1+1$ and $|{\mathrm{def}}(\Lambda_{-1})|=2k_{-1}$. Then
   \begin{itemize}
    \item 
    $
    \ell[\ne \pm 1]=\sum_{[a]\ne[\pm 1]}\widetilde{\#[a]}\cdot\lambda[a]_1^t,
    $
    \item
    $
\widetilde{\#[a]}:=
 \begin{cases}
 \#[a],&\textrm{ if $G_{[a]}^*(s)$ is a general linear group};\\
  \frac{\#[a]}{2},&\textrm{ if $G_{[a]}^*(s)$ is a unitary group},
\end{cases}
$
    \item
 $
\ell[1]=
\begin{cases}
\left(\Upsilon(\Lambda_1)^{-\zeta({}^{\bd}\Lambda_1)}\right)^t_1-(k_1+1), &\textrm{ if }\zeta({}^{\bd}\Lambda_1)\cdot{\mathrm{def}}(\Lambda_1)<0;\\
\left(\Upsilon(\Lambda_1)^{-\zeta({}^{\bd}\Lambda_1)}\right)^t_1+k_1, &\textrm{ if }\zeta({}^{\bd}\Lambda_1)\cdot{\mathrm{def}}(\Lambda_1)>0,
\end{cases}
$
\item
$
\ell[-1]=
\begin{cases}
\left(\Upsilon(\Lambda_{-1})^{\zeta({}^{\bd}\Lambda_{-1})}\right)^t_1-k_{-1}, &\textrm{ if }{\zeta({}^{\bd}\Lambda_{-1})}\cdot{\mathrm{def}}(\Lambda_{-1})\leqslant 0;\\
\left(\Upsilon(\Lambda_{-1})^{\zeta({}^{\bd}\Lambda_{-1})}\right)^t_1+k_{-1}, &\textrm{ if }{\zeta({}^{\bd}\Lambda_{-1})}\cdot{\mathrm{def}}(\Lambda_{-1})> 0.
\end{cases}
$
\end{itemize}
In particular, we have \(\ell[1]\geqslant k_1\) and \(\ell[-1]\geqslant k_{-1}\).
\end{cor}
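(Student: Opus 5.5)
The plan is to read the first descent index off the rank drop, and then compute each contribution using the explicit recipe of Section~\ref{sec6.1}.

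\emph{Decomposition of $\ell$.} Since $\pi'$ is a representation of $\Sp_{2(n-\ell)}(\mathbbm{k})$, we have $\ell=n-n'$. The rank $n$ is the sum of the $\rho$-rank and the ranks of $\Lambda_1$ and $\Lambda_{-1}$. Alvis--Curtis duality preserves rank, so $\rank(\Lambda)=\rank({}^{\bd}\Lambda)$. Theorem~\ref{ggp}(ii) pairs $\Lambda_1$ with $\Lambda'_{-1}$ and $\Lambda_{-1}$ with $\Lambda'_1$, and pairs the $\rho$-part with the $\rho'$-part. Subtracting the three pieces of $n'$ from the corresponding pieces of $n$ then gives $\ell=\ell[\ne\pm1]+\ell[1]+\ell[-1]$.

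\emph{The three contributions.}

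For the $\rho$-part, condition~(b) of Theorem~\ref{ggp}(ii) gives $\lambda'[-b]=\lambda[b]_-$, so the first column of $\lambda[b]$ is removed. The factor $G^*_{[b]}(s)$ is obtained by restriction of scalars of $\GL_{\nu}$ or $\RU_{\nu}$. Its contribution to the rank is therefore $\#[b]\cdot|\lambda|$ in the general linear case and $\frac{\#[b]}{2}\cdot|\lambda|$ in the unitary case. Removing the first column, of length $\lambda[b]^t_1$, lowers this by $\widetilde{\#[b]}\,\lambda[b]^t_1$, which is the stated formula for $\ell[\ne\pm1]$.

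For the symbol parts, I would use the identities of Theorem~\ref{ggp}(ii)(a). The bipartition of $\Lambda'_{-1}$ consists of $\Upsilon(\Lambda_1)^{-\ep}$, kept unchanged, together with $\Upsilon(\Lambda_1)^{\ep}$ with its first column removed. Here $\ep=-\zeta({}^{\bd}\Lambda_1)$, so $-\ep=\zeta({}^{\bd}\Lambda_1)$ and the partition losing its first column is $\Upsilon(\Lambda_1)^{-\zeta({}^{\bd}\Lambda_1)}$. The bipartition sizes therefore drop by $(\Upsilon(\Lambda_1)^{-\zeta({}^{\bd}\Lambda_1)})^t_1$. The rank also involves a defect correction, by \eqref{bp}: the correction is $\frac{\beta^2-1}{4}$ for odd defect $\beta$ and $\frac{\beta^2}{4}$ for even defect. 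By Theorem~\ref{ggp}(ii)(a),
\[
|{\rm def}(\Lambda'_{-1})|=|{\rm def}(\Lambda_1)\mp1|,
\]
and the sign is governed by whether $\zeta({}^{\bd}\Lambda_1)$ and ${\rm def}(\Lambda_1)$ have the same sign. Thus $|{\rm def}|$ changes from $2k_1+1$ to $2k_1$ or to $2k_1+2$. The corresponding correction changes from $k_1(k_1+1)$ to $k_1^2$ or to $(k_1+1)^2$. This gives the terms $+k_1$ and $-(k_1+1)$ respectively.

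The $(-1)$-part is handled the same way. The even defect $2k_{-1}$ moves to odd defect $2k_{-1}\mp1$. The correction changes from $k_{-1}^2$ to $k_{-1}(k_{-1}\mp1)$, which yields $\mp k_{-1}$ as stated.

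\emph{Main obstacle.} The delicate step is the sign bookkeeping: matching the sign of the defect change with $\zeta\cdot{\rm def}$, which includes the row swaps in step~(3) of the recipe and the factors $\epsilon_{-1}$ and $\varepsilon(-1)$. I would carry this out by checking the removal directly on ${}^{\bd}\Lambda$, using that duality preserves defect. Removing an entry from the row that makes $|{\rm def}|$ larger decreases $|{\rm def}|$, and removing one from the other row increases it. This identifies which case of the formula occurs. The boundary case ${\rm def}(\Lambda_{-1})=0$, with the convention $\binom{0}{0}$, has to be assigned to the ``$\leqslant0$'' branch.

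\emph{The inequalities.} The bounds $\ell[1]\geqslant k_1$ and $\ell[-1]\geqslant k_{-1}$ follow once we know that in the ``decrease'' cases the relevant transposed first column is large enough, namely $(\Upsilon^{\mp})^t_1\geqslant 2k_1+1$ and $\geqslant 2k_{-1}$ respectively. This should come from the fact that when the maximal entry of ${}^{\bd}\Lambda$ lies in the shorter row, that row must have at least the defect-many nonzero entries beyond the other row. I expect this to be the hardest check, and I would first test it on the cuspidal symbols of Examples~\ref{dex1} and~\ref{dex2}.
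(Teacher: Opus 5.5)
Your derivation of the identities is the paper's own argument: additivity of the rank over $\rho$, $\Lambda_1$ and $\Lambda_{-1}$; first-column removal in the $\rho$-part from Theorem~\ref{ggp}(ii)(b); and, for the symbols, the drop in $|\Upsilon|$ read off from Theorem~\ref{ggp}(ii)(a) plus the defect correction $\lfloor\mathrm{def}^2/4\rfloor$ from \eqref{bp}.

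The sign bookkeeping you call the main obstacle is harmless. The defect identities there read $\mathrm{def}(\Lambda'_{-1})=\pm\bigl(\mathrm{def}(\Lambda_1)-\zeta({}^{\bd}\Lambda_1)\bigr)$ and $\mathrm{def}(\Lambda'_1)=\pm\bigl(\mathrm{def}(\Lambda_{-1})-\zeta({}^{\bd}\Lambda_{-1})\bigr)$, each with a global sign, so only absolute values matter.

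For the inequalities your route differs from the paper's. The paper compares with the sharp descent for the opposite sign via Theorem~\ref{ggp}(i) and invokes maximality of the first descent index. You argue directly on the symbol. Made precise, your argument runs as follows:
\begin{itemize}
\item removing the largest entry $x$ of a symbol with $N$ entries lowers the rank by $x-\lfloor\frac{N-1}{2}\rfloor$;
\item $x\geqslant m_l-1$, where $m_l$ is the length of the longer row;
\item since $m_l-m_s=|\mathrm{def}|$, this gives $\ell[1]\geqslant k_1$ and $\ell[-1]\geqslant k_{-1}$ at once.
\end{itemize}
This holds for every symbol and needs no descent theory, so it is cleaner than the paper's argument.

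The step that fails is your claim that the $(-1)$-part comes out ``as stated''. Theorem~\ref{ggp}(ii)(a) gives $\Upsilon(\Lambda'_1)^{\epsilon_{-1}}=\Upsilon(\Lambda_{-1})^{-\ev}_-$ with $\ev=\zeta({}^{\bd}\Lambda_{-1})$. So the partition that loses its first column is $\Upsilon(\Lambda_{-1})^{-\zeta({}^{\bd}\Lambda_{-1})}$, exactly as for $\Lambda_1$. Your computation therefore gives $\ell[-1]=\bigl(\Upsilon(\Lambda_{-1})^{-\zeta({}^{\bd}\Lambda_{-1})}\bigr)^t_1-k_{-1}$ or $+k_{-1}$, according as $\zeta({}^{\bd}\Lambda_{-1})\,\mathrm{def}(\Lambda_{-1})\leqslant0$ or $>0$. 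That is not the displayed formula, which has superscript $+\zeta$.

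The displayed formula is in fact false. Take $\Lambda_{-1}=\binom{3,2,1}{0}$; then ${}^{\bd}\Lambda_{-1}=\binom{2,1,0}{3}$, $\zeta=-$ and $k_{-1}=1$. Removing $3$ lowers the rank from $4$ to $2$, so $\ell[-1]=2=\bigl(\Upsilon(\Lambda_{-1})^{+}\bigr)^t_1-1$. The displayed formula instead gives $\bigl(\Upsilon(\Lambda_{-1})^{-}\bigr)^t_1-1=-1<k_{-1}$.

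Your inequality sketch, made precise, likewise bounds the first part of $\Upsilon({}^{\bd}\Lambda_{-1})^{\zeta}=\bigl(\Upsilon(\Lambda_{-1})^{-\zeta}\bigr)^t$. So it supports only the corrected version. You should state and prove that version rather than assert agreement; the paper's ``the same calculation'' makes the same slip.

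The same caution applies to the $\rho$-part. With $[a]=\{a^{\pm q^k}\}$ as in Section~\ref{mlus}, $G^*_{[a]}(s)$ has rank $\frac12\#[a]\nu_a(s)$ in both the linear and the unitary case. So your ``therefore $\#[b]\,|\lambda|$'' is off by a factor $2$ in the linear case, unless $\#[a]$ is read as the size of the Frobenius orbit. For example, the generic $R^{\Sp_2}_{\GL_1}\chi$ has $\ell=1$, not $2$.

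Finally, a small sign slip: the change of correction from $k_{-1}^2$ to $k_{-1}(k_{-1}\mp1)$ contributes $\pm k_{-1}$, not $\mp k_{-1}$.
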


\begin{proof}
By Theorem~\ref{ggp}\textup{(ii)}, for every
\([a]\ne[\pm1]\) we have
\(
\lambda'[-a]=\lambda[a]_-.
\)
Hence
\[
\rank(\rho)-\rank(\rho')
=
\sum_{[a]\ne[\pm1]}
\widetilde{\#[a]}
\bigl(|\lambda[a]|-|\lambda'[-a]|\bigr)
=
\sum_{[a]\ne[\pm1]}
\widetilde{\#[a]}\lambda[a]^t_1.
\]

Under the canonical Lusztig correspondence, for a representation
\(\pi_{\rho,\Lambda_1,\Lambda_{-1}}\) of
\(\Sp_{2m}(\mathbbm{k})\), the rank \(m\) is the sum of the ranks of
\(\rho\), \({}^{\bd}\Lambda_1\), and
\({}^{\bd}\Lambda_{-1}\). Applying this to \(\pi\) and \(\pi'\), the definitions immediately give
\[
\ell=\ell[\ne\pm1]+\ell[1]+\ell[-1].
\]

To compute the two symbol contributions, we use the first-descent
recipe in Section~\ref{sec6.1}.  For the \(1\)-part, Step~(2) of the first-descent recipe removes a largest entry of
\({}^{\bd}\Lambda_1\), with the choice of row recorded by
\(\zeta({}^{\bd}\Lambda_1)\), while the possible row transposition in
Step~(3) does not change the rank.  The rank formula for symbols,
together with the change of defect, therefore gives the stated formula
for \(\ell[1]\).  The same calculation applied to
\({}^{\bd}\Lambda_{-1}\) gives the formula for \(\ell[-1]\).

It remains to prove the inequalities.  For the \(1\)-part, replace
the sign
\(
\ep=-\zeta({}^{\bd}\Lambda_1)
\)
by the opposite sign in the conditions of
Theorem~\ref{ggp}(i), while keeping the
\((\ne\pm1)\)- and \((-1)\)-parts fixed. Let
\(\ell_{\mathrm{op}}[1]\) be the largest possible contribution of the
\(1\)-part under this choice.  If
\(\ell_{\mathrm{op}}[1]>\ell[1]\), then Theorem~\ref{ggp}(i) gives a
nonzero descent of index
\[
\ell-\ell[1]+\ell_{\mathrm{op}}[1]>\ell,
\]
contradicting the definition of the first descent index.  Hence
\(
\ell_{\mathrm{op}}[1]\leqslant\ell[1].
\)
The formulas corresponding to the two sign choices show that one of
\(\ell[1]\) and \(\ell_{\mathrm{op}}[1]\) is obtained by adding
\(k_1\) to a first-column length.  Therefore
\(
\ell[1]\geqslant k_1.
\)

For the \((-1)\)-part, the assertion is immediate when \(k_{-1}=0\).
When \(k_{-1}>0\), replace
\(\ev=\zeta({}^{\bd}\Lambda_{-1})\) by the opposite sign and make the
corresponding row transposition in the \(1\)-part.  Since this does not
change the rank contribution of the \(1\)-part, the same argument gives
\(
\ell[-1]\geqslant k_{-1}.
\)
\end{proof}

\begin{cor}\label{fdi-comp}
Let
\(
\pi'=\pi_{\rho',\Lambda'_1,\Lambda'_{-1}}
\)
be an irreducible constituent of a first descent of
\(\pi=\pi_{\rho,\Lambda_1,\Lambda_{-1}}\). Let \(\ell[\pm1]\) and \(\ell'[\pm1]\) denote the symbol
contributions to the first descent indices of \(\pi\) and \(\pi'\),
respectively, as in Corollary~\ref{fdi}.  Then
\[
\ell[-1]\geqslant\ell'[1],
\qquad
\ell[1]\geqslant\ell'[-1]-1.
\]
Equality in the first (resp. second) inequality holds if and only if the largest
entries in the two rows of \({}^{\bd}\Lambda_{-1}\) (resp. \({}^{\bd}\Lambda_1\)) are equal.
\end{cor}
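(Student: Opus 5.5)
The plan is to compute both sides explicitly with Corollary~\ref{fdi} and the first-descent recipe of Section~\ref{sec6.1}. Each symbol contribution to a first descent index is a rank difference of Alvis--Curtis duals. So
\[
\ell[-1]-\ell'[1]
=
\bigl(\rank({}^{\bd}\Lambda_{-1})-\rank({}^{\bd}\Lambda'_1)\bigr)
-\bigl(\rank({}^{\bd}\Lambda'_1)-\rank({}^{\bd}\Lambda''_{-1})\bigr),
\]
where \(\Lambda''_{-1}\) is the \((-1)\)-symbol of a constituent of a first descent of \(\pi'\). By the recipe, \({}^{\bd}\Lambda'_1\) is obtained from \({}^{\bd}\Lambda_{-1}\) by removing one largest entry and possibly swapping rows. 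Likewise \({}^{\bd}\Lambda''_{-1}\) is obtained from \({}^{\bd}\Lambda'_1\) by removing its largest entry and possibly swapping. Row swaps do not change rank. Hence both differences are governed by the removal of the first and second largest entries of the multiset of entries of \({}^{\bd}\Lambda_{-1}\).

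The key step is a rank computation. Removing the largest entry \(x\) from a symbol with \(N\) entries changes the rank by \(x-\lfloor((N-1)/2)^2\rfloor+\lfloor((N-2)/2)^2\rfloor=x-\lfloor (N-1)/2\rfloor\). Here I use the normalization in which at most one zero entry occurs, shifting via the equivalence \(\sim\) when needed. Let \(x_1\geqslant x_2\) be the two largest entries of \({}^{\bd}\Lambda_{-1}\). Then
\[
\ell[-1]-\ell'[1]=(x_1-x_2)-\bigl(\lfloor (N-1)/2\rfloor-\lfloor (N-2)/2\rfloor\bigr).
\]
The bracket is \(0\) or \(1\) according to the parity of \(N\). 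For \((-1)\)-symbols the defect is even, so \(N\) is even and the bracket vanishes. This gives \(\ell[-1]-\ell'[1]=x_1-x_2\geqslant0\), with equality iff \(x_1=x_2\). Since the entries within a row are strictly decreasing, \(x_1=x_2\) means exactly that the two row maxima of \({}^{\bd}\Lambda_{-1}\) are equal.

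For the second inequality I treat the \(1\)-symbol the same way. \({}^{\bd}\Lambda_1\) has odd defect, so \(N\) is odd and the bracket equals \(1\). This gives \(\ell[1]-\ell'[-1]=x_1-x_2-1\geqslant -1\), with equality iff \(x_1=x_2\), that is, iff the row maxima of \({}^{\bd}\Lambda_1\) agree.

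The step I expect to be the main obstacle is making the bookkeeping rigorous. First, one must check that the first descent of \(\pi'\) really removes the current largest entry of the same multiset, namely the entries of \({}^{\bd}\Lambda'_1\). This follows from Theorem~\ref{ggp}(ii), applied to \(\pi'\), together with the fact that the recipe's swaps commute with \({\bd}\) up to the transposition relation \(\Upsilon({}^{\bd}\Lambda)^{\epsilon}=(\Upsilon(\Lambda)^{-\epsilon})^t\). Second, one must handle the degenerate cases:
\begin{itemize}
\item the exhausted-symbol convention \(\binom{0}{0}\), where the two zeros count as equal maxima;
\item the trivial symbol \(\binom{0}{-}\), which has \(|\Lambda|=0\);
\item shifts by \(\sim\) that alter \(N\) but not the rank.
\end{itemize}
I would check these by direct substitution, also cross-checking against the case formulas of Corollary~\ref{fdi} in terms of \(k_{\pm1}\) and first columns. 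Finally, the ambiguity of choice when the maxima are equal affects only which row loses the entry, not the ranks, so the result is independent of the choice.
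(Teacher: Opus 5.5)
Your proposal is correct and is essentially the paper's proof. The paper likewise identifies $\ell[-1],\ell'[1]$ (resp.\ $\ell[1],\ell'[-1]$) as the successive rank drops obtained by removing the two largest entries of ${}^{\bd}\Lambda_{-1}$ (resp.\ ${}^{\bd}\Lambda_1$), and uses the parity of the number of entries, forced by the parity of the defect, to obtain $\ell[-1]-\ell'[1]=x_1-x_2$ and $\ell[1]-\ell'[-1]=y_1-y_2-1$; your unified drop formula $x-\lfloor (N-1)/2\rfloor$ is just a compact form of the paper's explicit expressions $x-\tfrac N2+1$ and $y-\tfrac{M-1}2$.
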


\begin{proof}
In the cases that one of the symbols is exhausted,
we use the convention of
Section~\ref{sec:L-s}; the following calculation is then
unchanged.

By the definitions in Corollary~\ref{fdi} and the first-descent
recipe in Section~\ref{sec6.1}, \(\ell[-1]\) and \(\ell'[1]\) are the
successive rank drops obtained by removing the two largest entries of
\({}^{\bd}\Lambda_{-1}\); the possible row transpositions do not
change the ranks.

Let \(N\) be the total number of entries of
\({}^{\bd}\Lambda_{-1}\), and let \(x_1\geqslant x_2\) be its two
largest entries.  Since \({\mathrm{def}}(\Lambda_{-1})\) is even, \(N\) is even, and
the rank formula for symbols gives
\[
\ell[-1]=x_1-\frac N2+1,
\qquad
\ell'[1]=x_2-\frac N2+1.
\]
This proves the first inequality.  Equality holds precisely when
\(x_1=x_2\), that is, when the largest entries in the two rows are
equal.

Similarly, let \(M\) be the total number of entries of
\({}^{\bd}\Lambda_1\), and let \(y_1\geqslant y_2\) be its two
largest entries.  Since \({\mathrm{def}}(\Lambda_1)\) is odd, \(M\) is odd, and hence
\[
\ell[1]=y_1-\frac{M-1}{2},
\qquad
\ell'[-1]=y_2-\frac{M-1}{2}+1.
\]
The second inequality and its equality statement follow.
\end{proof}

\begin{prop}\label{trivial}
\begin{enumerate}
\item[(i)] Let $\pi\in\Irr(\Sp_{2n}(\mathbbm{k}))$. Then $\CD^{\rm FJ}_{2\ell,a}(\pi) \equiv 0$ for every $\ell>0$ and $a\in\{\pm\}$ if and only if $\pi$ is the trivial representation.
\item[(ii)] The first descent index of $\omega_{n,\psi_a}$ is $1$, and 
$\CD^{\rm FJ}_{2,a}(\omega_{n,\psi_a})=\mathbbm{1}_{2(n-1)}\oplus \mathbbm{1}_{2(n-1)}$ and $\CD^{\rm FJ}_{2,-a}(\omega_{n,\psi_a})=0$.

\end{enumerate}
\end{prop}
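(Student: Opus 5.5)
For (i), the direction ``trivial $\Rightarrow$ all positive descents vanish'' I would prove directly from the definition. For $\ell>0$ the character $\psi_{\CO_{2\ell,a}}$ is nontrivial on $N_{\CO_{2\ell,a}}^F$. A $\Hom$ from it into the trivial representation therefore vanishes, so $\operatorname{Wh}_{\CO_{2\ell,a}}(\mathbbm 1)=0$.

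For the converse I would use Corollary~\ref{fdi}. Let $\pi=\pi_{\rho,\Lambda_1,\Lambda_{-1}}$ have first descent index $\ell=0$. Then $\ell[\ne\pm1]+\ell[1]+\ell[-1]=0$.
\begin{itemize}
\item The term $\ell[\ne\pm1]=\sum\widetilde{\#[a]}\lambda[a]^t_1$ vanishes only if $\rho$ is empty, so $s$ has no eigenvalues $\ne\pm1$.
\item Since $\ell[-1]\geqslant k_{-1}\geqslant0$ and the first-column length is nonnegative, $\ell[-1]=0$ forces the $(-1)$-part to be exhausted. A nonempty ${}^{\bd}\Lambda_{-1}$ of positive rank would give a positive rank drop, using the rank formula in the proof of Corollary~\ref{fdi-comp}: $\ell[-1]=x_1-N/2+1>0$ once the rank is positive.
\item Likewise $\ell[1]=y_1-(M-1)/2=0$, together with Corollary~\ref{fdi-comp}, forces ${}^{\bd}\Lambda_1$ to be the symbol whose entries are $M-1,\dots,1,0$ arranged with the top entry removal costing nothing. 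Combined with $\ell[1]\geqslant k_1$ this should force $k_1=0$. It should also force ${}^{\bd}\Lambda_1$ to be the Alvis--Curtis dual of the symbol $\binom{n}{-}$, namely the Steinberg-type symbol, so that $\Lambda_1=\binom{n}{-}$, i.e.\ $\pi$ is trivial.
\end{itemize}
Here I would check that the only symbols of rank $n$ and defect $1$ with $y_1=(M-1)/2$ are the ones whose entries are consecutive, i.e.\ the dual of the trivial symbol. I expect this combinatorial identification to be the main obstacle, and I would handle it via $\Upsilon$: $\ell[1]=0$ means the relevant first column of $\Upsilon(\Lambda_1)$ has length $k_1+1$ with $k_1=0$, which pins down the bipartition $[\,n\,;\,\varnothing\,]$.

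For (ii), the Weil representation $\omega_{n,\psi_a}=\omega^{\rm even}\oplus\omega^{\rm odd}$ consists of two irreducible representations, both obtained by theta lifting from $\RO_1$. Under $\CL_{\rm can}$ they are parametrized explicitly: they are the constituents appearing in Example~\ref{dex3}, where $\mathbbm 1_{\Sp_2}$ descends to $\pi_{\Lambda'_{-1}}\oplus\pi_{\Lambda^\star_{-1}}$, and in general the two pieces have $\Lambda_{-1}$ of the shape $\binom{n}{0}$ or $\binom{-}{n,0}$ with trivial $\rho$ and $\Lambda_1$. I would apply Corollary~\ref{fdi} to each constituent and read off $\ell[1]=0$ and $\ell[-1]=1$, so each has first descent index $1$. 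Theorem~\ref{ggp}(ii) then gives the descendant: removing the largest entry $n$ yields the trivial symbol, so each constituent contributes exactly one copy of $\mathbbm 1_{2(n-1)}$ with multiplicity one. The sign is $a'=-\zeta({}^{\bd}\Lambda_1)\zeta({}^{\bd}\Lambda_{-1})$, and I would check that for both constituents it equals $a$. This matches the diagram of Example~\ref{trv} after the $\varepsilon(-1)$ bookkeeping from the convention \eqref{descent0}.

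Finally, the vanishing of $\CD^{\rm FJ}_{2,-a}$ follows because at the first descent the sign is forced by Theorem~\ref{ggp}(ii)(c). Alternatively, Theorem~\ref{ggp}(i) with the opposite sign gives no solution with $\ell=1$: the interlacing conditions fail because $\Lambda_1$ would need a nonzero contribution.
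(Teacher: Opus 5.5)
Your part (i) follows the paper's route: Corollary~\ref{fdi} forces $\ell[\ne\pm1]=\ell[1]=\ell[-1]=0$. Your direct argument for the easy direction is cleaner than the paper's appeal to the formulas: for $\ell>0$, $\psi_{\CO_{2\ell,a}}$ is a nontrivial character of $N_{\CO_{2\ell,a}}^F$, so it cannot occur in $\mathbbm{1}$.

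There is one slip in (i). The dual of the trivial symbol is ${}^{\bd}\binom{n}{-}=\binom{n,\ldots,0}{n,\ldots,1}$, whose largest entry is $(M-1)/2$; it is not a symbol with entries $M-1,\ldots,0$. The correct deduction from $y_1=(M-1)/2$ (with $M=2m+1$) goes as follows. The longer row must be $\{m,\ldots,0\}$ and the shorter row must lie inside it. The at-most-one-zero normalization then forces the shorter row to be $\{m,\ldots,1\}$, and $\mathrm{def}\equiv1\pmod 4$ fixes the orientation.

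The genuine gap is in (ii): the identification of the constituents of $\omega_{n,\psi_a}$.

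1. You extrapolate from Example~\ref{dex3}, which treats only $n=1$ and a single sign. From it you conclude that the constituents always have $\Lambda_{-1}\in\{\binom{n}{0},\binom{-}{n,0}\}$, independently of $a$. This holds only for $\omega_{n,\psi_+}$. The constituents of $\omega_{n,\psi_-}$ carry the transposed ($\sgn$-twisted) symbols $\binom{0}{n}$ and $\binom{n,0}{-}$.

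2. The sign assertion of (ii) depends exactly on this. Both ${}^{\bd}\binom{n}{0}=\binom{n-1,\ldots,0}{n,\ldots,1}$ and ${}^{\bd}\binom{-}{n,0}=\binom{n-1,\ldots,1}{n,\ldots,0}$ have $\zeta=-$. With ${}^{\bd}\Lambda_1=\binom{0}{-}$, Theorem~\ref{ggp}(ii)(c) then gives descent sign $+$. The transposes have $\zeta=+$ and give sign $-$. So with your identification, the check ``the sign equals $a$'' fails for $a=-$.

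3. Your index computation ($\ell[1]=0$, $\ell[-1]=1$) and your identification of the descendant as $\mathbbm{1}_{2(n-1)}$ with multiplicity one are insensitive to the transposition, so those parts are fine.

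4. Saying ``both are theta lifts from $\RO_1$'' does not close the gap. Through Theorem~\ref{thm:L-can}(2) you would have to relate the discriminant $\epsilon'$ of the line to the character $\psi_a$. You would also have to track the twist $\Theta_{\epsilon'\epsilon_\rho}$, which is precisely the row transposition.

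The paper gets the $a$-dependence for free from \eqref{descent0}, namely $\omega_{n,\psi_{\varepsilon(-1)b}}=\CD^{\rm FJ}_{0,b}(\mathbbm{1}_{2n})$. By (i), $0$ is the first descent index of $\mathbbm{1}_{2n}$, so Theorem~\ref{ggp}(ii) at $\ell=0$ applies to the tied symbols $\binom{n,\ldots,0}{n,\ldots,1}$ and $\binom{0}{0}$. The sign is $\ep\ev=b$, and $\Lambda'_{-1}$ is row-switched exactly when $\ev\varepsilon(-1)=+$. This yields $\{\binom{n}{0},\binom{-}{n,0}\}$ for $b=\varepsilon(-1)$ and the transposes for $b=-\varepsilon(-1)$. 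The second row of the resulting diagram, Corollary~\ref{5.6}(i), then gives the descents with sign $a$ and the vanishing for $-a$, exactly as you compute.
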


\begin{proof}
We first prove (i).  If
\(
        \CD^{\rm FJ}_{2\ell,a}(\pi)\equiv 0
\)
for every \(\ell>0\) and \(a\in\{\pm\}\), then the first descent index of
\(\pi\) is \(0\).  By Corollary~\ref{fdi},
\(
\ell =\ell[\ne\pm1]+\ell[1]+\ell[-1]=0.
\)
All three summands are nonnegative. Hence
\(
\ell[\ne\pm1]=\ell[1]=\ell[-1]=0.
\)
The first equality implies that the $\rho$-part is trivial, while
Corollary~\ref{fdi} gives
\(
k_1=k_{-1}=0.
\)
Substituting these values into the formulas for $\ell[1]$ and
$\ell[-1]$ shows directly that $\Lambda_1$ and $\Lambda_{-1}$ are the
symbols occurring in the trivial representation. Therefore
\(
\pi=\mathbbm{1}_{2n}.
\)
The converse follows from the same formulas. This proves (i).

For \textup{(ii)}, by \eqref{descent0},
\(
\CD^{\rm FJ}_{0,a}(\mathbbm{1}_{2n})
=
\omega_{n,\psi_{\varepsilon(-1)a}}.
\)
Consequently, computing the first descent of
$\omega_{n,\psi_{\varepsilon(-1)a}}$ equals computing the first
descent of $\CD^{\rm FJ}_{0,a}(\mathbbm{1}_{2n})$.
Corollary~\ref{5.6}\textup{(i)} together with Example~\ref{dex3} gives the corresponding two-step
descent diagram of $\mathbbm{1}_{2n}$. In this diagram, the first step corresponding to the $(0,a)$-descent gives the two irreducible
constituents of $\CD^{\rm FJ}_{0,a}(\mathbbm{1}_{2n})$.

By the same direct calculation as in Example~\ref{dex3}, both
constituents of $\CD^{\rm FJ}_{0,a}(\mathbbm{1}_{2n})$ have first descent index \(1\), and their descents with
sign $\varepsilon(-1)a$ are the trivial representation. Corollary~\ref{5.6}\textup{(i)}
shows that the branches in the second step with the opposite sign
terminate at zero. Therefore
\[
\CD^{\rm FJ}_{2,\varepsilon(-1)a}
\bigl(\omega_{n,\psi_{\varepsilon(-1)a}}\bigr)
=
\mathbbm{1}_{2(n-1)}
\oplus
\mathbbm{1}_{2(n-1)}
\]
and
\[
\CD^{\rm FJ}_{2,-\varepsilon(-1)a}
\bigl(\omega_{n,\psi_{\varepsilon(-1)a}}\bigr)
=
0.
\]
Replacing $\varepsilon(-1)a$ by \(a\), we obtain that
the first descent index of $\omega_{n,\psi_a}$ is \(1\).
\end{proof}

\section{Rational wavefront sets for symplectic groups}\label{sec7}

\subsection{Descent sequences vs rational orbits}

The purpose of this section is to translate the order structure of descent
sequences into statements about rational wavefront sets.

\begin{thm}\label{ds}
Let \(\pi\in\Irr(\Sp_{2n}(\mathbbm{k}))\), and let
\(\mathcal D(\pi)\) be the set of descent sequence indexes of
descent sequences of \(\pi\).
\begin{enumerate}
\item[(i)] Every maximal descent sequence is obtained by successive first
descents.  Moreover, the numerical sequence
\(
        (2\ell_1,2\ell_2,\ldots)
\)
is the same for every maximal descent sequence index
\(
        \Gamma=\llb(2\ell_1,a_1),(2\ell_2,a_2),\ldots\rrb
\) in \(\mathcal D(\pi)\).

\item[(ii)] Every maximal element of \(\mathcal D(\pi)\) is good.

\item[(iii)] If \(\Gamma\in\mathcal D(\pi)\) and
\(\Gamma'\leqslant\Gamma\), then \(\Gamma'\in\mathcal D(\pi)\).
\end{enumerate}
\end{thm}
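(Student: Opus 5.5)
We will prove the three parts in the order (i), (ii), (iii), by induction on $n$. Throughout we work one or two steps at a time at the head of a descent sequence and then apply the induction hypothesis to the tail. The one-step input is Theorem~\ref{ggp}: part (i) tests general descents, part (ii) and the recipe of Section~\ref{sec6.1} describe first descents.

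For (i), let $\Gamma=\llb(2\ell_1,a_1),(2\ell_2,a_2),\ldots\rrb$ be maximal, realized by $\pi\to\pi_1\to\pi_2\to\cdots$. Suppose the first step is not a first descent, i.e.\ $\ell_1<\ell^{\rm fd}$ for the first descent index $\ell^{\rm fd}$ of $\pi$. The key step is an exchange lemma in the spirit of Lemma~\ref{lem7.7}: use the interlacing conditions of Theorem~\ref{ggp}(i) to produce $\pi^\star$ with the following properties:
\begin{itemize}
\item $\pi^\star$ occurs in $\CD^{\rm FJ}_{2\ell_1+2,b_1}(\pi)$;
\item $\pi_2$ occurs in $\CD^{\rm FJ}_{2\ell_2-2,b_2}(\pi^\star)$;
\item $b_1b_2=a_1a_2$.
\end{itemize}
Concretely, move one box from the $\Upsilon$-data of $\pi_1$ into the first step. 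This works column-by-column for the $\rho$-part, and for the two symbols it amounts to shifting a single entry. The resulting index is an elementary mutation of $\Gamma$, hence strictly larger, contradicting maximality. The degenerate case $\ell_2=0$ will need care, and I will use Proposition~\ref{trivial} and convention \eqref{descent0} there. Iterating down the sequence shows that every step of a maximal sequence is a first descent. For the second assertion of (i), the first descent index is given by Corollary~\ref{fdi}. It depends only on $\rho$ and the ranks obtained by removing a largest entry of ${}^{\bd}\Lambda_{\pm1}$, so it is independent of the choices of $\ep,\ev$. Corollary~\ref{5.6} shows that all branches of first descents reconverge after two steps to representations with the same symbol ranks, possibly with rows interchanged, and row interchange does not change the later first descent indexes. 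Induction then gives a common numerical sequence.

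For (ii), I will check Definition~\ref{defn:good} on a sequence of successive first descents. Apply Corollary~\ref{fdi-comp} at each step: passing from $\pi_{i-1}$ to $\pi_i$ exchanges the roles of the $1$- and $(-1)$-symbols, and the $\rho$-contribution is non-increasing since $\rho_i$ removes first columns. Thus $\ell_{i+1}\le \ell_i$, except that $\ell_{i+1}=\ell_i+1$ is possible exactly in the equality case of the second inequality, when the two largest entries of ${}^{\bd}\Lambda_1$ coincide. Condition (a) of Definition~\ref{defn:good} then forbids two consecutive rises, and I will derive this from the same corollary applied one step later: after the equal pair is removed, the next rank drop cannot rise again. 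Condition (b), $a_i=\varepsilon(-1)a_{i+1}$, comes from the sign formula in Theorem~\ref{ggp}(ii)(c). In the equal-entry case the two removals come from opposite rows, and the row-swap rule of Step~(3), with its extra factor $\varepsilon(-1)$, produces exactly this relation. This sign bookkeeping across the exceptional case is where I expect the main risk in (ii).

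For (iii), it suffices to realize the inverse of a single elementary mutation. By truncating and inducting, it is enough to treat the first two positions: given $\pi_0\to\pi_1\to\pi_2$ with indexes $(\ell_1,a_1),(\ell_2,a_2)$, we must find $\pi_1^\star$ with descents of indexes $(\ell_1-1,\epsilon a_1)$ and $(\ell_2+1,\epsilon a_2)$ passing through it. I would take an auxiliary generic representation $\tau$ of a rank-one group whose semisimple parameter has eigenvalues disjoint from those of all $\pi_i$. I would then let $\pi_1^\star$ be the irreducible constituent of the Deligne--Lusztig/parabolic induction of $\tau\otimes\pi_1$ whose Lusztig parameter is that of $\pi_1$ enlarged by the generic $\rho$-block. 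Theorem~\ref{ggp}(i)(b) then allows the extra column to be absorbed by $\pi_0$ on one side and shed to $\pi_2$ on the other, since a single-column generic block is close or $2$-transverse to the empty partition. The symbol conditions are inherited from the original steps, and the signs change by the common factor $\epsilon_\rho$ for this block. I expect this step to be the main obstacle. One must verify all the $\preccurlyeq$ and defect conditions of Theorem~\ref{ggp}(i) simultaneously for both steps, control the factors $\er\erp$ so that both signs are multiplied by the same $\epsilon$, and treat separately the boundary cases $\ell_1-1=0$ (using \eqref{descent0}) and $\ell_2=0$.
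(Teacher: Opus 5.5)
Your part (iii) is the paper's Lemma~\ref{lem7.7}: an auxiliary generic character of $\GL_1$ or $\RU_1$ with a new eigenvalue, Deligne--Lusztig induction, and both signs multiplied by the same $\epsilon$. Your plan for (ii) follows Section~\ref{sec7.3}. The genuine gap is in the first assertion of (i).

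\textbf{The gap in (i).} Your exchange step only moves boxes of the $\Upsilon$-data of $\pi_1$ while keeping its symbol defects. So the candidate $\pi^\star$ satisfies the conditions of Theorem~\ref{ggp}(i) for the \emph{same} sign pair $(\ep,\ev)$ as the original first arrow, because the defect equalities there tie the target defects to $(\ep,\ev)$. For a fixed sign pair the $\preccurlyeq$ conditions are interval conditions. Their minimal solution is the sharp target, and the sharp target already has the largest index available with that sign pair.

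Now suppose the first arrow is sharp, but with respect to a sign pair other than $(-\zeta({}^{\bd}\Lambda_1),\zeta({}^{\bd}\Lambda_{-1}))$. Then it removes the largest entry of the non-maximal row of ${}^{\bd}\Lambda_1$ or ${}^{\bd}\Lambda_{-1}$, so $\ell_1<\ell^{\rm fd}$. In this case no shift of boxes, or of a single entry, of the kind you describe increases $\ell_1$. What is actually needed is:
\begin{itemize}
\item change the sign pair of the first arrow, which changes the defects of the intermediate representation;
\item find a compensating sign pair for the second arrow;
\item prove that the new path still ends at $\pi_2$;
\item check that the product $a_1a_2$ is unchanged.
\end{itemize}
This is exactly Proposition~\ref{lem75}. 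There the lower path uses the first-descent sign pair, and the second arrow gets the computed pair $\bigl(-\zeta({}^{\bd}\Lambda_1'),\,-\ep\ev\,\zeta({}^{\bd}\Lambda_1)\zeta({}^{\bd}\Lambda_{-1})\zeta({}^{\bd}\Lambda_{-1}')\bigr)$. Endpoint and sign product are then matched by comparing the removed rows and the total row-switch signs along the two paths. Your proposal does not see this case, and there your assertion $b_1b_2=a_1a_2$ has no justification.

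\textbf{Tail induction in (i).} Even where your mechanism does apply (non-sharp interlacing for a fixed sign pair, or a non-sharp $\rho$-part), the replacement reaches $\pi_2$ only because the tail is already known, by induction, to be a sharp first descent. The paper uses \eqref{711} and $\lambda''[b]=\lambda'[-b]_-$ to verify $\pi^\star\to\pi_2$, and replaces the first target by the full sharp target in one step. You should state this use of the tail explicitly.

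Single-box moves are also not always legal. For $\#[b]$ odd, take $\lambda[b]=\lambda'[-b]=[1,1]$. Removing one box gives $[1]$, and $[1,1]\cap[1]=[1]$ is not even, so $2$-transversality fails.

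\textbf{Corrections in (ii).}
\begin{itemize}
\item A rise $\ell_{i+1}=\ell_i+1$ requires equality in all three inequalities of Corollary~\ref{fdi-comp}, not only the second. That means ties in both ${}^{\bd}\Lambda_1$ and ${}^{\bd}\Lambda_{-1}$ and equal $\rho$-contributions.
\item Goodness needs the strict inequalities $\ell_{i-1}>\ell_i$ and $\ell_{i+1}>\ell_{i+2}$, not merely the absence of two consecutive rises. These follow because a symbol has at most two equal entries.
\item The identity $a_i=\varepsilon(-1)a_{i+1}$ uses both ties, to get $\zeta({}^{\bd}\Lambda'_1)\zeta({}^{\bd}\Lambda'_{-1})=\varepsilon(-1)\zeta({}^{\bd}\Lambda_1)\zeta({}^{\bd}\Lambda_{-1})$. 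It also needs the $\rho$-sign factors to cancel.
\end{itemize}
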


\begin{cor}\label{7.2}
   Let $\pi\in \Irr(\Sp_{2n}(\mathbbm{k}))$. Then
   \begin{enumerate}
 \item [(i)] Recall that \(\ScO(\pi)^{\max,\mathrm{st}}\) consists of the rational
orbits in \(\ScO(\pi)\) lying over the Kawanaka wavefront orbit. Then \[
   \ScO(\pi)^{\max, \rm{st}} = \ScO(\pi)^{\max}.
   \]

\item[(ii)] We have
\[
        \ScO(\pi)=\overline{\ScO(\pi)^{\max}},
\]
where the bar denotes rational closure.
\end{enumerate}
\end{cor}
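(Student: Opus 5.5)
We deduce Corollary~\ref{7.2} from Theorem~\ref{ds} by translating between rational orbits and descent sequence indexes with Corollary~\ref{od} and Lemma~\ref{lem:orbit-sequence-order}. The dictionary is as follows. If \(\CO\in\ScO(\pi)\), then Corollary~\ref{od}(i) shows that every \(\Gamma\in\mathfrak G(\CO)\) lies in \(\mathcal D(\pi)\). Conversely, if \(\Gamma\in\mathcal D(\pi)\) is good, Corollary~\ref{od}(ii) gives \(\CO(\Gamma)\in\ScO(\pi)\).

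For (i), the inclusion \(\ScO(\pi)^{\max,\rm st}\subset\ScO(\pi)^{\max}\) is already known, so we prove the reverse inclusion. Let \(\CO\in\ScO(\pi)^{\max}\) and pick any \(\Gamma\in\mathfrak G(\CO)\); such a \(\Gamma\) exists because every orbit admits a good index, obtained by listing even parts and writing each odd pair \((2k+1)^2\) as \((2k,a),(2k+2,\varepsilon(-1)a)\). Then \(\Gamma\in\mathcal D(\pi)\). Since \(\mathcal D(\pi)\) is finite, there is a maximal \(\Gamma_{\max}\in\mathcal D(\pi)\) with \(\Gamma\leqslant\Gamma_{\max}\). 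By Theorem~\ref{ds}(ii), \(\Gamma_{\max}\) is good. Lemma~\ref{lem:orbit-sequence-order}(ii) then gives \(\CO=\CO(\Gamma)\leqslant\CO(\Gamma_{\max})\), and Corollary~\ref{od}(ii) gives \(\CO(\Gamma_{\max})\in\ScO(\pi)\). Maximality of \(\CO\) forces \(\CO=\CO(\Gamma_{\max})\).

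So every orbit in \(\ScO(\pi)^{\max}\) has the form \(\CO(\Gamma)\) with \(\Gamma\) a maximal index. By Theorem~\ref{ds}(i), all maximal indexes share the same numerical sequence \((2\ell_1,2\ell_2,\ldots)\). The underlying partition of \(\CO(\Gamma)\) depends only on this numerical sequence, since the rules defining the parts \(p_i\) use only the \(\ell_i\). Hence all orbits in \(\ScO(\pi)^{\max}\) lie in one \(F\)-stable orbit \(\CO_0^{\rm st}\). By compatibility of the rational order with the stable closure order, every orbit of \(\ScO(\pi)\) lies below some maximal rational orbit. Therefore its stable orbit lies in \(\overline{\CO_0^{\rm st}}\), so \(\CO_0^{\rm st}\) is the Kawanaka wavefront orbit. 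This gives \(\ScO(\pi)^{\max}\subset\ScO(\pi)^{\max,\rm st}\).

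For (ii), the inclusion \(\ScO(\pi)\subset\overline{\ScO(\pi)^{\max}}\) is formal, since every orbit lies below a maximal one. For the reverse inclusion, let \(\CO'\leqslant\CO\) with \(\CO\in\ScO(\pi)^{\max}\). By the argument above, \(\CO=\CO(\Gamma)\) for some good \(\Gamma\in\mathcal D(\pi)\), so \(\Gamma\in\mathfrak G(\CO)\). We need an index for \(\CO'\) below \(\Gamma\). Lemma~\ref{lem:orbit-sequence-order}(i) goes upward: it produces, from any \(\Gamma''\in\mathfrak G(\CO')\), some \(\Gamma'''\in\mathfrak G(\CO)\) with \(\Gamma''\leqslant\Gamma'''\), but not necessarily \(\Gamma'''=\Gamma\). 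This is the main obstacle. Two remedies are available. First, Corollary~\ref{od}(i) applies to every element of \(\mathfrak G(\CO)\), so \(\Gamma'''\in\mathcal D(\pi)\) regardless of which good index appears. Then Theorem~\ref{ds}(iii) gives \(\Gamma''\in\mathcal D(\pi)\), and Corollary~\ref{od}(ii) gives \(\CO'=\CO(\Gamma'')\in\ScO(\pi)\). The only point to check is that \(\Gamma''\) is good, which holds by definition of \(\mathfrak G(\CO')\). Second, and more cleanly, one can bypass the choice of index and apply Lemma~\ref{mutation} together with the explicit lifts in the proof of Lemma~\ref{lem:orbit-sequence-order}(i), inducting along the mutation chain. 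We take the first remedy, which is enough, and conclude \(\overline{\ScO(\pi)^{\max}}\subset\ScO(\pi)\).
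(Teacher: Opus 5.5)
Your proof is correct and follows the paper's route. Part (i) comes from Theorem~\ref{ds}(i),(ii) through Corollary~\ref{od} and Lemma~\ref{lem:orbit-sequence-order}(ii), and part (ii) is Lemma~\ref{lem:orbit-sequence-order}(i) followed by Corollary~\ref{od}(i), Theorem~\ref{ds}(iii) and Corollary~\ref{od}(ii), so the ``obstacle'' you flag is already removed by your first remedy. One small slip: $\mathcal D(\pi)$ need not be finite, since steps with $\ell_i=0$ do not lower the rank and can be repeated; however, the set of indexes $\geqslant\Gamma$ is finite because elementary mutations preserve the length and $\sum_i\ell_i=n$, and that is all you need to obtain a maximal $\Gamma_{\max}\geqslant\Gamma$.
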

\begin{proof}
Consider the set
\[
\ScO(\pi)^{\rm max\text{-}descent}
:=
\bigcup_{\Gamma}\CO(\Gamma),
\]
where \(\Gamma\) runs over the maximal descent sequence indexes of \(\pi\).
By Theorem~\ref{ds} (ii), these indexes are good, so the orbits \(\CO(\Gamma)\)
are well-defined.  By Lemma~\ref{lem:orbit-sequence-order} and
Corollary~\ref{od}, maximal descent sequence indexes correspond exactly to
maximal rational orbits in \(\ScO(\pi)\).  Hence
\[
        \ScO(\pi)^{\rm max\text{-}descent}
        =
        \ScO(\pi)^{\max}.
\]
On the other hand, Theorem~\ref{ds} (i) implies that all \(F\)-rational orbits
in \(\ScO(\pi)^{\rm max\text{-}descent}\) lie over the same stable orbit.
Therefore
\[
        \ScO(\pi)^{\max,\rm st}
        =
        \ScO(\pi)^{\rm max\text{-}descent}
        =
        \ScO(\pi)^{\max}.
\]
This proves (i).

The second part follows from Theorem~\ref{ds} (iii), together with
Lemma~\ref{lem:orbit-sequence-order} and Corollary~\ref{od}.
\end{proof}

\subsection{Proof of Theorem \ref{ds} (i)}

We prove Theorem~\ref{ds}(i).  This consists of two assertions.  First, every
maximal descent sequence is obtained by successive first descents.  Second,
the numerical sequence
\(
        (2\ell_1,2\ell_2,\ldots)
\)
is independent of the choice of the maximal descent sequence.  

We now prove the first assertion. Let
\[
        \prll\to\prllp\to\prllpp\to\cdots
\]
be a maximal descent sequence.  Its tail after any arrow is again a
maximal descent sequence.  It is therefore enough to prove that the
first arrow of every maximal descent sequence is a first descent.

By Corollary~\ref{fdi} and Theorem~\ref{ggp}(ii), the first arrow is a
first descent once the following three facts are verified: its
\((\pm1)\)-symbol data attain the minimal values allowed by
Theorem~\ref{ggp}(i), the corresponding sign pair is the first-descent sign
pair, and its \((\ne\pm1)\)-part is the minimal one allowed by
Theorem~\ref{ggp}(i).

More precisely, we prove the following three assertions:
\begin{enumerate}
\item[(a)] for some sign pair \((\ep,\ev)\), the \((\pm1)\)-symbol part is
sharp.  Equivalently, for this fixed pair \((\ep,\ev)\), the bipartitions of \(\Lambda'_1\) and \(\Lambda'_{-1}\) are the
minimal ones allowed by the inequalities in Theorem~\ref{ggp}(i);

\item[(b)] this sign pair can be chosen to be the first-descent sign pair
\[
        (\ep,\ev)
        =
        (-\zeta({}^{\bd}\Lambda_1),\zeta({}^{\bd}\Lambda_{-1}));
\]

\item[(c)] the \(\rho\)-part is sharp.  Equivalently, for every class
\([b]\) occurring in the \((\ne\pm1)\)-part, the target partition is the
minimal choice allowed by Theorem~\ref{ggp}(i), namely
\(
        \lambda'[-b]=\lambda[b]_- .
\)
\end{enumerate}
Propositions~\ref{lem73}, \ref{lem75}, and \ref{lem74} prove (a), (b), and
(c), respectively.  Hence they prove that the first arrow is a first descent.
After these propositions, we return to the numerical assertion in
Theorem~\ref{ds}(i).

\begin{prop}\label{lem73}
The first arrow
\(
        \prll\to\prllp
\)
is sharp with respect to one of the sign pairs appearing in the conditions of
Theorem~\ref{ggp}(i) for this descent.  More explicitly, there exist
\(\ep,\ev\in\{\pm\}\) such that
\[
\begin{array}{@{}l@{\qquad}l@{}}
\Upsilon(\Lambda_{-1}')^{-\ep\ev\epsilon_{-1}\varepsilon(-1)}
        =
        \Upsilon(\Lambda_1)^{\ep}_-,
&
\Upsilon(\Lambda_{-1}')^{\ep\ev\epsilon_{-1}\varepsilon(-1)}
        =
        \Upsilon(\Lambda_1)^{-\ep},
\\[2mm]
\Upsilon(\Lambda_1')^{\epsilon_{-1}}
        =
        \Upsilon(\Lambda_{-1})^{-\ev}_-,
&
\Upsilon(\Lambda_1')^{-\epsilon_{-1}}
        =
        \Upsilon(\Lambda_{-1})^{\ev}.
\end{array}
\]
The corresponding defects are given by
\[
\begin{aligned}
&{\mathrm{def}}(\Lambda_1')
        =
        -\epsilon_{-1}\ev{\mathrm{def}}(\Lambda_{-1})
        +\epsilon_{-1},\\
&{\mathrm{def}}(\Lambda_{-1}')
        =
        -\epsilon_{-1}\ev\varepsilon(-1){\mathrm{def}}(\Lambda_1)
        -\ep\ev\epsilon_{-1}\varepsilon(-1).
\end{aligned}
\]

\end{prop}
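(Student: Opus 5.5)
We argue by contradiction, using maximality of the descent sequence
\(\prll\to\prllp\to\prllpp\to\cdots\) together with the mutation order.
By Theorem~\ref{ggp}(i), the first arrow satisfies the interlacing conditions for some pair \((\ep,\ev)\).
Each relation \(\lambda\preccurlyeq\mu\) there means that \(\lambda\) is obtained from \(\mu\) by deleting at most one box from each row.
Equivalently, \(\lambda\) is sandwiched between \(\mu_-\) and \(\mu\).
The four equalities in the statement assert that both sandwiches are attained at their extreme ends.
The top partition of \(\Lambda'\) is the \(-\)-truncation of the first partition, and the other partition of \(\Lambda'\) coincides with the second.
The defect identities are already part of Theorem~\ref{ggp}(i) and need no argument.
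So the task is to show that a non-sharp configuration cannot occur in a maximal sequence.

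\emph{Step 1 (rank bookkeeping).} Fix \((\ep,\ev)\) for the first arrow, and let \(\ell_1\) be its index.
The rank drop \(\ell_1\) is \(\ell[\ne\pm1]\) plus the drops in the \(\pm1\)-symbols, as in the proof of Corollary~\ref{fdi}.
For a fixed \((\ep,\ev)\) and fixed defects, the sharp choice is the unique one minimizing \(\operatorname{rank}(\Lambda'_1)+\operatorname{rank}(\Lambda'_{-1})\).
It removes the full first column of \(\Upsilon(\Lambda_1)^{\ep}\) and of \(\Upsilon(\Lambda_{-1})^{-\ev}\), and keeps the other partitions intact.
Any non-sharp \(\prllp\) therefore has strictly larger symbol rank, and \(\ell_1\) falls short of the sharp value by the number \(d>0\) of boxes not removed.

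\emph{Step 2 (rerouting).} Assume the first arrow is not sharp.
We build an alternative pair of arrows \(\prll\to\pi^\star\to\prllpp\) whose indices \((2\ell_1+2,b_1),(2\ell_2-2,b_2)\) satisfy \(b_1b_2=a_1a_2\).
This is one elementary mutation upward, and it contradicts maximality.
Concretely, let \(\pi^\star\) have \(\rho\)-part equal to that of \(\prllp\) and keep the same \((\ep,\ev)\).
In its bipartitions we remove one additional box from the first column, namely from a row where \(\prllp\) failed to remove one.
The sandwich conditions of Theorem~\ref{ggp}(i) for \(\prll\to\pi^\star\) then still hold.
By the defect formulas, the sign \(a=\ep\ev\er\erp\) is unchanged, so \(b_1=a_1\).

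The second arrow \(\pi^\star\to\prllpp\) is where the argument is delicate.
The conditions \(\Upsilon(\Lambda''_{-1})^{\cdots}\preccurlyeq\Upsilon(\Lambda^\star_1)^{\cdots}\) and its companions must continue to hold after one box is moved from the later step to the earlier step.
We would verify this row by row.
The box removed early in \(\pi^\star\) is exactly one that \(\prllp\to\prllpp\) was allowed to remove, because the two-step composite of sandwiches is symmetric in which step removes it.
It remains to check that \(b_2=a_2\), or more generally that \(b_1b_2=a_1a_2\).
This follows from the sign formula in Theorem~\ref{ggp}(i)(a), since the \(\epsilon_\rho\)-factors and \((\ep,\ev)\) are untouched.

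\emph{Step 3 (choosing the pair).} Theorem~\ref{ggp}(i) may be satisfied by more than one \((\ep,\ev)\).
The claim only needs sharpness for one of them, so we apply Steps 1–2 to the pair realizing the smallest possible symbol rank.
Suppose the terminal case is reached, where \(\prllp\) is already \(\mathbbm{1}\) or \(\ell_2=0\).
Then, with the convention \eqref{descent0}, the same argument rerouted through \(\CD^{\rm FJ}_{0,\cdot}\) still gives an upward mutation, since the exhausted symbol is treated as \(\binom{0}{0}\).

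\textbf{Main obstacle.} The main difficulty is Step 2: showing that the shifted box can be legally removed in the second descent.
The worst case is when \(\Upsilon(\Lambda'_1)\) and \(\Upsilon(\Lambda'_{-1})\) are exchanged under the \(\epsilon_{-1}\varepsilon(-1)\) row swaps, so the column removed at step 2 lives in the opposite partition.
I would first treat the case where no row swap occurs, by direct row-by-row comparison.
I would then reduce the swapped case to it by transposing symbols via \(\Lambda\mapsto\Lambda^t\), that is, by tensoring with \(\sgn\) in the orthogonal-side picture.
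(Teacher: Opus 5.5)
Your overall strategy is the paper's. You assume the first arrow is not sharp for $(\ep,\ev)$, reroute through a representation $\prlls$ with $\rho^\star=\rho'$, the same pair $(\ep,\ev)$ and a strictly larger first index, keep the endpoint $\prllpp$ and the product of the two signs, and contradict maximality. The gap is at the step you flag yourself. Your justification that the box moved into the first step ``is exactly one that $\prllp\to\prllpp$ was allowed to remove'' does not hold for an arbitrary second arrow.

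Put $\mu=\Upsilon(\Lambda_1)^{\ep}$ and $\lambda'=\Upsilon(\Lambda'_{-1})^{-\ep\ev\epsilon_{-1}\varepsilon(-1)}\preccurlyeq\mu$. Let $\lambda^\star$ be $\lambda'$ with one more box deleted in a row $i$ where $\lambda'_i=\mu_i$.
\begin{itemize}
\item Suppose the second arrow compares $\Upsilon(\Lambda''_1)^{-\ep}=\lambda''\preccurlyeq\lambda'$. Then the shift is legal only if the second step removed a box from row $i$. For example, $\mu=\lambda'=\lambda''=[1]$ gives $\lambda^\star=\varnothing$, and $[1]\preccurlyeq\varnothing$ fails.
\item Suppose instead, in the row-swapped situation, it compares $\lambda'\preccurlyeq\lambda''=\Upsilon(\Lambda''_1)^{\ep}$. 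Then the shift is legal only if the second step added no box in row $i$. For example, $\lambda'=[1]$ and $\lambda''=[2]$ give $\varnothing\preccurlyeq[2]$, which fails.
\end{itemize}
The same problem arises in the slots where the first step adds a vertical strip. Nothing in your argument excludes these configurations, so an elementary mutation at positions $1,2$ ending at $\prllpp$ need not exist. Since the failure occurs in both the swapped and the unswapped situations, reducing one to the other via $\Lambda\mapsto\Lambda^t$ cannot repair it.

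The missing idea is to use maximality of the tail, by induction on the length of the maximal sequence. The tail $\prllp\to\prllpp\to\cdots$ is itself a maximal descent sequence of $\prllp$; otherwise substituting a larger one would enlarge the sequence of $\prll$. So by induction its first arrow is sharp for some pair $(\epp,\evp)$, which is exactly the set of equalities \eqref{711}. Sharpness of the second arrow means it removes the entire first column in its removal slots and adds nothing in its addition slots. That is precisely what makes it legitimate to move boxes into the first step.

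The paper then replaces the first arrow at once by the sharp arrow for $(\ep,\ev)$, which may amount to several elementary mutations. It combines \eqref{711} with the inequalities \eqref{712} to get \eqref{714}, and reads off \eqref{7145}. These are the conditions of Theorem~\ref{ggp}(i) for $\prlls\to\prllpp$ with the same pair $(\epp,\evp)$, and the row swaps are absorbed by the uniform subscript bookkeeping. With \eqref{711} available your one-box shift would also work, but without it your Step~2 is unsupported.
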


\begin{proof}
Fix a maximal descent sequence
\[
        \prll\to\prllp\to\prllpp\to\cdots.
\]
If the maximal descent sequence has length one, the assertion is immediate. We may therefore assume that it has length at least two. Suppose, for contradiction, that the first arrow
\(\prll\to\prllp\) is not sharp with respect to any sign pair. We will replace
this first arrow by a sharp one and construct another irreducible
representation \(\prlls\) fitting into a diagram
\begin{equation}\label{dia73}
    \xymatrix{
& \prllp \ar[dr]^{(\ell'',a'')} & &  \\
\prll \ar[ur]^{(\ell',a')} \ar[dr]_{(\ell^\star,a^\star)}
& & \prllpp \ar[r] & \cdots \\
& \prlls \ar[ur]_{(\ell^{\star\prime},a^{\star\prime})} & &
}
\end{equation}
such that the lower path is again a descent sequence with
\[
        \ell^\star>\ell',
        \qquad
        a'a''=a^\star a^{\star\prime}.
\]
Then the lower path gives a descent sequence of \(\prll\) whose first index is
strictly larger than that of the upper maximal descent sequence, while the
square-class contribution of the first two arrows is unchanged.  This
contradicts maximality.

Let \((\ep,\ev)\) be the sign pair appearing in the conditions of
Theorem~\ref{ggp}(i) for the occurrence of \(\prllp\) in the descent
\(
        \prll\to\prllp .
\)

We now give the combinatorial verification.  We first apply the induction
hypothesis to the tail, then replace the first arrow by its sharp version, and
finally verify that \(\prlls\to\prllpp\) is still a descent.

{\it Step 1.} Apply the induction hypothesis to the tail.

By the tail we mean the remaining sequence
\[
        \prllp\to\prllpp\to\cdots
\]
obtained by deleting the first arrow.  This is again a maximal descent
sequence; otherwise, replacing the tail by a larger descent sequence of
\(\prllp\) would give a descent sequence of \(\prll\) larger than the fixed
one.

For the \(-1\)-symbol $\Lambda_{-1}'$ in $\prllp$, the corresponding representation \(\pi_{\Lambda_{-1}'}\) is a representation of
\(\RO_{2m}^{-\ep}(\mathbbm{k})\). Then the sign
\(\epsilon_{-1}\) in Theorem~\ref{ggp}(i) is \(-\ep\) for this tail.  By the
induction hypothesis, the first arrow of this tail,
\(
        \prllp\to\prllpp,
\)
is sharp with respect to a sign pair \((\epp,\evp)\).  Thus
\begin{equation}\label{711}
\begin{array}{@{}l@{\qquad}l@{}}
\Upsilon(\Lambda_{-1}'')^{\ep\epp\evp\varepsilon(-1)}
        =
        \Upsilon(\Lambda_1')^{\epp}_-,
&
\Upsilon(\Lambda_{-1}'')^{-\ep\epp\evp\varepsilon(-1)}
        =
        \Upsilon(\Lambda_1')^{-\epp},
\\[2mm]
\Upsilon(\Lambda_1'')^{-\ep}
        =
        \Upsilon(\Lambda_{-1}')^{-\evp}_-,
&
\Upsilon(\Lambda_1'')^{\ep}
        =
        \Upsilon(\Lambda_{-1}')^{\evp}.
\end{array}
\end{equation}
The corresponding defect relations are
\[
\begin{aligned}
& {\mathrm{def}}(\Lambda_1'')
        =
        \ep\evp{\mathrm{def}}(\Lambda_{-1}')-\ep, \\
& {\mathrm{def}}(\Lambda_{-1}'')
        =
        \ep\evp\varepsilon(-1){\mathrm{def}}(\Lambda_1')
        +\ep\epp\evp\varepsilon(-1).
\end{aligned}
\]

{\it Step 2.} Record the original first arrow.

With respect to the sign pair \((\ep,\ev)\) fixed above, the symbol
inequalities in Theorem~\ref{ggp}(i) for the descent
\(\prll\to\prllp\) are
\begin{equation}\label{712}
\begin{array}{@{}l@{\qquad}l@{}}
\Upsilon(\Lambda_{-1}')^{-\ep\ev\epsilon_{-1}\varepsilon(-1)}
        \preccurlyeq
        \Upsilon(\Lambda_1)^{\ep},
&
\Upsilon(\Lambda_1)^{-\ep}
        \preccurlyeq
        \Upsilon(\Lambda_{-1}')^{\ep\ev\epsilon_{-1}\varepsilon(-1)},
\\[2mm]
\Upsilon(\Lambda_1')^{\epsilon_{-1}}
        \preccurlyeq
        \Upsilon(\Lambda_{-1})^{-\ev},
&
\Upsilon(\Lambda_{-1})^{\ev}
        \preccurlyeq
        \Upsilon(\Lambda_1')^{-\epsilon_{-1}}.
\end{array}
\end{equation}
Here \(\epsilon_{-1}\) is determined by
\(
\pi_{\Lambda_{-1}}
\in
\Irr\bigl(\RO_{2m}^{\epsilon_{-1}}(\mathbbm{k})\bigr).
\)  The corresponding defect equalities are
\[
\begin{aligned}
& {\mathrm{def}}(\Lambda_1')
        =
        -\epsilon_{-1}\ev{\mathrm{def}}(\Lambda_{-1})
        +\epsilon_{-1}, \\
& {\mathrm{def}}(\Lambda_{-1}')
        =
        -\epsilon_{-1}\ev\varepsilon(-1){\mathrm{def}}(\Lambda_1)
        -\ep\ev\epsilon_{-1}\varepsilon(-1).
\end{aligned}
\]

By the contradiction assumption, the first arrow is not sharp with respect to
this pair \((\ep,\ev)\).  We now replace it by the sharp arrow with respect to
the same pair.  Let \(\prlls\) be the resulting representation.  Thus
\(\rho^\star=\rho'\), and the symbols of \(\prlls\) are defined by
\begin{equation}\label{713}
\begin{array}{@{}l@{\qquad}l@{}}
\Upsilon(\Lambda_{-1}^\star)^{-\ep\ev\epsilon_{-1}\varepsilon(-1)}
        =
        \Upsilon(\Lambda_1)^{\ep}_-,
&
\Upsilon(\Lambda_{-1}^\star)^{\ep\ev\epsilon_{-1}\varepsilon(-1)}
        =
        \Upsilon(\Lambda_1)^{-\ep},
\\[2mm]
\Upsilon(\Lambda_1^\star)^{\epsilon_{-1}}
        =
        \Upsilon(\Lambda_{-1})^{-\ev}_-,
&
\Upsilon(\Lambda_1^\star)^{-\epsilon_{-1}}
        =
        \Upsilon(\Lambda_{-1})^{\ev}.
\end{array}
\end{equation}
Their defects are
\[
\begin{aligned}
{\mathrm{def}}(\Lambda_1^\star)
        &=
        -\epsilon_{-1}\ev{\mathrm{def}}(\Lambda_{-1})
        +\epsilon_{-1},\\
{\mathrm{def}}(\Lambda_{-1}^\star)
        &=
        -\epsilon_{-1}\ev\varepsilon(-1){\mathrm{def}}(\Lambda_1)
        -\ep\ev\epsilon_{-1}\varepsilon(-1).
\end{aligned}
\]
Since the first arrow in the upper path of \eqref{dia73} is not sharp
with respect to \((\ep,\ev)\), at least one of the four inequalities
in \eqref{712} is not an equality. Hence the sharp target \(\prlls\)
has strictly smaller rank than \(\prllp\), and therefore
\[
        \ell^\star>\ell'.
\]

{\it Step 3.} Verify that \(\prlls\to\prllpp\) is still a descent.

We do not claim that $\prlls\to\prllpp$ is sharp.  We only verify that it satisfies the
criterion in Theorem~\ref{ggp}(i). Since \(\rho^\star=\rho'\), the conditions on the
\((\ne\pm1)\)-part for the lower arrow are exactly the same as for
\(\prllp\to\prllpp\).  It therefore remains only to check the symbol
inequalities and the defect equalities.

We first combine the two arrows in the upper path.  The equalities in
\eqref{711} identify the relevant bipartitions of \(\Lambda_{\pm1}''\) with
those of \(\Lambda_{\pm1}'\), and the inequalities in \eqref{712} then compare
them with the bipartitions of \(\Lambda_{\pm1}\).  This gives
\begin{equation}\label{714}
\begin{array}{@{}l@{\qquad}l@{}}
\Upsilon(\Lambda_{-1}'')^{\ep\epp\evp\varepsilon(-1)}
        \preccurlyeq
        \Upsilon(\Lambda_{-1})^{-\epp\ev\epsilon_{-1}}_{-\epp\epsilon_{-1}},
&
\Upsilon(\Lambda_{-1})^{\epp\ev\epsilon_{-1}}_{\epp\epsilon_{-1}}
        \preccurlyeq
        \Upsilon(\Lambda_{-1}'')^{-\ep\epp\evp\varepsilon(-1)},
\\[2mm]
\Upsilon(\Lambda_1'')^{-\ep}
        \preccurlyeq
        \Upsilon(\Lambda_1)^{\ev\evp\epsilon_{-1}\varepsilon(-1)}
        _{-\ep\ev\evp\epsilon_{-1}\varepsilon(-1)},
&
\Upsilon(\Lambda_1)^{-\ev\evp\epsilon_{-1}\varepsilon(-1)}
        _{\ep\ev\evp\epsilon_{-1}\varepsilon(-1)}
        \preccurlyeq
        \Upsilon(\Lambda_1'')^{\ep}.
\end{array}
\end{equation}
The passage from \eqref{712} to \eqref{714} amounts to bookkeeping of the row signs.  For example, consider the first inequality.  By the
first equality in \eqref{711}, its left-hand side can be written as
\[
        \Upsilon(\Lambda_{-1}'')^{\ep\epp\evp\varepsilon(-1)}
        =
        \Upsilon(\Lambda_1')^{\epp}_{-},
\]
one needs to compare \(\Upsilon(\Lambda_1')^{\epp}_{-}\) with the
corresponding row of \(\Upsilon(\Lambda_{-1})\). This is obtained from \eqref{712} as
\[
\begin{cases}
\Upsilon(\Lambda_1')^{\epsilon_{-1}}_{-}
        \preccurlyeq
        \Upsilon(\Lambda_{-1})^{-\ev}_{-},
        & \epp=\epsilon_{-1},\\[1mm]
\Upsilon(\Lambda_1')^{-\epsilon_{-1}}_{-}
        \preccurlyeq
        \Upsilon(\Lambda_{-1})^{\ev}_{+},
        & \epp=-\epsilon_{-1}.
\end{cases}
\]
In the case of $\epp=-\epsilon_{-1}$, we use the equivalent deleted-row form of the fourth
inequality in \eqref{712}, according to the definition of
\(\preccurlyeq\).  In both cases this is the uniform inequality
\[
        \Upsilon(\Lambda_1')^{\epp}_{-}
        \preccurlyeq
        \Upsilon(\Lambda_{-1})^{-\epp\ev\epsilon_{-1}}
        _{-\epp\epsilon_{-1}}.
\]
The other three inequalities in \eqref{714} are obtained in the same way.

We now use the definition of the sharp replacement \(\prlls\).  For example,
the first inequality in \eqref{714} becomes
\[
\Upsilon(\Lambda_{-1}'')^{\ep\epp\evp\varepsilon(-1)}
        \preccurlyeq
        \Upsilon(\Lambda_{-1})^{-\epp\ev\epsilon_{-1}}_{-\epp\epsilon_{-1}}
        =
        \Upsilon(\Lambda_1^\star)^{\epp},
\]
by \eqref{713}.  The other three inequalities are obtained in the same way.
Hence \eqref{714} gives
\begin{equation}\label{7145}
\begin{array}{@{}l@{\qquad}l@{}}
\Upsilon(\Lambda_{-1}'')^{\ep\epp\evp\varepsilon(-1)}
        \preccurlyeq
        \Upsilon(\Lambda_1^\star)^{\epp},
&
\Upsilon(\Lambda_1^\star)^{-\epp}
        \preccurlyeq
        \Upsilon(\Lambda_{-1}'')^{-\ep\epp\evp\varepsilon(-1)},
\\[2mm]
\Upsilon(\Lambda_1'')^{-\ep}
        \preccurlyeq
        \Upsilon(\Lambda_{-1}^\star)^{-\evp},
&
\Upsilon(\Lambda_{-1}^\star)^{\evp}
        \preccurlyeq
        \Upsilon(\Lambda_1'')^{\ep}.
\end{array}
\end{equation}
Moreover, \(\pi_{\Lambda_{-1}^{\star}}\in\Irr\bigl(\RO_{2m^\star}^{-\ep}(\mathbbm{k})\bigr),\) so the sign \(\epsilon_{-1}\) attached to the source of the lower arrow in Theorem~\ref{ggp}(i) is \(-\ep\). These are precisely the symbol inequalities in Theorem~\ref{ggp}(i) for 
\(
        \prlls\to\prllpp
\)
with the sign pair \((\epp,\evp)\).

It remains to check the defect equalities for this lower arrow.  By the
construction of \(\prlls\), the symbols \(\Lambda_1^\star\) and
\(\Lambda_1'\) have the same defect, and the symbols
\(\Lambda_{-1}^\star\) and \(\Lambda_{-1}'\) have the same defect.  Therefore
the defect relations following \eqref{711} can be rewritten as
\[
\begin{aligned}
{\mathrm{def}}(\Lambda_1'')
        &=
        \ep\evp{\mathrm{def}}(\Lambda_{-1}^\star)-\ep,\\
{\mathrm{def}}(\Lambda_{-1}'')
        &=
        \ep\evp\varepsilon(-1){\mathrm{def}}(\Lambda_1^\star)
        +\ep\epp\evp\varepsilon(-1).
\end{aligned}
\]
Thus all the conditions in Theorem~\ref{ggp}(i) hold, and hence
\(\prllpp\) occurs in a descent of \(\prlls\).

We have therefore constructed a descent sequence along the lower path
with \(\ell^\star>\ell'\).  Since the two paths in \eqref{dia73} have
the same endpoint, use the same sign pairs in the two arrows, and have
the same initial and final \(\rho\)-parts, Theorem~\ref{ggp}(i) gives
\(a'a''=a^\star a^{\star\prime}\); hence the lower two-step index is
obtained from the upper one by a positive number of elementary sequence
mutations, contradicting maximality.

\end{proof}

Applying Alvis--Curtis duality to the equalities in
Proposition~\ref{lem73} gives the following equivalent description. 

\begin{enumerate}

\item Take the Alvis--Curtis duals
\({}^{\bd}\Lambda_{-1}\) and \({}^{\bd}\Lambda_1\), respectively.

\item Remove the largest entry in the top row or bottom row according to the
signs: for \({}^{\bd}\Lambda_{-1}\), remove from the top row if
\(\ev=+\) and from the bottom row if \(\ev=-\); for
\({}^{\bd}\Lambda_1\), remove from the top row if \(\ep=-\) and from the
bottom row if \(\ep=+\).

\item If
\(
        \epsilon_{-1}\ev=+
\)
in the construction of \(\Lambda_1'\), swap the two rows.  If
\(
        \epsilon_{-1}\ev\varepsilon(-1)=+
\)
in the construction of \(\Lambda_{-1}'\), swap the two rows.

\item Take the Alvis--Curtis dual again.
\end{enumerate}
Recall that \(\zeta({}^{\bd}\Lambda)\) denotes the sign of a row
containing a largest entry of \({}^{\bd}\Lambda\). If the two largest entries are equal, we
choose the value of \(\zeta({}^{\bd}\Lambda)\) corresponding to the row
used by the descent under consideration.

By Proposition~\ref{lem73}, once a sign pair \((\ep,\ev)\) is fixed, the sharp descent removes the largest entry from the row encoded by \(-\ep\) in
\({}^{\bd}\Lambda_1\), and from the row encoded by \(\ev\) in
\({}^{\bd}\Lambda_{-1}\).  Hence the removed entries are largest entries of
the whole symbols precisely when
\[
        (\ep,\ev)
        =
        (-\zeta({}^{\bd}\Lambda_1),\zeta({}^{\bd}\Lambda_{-1})).
\]
The next proposition proves that this is the sign pair forced by maximality.

\begin{prop}\label{lem75}
Let \(\prll\in \Irr(\Sp_{2n}(\mathbbm{k}))\), and let
\[
        \prll\xrightarrow{(\ell,a)}\prllp\to\prllpp\to\cdots
\]
be a maximal descent sequence.  Let \((\ep,\ev)\) be a sign pair for the first
arrow satisfying Proposition~\ref{lem73}.  With the tie convention above, one
has
\[
        (\ep,\ev)
        =
        (-\zeta({}^{\bd}\Lambda_1),\zeta({}^{\bd}\Lambda_{-1})).
\]
\end{prop}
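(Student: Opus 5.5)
We argue by contradiction, following the exchange strategy of Proposition~\ref{lem73}: a wrong sign pair lets us build a competing two-step path through a different intermediate representation, with strictly larger first index and the same sign product. The two components of the sign pair are handled separately; we describe the $1$-part, the $(-1)$-part being symmetric.

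\emph{Reduction.} By Proposition~\ref{lem73} the first arrow is sharp with respect to $(\ep,\ev)$. So $\prllp$ arises from the four-step recipe stated after Proposition~\ref{lem73}: remove the largest entry from the row of ${}^{\bd}\Lambda_1$ encoded by $-\ep$, and the largest entry from the row of ${}^{\bd}\Lambda_{-1}$ encoded by $\ev$. Suppose $\ep\ne-\zeta({}^{\bd}\Lambda_1)$ and there is no tie. Then the removed entry $y$ of ${}^{\bd}\Lambda_1$ is the top of the other row and satisfies $y<y_1$, where $y_1$ is the global maximum. The rank-drop computation in the proof of Corollary~\ref{fdi-comp} gives
\[
\ell[1]=y-\tfrac{M-1}{2}
\]
for this arrow. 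Since $y<y_1$, this is strictly smaller than the contribution $y_1-\tfrac{M-1}{2}$ obtained with the correct sign.

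\emph{Construction of the competing path.}
\begin{enumerate}
\item Define $\prlls$ as the sharp descendant with respect to $(-\ep,\ev)$: it has the same $\rho$-part and the same $(-1)$-data as $\prllp$, but removes $y_1$ instead of $y$. Then $\ell^\star>\ell$.
\item Check that $\prllpp$ still occurs in a descent of $\prlls$, using Theorem~\ref{ggp}(i). By Step~1 of Proposition~\ref{lem73}, the arrow $\prllp\to\prllpp$ is sharp with respect to some pair $(\epp,\evp)$. On the ${}^{\bd}$-side it therefore removes one more entry from the symbol coming from $\Lambda_1$, and on the original path the entry $y_1$ is removed at that second step, or later.
\item Compare the two orders of removal. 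On the lower path $y_1$ is removed first and $y$ is removed later. After the row-swap bookkeeping of Step~3 of Proposition~\ref{lem73}, both paths reach the same two-step residual symbol. The needed interlacing relations $\preccurlyeq$ then follow as in \eqref{714}–\eqref{7145}, and the defect identities carry over because $\Lambda^\star_{\pm1}$ and $\Lambda'_{\pm1}$ have the same defects up to the sign change.
\end{enumerate}
If $y_1$ is not removed at the second step, the constraint from the sharp tail shows that $y_1$ must be the next removal anyway, which reduces to the same comparison.

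\emph{Conclusion and the main obstacle.} Theorem~\ref{ggp}(i) relates the labels through $a=\ep\ev\er\erp$. Flipping $\ep$ on the first arrow is compensated on the second arrow, so $a'a''=a^\star a^{\star\prime}$. The lower path is therefore an elementary mutation of the upper one, contradicting maximality. The $(-1)$-part is handled identically: $\ev$ is flipped and the corresponding $1$-row transposition is made, as in the proof of Corollary~\ref{fdi}, which leaves the rank unchanged. The tie case is absorbed by the tie convention. The main obstacle is Step~2: showing that the tail arrow still satisfies the row-sign compatibilities after $\ep$ is flipped, since $\epsilon_{-1}$ for $\prlls$ becomes $\ep$ rather than $-\ep$. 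This changes which rows are swapped in the recipe, and that must be tracked carefully, row by row, against the sharp equalities \eqref{711}.
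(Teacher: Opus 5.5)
Your overall strategy is the right one: replace the first arrow by the sharp arrow with the corrected sign, show that $\prlls$ still descends to $\prllpp$, and compare sign products. Correcting one wrong component at a time is also legitimate, since either correction alone already gives $\ell^\star>\ell$.

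\textbf{First gap: the tail need not remove $y_1$.} The comparison rests on the upper second arrow removing $y_1$, the unique largest entry of ${}^{\bd}\Lambda'_{-1}$ (which is ${}^{\bd}\Lambda_1$ with $y$ deleted, up to transposition). You never establish this. Step~1 of Proposition~\ref{lem73} only gives sharpness of the tail. That says the tail arrow removes the head of \emph{some} row; it may just as well remove the head $z<y$ of the row not containing $y_1$. Your fallback does not repair this. If $y_1$ is removed only at a later step, the upper path removes $\{y,z\}$ in its first two steps, while any lower path starting with $y_1$ removes $y_1$ and one more entry. So the two-step endpoints cannot coincide, and there is no two-term mutation to compare. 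The missing ingredient is induction on the length of maximal descent sequences, as in the paper. The tail is itself maximal (otherwise replace it), so the inductive hypothesis says its first arrow has sign pair $(-\zeta({}^{\bd}\Lambda_1'),\zeta({}^{\bd}\Lambda_{-1}'))$. Hence it removes $y_1$ and the largest entry of ${}^{\bd}\Lambda_1'$.

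\textbf{Second gap: the second sign pair must change.} The step you postpone as the main obstacle is the actual content of the proof, and your sketch of it is wrong as stated. After flipping $\ep$, the intermediate $\prlls$ has $\epsilon_{-1}=\ep$. With $s=\epsilon_{-1}\ev\varepsilon(-1)$, one has $\mathrm{def}(\Lambda_{-1}^\star)=-s(\mathrm{def}(\Lambda_1)-\ep)$ but $\mathrm{def}(\Lambda'_{-1})=-s(\mathrm{def}(\Lambda_1)+\ep)$. So the defects do not carry over. The relations \eqref{714}--\eqref{7145} also do not transfer, because they use the same pair on both first arrows and the same $(\epp,\evp)$ on both second arrows. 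Indeed, keeping $(\epp,\evp)$ on the lower second arrow would force $\mathrm{def}(\Lambda_1'')=2\ep-\mathrm{def}(\Lambda_1)$ in Theorem~\ref{ggp}(i), whereas actually $\mathrm{def}(\Lambda_1'')=\mathrm{def}(\Lambda_1)$.

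In the case you wrote out, the pair $(\epp,-\evp)$ works for the lower second arrow:
\begin{itemize}
\item both paths remove the same entries from both initial symbols;
\item the lower second-step row switches $\ep(-\evp)$ and $\ep(-\evp)\varepsilon(-1)$ equal the upper ones $-\ep\evp$ and $-\ep\evp\varepsilon(-1)$;
\item hence the sharp descendant of $\prlls$ for $(\epp,-\evp)$ with $\rho$-part $\rho''$ is $\prllpp$;
\item the sign product is preserved: $a^\star a^{\star\prime}=(-\ep\ev)(-\epp\evp)\er\epsilon_{\rho''}=a'a''$.
\end{itemize}
If $\ev$ is flipped instead, both the first-step and second-step switches reverse, and the same pair works.

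This is exactly what the paper does, except that it corrects all wrong components at once. It determines the lower second pair $(\alpha,\beta)$ by matching the pulled-back removed rows; its formula for $\beta$ reduces to $-\evp$ when only one component was wrong. It then checks that the total row-switch signs, and hence the defects, agree.
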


\begin{proof}
We argue by induction on the length of maximal descent sequences.  Suppose,
for contradiction, that \((\ep,\ev)\) does not satisfy the conclusion of the
proposition.  We shall construct two descent sequences
\[
\xymatrix{
& \prllp \ar[dr]^{(\ell'',a'')} & & \\
\prll \ar[ur]^{(\ell,a)} \ar[dr]_{(\ell^\star,a^\star)}
& & \prllpp \ar[r] & \cdots \\
& \prlls \ar[ur]_{(\ell^{\star\prime},a^{\star\prime})} & &
}
\]
where the upper path is the fixed maximal descent sequence.
For the bottom first arrow, take
\[
        (\ep^\star,\ev^\star)
        =
        (-\zeta({}^{\bd}\Lambda_1),\zeta({}^{\bd}\Lambda_{-1})).
\]
Let
\(
        \prll\xrightarrow{(\ell^\star,a^\star)}\prlls
\)
be the sharp descent with respect to \((\ep^\star,\ev^\star)\), in the sense
of Proposition~\ref{lem73}, taking the same \((\ne\pm1)\)-part as in
\(\prllp\).  Since the assumed equality of sign pairs fails, the upper first
arrow does not remove a largest entry of the whole symbol in at least one of
\({}^{\bd}\Lambda_1\) and \({}^{\bd}\Lambda_{-1}\), whereas the bottom first
arrow does.  Hence
\[
        \ell^\star>\ell .
\]
If the fixed maximal sequence has length one, this already contradicts
maximality.  Thus we may assume that the tail is nonempty.  The tail is again
maximal; otherwise replacing it by a larger tail would give a larger descent
sequence of \(\prll\) with the same first arrow.

It remains to prove that \(\prlls\) descends to \(\prllpp\).  The
\(\rho\)-part causes no difficulty: the conditions on the \((\ne\pm1)\)-part
in Theorem~\ref{ggp}(i) are unchanged when only the \((\pm1)\)-symbol sign
pair is changed.  We compare only the \((\pm1)\)-symbol
part.

We use the recipe following Proposition~\ref{lem73} to read off, at each
step, the row from which the largest entry is removed and the corresponding
row-switch sign.  With our sign convention, when a row chosen after the first
step is viewed in the initial symbol, its label is multiplied by the
row-switch sign of the first step.  If the same row occurs twice, the two
entries are removed successively from that row.

Since the tail
\[
        \prllp\xrightarrow{(\ell'',a'')}\prllpp\to\cdots
\]
is maximal, the induction hypothesis gives the sign pair
\[
        (-\zeta({}^{\bd}\Lambda_1'),\zeta({}^{\bd}\Lambda_{-1}'))
\]
for its first arrow.

For the upper path, the rows of the initial symbols from which entries are
removed are
\[
\begin{array}{c|c|c}
\text{initial symbol} & \text{first removed row}
& \text{second removed row, pulled back} \\ \hline
{}^{\bd}\Lambda_1
& -\ep
& \epsilon_{-1}\ev\varepsilon(-1)\zeta({}^{\bd}\Lambda_{-1}') \\[1mm]
{}^{\bd}\Lambda_{-1}
& \ev
& \epsilon_{-1}\ev\zeta({}^{\bd}\Lambda_1') .
\end{array}
\]
Here \(\epsilon_{-1}\ev\varepsilon(-1)\) and \(\epsilon_{-1}\ev\) are the
row-switch signs in the first arrow.

Let \((\alpha,\beta)\) be the sign pair for a possible second arrow from
\(\prlls\).  
For the bottom path, the corresponding rows are
\[
\begin{array}{c|c|c}
\text{initial symbol} & \text{first removed row}
& \text{second removed row, pulled back} \\ \hline
{}^{\bd}\Lambda_1
& \zeta({}^{\bd}\Lambda_1)
& \epsilon_{-1}\zeta({}^{\bd}\Lambda_{-1})\varepsilon(-1)\beta \\[1mm]
{}^{\bd}\Lambda_{-1}
& \zeta({}^{\bd}\Lambda_{-1})
& \epsilon_{-1}\zeta({}^{\bd}\Lambda_{-1})(-\alpha).
\end{array}
\]

For \({}^{\bd}\Lambda_1\), the upper path selects the two rows
\[
        -\ep,\qquad
        \epsilon_{-1}\ev\varepsilon(-1)\zeta({}^{\bd}\Lambda_{-1}'),
\]
whereas the bottom path first selects the row
\(\zeta({}^{\bd}\Lambda_1)\).  Thus, if the bottom endpoint is to be
\(\prllpp\), the second removed row, pulled back to the initial symbol, must be
the row which makes the two-row multiset agree with the two rows in the upper
table.  Hence
\[
        \epsilon_{-1}\zeta({}^{\bd}\Lambda_{-1})
        \varepsilon(-1)\beta
        =
        -\ep\zeta({}^{\bd}\Lambda_1)\,
        \epsilon_{-1}\ev\varepsilon(-1)
        \zeta({}^{\bd}\Lambda_{-1}'),
\]
and therefore
\[
        \beta
        =
        -\ep\ev\,
        \zeta({}^{\bd}\Lambda_1)
        \zeta({}^{\bd}\Lambda_{-1})
        \zeta({}^{\bd}\Lambda_{-1}').
\]

Similarly, for \({}^{\bd}\Lambda_{-1}\), the upper path selects the two rows
\[
        \ev,\qquad
        \epsilon_{-1}\ev\zeta({}^{\bd}\Lambda_1'),
\]
whereas the bottom path first selects the row
\(\zeta({}^{\bd}\Lambda_{-1})\).  Therefore the second removed row, pulled back
to the initial symbol, must be the row which makes the two-row multiset agree
with the two rows in the upper table, and we get
\[
        \epsilon_{-1}\zeta({}^{\bd}\Lambda_{-1})(-\alpha)
        =
        \ev\zeta({}^{\bd}\Lambda_{-1})\,
        \epsilon_{-1}\ev\zeta({}^{\bd}\Lambda_1').
\]
Thus
\[
        \alpha=-\zeta({}^{\bd}\Lambda_1').
\]
We therefore choose the bottom second arrow to have sign pair
\[
        (\ep^{\star\prime},\ev^{\star\prime})
        =
        \left(
        -\zeta({}^{\bd}\Lambda_1'),
        -\ep\ev\,
        \zeta({}^{\bd}\Lambda_1)
        \zeta({}^{\bd}\Lambda_{-1})
        \zeta({}^{\bd}\Lambda_{-1}')
        \right).
\]
Define a representation \(\pi^{\star\prime}\) as follows.  Its symbols
are obtained by taking the sharp choice associated with sign pair $(\ep^{\star\prime},\ev^{\star\prime})$,
and its \(\rho\)-part is the same as that of \(\prllpp\). The criterion in
Theorem~\ref{ggp}(i) then gives a descent from \(\prlls\) to  \(\pi^{\star\prime}\).  Thus \(\pi^{\star\prime}\) is the endpoint of
the lower path, and we now prove that
\(\pi^{\star\prime}=\prllpp\). By construction, the upper and lower
paths remove the same entries from both initial symbols.  Their row-switch
signs also agree.  Indeed, for the symbols coming from
\({}^{\bd}\Lambda_1\) and \({}^{\bd}\Lambda_{-1}\), the total row-switch
signs along the upper path are, respectively,
\[
        \epsilon_{-1}\ev\varepsilon(-1)\cdot
        (-\ep)\zeta({}^{\bd}\Lambda_{-1}')
\quad\text{and}\quad
        \epsilon_{-1}\ev\cdot
        (-\ep)\zeta({}^{\bd}\Lambda_{-1}')\varepsilon(-1).
\]
Both are equal to
\[
        \epsilon_{-1}\ev\cdot
        (-\ep)\zeta({}^{\bd}\Lambda_{-1}')\varepsilon(-1).
\]
The corresponding total row-switch signs along the bottom path are,
respectively,
\[
        \epsilon_{-1}\zeta({}^{\bd}\Lambda_{-1})\varepsilon(-1)
        \cdot \zeta({}^{\bd}\Lambda_1)\ev^{\star\prime}
\quad\text{and}\quad
        \epsilon_{-1}\zeta({}^{\bd}\Lambda_{-1})
        \cdot \zeta({}^{\bd}\Lambda_1)\ev^{\star\prime}\varepsilon(-1).
\]
Both are equal to
\[
        \epsilon_{-1}\zeta({}^{\bd}\Lambda_{-1})\cdot
        \zeta({}^{\bd}\Lambda_1)\ev^{\star\prime}\varepsilon(-1),
\]
which, by the definition of \(\ev^{\star\prime}\), is the same as the upper
total row-switch sign.

The two defect equalities in Theorem~\ref{ggp}(i) depend only on the
initial defects and the corresponding row-switch signs.  Since the total
row-switch signs along the two paths have just been shown to agree, the
defects at the lower endpoint are equal to those of
\(\Lambda_1''\) and \(\Lambda_{-1}''\). Hence the
bottom endpoint $\pi^{\star\prime}$ has the same \(\rho\)-part, the same removed entries, the
same row-switch signs, and the same defects as \(\prllpp\).  Therefore \(\pi^{\star\prime}=\prllpp\). By the preceding sign comparison, the products of
the two descent signs along the two paths are equal. Together with
\(\ell^\star>\ell\), this shows that the lower two-step index is obtained
from the upper one by a positive number of elementary sequence
mutations, contradicting maximality. The proposition follows.
\end{proof}

We next treat the \((\ne\pm1)\)-part.  The following proposition is the
corresponding sharpness statement for the \(\rho\)-part.

\begin{prop}\label{lem74}
Let \(\prll\in \Irr(\Sp_{2n}(\mathbbm{k}))\), and let
\[
        \prll\xrightarrow{(\ell,a)}\prllp\to\prllpp\to\cdots
\]
be a maximal descent sequence.  Write
\[
        \rho=\prod_{[b]}\pi[b],
        \qquad
        \rho'=\prod_{[b]}\pi'[b],
\]
where \([b]\) runs over the classes occurring in the \((\ne\pm1)\)-part, and
\(\pi[b]\) and \(\pi'[b]\) correspond to the partitions
\(\lambda[b]\) and \(\lambda'[b]\), respectively.  Then, for every such class
\([b]\),
\[
        \lambda'[-b]=\lambda[b]_- .
\]
\end{prop}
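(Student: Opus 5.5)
The plan follows the pattern of Propositions~\ref{lem73} and~\ref{lem75}. We argue by contradiction and by induction on the length of the maximal descent sequence. If the sequence has length one, then \(\prllp=\mathbbm{1}\) has trivial \(\rho\)-part. Theorem~\ref{ggp}(i)(b) then forces every \(\lambda[b]\) to be close to the empty partition. Hence \(\lambda[b]_-\) is empty, and the claim holds. So assume the tail \(\prllp\to\prllpp\to\cdots\) is nonempty. As before, the tail is maximal. By induction, together with Propositions~\ref{lem73} and~\ref{lem75}, its first arrow is sharp on the \(\rho\)-part, so \(\lambda''[b]=\lambda'[-b]_-\) for every class.

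Now suppose some class has \(\lambda'[-b]\neq\lambda[b]_-\). We build a diagram of the same shape as \eqref{dia73}. Keep the \((\pm1)\)-symbol data of the first arrow exactly as in \(\prllp\), including its sign pair. Replace the \(\rho\)-part by \(\rho^\star\), defined by \(\lambda^\star[-b]=\lambda[b]_-\) for all \([b]\). This is the minimal choice allowed by Theorem~\ref{ggp}(i)(b). Indeed, closeness or 2-transversality forces \(\lambda'[-b]\) to contain \(\lambda[b]_-\) boxwise, and \(\lambda[b]_-\) itself is admissible: removing the first column gives a transverse partner, exactly as in the \(\lambda^t,\lambda^t_\star\) remark of Section~\ref{sec:L-s}. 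Let \(\prlls\) be the resulting representation. By Theorem~\ref{ggp}(i), \(\prlls\) occurs in a descent of \(\prll\). Its rank is strictly smaller than that of \(\prllp\), because \(\widetilde{\#[b]}\,(|\lambda'[-b]|-|\lambda[b]_-|)>0\) for the offending class. Therefore \(\ell^\star>\ell\).

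The key step, which I expect to be the main obstacle, is to show that \(\prlls\) still descends to \(\prllpp\) with the same sign pair \((\epp,\evp)\) as the upper second arrow. The symbol conditions are unchanged, since the \((\pm1)\)-data are untouched. What remains is to check condition (b) of Theorem~\ref{ggp}(i) between \(\lambda^\star[-b]=\lambda[b]_-\) and \(\lambda''[b]=\lambda'[-b]_-\). I plan to use two facts. First, \(\lambda[b]\) and \(\lambda'[-b]\) are close, or 2-transverse. Second, deleting the first column is monotone and shifts parts by at most one. Together these should give that \(\lambda[b]_-\) and \(\lambda'[-b]_-\) are again close, respectively 2-transverse. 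For 2-transversality the main care is the evenness of the common parts. There I would first try to show that the common parts of \(\lambda[b]_-\) and \(\lambda'[-b]_-\) are the common parts of \(\lambda[b]\) and \(\lambda'[-b]\), decreased by one. If that fails, I would work with the transposed description: closeness means that \(\lambda^t\) and \(\lambda'^t\) interlace, and deleting the first column deletes the first entry of the transpose, which preserves interlacing.

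Finally, compare the signs. The descent signs \(a\) in Theorem~\ref{ggp}(i) depend on the \(\rho\)-part only through \(\er\erp\). Along the upper path the product of the two descent signs involves \(\er\erp\cdot\epsilon_{\rho'}\epsilon_{\rho''}=\er\epsilon_{\rho''}\). Along the lower path it involves \(\er\epsilon_{\rho^\star}\cdot\epsilon_{\rho^\star}\epsilon_{\rho''}=\er\epsilon_{\rho''}\). The sign pairs used in the two paths agree, so \(a'a''=a^\star a^{\star\prime}\). Combined with \(\ell^\star>\ell\), this means the lower two-step index is obtained from the upper one by elementary sequence mutations. This contradicts maximality and proves the proposition.
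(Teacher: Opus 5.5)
Your argument is the paper's own proof of this proposition in slightly expanded form. The shared steps are: sharpen the \(\rho\)-part to \(\lambda^\star[-b]=\lambda[b]_-\) while freezing the \((\pm1)\)-data and the sign pair; check that \(\prllpp\) is still a descendant of \(\prlls\) because deleting first columns preserves closeness and 2-transversality; and cancel the intermediate factors \(\epsilon_{\rho'}\), \(\epsilon_{\rho^\star}\) in the product of the two descent signs. Your first route for the 2-transversality check is the right one: the nonzero common parts of \(\lambda[b]_-\) and \(\lambda'[-b]_-\) are exactly the common parts \(\geqslant 2\) of \(\lambda[b]\) and \(\lambda'[-b]\), lowered by one. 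Discard the fallback, since closeness is not interlacing of transposes (\([2]\) and \([1,1]\) are close, but their transposes \([1,1]\) and \([2]\) do not interlace).

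There is one step that neither you nor the paper checks. Theorem~\ref{ggp}(i) is stated only for \(n\geqslant n'\), so the lower second arrow requires \(n^\star\geqslant n''\), that is, \(\ell''\geqslant\ell^\star-\ell\). The same inequality is needed for the lower index to be reachable by elementary mutations, which demand \(\ell_{i+1}>0\). It does not follow from the conditions you verify. Since the tail begins with a first descent, \(\ell''=\ell'[\ne\pm1]+\ell'[1]+\ell'[-1]\) with \(\ell'[\ne\pm1]=\sum\widetilde{\#[c]}\,\lambda'[c]^t_1\). The sharp replacement instead gains \(\ell^\star-\ell=\sum\widetilde{\#[b]}\,(|\lambda'[-b]|-|\lambda[b]_-|)\), which can be up to twice \(\ell'[\ne\pm1]\). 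For example, take a single class with \(\lambda[b]=[1]\), \(\lambda'[-b]=[2]\), \(\lambda''[b]=[1]\), and \(\ell'[1]=\ell'[-1]=0\). Then \(\ell''=\widetilde{\#[b]}<2\widetilde{\#[b]}=\ell^\star-\ell\), so \(\prlls\) lives on a smaller group than \(\prllpp\) and the lower path does not exist; nothing in the argument rules this configuration out.

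When Theorem~\ref{ggp}(i) only asks for closeness, you can repair this. Replace \(\lambda'[-b]\) not by \(\lambda[b]_-\) but by the rowwise maximum \(\mu\) of \(\lambda[b]_-\) and \(\lambda'[-b]_-\). Then \(\mu\subsetneq\lambda'[-b]\) for each offending class, and \(\mu\) is close to both \(\lambda[b]\) and \(\lambda'[-b]_-\). Moreover \(|\lambda'[-b]|-|\mu|\leqslant\lambda'[-b]^t_1\), so \(0<\ell^\star-\ell\leqslant\ell'[\ne\pm1]\leqslant\ell''\), and the rest of your argument goes through unchanged. In the 2-transverse case this \(\mu\) can break the evenness of common parts (for \([1]\) against \([2]\) it gives \(\mu=[1]\)), so that case still needs a separate argument.
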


\begin{proof}
The proof is the same maximality argument as in Proposition~\ref{lem73}.
Suppose that
\[
        \lambda'[-b]\ne \lambda[b]_-
\]
for some class \([b]\) in the \((\ne\pm1)\)-part.  By the
\((\ne\pm1)\)-part of Theorem~\ref{ggp}(i), \(\lambda'[-b]\) satisfies the
required condition relative to \(\lambda[b]\), but it is not the sharp choice
\(\lambda[b]_-\).  Thus the target \(\rho\)-part of
\(\prll\to\prllp\) is not minimal.

Replace only the \([-b]\)-part of \(\rho'\) by \(\lambda[b]_-\), and keep
\(\Lambda_1'\), \(\Lambda_{-1}'\), and all other \((\ne\pm1)\)-parts
unchanged.  Denote the new \(\rho\)-part by \(\rho^\star\), and let the
resulting representation be \(\prlls\).  Theorem~\ref{ggp}(i) gives a
descent from \(\prll\) to \(\prlls\).  Since its target \(\rho\)-part is strictly smaller than that of
\(\prllp\), the index of this descent is strictly larger than \(\ell\).

It remains to check that \(\prllpp\) is still obtained from \(\prlls\) by a descent.  Let \(\rho''\) be the \(\rho\)-part of \(\prllpp\), and let \(\lambda''[b]\) be its \([b]\)-part.  By the induction hypothesis,
\[
\lambda''[b]=\lambda'[-b]_-,
\]
while, by construction,
\[
\lambda^\star[-b]=\lambda[b]_-.
\]
Since \(\prllp\) occurs in a descent of \(\prll\), the partitions
\(\lambda'[-b]\) and \(\lambda[b]\) are close or \(2\)-transverse.
Deleting the first column from both preserves this condition, so
\(\lambda''[b]\) and \(\lambda^\star[-b]\) satisfy the required condition
in Theorem~\ref{ggp}(i).  All other partition and symbol data are unchanged;
hence \(\prllpp\) occurs in a descent of \(\prlls\).

The replacement changes only the intermediate \(\rho\)-signs.  Since their
total contribution along the two successive descents depends only on the
endpoints \(\rho\) and \(\rho''\), the product of the two descent signs is
unchanged.  The modified path therefore has the same endpoint and sign
product, but a strictly larger first index, contradicting maximality.
\end{proof}

Combining Propositions~\ref{lem73}, \ref{lem75}, and \ref{lem74}, the first
arrow in any maximal descent sequence is a first descent.  Since the tail of a
maximal descent sequence is again maximal, induction shows that every maximal
descent sequence is obtained by successive first descents.

It remains to prove the independence of the numerical sequence.  By
Corollary~\ref{fdi}, the first number \(2\ell_1\) is fixed.  We next
show that \(2\ell_2\) is fixed.   By the
first-descent recipe in Section~\ref{sec6.1}, all irreducible
constituents of the first descent have the same \(\rho'\)-part.  The
only possible ambiguity in their symbol parts occurs when the largest
entries in the two rows of \({}^{\bd}\Lambda_1\), or those in the two
rows of \({}^{\bd}\Lambda_{-1}\), are equal: in that case either of the
two equal entries may be removed.  Whichever one is removed in the first
descent, the other remains and is removed in the next first descent.
The symbol contributions to $\ell_2$ depend
only on the values of the removed entries, and not on the rows containing
them.  Hence the symbol contributions to the second first-descent index
are the same for all constituents.  Corollary~\ref{fdi} therefore shows
that \(2\ell_2\) is fixed.  Repeating this argument after each successive
first descent proves that
\(
(2\ell_1,2\ell_2,\ldots)
\)
is independent of the maximal descent sequence.  This proves
Theorem~\ref{ds}(i).

\subsection{Proof of Theorem \ref{ds} (ii)} \label{sec7.3}

By Theorem~\ref{ds}(i), every maximal descent sequence is obtained by
successive first descents.  We prove that its index is good.  Recall from
Definition~\ref{defn:good} that this has two parts: a numerical condition on
the sequence \(\ell_i\), and a sign condition in the case
\(\ell_i=\ell_{i+1}-1\).  We first check the numerical condition.

By Corollary~\ref{fdi},
\(
\ell_i=\ell[\ne\pm1]_i+\ell[1]_i+\ell[-1]_i.
\)
Moreover, the explicit formula for the \((\ne\pm1)\)-part and
Corollary~\ref{fdi-comp} give
\[
\ell[\ne\pm1]_i\geqslant\ell[\ne\pm1]_{i+1},\qquad
\ell[-1]_i\geqslant\ell[1]_{i+1},\qquad
\ell[1]_i\geqslant\ell[-1]_{i+1}-1.
\]
Hence \(\ell_i\geqslant\ell_{i+1}-1\).  If equality holds, then all
three inequalities above are equalities.  By Corollary~\ref{fdi-comp},
the largest entries in the two rows of both dual symbols before the
\(i\)-th descent are equal.  After one entry in each pair is removed,
both dual symbols have a unique largest entry, so
Corollary~\ref{fdi-comp} applied to the next pair gives
\[
\ell_{i+1}>\ell_{i+2}.
\]
If \(i>1\), for both dual symbols before the \(i\)-th descent to have
tied largest entries, both dual symbols before the preceding descent
must have unique largest entries.  Applying
Corollary~\ref{fdi-comp} to the preceding pair gives
\[
\ell_{i-1}>\ell_i.
\]
For \(i=1\), this follows from the convention
\(\ell_0=+\infty\).  This proves the numerical condition in
Definition~\ref{defn:good}.

It remains to verify the sign condition.  Since every tail of a maximal
descent sequence is again maximal, it is enough to consider \(i=1\).
Let
\[
\prll\xrightarrow{(\ell_1,a_1)}\prllp
\xrightarrow{(\ell_2,a_2)}\prllpp
\]
and assume \(\ell_1=\ell_2-1\).  Then
\[
\ell[\ne\pm1]_1=\ell[\ne\pm1]_2,\qquad
\ell[-1]_1=\ell[1]_2,\qquad
\ell[1]_1=\ell[-1]_2-1,
\]
and the largest entries in the two rows of both
\({}^{\bd}\Lambda_1\) and \({}^{\bd}\Lambda_{-1}\) are equal.
We now compare the signs \(a_1\) and \(a_2\).  Recalling the definition of
\(\zeta\), in this tied case \(\zeta({}^{\bd}\Lambda)\) may be chosen to
be either row sign.  We fix choices
\[
        s_1=\zeta({}^{\bd}\Lambda_1),
        \qquad
        s_{-1}=\zeta({}^{\bd}\Lambda_{-1}).
\]
Thus the sign pair of the first descent is
\(
(\ep,\ev)=(-s_1,s_{-1}).
\)
 Once the first step is fixed, the next
first descent removes the largest remaining entries, namely those in the
opposite rows \(-s_1\) and \(-s_{-1}\).

By the sign formula in Theorem~\ref{ggp}(ii), the first two descent signs are
\[
        a_1
        =
        -s_1s_{-1}\epsilon_\rho\epsilon_{\rho^-_1},
        \qquad
        a_2
        =
        -\zeta({}^{\bd}\Lambda'_1)\zeta({}^{\bd}\Lambda'_{-1})
        \epsilon_{\rho'}\epsilon_{(\rho')^-_1}.
\]
The \((\ne\pm1)\)-part contributes equally to \(a_1\) and \(a_2\).
Indeed, by the explicit formula in Corollary~\ref{fdi}, after reindexing
the classes one has
\[
\ell[\ne\pm1]_1
=
\sum_{[a]\ne[\pm1]}
\widetilde{\#[a]}\lambda[a]^t_1,
\qquad
\ell[\ne\pm1]_2
=
\sum_{[a]\ne[\pm1]}
\widetilde{\#[a]}\lambda[a]^t_2.
\]
Since
\(\lambda[a]^t_1\geqslant\lambda[a]^t_2\) for every \([a]\), the equality
\(
\ell[\ne\pm1]_1=\ell[\ne\pm1]_2
\)
forces equality factor by factor.  Hence the corresponding
\(\rho\)-sign factors in the formulas for \(a_1\) and \(a_2\) are equal.

It remains to compare the \((\pm1)\)-symbol contributions.  By the definition of $s_{\pm 1}$, we have
\[
\begin{aligned}
        \zeta({}^{\bd}\Lambda'_1)
        &=
        (-s_{-1})\,\ev\epsilon_{-1},\\
        \zeta({}^{\bd}\Lambda'_{-1})
        &=
        (-s_1)\,\varepsilon(-1)\ev\epsilon_{-1}.
\end{aligned}
\]
Here the extra factors are the row-switch signs: \(\ev\epsilon_{-1}\) for the
\(-1\)-symbol becoming the \(1\)-symbol, and
\(\varepsilon(-1)\ev\epsilon_{-1}\) for the \(1\)-symbol becoming the
\(-1\)-symbol.
Therefore
\[
        \zeta({}^{\bd}\Lambda'_1)
        \zeta({}^{\bd}\Lambda'_{-1})
        =
        \varepsilon(-1)s_1s_{-1}.
\]
The sign formula in
Theorem~\ref{ggp}(ii) gives
\[
        a_1=\varepsilon(-1)a_2 .
\]
Changing the choices of \(s_1\) and \(s_{-1}\) may
change the individual signs \(a_1\) and \(a_2\), but the above ratio is
unchanged. Thus the adjacent pair satisfies the sign condition in
Definition~\ref{defn:good}, independently of these choices. Together with the numerical condition proved
above, this shows that every maximal descent sequence index
is good.  This proves Theorem~\ref{ds}(ii).

\subsection{Proof of Theorem \ref{ds} (iii)} \label{sec7.4}

We prove Theorem~\ref{ds}(iii), namely that if
\[
        \Gamma=\llb(2\ell_1,a_1),(2\ell_2,a_2),\cdots\rrb
        \in\mathcal D(\prll)
\]
and
\[
        \Gamma'=\llb(2\ell_1',a_1'),(2\ell_2',a_2'),\cdots\rrb
        \leqslant \Gamma ,
\]
then \(\Gamma'\in\mathcal D(\prll)\).
By the definition of the order on descent sequence indexes in
Section~\ref{sec4.1}, it suffices to treat one elementary mutation.  Thus we
may assume that \(\Gamma\) and \(\Gamma'\) differ only at one adjacent pair
\(i,i+1\), with
\begin{itemize}
    \item $\ell_{i}=\ell'_{i}+1$;
    \item $\ell_{i+1}=\ell'_{i+1}-1$;
    \item $a_i'a_{i+1}'=a_ia_{i+1}$.
\end{itemize}
It is enough to treat the case \(i=1\).  The general case follows by applying
the same argument to the tail of the descent sequence starting at the
\((i-1)\)-st representation and then splicing it with the preceding arrows.

\begin{lem}\label{lem7.7}
    Let $\prll\in \Irr(\Sp_{2n}(\mathbbm{k}))$. Assume that there is a descent sequence 
    \[
    \prll\to\prllp\to\prllpp\to\cdots
    \]
    with descent sequence index $\Gamma=\llb(2\ell_1,a_1),(2\ell_2,a_2),\cdots\rrb$. Consider a descent sequence index $\Gamma'=\llb(2\ell_1',a_1'),(2\ell_2',a_2'),\cdots\rrb$ with
    \begin{itemize}
        \item $\ell_1'=\ell_1-1$,
        \item $\ell_2'=\ell_2+1$,
        \item $a_1'a_2'=a_1a_2$,
        \item \((\ell_i',a_i')=(\ell_i,a_i)\) for \(i>2\).
    \end{itemize}
    Then there is a descent sequence of $\prll$ with index $\Gamma'$.
\end{lem}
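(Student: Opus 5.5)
We follow the strategy sketched in the introduction and construct an intermediate representation \(\pi^\star\) of \(\Sp_{2(n-\ell_1+1)}(\mathbbm{k})\) such that
\[
        \prll\xrightarrow{(\ell_1-1,a_1')}\pi^\star
        \xrightarrow{(\ell_2+1,a_2')}\prllpp .
\]
The tail \(\prllpp\to\cdots\) is kept unchanged; splicing it on then gives a descent sequence with index \(\Gamma'\).

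\emph{Choice of \(\pi^\star\).} Put \(\prllp=\pi^{s'}_{\rho',\Lambda'_1,\Lambda'_{-1}}\). Fix an auxiliary rank-one datum: an element \(c\in\overline{\mathbbm{k}}^\times\), \(c\ne\pm1\), with \([c]\) and \([-c]\) disjoint from the eigenvalue classes of \(s,s',s''\). Take the corresponding \(\GL_1\) or \(\RU_1\) factor, equipped with the trivial (generic) unipotent representation, so that \(\lambda[c]=[1]\). The torus type is chosen so that the unitarity count changes \(\epsilon_{\rho'}\) by a prescribed sign \(\epsilon\in\{\pm1\}\). Define \(\pi^\star:=\pi_{\rho'\otimes\pi[c],\Lambda'_1,\Lambda'_{-1}}\). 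By compatibility of \(\CL_{\rm can}\) with parabolic induction (Theorem~\ref{thm:L-can}(1)), this is an irreducible constituent of a Deligne--Lusztig induction of \(\prllp\) and a character of \(\GL_1\); since the new eigenvalue class is disjoint from the others, it is in fact irreducible. Its rank is \(\rank(\prllp)+1\), which is consistent with the indices \(\ell_1-1\) and \(\ell_2+1\).

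\emph{Verifying the two arrows via Theorem~\ref{ggp}(i).} For \(\prll\to\pi^\star\), the \((\pm1)\)-symbol conditions and the defect equalities are literally those of \(\prll\to\prllp\), with the same pair \((\ep,\ev)\). For the \((\ne\pm1)\)-part, the classes already present in \(\rho\) and \(\rho'\) are unchanged. The only new class is \([-c]\) in \(\pi^\star\), paired with the empty partition on \(\prll\), and \([1]\) and \(\varnothing\) are close and \(2\)-transverse (the common part is empty). Similarly, for \(\pi^\star\to\prllpp\) we reuse the symbol data of \(\prllp\to\prllpp\), and the new class \([c]\) with \(\lambda=[1]\) is paired with \(\varnothing\) in \(\rho''\), which again satisfies the condition. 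The descent signs are \(a=\ep\ev\er\erp\). Adding \([c]\) multiplies \(\epsilon_{\rho^\star}\) by \(\epsilon\), so the new signs are \(a_1\epsilon\) and \(a_2\epsilon\). Their product is \(a_1a_2\), as required. Choosing \(\epsilon\), that is, the unitary versus linear type of the auxiliary torus (both are available by choosing \(c\) of suitable degree), lets us reach both sign pairs with the prescribed product. The pair \(a_1',a_2'\) satisfies \(a_1'a_2'=a_1a_2\) and so is one of these two.

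\emph{Main obstacle.} The delicate point is the sign bookkeeping in the \(\rho\)-parts. In the theta-compatible labelling a class \([c]\) on one side corresponds to \([-c]\) on the other (Theorem~\ref{ggp}(i)(b)), and \(\epsilon_\rho\) is defined on the dual side. We must check that adding \([c]\) changes the factor \(\er\erp\) by exactly \(\epsilon\) in each arrow, and that the two changes cancel in the product; this requires unwinding \(\epsilon_\rho\) as defined before Theorem~\ref{thm:L-can}. The cases \(\ell_1=1\) and \(\ell_2=0\) must be checked separately. When \(\ell_1=1\) the new first arrow is the formal \(\ell=0\) descent \eqref{descent0}, and when \(\ell_2=0\) the old second arrow is formal. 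Theorem~\ref{ggp} includes \(\ell=0\) under that convention, so the same criterion applies; the remaining work is to confirm that the auxiliary class does not interfere with the \(\omega_{n,\psi}\)-description in those cases.
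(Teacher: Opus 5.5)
Your proposal is essentially the paper's proof: both adjoin an auxiliary $\GL_1$- or $\RU_1$-eigenvalue to $\pi'$ (the paper via Deligne--Lusztig induction), check the two new arrows with Theorem~\ref{ggp}(i) using the unchanged symbol data, and use $\epsilon_{\rho^\epsilon}=\epsilon\,\epsilon_{\rho'}$ so that both descent signs are multiplied by the same $\epsilon$; the $\ell=0$ cases need no separate treatment because Theorem~\ref{ggp}(i) already covers them. The one adjustment is that you only need $c\ne\pm1$ with $c$ not an eigenvalue of $-s$, $s'$, or $-s''$, which is exactly what Theorem~\ref{ggp}(i)(b) demands. Your requirement that both $[c]$ and $[-c]$ avoid all three is stronger and harder to guarantee, and the existence of such a $c$ of each torus type is where the paper invokes $n\geqslant2$ and $p>3(h_G-1)$, settling $n=1$ by direct verification.
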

 \begin{proof}
 If \(n=1\), the assertion follows by direct verification.  We henceforth
assume that \(n\geqslant2\).

Choose semisimple elements \(s\), \(s'\), and \(s''\) such that
\[
\prll\in\mathcal E(\Sp_{2n}(\mathbbm{k}),s),\qquad
\prllp\in\mathcal E(\Sp_{2n'}(\mathbbm{k}),s'),\qquad
\prllpp\in\mathcal E(\Sp_{2n''}(\mathbbm{k}),s'').
\]
Since \(n\geqslant2\) and our assumption of characteristic \(p>3(h_G-1)\), we can choose
\[
s^+\in\GL_1(\mathbbm{k})=\mathbbm{k}^{\times},
\qquad
s^-\in\mathrm U_1(\mathbbm{k})
\subset\overline{\mathbbm{k}}^{\times}
\]
such that
\begin{itemize}
    \item \(s^\pm\) is not an eigenvalue of any of
    \(-s\), \(s'\), and \(-s''\);
    \item \(s^\pm\ne\pm1\).
\end{itemize}
Let \(\chi^+\) and \(\chi^-\) be the corresponding characters of
\(\GL_1(\mathbbm{k})\) and \(\mathrm U_1(\mathbbm{k})\), respectively.
Let
\begin{align*}
& \pi_{\rho^+,\Lambda'_1,\Lambda'_{-1}}=R^{\Sp_{2(n-\ell_1+1)}}_{\GL_1\times\Sp_{2(n- \ell_1)}}(\chi^+\otimes\prllp)\\
& (\textrm{resp. }\pi_{\rho^-,\Lambda'_1,\Lambda'_{-1}}=R^{\Sp_{2(n-\ell_1+1)}}_{\RU_1\times\Sp_{2(n-\ell_1)}}(\chi^-\otimes\prllp)).
\end{align*}
Since the new semisimple eigenvalue \(s^\pm\) is disjoint from the eigenvalues of \(s'\), the corresponding Deligne--Lusztig induction is irreducible. Here \(\rho^+\) (resp. \(\rho^-\)) is the
\((\ne\pm1)\)-parameter whose \(s^+\)-component
(resp. \(s^-\)-component) is \(\chi^+\) (resp. \(\chi^-\))
and whose remaining components agree with those of \(\rho'\). For \(\epsilon=\pm1\), write \(\rho^\epsilon\) for \(\rho^+\) or
\(\rho^-\), respectively. Then
$
 \epsilon_{\rho^\epsilon}=\epsilon \epsilon_{\rho'}.
 $

For the arrow $\prll\to\pi_{\rho^\pm,\Lambda'_1,\Lambda'_{-1}}$, the \([-s^\pm]\)-part of \(\prll\) is
\(\varnothing\), while the \([s^\pm]\)-part of
\(\pi_{\rho^\pm,\Lambda'_1,\Lambda'_{-1}}\) is \([1]\).  For the
arrow $\pi_{\rho^\pm,\Lambda'_1,\Lambda'_{-1}} \to \prllpp$, these parts are \([1]\) and \(\varnothing\),
respectively.  They satisfy both the close and the required
\(2\)-transversality conditions.  All the remaining partition and
symbol data are unchanged.  Hence
Theorem~\ref{ggp}(i) gives
 \[
\Hom_{\Sp_{2(n-\ell_1+1)}}(\pi_{\rho^\epsilon,\Lambda'_1,\Lambda'_{-1}},\CD^{\rm FJ}_{2(\ell_1-1), \epsilon a_1}(\prll))\ne 0
 \]
 and
  \[
\Hom_{\Sp_{2(n-\ell_1-\ell_2)}}(\prllpp,\CD^{\rm FJ}_{2(\ell_2+1), \epsilon a_2}( \pi_{\rho^\epsilon,\Lambda'_1,\Lambda'_{-1}}))\ne 0.
 \]
It follows that, for each \(\epsilon\in\{\pm1\}\), there is a descent sequence
\[
        \prll
        \xrightarrow{(\ell_1-1,\epsilon a_1)}
        \pi_{\rho^\epsilon,\Lambda'_1,\Lambda'_{-1}}
        \xrightarrow{(\ell_2+1,\epsilon a_2)}
        \prllpp
        \to\cdots .
\]
Here the tail after \(\prllpp\) is the same as in the original descent
sequence. Then
 the descent sequence above has index \(\Gamma'\), and therefore \(\Gamma'\in\mathcal D(\prll)\).
 \end{proof}

The lemma proves the elementary mutation case.  Since the order on descent sequence indexes is generated by elementary mutations, iterating the lemma
proves Theorem~\ref{ds}(iii).

\subsection{Maximal descent sequences and Whittaker dimensions}

\begin{thm}\label{max}
Let \(\prll\in \Irr(\Sp_{2n}(\mathbbm{k}))\).  Assume that
\(
        \rho=\prod_{[a]\ne[\pm1]}\pi[a],
\)
where \(\pi[a]\) corresponds to the partition
\(
        \lambda[a]=[\lambda[a]_1,\lambda[a]_2,\ldots,\lambda[a]_{k[a]}].
\) 
\begin{enumerate}
    \item[(i)]  Let $\Gamma=\llb(2\ell_1,a_1),(2\ell_2,a_2),\cdots,(2\ell_k,a_k)\rrb$ be a maximal descent sequence index of $\prll$. For each \(i\), let
\(\ell[\ne\pm1]_i,\ell[1]_i,\ell[-1]_i\) denote the three contributions
to the \(i\)-th first-descent index appearing in
Corollary~\ref{fdi}. Recall from \eqref{eq:bd-iteration} that
\(
{}^{\bd}\Lambda_{\pm1}^{\,i}
:=({}^{\bd}\Lambda_{\pm1})^i.
\)
    We have
    \begin{itemize}
    \item $\ell_i=\ell[\ne \pm 1]_i+\ell[1]_i+\ell[-1]_i$;
        \item $\ell[\ne \pm1]_i=\sum_{[a]\ne[\pm1]}\widetilde{\#[a]}\lambda[a]^t_i$;
        \item $ \ell[(-1)^{i-1}]_i=\rank({}^{\bd}\Lambda_1^{i-1})-\rank({}^{\bd}\Lambda_1^{i})$;
        \item $\ell[(-1)^{i}]_{i}=\rank({}^{\bd}\Lambda_{-1}^{i-1})-\rank({}^{\bd}\Lambda_{-1}^{i})$;
        \item  $a_i=-\epsilon_{\rho_{i-1}}\epsilon_{\rho_{i}}\zeta({}^{\bd}\Lambda_1^{i-1})\zeta({}^{\bd}\Lambda_{-1}^{i-1})\varepsilon(-1)^{i-1}$;
        \item $k=\max\left\{\max_{[a]\ne[\pm1]}k[a], |{}^{\bd}\Lambda_1|,
|{}^{\bd}\Lambda_{-1}|\right\}$.
    \end{itemize}
    
    \item[(ii)] We have
    \[
    \ScO(\prll)^{\max}=\bigcup_{\textrm{maximal descent sequence indexes }\Gamma'\textrm{ of }\prll}\CO(\Gamma').
    \]
Moreover, \(\ScO(\prll)^{\max,\mathrm{st}}\) is contained in a unique
\(F\)-stable nilpotent orbit, whose partition
\(\lambda=[\lambda_1,\ldots,\lambda_k]\) is attached to any maximal
descent sequence index
\(
\Gamma'
=
\llb(2\ell_1,a_1),\ldots,(2\ell_k,a_k)\rrb,
\)
with \(\lambda_i\) corresponding to \(\ell_i\).

\item[(iii)] 
Define
\[
\mathtt{N}:=\Set{1\leqslant i<k | \begin{aligned}
& \ell[\ne\pm 1]_{i}=\ell[\ne\pm 1]_{i+1},\\
& \ell[ \epsilon]_{i}=\ell[-\epsilon]_{i+1}-1, \ \ell[ -\epsilon]_{i}=\ell[\epsilon]_{i+1} \textrm{ for some }\epsilon \in \{\pm 1\}
\end{aligned}}.
\]
Then \(\lambda_i\) is odd precisely when either \(i\) or \(i-1\) belongs to
\(\mathtt N\).

\item[(iv)] Define
\[
\mathtt{M}:=\Set{1\leqslant i<k | \begin{aligned}
& \ell[\ne\pm 1]_{i}>\ell[\ne\pm 1]_{i+1},\\
& \ell[ \epsilon]_{i}=\ell[-\epsilon]_{i+1}-1, \ \ell[ -\epsilon]_{i}=\ell[\epsilon]_{i+1} \textrm{ for some }\epsilon \in \{\pm 1\}
\end{aligned}}.
\]
Then for any
\(\CO\in\ScO(\prll)^{\rm max}\), we have
\[
\dim {\mathrm{Wh}}_{\CO}(\prll)=2^{\#\mathtt{M}}.
\]
In particular, the dimension above equals 1 for quadratic unipotent representations ($\ell[\ne\pm 1]_{i}\equiv 0$).
\end{enumerate}
\end{thm}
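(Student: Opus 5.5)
For (i), I will use Theorem~\ref{ds}(i): every maximal descent sequence is a chain of successive first descents, so each step is governed by Theorem~\ref{ggp}(ii) and the recipe of Section~\ref{sec6.1}. Let $\pi_{i-1}=\pi_{\rho_{i-1},\Lambda_1^{(i-1)},\Lambda_{-1}^{(i-1)}}$ be the $(i-1)$-st term. By Corollary~\ref{fdi}, $\ell_i$ splits into the $(\ne\pm1)$-, $1$- and $(-1)$-contributions.

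\begin{itemize}
\item \emph{The $\rho$-part.} Theorem~\ref{ggp}(ii)(b) removes the first column of each $\lambda[a]$ at every step. After $i-1$ steps the relevant first column is $\lambda[a]^t_i$, which gives the formula for $\ell[\ne\pm1]_i$.
\item \emph{The symbol parts.} The recipe takes the dual, removes a largest entry, possibly swaps rows, and dualizes back. The roles of $\Lambda_1$ and $\Lambda_{-1}$ are exchanged at every step. Hence after $i-1$ steps the dual of the current $1$-symbol is, up to row transposition, ${}^{\bd}\Lambda_1^{i-1}$ or ${}^{\bd}\Lambda_{-1}^{i-1}$ according to the parity of $i$. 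Row transpositions do not change rank, so the two rank-drop formulas follow.
\item \emph{The signs.} I will track the accumulated row-switch signs exactly as in the proof of Theorem~\ref{ds}(ii). There we showed that each step multiplies the product $\zeta\zeta$ by $\varepsilon(-1)$. Iterating gives the factor $\varepsilon(-1)^{i-1}$ in the formula $a_i=-\epsilon_{\rho_{i-1}}\epsilon_{\rho_i}\zeta\zeta$ read off from Theorem~\ref{ggp}(ii)(c).
\item \emph{The length $k$.} The sequence terminates when all entries and columns are exhausted. Each step removes one nonzero entry from each dual symbol and one column from each $\lambda[a]$, which yields the formula for $k$ (using the terminal convention $\binom00$).
\end{itemize}

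Part (ii) is then Corollary~\ref{7.2} together with the identification of $\ScO(\pi)^{\max}$ with $\bigcup_\Gamma\CO(\Gamma)$ from its proof. This uses Lemma~\ref{lem:orbit-sequence-order}, Corollary~\ref{od} and Theorem~\ref{ds}(ii). The partition is read off from the common numerical sequence of Theorem~\ref{ds}(i).

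For (iii), by Definition~\ref{defn:good} and the construction of $\CO(\Gamma)$, $\lambda_i$ is odd exactly when $i$ or $i-1$ is a rise, that is, $\ell_j=\ell_{j+1}-1$. In the proof of Theorem~\ref{ds}(ii) we showed that a rise forces equality in all three inequalities of Corollary~\ref{fdi-comp}. These equalities are exactly the defining conditions of $\mathtt N$, with $\epsilon$ matching the parity alternation. Conversely, these conditions sum to $\ell_j=\ell_{j+1}-1$.

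For (iv), I will proceed by induction along the maximal sequence, using the composition laws.
\begin{itemize}
\item \emph{Even parts.} Lemma~\ref{erL3} reduces the count to the sum over constituents of the first descent. Each constituent has multiplicity one by Theorem~\ref{ggp}(ii), so each even step counts the constituents $\pi'$ whose own maximal descent continues to the fixed orbit.
\item \emph{Odd pairs.} Theorem~\ref{thm5.1} shows that a two-step path through the first descents counts twice the odd-pair Whittaker dimension.
\end{itemize}
The bookkeeping thus reduces to Corollary~\ref{5.6}. At a step, the number of first-descent constituents leading to a fixed tail orbit is $2$ or $1$ according to whether ties occur. An odd pair contributes a factor $2$ in the composition, which is cancelled by the $\frac12$ in Theorem~\ref{thm5.1}, except when the $\rho$-part strictly drops. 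In that case the two branches of the diagram have different $\rho$ signs but lead to the same orbit, giving exactly one extra factor of $2$ per $i\in\mathtt M$.

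The main obstacle is this case analysis. I must verify against each of the nine diagrams of Corollary~\ref{5.6} that, for a fixed target orbit, the constituent count is $2$ exactly at indices of $\mathtt M$ and $1$ otherwise. The subtle point is distinguishing branches that give different rational orbits, which do not contribute to $\dim\mathrm{Wh}_\CO$, from branches that give the same one. I would settle this by comparing the resulting signs $a_i$ via the formula in (i).
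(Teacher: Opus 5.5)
Parts (i)--(iii) follow the paper's proof essentially step by step. The skeleton of your (iv) is also the paper's: induction along the maximal sequence, Lemma~\ref{erL3} for even parts, Theorem~\ref{thm5.1} for odd pairs, and constituent counts from Corollary~\ref{5.6}. The gap is in the crux of (iv): the mechanism you give for the surviving factor $2$ is wrong, and the check you set yourself (constituent count $2$ exactly at $\mathtt{M}$, $1$ otherwise) is false as stated.

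The factor comes from the two irreducible constituents $\pi'\oplus\pi''$ of a \emph{single} first descent $\CD^{\rm FJ}_{2\ell_i,a_i}(\pi_{i-1})$ with $a_i$ fixed. This happens exactly in Corollary~\ref{5.6}(i), i.e.\ when both dual symbols of $\pi_{i-1}$ have tied maxima. By Corollary~\ref{fdi-comp} this is exactly the condition $i\in\mathtt{M}\cup\mathtt{N}$; in every other case a first descent of fixed sign is irreducible.

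The two constituents come from the tie choices $(s_1,s_{-1})$ and $(-s_1,-s_{-1})$, and they have the same $\rho$-part $\rho_1^-$ by Theorem~\ref{ggp}(ii)(d). The next sign $a_{i+1}$ depends on this choice only through $s_1s_{-1}$; this is the computation in the proof of Theorem~\ref{ds}(ii), which uses only the two ties. Hence, as Corollary~\ref{5.6}(i) records, both constituents descend with the same sign to the same irreducible representation and contribute equally.

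So the count is $2$ at $i\in\mathtt{N}$ as well, where the $\frac12$ of Theorem~\ref{thm5.1} cancels it. For $i\in\mathtt{M}$ one has $\ell_i-\ell_{i+1}=\ell[\ne\pm1]_i-\ell[\ne\pm1]_{i+1}-1\geqslant0$. The $i$-th part is then even, only Lemma~\ref{erL3} enters, and the $2$ survives. This dichotomy is all that is needed; no nine-diagram verification is required.

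Discard the explanation via two branches ``with different $\rho$ signs'' leading to the same orbit. The factor $\epsilon_{\rho_{i-1}}\epsilon_{\rho_i}$ is the same on every branch at step $i$, because a first descent forces $\rho_i=(\rho_{i-1})_1^-$. If you mean the descents with opposite signs $\pm a_i$, these do give the same orbit when $\ell_i=\ell_{i+1}$, since only the product $\varepsilon(-1)a_ia_{i+1}$ is recorded. However, the Whittaker dimension is computed through one fixed decomposition of the remaining orbit as $\CO_{2\ell_i,a_i}*\CO'$, with $\CO'$ the orbit of the tail. Adding the opposite-sign branch as a further contribution would therefore double count.
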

\begin{proof}
Choose a maximal descent sequence with index \(\Gamma\),
\[
\prll=\pi_0\to\pi_1\to\cdots\to\pi_k=\mathbbm{1},
\qquad
\pi_j=\pi_{\rho_j,\Lambda_{1,j},\Lambda_{-1,j}}.
\]
By Theorem~\ref{ds}\textup{(i)}, every arrow is a first descent.  At
each step, the first column is removed from every partition in the
\((\ne\pm1)\)-part, so the \(i\)-th step gives the stated formula for
\(\ell[\ne\pm1]_i\). For the \(\pm1\)-part, the first-descent recipe in
Section~\ref{sec6.1} constructs \(\Lambda_{\epsilon,j}\) from
\(\Lambda_{-\epsilon,j-1}\) for each
\(\epsilon\in\{\pm1\}\). Hence
\({}^{\bd}\Lambda_1^{i-1}\) and
\({}^{\bd}\Lambda_{-1}^{i-1}\), possibly with their two rows
interchanged, are respectively
\({}^{\bd}\Lambda_{(-1)^{i-1},i-1}\) and
\({}^{\bd}\Lambda_{(-1)^i,i-1}\).  Since interchanging rows does not
change rank, Corollary~\ref{fdi} gives the stated formulas for the
symbol contributions and
\[
\ell_i=\ell[\ne\pm1]_i+\ell[1]_i+\ell[-1]_i,
\]
which prove the $\ell$ part of (i).

For the rational sign, Theorem~\ref{ggp}\textup{(ii)} gives
\[
a_i=-\epsilon_{\rho_{i-1}}\epsilon_{\rho_i}
\zeta({}^{\bd}\Lambda_{1,i-1})
\zeta({}^{\bd}\Lambda_{-1,i-1}).
\]
For the \(j\)-th preceding descent, set
\(
(\ev)_j=\zeta({}^{\bd}\Lambda_{-1,j}),
\)
and let \(\epsilon_{-1,j}\) be determined by the representation
\(\pi_{\Lambda_{-1,j}}\), as in Theorem~\ref{ggp}.
By the first-descent recipe in Section~\ref{sec6.1},
the rows are interchanged in constructing
\(\Lambda_{1,j+1}\) when \(\epsilon_{-1,j}(\ev)_j=+\), and in
constructing \(\Lambda_{-1,j+1}\) when
\(\epsilon_{-1,j}(\ev)_j\varepsilon(-1)=+\).  Since interchanging rows
changes the sign of \(\zeta\), the product of the corresponding changes in the two
\(\zeta\)-values is
\[
\bigl(-\epsilon_{-1,j}(\ev)_j\bigr)
\bigl(-\epsilon_{-1,j}(\ev)_j\varepsilon(-1)\bigr)
=\varepsilon(-1).
\]
Iterating gives
\[
a_i=-\epsilon_{\rho_{i-1}}\epsilon_{\rho_i}
\zeta({}^{\bd}\Lambda_1^{i-1})
\zeta({}^{\bd}\Lambda_{-1}^{i-1})
\varepsilon(-1)^{i-1}.
\]
The formula for \(k\) follows because the descent terminates precisely
when all partition columns and all entries of the two dual symbols
have been removed. This proves (i).

By Theorem~\ref{ds}\textup{(ii)}, Corollary~\ref{od}, and
Lemma~\ref{lem:orbit-sequence-order}, the rational orbits attached to
maximal descent sequence indexes are exactly the rational
orbits in \(\ScO(\prll)^{\rm max}\).  This gives the asserted equality in
\textup{(ii)}.
By Theorem~\ref{ds}\textup{(i)}, all maximal descent sequence indexes
have the same numerical sequence.  Moreover, the formulas in
\textup{(i)} show that the three contributions
\(\ell[\ne\pm1]_i,\ell[1]_i,\ell[-1]_i\) are independent of the
choice of maximal descent sequence index.  Hence the corresponding
rational orbits have the same partition
\(\lambda=[\lambda_1,\ldots,\lambda_k]\), where \(\lambda_i\)
corresponds to \(\ell_i\), and lie in a unique \(F\)-stable nilpotent
orbit.  This proves \textup{(ii)}.

By Corollary~\ref{fdi-comp} and \textup{(i)}, 
\(i\in\mathtt N\) if and only if
\(\ell_i=\ell_{i+1}-1\).
Since \(\Gamma\) is good, each \(i\in\mathtt N\) gives the two equal
odd parts
\(
\lambda_i=\lambda_{i+1}=2\ell_i+1.
\)
If neither \(j-1\) nor \(j\) belongs to \(\mathtt N\), then
\(\lambda_j=2\ell_j\), which is even.  This proves (iii).

For (iv), we argue by induction along a maximal descent sequence. Fix \(\CO\in\ScO(\prll)^{\rm max}\) and a maximal descent sequence
\[
\prll=\pi_0\to\pi_1\to\cdots\to\pi_k=\mathbbm{1}
\]
with index \(\Gamma_{\CO}\) such that
\(\CO=\CO(\Gamma_{\CO})\).  
For \(r\geq1\), let \(\CO_{\geq r}\) be the rational nilpotent orbit
corresponding, via the descent-orbit dictionary in Section~\ref{sec4.1}, to
the tail index
\[
        \llb(2\ell_r,a_r),(2\ell_{r+1},a_{r+1}),\cdots\rrb .
\]
Thus \(\CO=\CO_{\geq1}\). By maximality of \(\Gamma_{\CO}\), the tail beginning at \(r\) is a
maximal descent sequence index of \(\pi_{r-1}\).

We first consider the case where
\(\CD^{\rm FJ}_{2\ell_1,a_1}(\prll)\) has only one irreducible
constituent, denoted by \(\pi'=\pi_1\). By Corollary~\ref{5.6},
\(1\notin\mathtt N\cup\mathtt M\).  Hence, by \textup{(iii)}, the first
part of the partition attached to \(\CO_{\geq1}\) is even, namely
\(2\ell_1\).  Since \(\Gamma_{\CO}\) is good and
\(1\notin\mathtt N\), we have \(\ell_1\geqslant\ell_2\), and therefore
\[
\CO_{\geq1}=\CO_{2\ell_1,a_1}*\CO_{\geq2}.
\]
By Lemma~\ref{erL3},
\[
        \dim \mathrm{Wh}_{\CO_{\geq1}}(\prll)
        =
        \dim \mathrm{Wh}_{\CO_{\geq2}}(\pi').
\]
Thus this step contributes no extra factor 2.

Now suppose that
\(\CD^{\rm FJ}_{2\ell_1,a_1}(\prll)\)
has two irreducible constituents, denoted by
\(\pi'=\pi_1\) and \(\pi''\).  By
Corollary~\ref{5.6}(i), the largest entries in the two rows of both
\({}^{\bd}\Lambda_1\) and \({}^{\bd}\Lambda_{-1}\) are equal.  Hence, by Corollary~\ref{fdi-comp} and
Theorem~\ref{max}\textup{(i)}, there exists \(\epsilon=\pm1\) such that
\[
        \ell[\epsilon]_1=\ell[-\epsilon]_2-1,
        \qquad
        \ell[-\epsilon]_1=\ell[\epsilon]_2 .
\]

Assume first that
\[
        \ell[\ne\pm1]_1=\ell[\ne\pm1]_2 .
\]
Then \(1\in\mathtt N\).  By (iii), the first two parts of the partition are
odd and equal:
\[
        \lambda_1=\lambda_2=2\ell_1+1 .
\]
By Corollary~\ref{5.6} (i), the first descents of \(\pi'\) and \(\pi''\) have a
common irreducible constituent, denoted by
\(\pi'''=\pi_2\).  Using
Theorem~\ref{thm5.1}, we get
\[
\begin{aligned}
\dim \mathrm{Wh}_{\CO_{\geq1}}(\prll)
&=
\dim \mathrm{Wh}_{\CO_{(2\ell_1+1)^2}*\CO_{\geq3}}(\prll)\\
&=
\frac{1}{2}
\dim \mathrm{Wh}_{\CO_{2\ell_1+2,\varepsilon(-1)a_1}*\CO_{\geq3}}
\left(
        \mathrm{Wh}_{\CO_{2\ell_1,a_1}}(\prll)
\right)\\
&=
\frac{1}{2}
\dim \mathrm{Wh}_{\CO_{2\ell_1+2,\varepsilon(-1)a_1}*\CO_{\geq3}}
\left(
        \pi'\oplus\pi''
\right)\\
&=
\dim \mathrm{Wh}_{\CO_{\geq3}}(\pi''').
\end{aligned}
\]
Here the two summands \(\pi'\) and \(\pi''\) give the same contribution to the
common constituent \(\pi'''\); the resulting factor \(2\) is exactly cancelled
by the factor \(\frac12\) in the odd composition law. Thus this step contributes no extra factor.

It remains to consider the case
\[
        \ell[\ne\pm1]_1>\ell[\ne\pm1]_2 .
\]
Then \(1\notin\mathtt N\), so by (iii) the first part of the partition
attached to \(\CO_{\geq1}\) is even.  Thus
\[
        \CO_{\geq1}=\CO_{2\ell_1,a_1}*\CO_{\geq2}.
\]
By the branch symmetry in Corollary~\ref{5.6}\textup{(i)},
\(\pi'\) and \(\pi''\) have the same remaining descent data.  Hence
\[
\dim \mathrm{Wh}_{\CO_{\geq2}}(\pi')
=
\dim \mathrm{Wh}_{\CO_{\geq2}}(\pi'').
\] 
Using Lemma~\ref{erL3}, we obtain
\[
\begin{aligned}
\dim \mathrm{Wh}_{\CO_{\geq1}}(\prll)
&=
\dim \mathrm{Wh}_{\CO_{\geq2}}
\left(
        \mathrm{Wh}_{\CO_{2\ell_1,a_1}}(\prll)
\right)\\
&=
\dim \mathrm{Wh}_{\CO_{\geq2}}(\pi'\oplus\pi'')\\
&=
2\dim \mathrm{Wh}_{\CO_{\geq2}}(\pi').
\end{aligned}
\]
In this case there is no factor \(\frac12\) from the odd composition law, so
the factor \(2\) coming from the two irreducible constituents remains.  This
case is precisely the condition \(1\in\mathtt M\).

Repeating the same argument along the maximal descent sequence, each index in
\(\mathtt M\) contributes one factor \(2\), while all other steps contribute
no extra factor. This proves (iv).

\end{proof}

\begin{exmp}
Let \(\mathbbm{1}\) be the trivial representation of
\(\Sp_2(\mathbbm{k})\).  Then
\(
        \ScO(\mathbbm{1})=\{0\}.
\)
Let
\(
        \sigma=R^{\Sp_2}_{\GL_1}\chi,
\)
where \(\chi\) is neither trivial nor quadratic.  Then \(\sigma\) is generic,
\(
        \ScO(\sigma)^{\max}=\{[(2,\pm)]\},
\)
and each of its Whittaker models has dimension one.
Now put
\(
        \pi
        :=
        R^{\Sp_4}_{\GL_1\times \Sp_2}
        (\chi\otimes\mathbbm{1}).
\) 
Then $\pi$ is irreducible and
\(
        \ScO(\pi)^{\max}=\{[(2^2,\pm)]\}.
\)
Moreover, for either sign \(\eta=\pm\) and \(\CO=[(2^2,\eta)]\), one has
\[
        \dim \mathrm{Wh}_{\CO}(\pi)=2.
\]
\end{exmp}

\subsection{Whittaker dimensions and canonical quotients}

The power-of-two formula for Whittaker dimensions suggests the
following relation with Lusztig's canonical quotient.

\begin{thm}\label{dimW}
Let \(G=\Sp_{2n}\), \(\pi\in\mathcal E(G,s)\), \(\CO\in\SOM\), and
\(\SCO=\SOMS\).  Let
\(
        \pi^*=\mathcal L^{\rm can}(\pi)\in\mathcal E(C_{G^*}(s),1)\)
        and
        \(\SCO^*=\mathscr O(\pi^*)^{\max,\mathrm{st}}.
\)
Then
\[
\dim \mathrm{Wh}_{\CO}(\pi)={\frac{|A(\SCO)|}{|\SOM||\overline{A}(\SCO^*)|}},
\]
where $\overline{A}(\SCO^*)$ is Lusztig's canonical quotient of $\SCO^*$ in  $C_{G^*}(s)$.
\end{thm}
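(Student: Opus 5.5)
Since Theorem~\ref{max}(iv) already gives $\dim\mathrm{Wh}_\CO(\pi)=2^{\#\mathtt M}$, the plan is to prove the purely combinatorial identity
\[
|A(\SCO)|=2^{\#\mathtt M}\,|\SOM|\,|\overline A(\SCO^*)|.
\]
We work with the partition $\lambda=[\lambda_1,\ldots,\lambda_k]$ of $\SCO$ from Theorem~\ref{max}(ii). Its even parts are read off from Theorem~\ref{max}(iii). By Sommers' description, $|A(\SCO)|=2^{e}$, where $e$ is the number of distinct even parts of $\lambda$. All three factors on the right will be expressed through the same combinatorial data along the maximal descent sequence, namely the contributions $\ell[\ne\pm1]_i$, $\ell[1]_i$, $\ell[-1]_i$.

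\emph{Step 1: counting $\SOM$.} By Theorem~\ref{max}(ii), $\SOM$ is the set of orbits $\CO(\Gamma)$, where $\Gamma$ runs over the maximal descent sequence indexes. The numerical sequence is fixed by Theorem~\ref{ds}(i), so only the signs $a_i$ vary. By the sign formula in Theorem~\ref{max}(i), $a_i$ depends only on the tie choices $\zeta({}^{\bd}\Lambda^{i-1}_{\pm1})$.

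Ties in ${}^{\bd}\Lambda_{\pm1}$ occur exactly at the branching points of Corollary~\ref{5.6}. At an index in $\mathtt N$ (an odd pair), the branches merge and the sign freedom is absorbed, since the odd part carries no sign. At any other tied step, the sign $a_i$ can genuinely be flipped, but only together with the sign of another even part, because sign products are preserved along a branch. After the parts are combined into a signed Young diagram, two indexes may also give the same orbit when they flip signs within the same part size.

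The resulting count should be $|\SOM|=2^{t}$, where $t$ is the number of tied steps that are not absorbed by $\mathtt N$ and that change the sign of some distinct even part. Concretely, I would define $t$ as the number of distinct even parts of $\lambda$ whose block contains an index $i$ at which ${}^{\bd}\Lambda^{i-1}_{1}$ or ${}^{\bd}\Lambda^{i-1}_{-1}$ has tied maxima, minus the number of forced relations among them.

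\emph{Step 2: $\overline A(\SCO^*)$ on the dual side.} We have $C_{G^*}(s)=G^*_{[\ne\pm1]}\times \SO_{2\nu_1+1}\times \RO_{2\nu_{-1}}$. The canonical quotient is trivial on the type~A factors and multiplicative over the factors. For the factors $\SO_{2\nu_1+1}$ and $\RO_{2\nu_{-1}}$ we use Section~\ref{family}: $|\overline A|=2^{d}$, where $|Z_1|=2d+1$ is the number of singles of the corresponding symbol. We translate $d$ into the number of untied steps of ${}^{\bd}\Lambda_{\pm1}$, that is, steps at which a single entry is removed. This translation is valid because Alvis--Curtis duality preserves the multiset of entries up to the family structure.

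\emph{Step 3: matching the three counts.} The distinct even parts of $\lambda$ correspond to maximal runs of equal even $\ell_i$ not touched by $\mathtt N$. Each such even part falls into exactly one of three types.
\begin{enumerate}
\item[(a)] Its block contains a step with $\ell[\ne\pm1]_i>\ell[\ne\pm1]_{i+1}$ together with a tie, that is, an index in $\mathtt M$. Such a part contributes its factor $2$ to $2^{\#\mathtt M}$.
\item[(b)] Its block contains a tie absorbed into the sign freedom. Such a part contributes its factor $2$ to $|\SOM|$.
\item[(c)] Its block ends at a single of the symbol. Such a part contributes its factor $2$ to $|\overline A(\SCO^*)|$.
\end{enumerate}
Sommers' distinguished subset, namely the even parts $\lambda_i$ with $\sum_{j\le i}m_j$ even, should match case (c). This uses the parity bookkeeping of Corollary~\ref{fdi-comp}.

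The main obstacle is this trichotomy, and in particular showing that cases (b) and (c) do not overlap and together exhaust the even parts outside $\mathtt M$. I would first verify it for unipotent $\pi$, where $\mathtt M=\emptyset$, by an induction that removes two descent steps at a time using the diagrams of Corollary~\ref{5.6}. I would then add the $\rho$-part. Each generic $\GL/\RU$ contribution only creates steps in $\mathtt M$, each with fibre size $1$ in $\SOM$, as illustrated by the $\Sp_4$ example.
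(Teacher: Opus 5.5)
You correctly reduce the theorem, via Theorem~\ref{max}(iv), to the identity $|A(\SCO)|=2^{\#\mathtt M}\,|\SOM|\,|\overline A(\SCO^*)|$. However, the trichotomy meant to prove this identity is neither established nor correct as stated.

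First, $\overline A(\SCO^*)$ is the canonical quotient of an orbit in $C_{G^*}(s)$. It cannot be read off from Sommers' distinguished subset of the partition $\lambda$ of $\SCO$. Take $\pi=R^{\Sp_4}_{\GL_1\times\Sp_2}(\chi\otimes\mathbbm{1})$ with $\chi$ neither trivial nor quadratic. Then $\lambda=[2,2]$, and the part $2$ is Sommers-distinguished. But $\pi^*$ is the trivial representation of $\GL_1\times\SO_3$, so $|\overline A(\SCO^*)|=1$. This part is actually of your type~(a): both ${}^{\bd}\Lambda_1=\binom{1,0}{1}$ and the exhausted ${}^{\bd}\Lambda_{-1}$ are tied, so $1\in\mathtt M$.

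Second, your plan for the $\rho$-part (``it only creates steps in $\mathtt M$, each with fibre size $1$ in $\SOM$'') fails. Take the quadratic unipotent $\pi^\star\in\Irr(\Sp_4(\mathbbm{k}))$ with ${}^{\bd}\Lambda_1=\binom{1,0}{1}$ and ${}^{\bd}\Lambda_{-1}=\binom{1}{0}$. Then $\lambda^\star=[2,2]$, and $\ScO(\pi^\star)^{\max}$ is a single orbit, because the tie only flips the two signs inside one block. Now let $\pi=R^{\Sp_6}_{\GL_1\times\Sp_4}(\chi\otimes\pi^\star)$. One gets $\ell[\ne\pm1]=(1,0)$ and $\lambda=[4,2]$. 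Since only ${}^{\bd}\Lambda_1$ is tied, $\mathtt M=\emptyset$. Yet $|\SOM|=2$, because the same tie now flips the signs of two distinct even blocks. So adding $\rho$ can split a block of equal even parts, doubling both $|A|$ and $|\SOM|$ without touching $\mathtt M$. Your count $|\SOM|=2^t$, with $t$ defined only up to ``forced relations'', is too vague to capture this.

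What is missing is a mechanism that controls the three factors jointly, rather than a static assignment of even parts to types. The paper argues by induction along a maximal descent sequence. For quadratic unipotent $\pi$ (Lemma~\ref{unm}), it removes one first descent and splits into cases according to whether the top two entries of ${}^{\bd}\Lambda_1$, respectively ${}^{\bd}\Lambda_{-1}$, are tied. In each case (the doubly tied case is handled over two steps via the odd-pair law) it checks that $|A|$, $|\SOM|$ and the dual canonical quotient change by compatible powers of $2$. Hence the ratio $|A(\SCO)|/(|\SOM|\,|\overline A(\SCO^*)|)$ stays equal to $1$. For general $\pi$, it compares the step $\pi\to\pi'$ with the step $\pi^\star\to\pi^{\star\prime}$ of the quadratic unipotent parts. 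The $\GL$ and $\RU$ factors have trivial canonical quotient, so the dual-side ratio is unchanged. It then shows that the ratio for $\pi$ is twice the ratio for $\pi'$ exactly when $1\in\mathtt M$, and equal to it otherwise, matching Theorem~\ref{max}(iv). You would need these step-by-step comparisons anyway; the trichotomy does not follow from Corollaries~\ref{5.6} and~\ref{fdi-comp} alone.
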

To prove the theorem, we need the following lemma.

\begin{lem}\label{unm}
Let \(\pi\) be an irreducible quadratic unipotent representation.  With the
notation of Theorem~\ref{dimW}, for every \(\CO\in\SOM\) one has
\[
        \frac{|A(\SCO)|}
        {|\SOM|\,|\overline A(\SCO^*)|}
        =
        1 .
\]
\end{lem}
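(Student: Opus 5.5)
We prove Lemma~\ref{unm} by a counting argument. The idea is that $|A(\SCO)|$ equals $|\SOM|\cdot|\overline A(\SCO^*)|$ because both sides are governed by the same pairs of tied largest entries in the iterated dual symbols.

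\emph{Reduction.} Since $\pi$ is quadratic unipotent, $\rho$ is trivial, so $\ell[\ne\pm1]_i=0$ for all $i$. Hence $\mathtt M=\varnothing$, and Theorem~\ref{max}(iv) gives $\dim\mathrm{Wh}_\CO(\pi)=1$. The dual group is $C_{G^*}(s)=\SO(V^*_{s=1})\times\RO(V^*_{s=-1})$, and $\pi^*=\pi_{\Lambda_1}\otimes\pi_{\Lambda_{-1}}$. Consequently $\overline A(\SCO^*)$ is the product of the canonical quotients of the Kawanaka wavefront orbits of the two unipotent factors. In the normalization of Section~\ref{family}, these are $I^{\rm odd}_{\mathcal F}$ for the odd orthogonal factor attached to $\Lambda_1$ and $I^{\rm even}_{\mathcal F}$ for the even orthogonal factor attached to $\Lambda_{-1}$, with $\mathcal F$ the family of the Alvis--Curtis duals. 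It therefore suffices to show
\[
|A(\SCO)|=|\SOM|\cdot|\overline A(\SCO^*)|.
\]

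\emph{Computing each factor.} Using Theorem~\ref{max}(i),(iii), we read off the partition $\lambda$ of $\SCO$ from the successive removals of largest entries of ${}^{\bd}\Lambda_1^{i}$ and ${}^{\bd}\Lambda_{-1}^{i}$.
\begin{enumerate}
\item[(1)] $|A(\SCO)|=2^{e}$, where $e$ is the number of distinct even parts of $\lambda$.
\item[(2)] $|\SOM|$ is counted through Theorem~\ref{max}(ii): $\SOM$ is the set of orbits $\CO(\Gamma)$ over maximal indexes $\Gamma$. By Corollary~\ref{5.6}, a free binary choice of branch occurs exactly at the steps where one of the dual symbols has tied largest entries. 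A branch choice changes the rational orbit precisely when the two resulting even parts are distinct, so that the signs can be separated; this is the ``one or two'' dichotomy mentioned in the introduction.
\item[(3)] $|\overline A(\SCO^*)|$: the dual-side orbits have partitions read from the same dual symbols. The quotient $\overline A$ is generated by the singles $z_j$ paired as in \eqref{ei}. We count its rank as the number of pairs of consecutive distinct singles in the appropriate parity class.
\end{enumerate}

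\emph{Matching.} We proceed by induction along a maximal first-descent sequence, removing one step (or two steps in the $\mathtt N$ case) at a time, and track how each of $e$, $\log_2|\SOM|$ and $\operatorname{rank}\overline A$ changes. The nine cases of Corollary~\ref{5.6} describe these changes, combined with the gap formulas of Corollary~\ref{fdi-comp}. For instance, a step in which ${}^{\bd}\Lambda_{\pm1}$ has a unique maximum produces a new distinct even part of $\lambda$ exactly when it also produces a new single in the appropriate row, which contributes one generator to $\overline A$. A tied step instead contributes a doubled entry $Z_2$, which is invisible to $\overline A$ but doubles the number of rational orbits.

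The main obstacle is this case-by-case matching. In particular, we must check that the parity convention of Section~\ref{family} (even versus odd basis vectors $e_i$) lines up with which even parts of $\lambda$ become merged, that is, equal, and therefore do not contribute a generator to $A(\SCO)$. We would first test the base cases (the trivial representation, Example~\ref{dex3}, and the cuspidal Examples~\ref{dex1}--\ref{dex2}) to pin down the bookkeeping, and then run the induction.
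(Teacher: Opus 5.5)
Your overall plan (induct along a maximal first-descent sequence, split according to whether the two largest entries of ${}^{\bd}\Lambda_1$ and of ${}^{\bd}\Lambda_{-1}$ are tied, and pass two steps at once in the doubly tied ($\mathtt N$) case) is the same as the paper's. However, the proposal stops where the proof begins. The per-step comparison of $|A(\SCO)|$, $|\SOM|$ and $|\overline A(\SCO^*)|$ with the corresponding quantities for the descendant is called ``the main obstacle'' and never carried out. The rules you do state for it would not balance.

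First, a tie does not double $|\SOM|$ in general; your item (2) and your matching paragraph contradict each other here. With exactly one tied symbol, the two branches of Corollary~\ref{5.6} reconverge, and $|\SOM|=c\,|\ScO(\pi')^{\max}|$ with $c=2$ if $\ell_1>\ell_2$ and $c=1$ if $\ell_1=\ell_2$. This factor is absorbed by the same factor $c$ in $|A(\SCO)|$, since the part $2\ell_1$ is a new distinct even part exactly when $\ell_1>\ell_2$; it is not absorbed by the dual side. In the doubly tied case all branches give the same odd-pair orbit, so nothing doubles.

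Second, a tie is not invisible to $\overline A$. We have $|I^{\rm odd}_{\mathcal F}|=|I^{\rm even}_{\mathcal F}|=2^d$, where $2d+1$ is the number of singles (counting the appended $-\infty$ on the even side). With this count:
\begin{itemize}
\item removing a unique maximum from ${}^{\bd}\Lambda_{-1}$ lowers $d$ by one;
\item removing a unique maximum from ${}^{\bd}\Lambda_1$ leaves $d$ unchanged;
\item a tie in ${}^{\bd}\Lambda_{-1}$ leaves $d$ unchanged;
\item a tie in ${}^{\bd}\Lambda_1$ raises $d$ by one, because the surviving tied entry becomes a single of the even-type descendant symbol.
\end{itemize}

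With the correct rules, the induction on rank closes as follows.
\begin{itemize}
\item If both maxima are unique, then $\ell_1>\ell_2$ and the first descent is irreducible. So $|\SOM|=|\ScO(\pi')^{\max}|$, while both $|A(\SCO)|$ and $|\overline A(\SCO^*)|$ double.
\item If exactly one maximum is tied, then $|\SOM|$ and $|A(\SCO)|$ are both multiplied by $c$, while the two symbol contributions to $|\overline A(\SCO^*)|$ cancel.
\item If both are tied, Theorem~\ref{thm5.1} and Corollary~\ref{5.6}(i) put $\SOM$ in bijection with $\ScO(\pi_E)^{\max}$ for the common second descendant $\pi_E$. The odd pair $(2\ell_1+1)^2$ adds nothing to $A(\SCO)$, and the dual quotient is unchanged over the two steps.
\end{itemize}
These three relations are the content of the paper's proof. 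The paper reads off the dual side from Proposition~\ref{un} rather than by counting singles; your single count is a perfectly good substitute once the rules are right.

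The relation $|\SOM|=c\,|\ScO(\pi')^{\max}|$ also needs the branch symmetry of Corollary~\ref{5.6}: the constituents of the first descent have the same remaining maximal descent data. Your remark that $\mathtt M=\varnothing$ and $\dim\mathrm{Wh}_\CO(\pi)=1$ is true but plays no role in this lemma.
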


\begin{proof}
The statement is clear in rank \(0\) and for the trivial
representation.  We argue by induction on the rank in the remaining
cases.
Write
\[
        \pi=\pi_{\Lambda_1,\Lambda_{-1}}.
\]
Let \(a_1\ge a_2\) and \(b_1\ge b_2\) be the two largest entries in
\({}^{\bd}\Lambda_1\) and \({}^{\bd}\Lambda_{-1}\), respectively. If one of the symbols has fewer than two entries, we append entries equal to 
\(-\infty\)'s when defining \(a_2\) or \(b_2\). If the
two largest entries of a symbol are equal, we denote by \(a_1\)
(resp. \(b_1\)) the one removed by the first descent.  The remaining one is
denoted by \(a_2\) (resp. \(b_2\)). 

Let
\(
        \Gamma=\llb(2\ell_1,\alpha_1),(2\ell_2,\alpha_2),\ldots\rrb
\)
be a maximal descent sequence index corresponding to \(\CO\).
Let
\(\pi'=\pi_{\Lambda'_1,\Lambda'_{-1}}\)
be the irreducible constituent occurring after the first step of the
maximal descent sequence with index \(\Gamma\). Since \(\pi\) is nontrivial, Proposition~\ref{trivial}\textup{(i)}
implies that its first descent index is positive. Hence $\pi'\in {\rm Irr}(\Sp_{2n'}(\mathbbm{k}))$ with $n'<n$. Put
\[
        \SCOP=\ScO(\pi')^{\max,\mathrm{st}},
        \qquad
        \SCOPS=\mathscr O(\mathcal L^{\rm can}(\pi'))^{\max,\mathrm{st}}.
\]
We compare
\[
        \frac{|A(\SCO)|}
        {|\SOM|\,|\overline A(\SCO^*)|}
        \quad\text{with}\quad
        \frac{|A(\SCOP)|}
        {|\ScO(\pi')^{\max}|\,|\overline A(\SCOPS)|}.
\]

First assume that
\[
        a_1>a_2,\qquad b_1>b_2 .
\]
By Corollary~\ref{5.6}, the first descent of \(\pi\) has a unique irreducible
constituent, namely \(\pi'\).  Moreover, Corollary~\ref{fdi-comp} and
Theorem~\ref{max}\textup{(i)} imply that the conditions
\(a_1>a_2\) and \(b_1>b_2\) give
\(
\ell_1>\ell_2 .
\)
Hence \(1\notin\mathtt N\).  By Theorem~\ref{max}(iii), the first part of the
partition attached to \(\SCO\) is therefore even, namely \(2\ell_1\).  Since
\(\ell_1>\ell_2\), this first even part is strictly larger than the remaining
parts.  Thus the even composition law, Lemma~\ref{erL3}, gives
\[
        \CO\in\SOM
        \quad\Longleftrightarrow\quad
        \CO=\CO_{2\ell_1,\alpha_1}*\CO'
        \quad\text{for some }\CO'\in\ScO(\pi')^{\max}.
\]
In particular,
\[
        |\SOM|=|\ScO(\pi')^{\max}|.
\]
Since the first part is even and strictly larger than the remaining parts, the
description of component groups in Section~\ref{lcq} gives
\[
        |A(\SCO)|=2|A(\SCOP)|.
\]
We now compare the canonical quotients on the dual side.  We have
\[
        \mathcal L^{\rm can}(\pi')
        =
        \pi_{\Lambda'_1}\otimes \pi_{\Lambda'_{-1}},
\]
where \(\pi_{\Lambda'_1}\) is an irreducible unipotent representation of a
special odd orthogonal group, and \(\pi_{\Lambda'_{-1}}\) is an irreducible
unipotent representation of an even orthogonal group.  By Proposition~\ref{un}, the conditions
\(a_1>a_2\) and \(b_1>b_2\) imply that
\begin{itemize}
    \item the partition of
    \(\mathscr O(\pi_{\Lambda'_{-1}})^{\max}\) is obtained by removing the
    first part of \(\mathscr O(\pi_{\Lambda_1})^{\max}\);
    \item the partition of
    \(\mathscr O(\pi_{\Lambda'_1})^{\max}\) is obtained by removing the first
    part of \(\mathscr O(\pi_{\Lambda_{-1}})^{\max}\);
    \item the removed parts are odd.
\end{itemize}
Therefore, by the description of Lusztig's canonical quotient in
Section~\ref{lcq},
\[
|\overline A(\mathscr O(\pi_{\Lambda_1})^{\max,\mathrm{st}})|
=
2|\overline A(\mathscr O(\pi_{\Lambda'_{-1}})^{\max,\mathrm{st}})|
\]
and
\[
|\overline A(\mathscr O(\pi_{\Lambda_{-1}})^{\max,\mathrm{st}})|
=
|\overline A(\mathscr O(\pi_{\Lambda'_1})^{\max,\mathrm{st}})|.
\]
Equivalently,
\[
        |\overline A(\SCO^*)|
        =
        2|\overline A(\SCOPS)|.
\]
Combining the three equalities above, we get
\[
\frac{|A(\SCO)|}
{|\SOM|\,|\overline A(\SCO^*)|}
=
\frac{2|A(\SCOP)|}
{2|\ScO(\pi')^{\max}|\,|\overline A(\SCOPS)|}
=
\frac{|A(\SCOP)|}
{|\ScO(\pi')^{\max}|\,|\overline A(\SCOPS)|}.
\]
The last quotient is equal to \(1\) by the induction hypothesis applied to
\(\pi'\).  This proves the claim in the first case.

Next assume that
\[
        a_1>a_2,\qquad b_1=b_2 .
\]
The same argument as in the first case gives \(\ell_1\geqslant\ell_2\).  Hence
\(1\notin\mathtt N\), and the first part of the partition attached to
\(\SCO\) is even. 
By Corollary~\ref{5.6} and the same descent--orbit analysis as in the
proof of Theorem~\ref{max}\textup{(iv)}, together with the description
of component groups in Section~\ref{lcq}, we have
\[
|\SOM|=c|\ScO(\pi')^{\max}|,
\qquad
|A(\SCO)|=c|A(\SCOP)|,
\]
where
\[
c=
\begin{cases}
1,&\ell_1=\ell_2,\\
2,&\ell_1>\ell_2.
\end{cases}
\]

We now compare the canonical quotients on the dual side.  The contribution
from the \(a\)-part is the same as in the first case:
\[
|\overline A(\mathscr O(\pi_{\Lambda_1})^{\max,\mathrm{st}})|
=
2|\overline A(\mathscr O(\pi_{\Lambda'_{-1}})^{\max,\mathrm{st}})|.
\]
For the \(b\)-part, the equality \(b_1=b_2\) implies, by
Proposition~\ref{un}, that the first two parts of the partition of
\(\mathscr O(\pi_{\Lambda_{-1}})^{\max,\mathrm{st}}\) are equal and even.
Under the first descent, the first of these two equal even parts is removed,
while the second one is increased by one and hence becomes odd.  The partition
obtained in this way is the one corresponding to \(\mathscr O(\pi_{\Lambda'_{1}})^{\max,\mathrm{st}}\).
Therefore, by the description of Lusztig's canonical quotient in
Section~\ref{lcq}, the new odd part contributes one extra factor \(2\), and
we get
\[
|\overline A(\mathscr O(\pi_{\Lambda'_1})^{\max,\mathrm{st}})|
=
2|\overline A(\mathscr O(\pi_{\Lambda_{-1}})^{\max,\mathrm{st}})|.
\]
Together with the \(a\)-part equality above, this gives
\[
        |\overline A(\SCO^*)|
        =
        |\overline A(\SCOPS)|.
\]
Therefore
\[
\frac{|A(\SCO)|}
{|\SOM|\,|\overline A(\SCO^*)|}
=
\frac{c|A(\SCOP)|}
{c|\ScO(\pi')^{\max}|\,|\overline A(\SCOPS)|}
=
\frac{|A(\SCOP)|}
{|\ScO(\pi')^{\max}|\,|\overline A(\SCOPS)|}.
\]
The last quotient is equal to \(1\) by the induction hypothesis applied to
\(\pi'\).  This proves the claim in this case.

Next assume that
\[
        a_1=a_2,\qquad b_1>b_2 .
\]
This is analogous to the previous case, with the roles of the two symbols
interchanged; we only record the change in the \(a\)-part on the dual side.
The equality \(a_1=a_2\) gives two first-descent branches whose
irreducible representations \(\pi'\) and \(\pi''\) have the same
remaining descent data.
As before, Corollary~\ref{fdi-comp} and
Theorem~\ref{max}\textup{(i)} give
\(\ell_1\geqslant\ell_2\).  Hence
\(1\notin\mathtt N\).  Thus
\[
        |\SOM|=c|\ScO(\pi')^{\max}|,
        \qquad
        |A(\SCO)|=c|A(\SCOP)|,
\]
where $c$ is defined as in the previous case.
By Proposition~\ref{un}, the equality \(a_1=a_2\) means that the first two parts
of the partition of
\(\mathscr O(\pi_{\Lambda_1})^{\max,\mathrm{st}}\) are equal and odd.  Under
the first descent, the first of these two equal odd parts is removed, while
the second one remains unchanged.  The resulting partition is the one
corresponding to \(\mathscr O(\pi_{\Lambda'_{-1}})^{\max,\mathrm{st}}\).  Hence, by the description of
Lusztig's canonical quotient in Section~\ref{lcq},
\[
|\overline A(\mathscr O(\pi_{\Lambda_1})^{\max,\mathrm{st}})|
=
|\overline A(\mathscr O(\pi_{\Lambda'_{-1}})^{\max,\mathrm{st}})|.
\]
Together with the \(b\)-part equality from the first case, this gives
\[
        |\overline A(\SCO^*)|
        =
        |\overline A(\SCOPS)|.
\]
Combining this with the two equalities for \(|\SOM|\) and \(|A(\SCO)|\)
above, we reduce to the same quotient for \(\pi'\).  Hence the desired
equality follows from the induction hypothesis applied to \(\pi'\).  This
proves the claim in this case.

Finally assume that
\[
        a_1=a_2,\qquad b_1=b_2 .
\]
In this case both symbols are in the equal-row situation.  By
Corollary~\ref{5.6}, the first two descent steps fit into the diagram
\[
\xymatrix{
& & \pi \ar[dl] \ar[dr] & &  \\
& \pi_A\oplus\pi_B \ar[dl] \ar[dr] & & \pi_C\oplus\pi_D \ar[dl] \ar[dr] & \\
0 & & \pi_E & & 0.
}
\]
By Corollary~\ref{fdi-comp} and
Theorem~\ref{max}\textup{(i)}, we have
\(1\in\mathtt N\).  Hence
Theorem~\ref{max}\textup{(iii)} shows that the first two parts of
the partition attached to \(\SCO\) are equal and odd. By the odd-pair
composition law and Theorem~\ref{thm5.1}, we have
\[
        \CO\in\SOM
        \quad\Longleftrightarrow\quad
        \CO=\CO_{(2\ell_1+1)^2}*\CO_E
        \quad\text{for some }\CO_E\in\ScO(\pi_E)^{\max}.
\]
Let
\(
        \CO^{\rm st}_E=\mathscr O(\pi_E)^{\max,\mathrm{st}}.
\)
We have
\[
        |\SOM|=|\ScO(\pi_E)^{\max}|,
        \qquad
        |A(\SCO)|=|A(\CO^{\rm st}_E)|.
\]

It remains to compare the canonical quotients on the dual side.  We follow
the two first descents in the diagram above.  For the first descent, the
\(a\)-part is governed by the comparison used in the third case, whereas the
\(b\)-part is governed by the comparison used in the second case.  For the
second descent, both parts are governed by the comparison used in the first
case.  Applying these factor comparisons and the description of Lusztig's
canonical quotient in Section~\ref{lcq}, we obtain
\[
        |\overline A(\SCO^*)|
        =
        |\overline A(\mathscr O(\mathcal L^{\rm can}(\pi_E))^{\max,\mathrm{st}})|.
\]
Combining this with
\[
        |\SOM|=|\ScO(\pi_E)^{\max}|,
        \qquad
        |A(\SCO)|=|A(\CO^{\rm st}_E)|,
\]
we reduce the desired equality to the corresponding quotient for \(\pi_E\).
The result follows from the induction hypothesis applied to \(\pi_E\).  This
completes the proof.
\end{proof}

\begin{proof}[Proof of Theorem~\ref{dimW}]
Write \(\pi=\prll\).  Choose a maximal descent sequence index
\[
        \Gamma=\llb(2\ell_1,a_1),(2\ell_2,a_2),\ldots\rrb
\]
corresponding to \(\CO\).  We argue by induction on the length of
\(\Gamma\).  If the length is one, then \(\pi\) is generic, and the formula
follows directly from the explicit description of the generic orbit and of
Lusztig's canonical quotient.

Assume now that the length of \(\Gamma\) is at least two.  Let
\(
        \Gamma'
\)
be obtained from \(\Gamma\) by deleting its first term.  Let \(\pi'\) be the
irreducible constituent of the first descent of \(\pi\) corresponding to
\((2\ell_1,a_1)\), and write
\[
        \pi'=\pi_{\rho',\Lambda'_1,\Lambda'_{-1}}.
\]
By the maximality of \(\Gamma\), its tail \(\Gamma'\) is a maximal
descent sequence index for \(\pi'\).  Let
\(
        \CO'\in\ScO(\pi')^{\max}
\)
be the corresponding rational nilpotent orbit.

Let
\(
        \pi^\star=\pi_{\Lambda_1,\Lambda_{-1}}
\)
be the irreducible representation obtained by keeping only the quadratic
unipotent part of \(\pi\).  By the first-descent recipe in Section~\ref{sec6.1} and
Corollary~\ref{5.6}, the first descent on the two quadratic symbols
\(\Lambda_1,\Lambda_{-1}\) is determined by their relevant largest entries;
the non-quadratic datum \(\rho\) is carried to \(\rho'\) independently and
does not change this operation.  Hence the quadratic unipotent part of
\(\pi'\) is
\[
        \pi^{\star\prime}=\pi_{\Lambda'_1,\Lambda'_{-1}}.
\]

Let \(\Gamma^\star\) and \(\Gamma^{\star\prime}\) be the maximal
descent sequence indexes obtained from \(\Gamma\) and \(\Gamma'\),
respectively, by retaining only the contributions from the
\(1\)- and \((-1)\)-parts in Theorem~\ref{max}\textup{(i)}.  Let
\(
\CO^\star=\CO(\Gamma^\star),
\) and \(
\CO^{\star\prime}=\CO(\Gamma^{\star\prime}).
\)  Write
\[
        (\CO')^{\mathrm{st}}
        =
        \mathscr O(\pi')^{\max,\mathrm{st}},
        \qquad
        (\CO^\star)^{\mathrm{st}}
        =
        \mathscr O(\pi^\star)^{\max,\mathrm{st}},
        \qquad
        (\CO^{\star\prime})^{\mathrm{st}}
        =
        \mathscr O(\pi^{\star\prime})^{\max,\mathrm{st}} .
\]
We also write \(\CO^{\mathrm{st}}\) for the stable orbit containing
\(\CO\).  On the dual side, denote the corresponding stable nilpotent orbits
in the centralizers of the relevant semisimple elements by
\[
        \CO^{*\mathrm{st}},\qquad
        (\CO')^{*\mathrm{st}},\qquad
        (\CO^\star)^{*\mathrm{st}},\qquad
        (\CO^{\star\prime})^{*\mathrm{st}}.
\]
The stable orbits \(\CO^{*\mathrm{st}}\) and
\((\CO')^{*\mathrm{st}}\) differ from
\((\CO^\star)^{*\mathrm{st}}\) and
\((\CO^{\star\prime})^{*\mathrm{st}}\) only by the nilpotent orbits coming
from general linear and unitary factors in the corresponding centralizers.
Since these factors have trivial Lusztig canonical quotient,
\begin{equation}\label{ao}
\frac{|\overline A(\CO^{*\mathrm{st}})|}
{|\overline A((\CO^\star)^{*\mathrm{st}})|}
=
\frac{|\overline A((\CO')^{*\mathrm{st}})|}
{|\overline A((\CO^{\star\prime})^{*\mathrm{st}})|}.
\end{equation}
Let
\[
        Q(\sigma,\mathcal P)
        :=
        \frac{|A(\mathcal P^{\mathrm{st}})|}
        {|\ScO(\sigma)^{\max}|\,
         |\overline A(\mathcal P^{*\mathrm{st}})|},
\]
where $\sigma$ is an irreducible representation of a finite symplectic group, $\mathcal P$ is a rational nilpotent orbit and \(\mathcal P^{*\mathrm{st}}\) denotes the corresponding dual stable
orbit listed above.  It is enough to prove
\[
        \dim \mathrm{Wh}_{\CO}(\pi)=Q(\pi,\CO).
\]

For quadratic unipotent representations, Theorem~\ref{max}(iv) gives
multiplicity one, and Lemma~\ref{unm} gives
\[
        Q(\pi^\star,\CO^\star)=1,
        \qquad
        Q(\pi^{\star\prime},\CO^{\star\prime})=1.
\]
Equivalently,
\[
\begin{aligned}
|A((\CO^\star)^{\mathrm{st}})|
&=
|\ScO(\pi^\star)^{\max}|\,
|\overline A((\CO^\star)^{*\mathrm{st}})|,\\
|A((\CO^{\star\prime})^{\mathrm{st}})|
&=
|\ScO(\pi^{\star\prime})^{\max}|\,
|\overline A((\CO^{\star\prime})^{*\mathrm{st}})|.
\end{aligned}
\]
By Theorem~\ref{max}(iv), the first descent gives
\[
\dim \mathrm{Wh}_{\CO}(\pi)
=
\begin{cases}
2\dim \mathrm{Wh}_{\CO'}(\pi') & \text{if }1\in\mathtt M,\\
\dim \mathrm{Wh}_{\CO'}(\pi') & \text{if }1\notin\mathtt M.
\end{cases}
\]

Assume first that \(1\in\mathtt M\).  By the definition of
\(\mathtt M\), we have
\[
        \ell[\ne\pm1]_1>\ell[\ne\pm1]_2,\qquad
        \ell[1]_1=\ell[-1]_2-1,\qquad
        \ell[-1]_1=\ell[1]_2.
\]
By Corollary~\ref{fdi-comp}, the largest entries in the two rows of
both \({}^{\bd}\Lambda_1\) and
\({}^{\bd}\Lambda_{-1}\) are equal. Put
\(
        k=\ell[1]_1+\ell[-1]_1 .
\)
By Theorem~\ref{max}(iii), the first two parts of
\((\CO^\star)^{\mathrm{st}}\) are \((2k+1,2k+1)\), while the corresponding
first part for \((\CO^{\star\prime})^{\mathrm{st}}\) is \(2k+2\), with all
remaining parts agreeing.  Hence, by Section~\ref{lcq},
\[
        2|A((\CO^\star)^{\mathrm{st}})|
        =
        |A((\CO^{\star\prime})^{\mathrm{st}})|.
\]
Also, \((\CO')^{\mathrm{st}}\) is obtained from
\(\CO^{\mathrm{st}}\) by removing its first part \(2\ell_1\).
If \(\ell_1=\ell_2\), then the first two parts of
\(\CO^{\mathrm{st}}\) are both equal to \(2\ell_1\); if
\(\ell_1>\ell_2\), then \(2\ell_1\) is strictly larger than all the
remaining parts.  Therefore
\[
|A(\CO^{\mathrm{st}})|
=
\begin{cases}
|A((\CO')^{\mathrm{st}})| & \text{if }\ell_1=\ell_2,\\
2|A((\CO')^{\mathrm{st}})| & \text{if }\ell_1>\ell_2.
\end{cases}
\]
It follows that
\begin{equation}\label{m1}
\frac{|A((\CO')^{\mathrm{st}})|}
{|A((\CO^{\star\prime})^{\mathrm{st}})|}
=
\begin{cases}
\dfrac{|A(\CO^{\mathrm{st}})|}
{2|A((\CO^\star)^{\mathrm{st}})|}
& \text{if }\ell_1=\ell_2,\\[1.2ex]
\dfrac{|A(\CO^{\mathrm{st}})|}
{4|A((\CO^\star)^{\mathrm{st}})|}
& \text{if }\ell_1>\ell_2 .
\end{cases}
\end{equation}
The proof of Lemma~\ref{unm} gives
\[
        |\ScO(\pi^\star)^{\max}|
        =
        |\ScO(\pi^{\star\prime})^{\max}|.
\]
The proof of
Theorem~\ref{max}\textup{(iv)} gives
\[
|\ScO(\pi)^{\max}|
=
\begin{cases}
|\ScO(\pi')^{\max}| & \text{if }\ell_1=\ell_2,\\
2|\ScO(\pi')^{\max}| & \text{if }\ell_1>\ell_2.
\end{cases}
\]
Combining these equalities with \eqref{m1}, in both subcases we get
\begin{equation}\label{m3}
\frac{
|A((\CO')^{\mathrm{st}})|\,|\ScO(\pi^{\star\prime})^{\max}|
}{
|A((\CO^{\star\prime})^{\mathrm{st}})|\,|\ScO(\pi')^{\max}|
}
=
\frac{
|A(\CO^{\mathrm{st}})|\,|\ScO(\pi^\star)^{\max}|
}{
2|A((\CO^\star)^{\mathrm{st}})|\,|\ScO(\pi)^{\max}|
}.
\end{equation}
Together with \eqref{ao} and the quadratic-unipotent equalities above,
\eqref{m3} gives
\[
        2Q(\pi',\CO')=Q(\pi,\CO).
\]
By the induction hypothesis,
\[
        \dim \mathrm{Wh}_{\CO'}(\pi')=Q(\pi',\CO').
\]
Hence
\[
        \dim \mathrm{Wh}_{\CO}(\pi)
        =
        2\dim \mathrm{Wh}_{\CO'}(\pi')
        =
        2Q(\pi',\CO')
        =
        Q(\pi,\CO).
\]
This proves the theorem when \(1\in\mathtt M\).

It remains to treat the case \(1\notin\mathtt M\).  In this case the first
descent contributes no multiplicity factor.  Repeating the same comparison of
component groups and of maximal rational orbits gives the analogue of
\eqref{m3} without the extra factor \(2\):
\[
\frac{
|A((\CO')^{\mathrm{st}})|\,|\ScO(\pi^{\star\prime})^{\max}|
}{
|A((\CO^{\star\prime})^{\mathrm{st}})|\,|\ScO(\pi')^{\max}|
}
=
\frac{
|A(\CO^{\mathrm{st}})|\,|\ScO(\pi^\star)^{\max}|
}{
|A((\CO^\star)^{\mathrm{st}})|\,|\ScO(\pi)^{\max}|
}.
\]
Together with \eqref{ao} and the quadratic-unipotent equalities above, this
gives
\[
        Q(\pi',\CO')=Q(\pi,\CO).
\]
By Theorem~\ref{max}(iv) and the induction hypothesis,
\[
        \dim \mathrm{Wh}_{\CO}(\pi)
        =
        \dim \mathrm{Wh}_{\CO'}(\pi')
        =
        Q(\pi',\CO')
        =
        Q(\pi,\CO).
\]
This proves the remaining case and completes the proof.
\end{proof}

\section{Rational wavefront sets for orthogonal groups}\label{sec8}

In this section, we use the theta correspondence to reduce the study of
rational wavefront sets for orthogonal groups to the symplectic case.
We first recall the relation between theta correspondence and generalized
Whittaker models, following \cite{GZ14,Z19}, and give the corresponding
statement over finite fields. We then apply this compatibility to prove the main results for
orthogonal groups.

\subsection{Theta correspondence and generalized Whittaker models}

Following the notation in Section \ref{subsec:theta}, assume that $G$ and $G'$ form a reductive dual pair, with Lie algebras $\fg$ and $\fg'$, respectively. Recall the
moment maps
\[
\xymatrix{
& \Hom(V, V') \ar[dl]_\phi \ar[dr]^{\phi'} & \\
\fg & & \fg'
}
\]
where $\phi(T) := T^*T$ and $\phi'(T) := T T^*$ for $T\in \Hom(V, V')$. 
Given an $\mathfrak{sl}_2$-triple $\gamma= \{X, H, Y \} \subset \fg$, let $\fg_\gamma:=\textrm{span}\{X, H, Y \}\subset  \fg$. Then we have the decompositions
\[
V = \bigoplus_{ k\in\bb{Z}} V_k = \bigoplus_{ j} V^{\gamma,t_j},
\]
where $V_k := \set{v \in V | Hv = kv}$, and $V^{\gamma,t_j}$ is a direct sum of irreducible $t_j$-dimensional $\fg_\gamma$-modules. For any $d$ and $k$, set \(V^{\gamma,d}_k:=V^{\gamma,d}\cap V_k\), and use the analogous notation for $V'$.
Similar notation applies to an $\mathfrak{sl}_2$-triple
$\gamma'=\{X',H',Y'\}\subset\fg'$. We write
\[
\CO:=\CO_X\subset\fg,
\qquad
\CO':=\CO_{X'}\subset\fg'.
\]

For the description of the theta lifting of nilpotent orbits,
we use the following notion of lifting of $\mathfrak{sl}_2$-triples.
\begin{defn}
Let $\gamma\subset  \fg$ and $\gamma'\subset  \fg'$
be  $\mathfrak{sl}_2$-triples, and $T \in \Hom(V, V ')$. We
say that $T$ lifts $\gamma$ to $\gamma'$
if
 \begin{itemize}
 \item $ T \in {\rm GenHom}(V, V ')$,
 \item $\phi(T) = X$ and $\phi'(T) = X'$,
 \item $T(V_k)\subset V_{k+1}'$ for all $k$, where $V'_k$ is defined similarly for $\gamma'$.
 \end{itemize}
Here
 \[
{\rm GenHom}(V, V') := \set{T \in \Hom(V, V ') | \Ker(T)\textrm{ is a nondegenerate subspace of }V }.
\]
\end{defn}
We denote the set of all such lifts by
\[
\CO_{\gamma,\gamma'}
:=
\set{T\in\Hom(V,V')\mid T\text{ lifts }\gamma\text{ to }\gamma'}.
\]
The argument of \cite[Lemma 3.4]{Z19} works over finite fields as well.
We obtain the following description. For any $F$-rational $\mathfrak{sl}_2$-triple $\gamma'=\{X',H',Y'\}\subset(\fg')^F$ with $X'$ in the image of $\phi'$, there is a unique $G^F$-conjugacy class of $F$-rational
$\mathfrak{sl}_2$-triples
$\gamma=\{X,H,Y\}\subset\fg^F$
such that $\CO_{\gamma,\gamma'}$ is nonempty. Moreover, $\CO_{\gamma,\gamma'}$ is a single
$M_X^F\times (M'_{X'})^F$-orbit, where $M_X$ and $M'_{X'}$ are the
stabilizers of the triples $\gamma$ and $\gamma'$, respectively.

\begin{defn} We  say that $\gamma$ (resp. $\CO$) is the generalized descent of $\gamma'$ (resp. $\CO'$) if there exists $T\in \Hom(V,V')$ lifting $\gamma$ to $\gamma'$. In this case  we write
\[
\CO = \nabla^{\rm{gen}}_{V',V}(\CO').
\]
If $\CO_{\gamma,\gamma'}$ contains an injective element, then we call $\gamma$ (resp. $\CO$) the descent of $\gamma'$ (resp. $\CO'$), and write
\[
\CO = \nabla_{V',V}(\CO').
\]
\end{defn}
This orbit-theoretic generalized descent should not be confused with the representation-theoretic descents and descent sequences introduced in
Section~\ref{sec4}.

The explicit description of $\nabla^{\mathrm{gen}}_{V',V}$ over local
fields is given in \cite[Section~3]{GZ14}. In the finite-field setting,
it takes the following form. 
\begin{prop}\label{orbit}
    Let $\CO'=[(\lambda_1^{k_1},a_1),\cdots,(\lambda_l ^{k_l},a_l),(2^{j},b_2),(1^s,b_1)]$ be an $F$-rational nilpotent orbit lying in the image of $\phi'$, where
$\lambda_1>\cdots>\lambda_l>2$. Then
    \[
    \CO = \nabla^{\rm{gen}}_{V',V}(\CO')=\left[\left((\lambda_1-1)^{k_1},a_1\right),\cdots,\left((\lambda_l-1) ^{k_l},a_l\right),(1^{j+t},b_1')\right].
    \]
 Here $t=\dim\Ker(T)$ for
$T\in\CO_{\gamma,\gamma'}$ realizing the generalized descent; this
dimension is independent of the choice of $T$.
The $(j+t)$-dimensional multiplicity space $W_1$ corresponding to
$(1^{j+t},b_1')$ contains the $j$-dimensional multiplicity space $W_2$
corresponding to $(2^j,b_2)$ as a nondegenerate subspace.
In particular, if $t=0$, then $b_1'=b_2$.
\end{prop}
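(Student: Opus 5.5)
The plan is to work with an explicit $\mathfrak{sl}_2$-triple attached to $\CO'$ and construct $T$ directly, block by block, following the pattern of \cite[Section~3]{GZ14} and \cite[Lemma~3.4]{Z19}. Decompose $V'=\bigoplus_j V'^{\gamma',t_j}$ into isotypic pieces. Each piece with $t_j=\lambda_i$ is $\mathbb{S}^{\lambda_i-1}\otimes W'_i$, where $\mathbb{S}^{d}$ is the $(d+1)$-dimensional irreducible $\mathfrak{sl}_2$-module and the multiplicity space $W'_i$ carries the form with sign $a_i$. On such a piece, the lifting conditions $\phi'(T)=X'$, $T(V_k)\subset V'_{k+1}$ and $T\in{\rm GenHom}$ should force the following. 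The image of $T^*$ restricted to this block is a copy of $\mathbb{S}^{\lambda_i-2}\otimes W'_i$ inside $V$, and $T^*T$ acts on it as the raising operator of a string of length $\lambda_i-1$.

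The first key step is to check the sign. The form that $V$ induces on the new multiplicity space is obtained from the form on $W'_i$. This is because the pairing $(Tv,Tw)_{V'}=(v,T^*Tw)_V$ shifts the top weight by one, and $V$ and $V'$ have opposite symmetry types. After the standard normalization of the invariant forms on $\mathbb{S}^d$ (as in \cite{GZ14}), the resulting discriminant is unchanged, which gives $((\lambda_i-1)^{k_i},a_i)$. I expect the parity and sign bookkeeping here to be the main obstacle. It requires a careful choice of basis $e_0,\dots,e_d$ of $\mathbb{S}^d$ with $(e_p,e_{d-p})=(-1)^p$, and a check that the rescalings needed to make $T^*T=X$ exactly are squares, or that they come in pairs and cancel. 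Over $\mathbbm{k}$, $p$ is large, so every factorial that appears is invertible.

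The second step handles the parts $2^j$ and $1^s$. A string of length $2$ lifts to a string of length $1$, so $W_2$ embeds as a nondegenerate subspace of the trivial-isotypic part of $V$, with form induced as above; in particular the discriminant $b_2$ is preserved. Strings of length $1$ in $V'$ are killed by $T^*$ restricted to the $1$-isotypic part of $V'$. The remaining trivial part of $V$ is $\Ker(T)$, of dimension $t$, and it is nondegenerate because $T\in{\rm GenHom}$. Hence $W_1=W_2\oplus\Ker(T)$ is an orthogonal sum, and this yields $(1^{j+t},b'_1)$ with $b'_1=b_2$ when $t=0$.

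Independence of $t$ and of the whole answer from the choice of $T$ follows from the uniqueness statement recalled just before the Proposition: $\CO_{\gamma,\gamma'}$ is a single $M^F_X\times(M'_{X'})^F$-orbit, so $\dim\Ker T$ is constant on it. Finally, the orbit-classification condition that the product of signs, together with the $\varepsilon(-1)$ factors, equals $\disc(V)$ is automatic, because the blocks assemble into the given space $V$.
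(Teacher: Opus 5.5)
Your plan is essentially the paper's own approach. The paper gives no separate argument for Proposition~\ref{orbit}; it simply transports the block-by-block analysis of \cite[Section~3]{GZ14}, with the uniqueness statement adapted from \cite[Lemma~3.4]{Z19}, to finite fields, and that is exactly what you propose.

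Two details can be settled more directly than you suggest. For the sign step, no rescaling is needed. Since $X'^{\lambda-1}=TX^{\lambda-2}T^*$, one has $(X'^{\lambda-1}v_1',v_2')_{V'}=(X^{\lambda-2}T^*v_1',T^*v_2')_V$. So $T^*$ is an isometry between the lowest-weight multiplicity spaces equipped with the forms $(X'^{\lambda-1}u,v)_{V'}$ and $(X^{\lambda-2}u,v)_V$. For the string lengths, the subspace $\Ker T$ is graded and contained in $\Ker X$, so its nondegeneracy (the GenHom condition) forces $\Ker T\subset V_0$. This is what rules out strings of $V$ that are longer than their partner strings in $V'$.
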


\begin{prop}\label{8.4}
Put
\(
\beta=\dim V'-\dim V.
\)
Let
$\CO_1',\CO_2'\subset(\fg')^F$
be two $F$-rational nilpotent orbits lying in the image of $\phi'$,
and set
\(
\CO_i=\nabla^{\rm gen}_{V',V}(\CO_i')
\) with $i=1,2$. Then
\[
\CO_1'\leqslant\CO_2'
\quad\Longrightarrow\quad
\CO_1\leqslant\CO_2.
\]
Assume that the partition of $\CO_2'$ has exactly $\beta$ rows. Then
\[
\CO_1'\leqslant\CO_2'
\quad\Longleftrightarrow\quad
\CO_1\leqslant\CO_2.
\]
Moreover, assume that $\dim V'\geqslant2\dim V$ and that $V'$ is
skew-symmetric. Then every $F$-rational nilpotent orbit in $\fg^F$
is the generalized descent of a unique $F$-rational nilpotent orbit
in the image of $\phi'$ whose partition has exactly $\beta$ rows.
\end{prop}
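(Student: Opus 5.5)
Proposition~\ref{8.4} will be derived from the explicit formula of Proposition~\ref{orbit} together with the column-sum and sign description of the rational orbit order in Section~\ref{ssec:PO}. By Proposition~\ref{orbit}, $\nabla^{\rm gen}_{V',V}$ deletes the first column of the signed Young diagram of $\CO'$ and merges the parts $2^j$ with the kernel contribution into a block of $1$'s. The signs of all rows of length at least $3$ are preserved. Hence, for every $r\geqslant 2$,
\[
C_{r-1}(\CO)=C_r(\CO')-c_1(\CO'),\qquad {\rm sgn}_{r-1}(\CO)={\rm sgn}_{r}(\CO')\ \ (r\geqslant 3).
\]
The first column length $c_1(\CO)$ is forced by $\dim V$. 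Because $V$ and $V'$ have opposite parity types, an odd index $2s-1$ for one space corresponds to an even index for the other. The parity convention in the order is therefore exactly matched by the shift $r\mapsto r-1$. I would first check this index bookkeeping, including the lowest sign condition involving the $1$-block, where ${\rm sgn}$ of the merged block $(1^{j+t},b_1')$ must be compared with the data of $\CO'$. Here I would use that $W_2\subset W_1$ is nondegenerate, so that $b_1'$ is determined by $\disc(V)$ and the remaining signs.

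For the forward implication, assume $\CO_1'\leqslant\CO_2'$. Both orbits lie in $\fg'$, so $C_1(\CO_i')$ need not agree. I would write
\[
C_{r-1}(\CO_1)-C_{r-1}(\CO_2)=\bigl(C_r(\CO_1')-C_r(\CO_2')\bigr)-\bigl(c_1(\CO_1')-c_1(\CO_2')\bigr).
\]
This gives a difference of cumulative sums of columns $2,\ldots,r$. Since $C_m(\CO_i')=\dim V-\ldots$ is fixed, I would instead use the complementary description $\dim V'-C_r = $ boxes beyond column $r$. Then $C_{r-1}(\CO_i)=\dim V-(\text{boxes of }\CO_i'\text{ beyond column }r)$, and the inequality transfers directly. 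Sign conditions at equality transfer by the displayed sign identity. For the converse, when $\CO_2'$ has exactly $\beta$ rows, I would note $c_1(\CO_2')=\beta$ is the maximal possible first column among orbits in the image of $\phi'$ (the image condition forces at most $\beta$ rows, roughly $\dim V'-\dim V\geqslant\#\text{rows}-\text{rank}$). Then $C_1(\CO_1')\leqslant C_1(\CO_2')$ fails only in the trivial direction. The remaining column conditions for $r\geqslant 2$ are equivalent to those for $\CO_1,\CO_2$ by the same formula, read backwards.

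For the last assertion, given $\CO\subset\fg^F$ with $\dim V'\geqslant 2\dim V$, I would construct $\CO'$ by adding a first column of length $\beta$. Each row of $\CO$ grows by one, keeping its sign, and $\beta-\#\text{rows}(\CO)\geqslant 0$ new rows of length $1$ are added. The sign of the new $1$-block is fixed by $\disc(V')$ if $V'$ is symmetric; since $V'$ is skew-symmetric here, those rows carry no sign, though odd rows must pair up. The main obstacle is checking that this $\CO'$ is a valid signed diagram in the image of $\phi'$. This requires parity constraints: rows of odd length in skew-symmetric $V'$ must come in pairs, which corresponds to even rows of $\CO$ in symmetric $V$, and these do occur with even multiplicity. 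It also requires that the rows of $\CO$ of length $1$ become rows of length $2$ with the correct sign $b_2$, using the case $t=0$ of Proposition~\ref{orbit}. Uniqueness follows since any preimage with $\beta$ rows has first column $\beta$ and its remaining columns are determined by $\CO$.
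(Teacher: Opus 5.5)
Your route is the paper's own: everything is read off from Proposition~\ref{orbit}, and the last assertion uses the same construction (pad $\CO$ to $\beta$ rows and add a first column). However, the column bookkeeping is wrong in two places, and your justification of the converse rests on a false claim.

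First, the identity $C_{r-1}(\CO)=C_r(\CO')-c_1(\CO')$ omits the $t=\dim\Ker(T)$ kernel rows, which lie in the first column of $\CO$. In fact $c_1(\CO)=c_2(\CO')+t$, and counting boxes gives $\dim V=\dim V'-c_1(\CO')+t$, that is, $t=c_1(\CO')-\beta$. The correct identity is therefore
\[
C_{r-1}(\CO)=C_r(\CO')-\beta\qquad(r\geqslant2).
\]
This is what your ``complementary description'' actually computes. It contradicts your first display unless $t=0$. Also, $c_1(\CO)$ is not determined by $\dim V$. With the corrected identity the forward implication is immediate, and the column conditions at $r\geqslant2$ for $\CO'$ are equivalent to all column conditions for $\CO$.

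Second, orbits in the image of $\phi'$ have \emph{at least} $\beta$ rows, not at most. Indeed $X'=TT^*$ has rank at most $\dim V$, so $c_1(\CO')=\beta+t\geqslant\beta$. Moreover, $\CO_1'\leqslant\CO_2'$ requires $C_1(\CO_1')\geqslant C_1(\CO_2')$, not $\leqslant$. Your two reversals cancel in the conclusion, but the argument as written is wrong. The correct reason the $r=1$ column condition is automatic is that $\beta$ is the minimal possible number of rows, so $c_1(\CO_1')\geqslant\beta=c_1(\CO_2')$.

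The hypothesis is also needed for the index-$1$ sign condition when $V'$ is skew-symmetric. If $c_1(\CO_1')=c_1(\CO_2')=\beta$, then $t_1=t_2=0$, so $b_1'=b_2$ in Proposition~\ref{orbit}. Since the $1$-block of $\CO_i'$ has even multiplicity and trivial sign, one gets ${\rm sgn}_1(\CO_i')={\rm sgn}_1(\CO_i)=\disc(V)$ for both $i$.

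Finally, in the last assertion you should say why the padding works. We have $\beta-c_1(\CO)\geqslant0$ because $c_1(\CO)\leqslant\dim V\leqslant\beta$. It is even because $\dim V+c_1(\CO)=\sum_i(\lambda_i+1)n_i$ is even for symmetric $V$. Hence the new rows of length $1$ pair up, as required in the skew-symmetric space $V'$.
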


\begin{proof}
The first two assertions follow directly from Proposition~\ref{orbit}
and a straightforward calculation.

For the final assertion, let $\CO\subset\fg^F$. Since
$\beta\geqslant\dim V$, its signed Young diagram has at most $\beta$
rows. Append zero rows until it has exactly $\beta$ rows and then add
one box to each row. Transporting the rational forms as in
Proposition~\ref{orbit} gives an $F$-rational nilpotent orbit
$\CO'\subset(\fg')^F$ with exactly $\beta$ rows. It lies in the image
of $\phi'$ and satisfies
\(
\nabla^{\rm gen}_{V',V}(\CO')=\CO.
\)
The construction also proves uniqueness.
\end{proof}

We now recall the stabilizer decomposition needed for the comparison of
generalized Whittaker models.
For a symmetric or skew-symmetric space \(W\), write \(\RG(W)\) for its isometry group.

\begin{defn}
Let $\gamma$ and $\gamma'$ be as above, and let
$T\in\CO_{\gamma,\gamma'}$. Put
\(
V_{\gamma,\gamma'}:=\Ker(T)\) and
\(
V'_{\gamma,\gamma'}:=(V')^{\gamma',1}_0,
\)
and let $L$ and $L'$ be their isometry groups, respectively.
Define
\[
\alpha=\alpha_T:M'_{X'}\longrightarrow M_X
\]
by letting $\alpha(m')$ act trivially on $\Ker(T)$ and as
$T^{-1}m'T$ on $(\Ker(T))^\perp$.
Let $t_1+1>\cdots>t_l+1>2$ be the distinct integers with 
$(V')^{\gamma',t_j+1}\neq 0$ for any $1\leq j\leq l$, listed in decreasing order. Via the isometries induced by $T^*$, identify
\[
(V')^{\gamma',t_j+1}_{t_j}
\simeq
V^{\gamma,t_j}_{t_j-1},
\qquad 1\leq j\leq l,
\]
and  via the isometry induced by \(T^*\), regard
\(
U_1:=(V')^{\gamma',2}_1
\)
as a nondegenerate subspace of $V$.
Then
\[
M'_{X'}=M_{X,X'}\times L',
\]
where
\[
M_{X,X'}
:=
\prod_{j=1}^l
\RG\bigl((V')^{\gamma',t_j+1}_{t_j}\bigr)
\times\RG(U_1).
\]
Let
\[
U:=\left(\bigoplus_{j=1}^l V^{\gamma,t_j}\right)^\perp.
\]
Then
\(
U=U_1\perp V_{\gamma,\gamma'},
\)
and
\[
M_X
=
\prod_{j=1}^l
\RG\bigl(V^{\gamma,t_j}_{t_j-1}\bigr)
\times\RG(U)
\supset M_{X,X'}\times L.
\]
\end{defn}

\begin{thm}[Finite-field analogue of Theorem 3.7 in \cite{Z19}]\label{gz}
Let $(G,G')$ be a reductive dual pair over $\mathbbm{k}$.
Let $\CO\subset\fg^F$ and
$\CO'\subset(\fg')^F$ be $F$-rational nilpotent orbits, and let
$\pi\in\Irr(G^F)$.
\begin{enumerate}
\item[(i)]  Assume that $\CO'$ lies in the image of $\phi'$ and that $\CO$ is the generalized descent of $\CO'$. Choose compatible
$F$-rational $\mathfrak{sl}_2$-triples
\[
\gamma=\{X,H,Y\},
\qquad
\gamma'=\{X',H',Y'\},
\]
representing $\CO$ and $\CO'$, respectively. Then the associated
stabilizers are of the form
\[
M_X \supset M_{X,X'} \times L, \quad M'_{X'} = M_{X,X'} \times L',
\]
for some reductive dual pair $(L, L')$ of the same type as $(G,G')$. Let $\tau'\in \Irr(L^{\prime F})$. Then
\[
{\mathrm{Wh}}_{\CO',\tau'} (\Theta(\pi))\cong {\mathrm{Wh}}_{\CO,\Theta(\tau')^\vee} (\pi^\vee)
\]
as $M^F_{X,X'}$-modules. Here $\Theta(\tau')$ is the theta lift of $\tau'$ with respect to the
dual pair $(L,L')$. We set
\[
\mathrm{Wh}_{\CO',\tau'}(\Theta(\pi))
:=
\Hom_{L^{\prime F}}
\bigl(\tau',\mathrm{Wh}_{\CO'}(\Theta(\pi))\bigr),
\]
and define
$\mathrm{Wh}_{\CO,\Theta(\tau')^\vee}(\pi^\vee)$ analogously.

\item[(ii)] If $\CO'$ does not lie in the image of $\phi'$, then
\[
{\mathrm{Wh}}_{\CO'} (\Theta(\pi)) = 0.
\]
\end{enumerate}
\end{thm}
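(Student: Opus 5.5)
The plan is to transplant the proof of \cite[Theorem~3.7]{Z19} (itself modelled on Gomez--Zhu \cite{GZ14}) to the finite field. Every analytic step there becomes a finite character-sum identity here. All that is needed is the mixed Schr\"odinger model of the Weil representation $\omega_{N,\psi}$ and the Exchanging Roots Lemma~\ref{erl}.

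\emph{Part~(ii).} Realize $\omega_{G,G'}$ on functions on a Lagrangian of $\BW=\Hom(V,V')$ adapted to the grading of $V'$ by $H'$. The unipotent group $G'_{\geqslant 2}$ acts in this model partly by translations and partly by multiplication by characters of the form $T\mapsto\psi(\langle\phi'(T),\cdot\rangle)$. Taking $\psi_{\CO'}$-coinvariants then localizes to the locus where the relevant block of $\phi'(T)$ equals $X'$. If $\CO'\not\subset\phi'(\Hom(V,V'))$, this locus is empty, so the coinvariants vanish. Since $\Theta(\pi)$ is the $\pi$-isotypic part of $\omega$, this gives ${\rm Wh}_{\CO'}(\Theta(\pi))=0$.

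\emph{Part~(i).} Proceed in three steps.

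\begin{enumerate}
\item[(1)] By the preceding remarks (the finite-field version of \cite[Lemma~3.4]{Z19}), the set $\CO_{\gamma,\gamma'}$ is a single $M_X^F\times M_{X'}'^F$-orbit. Fix $T\in\CO_{\gamma,\gamma'}$. Its stabilizer is computed through the map $\alpha_T$, which yields the decompositions $M'_{X'}=M_{X,X'}\times L'$ and $M_X\supset M_{X,X'}\times L$.
\item[(2)] Compute ${\rm Wh}_{\CO'}(\omega)$ as a $G^F\times M'^F_{X'}$-module. After the localization from part~(ii), it is induced from the stabilizer of $T$. More precisely, it is the $\psi_\CO$-twisted induction of the Weil representation $\omega_{L,L'}$ of the smaller dual pair $(L,L')$, attached to the kernel $V_{\gamma,\gamma'}=\Ker(T)$ and to $V'_{\gamma,\gamma'}=(V')^{\gamma',1}_0$. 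In this description the $G_{\geqslant1}$-part acts through $\omega_{\psi_\CO}$, and the $\Fg'_1$-directions are handled by exchanging roots (Lemma~\ref{erl}), exactly as in the proof of Theorem~\ref{erl2}.
\item[(3)] Take the $\tau'$-isotypic part for $L'^F$ and then pair with $\pi$. Frobenius reciprocity together with $\omega_{L,L'}=\bigoplus\tau'\otimes\Theta(\tau')$ turns ${\rm Hom}_{L'^F}\bigl(\tau',{\rm Wh}_{\CO'}(\Theta(\pi))\bigr)$ into ${\rm Hom}_{G_{\geqslant1}^F\rtimes L^F}\bigl(\omega_{\psi_\CO}\otimes\Theta(\tau'),\pi^\vee\bigr)$. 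The contragredients arise because $\omega_{G,G'}$ pairs $\pi$ with $\Theta(\pi)$. The resulting identification is $M^F_{X,X'}$-equivariant because $\alpha_T$ intertwines the two actions.
\end{enumerate}

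\emph{Main obstacle.} I expect step~(2) to be the hardest, specifically the degree-one pieces. The half-integral levels $\Fg_1$ and $\Fg'_1$ require a careful choice of Lagrangians $\Fl,\Fl'$ that is compatible with $T$, so that the Heisenberg parts match up to the exchanges permitted by Lemma~\ref{erl}. The other delicate point is verifying that the $2$-block multiplicity space $U_1$ embeds nondegenerately in $V$ with the correct rational form, as required by Proposition~\ref{orbit}. For both issues I would first handle the case $t=\dim\Ker T=0$, where $L$ is trivial, and then reduce the general case to it by splitting off $\Ker(T)$ as an orthogonal summand.
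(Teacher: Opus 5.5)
Your plan is essentially the paper's own argument. The paper likewise only sketches this theorem as the finite-field transcription of Zhu's proof of \cite[Theorem~3.7]{Z19}: it realizes the Weil representation in a mixed model adapted to the two $\mathfrak{sl}_2$-gradings (the symplectic--orthogonal mixed model of \cite{MQZ24,Ger77}, with \cite[Section~4.4]{PW} as the worked unitary template) and takes successive equivariant quotients, which is the localization, stabilizer and Frobenius-reciprocity structure you outline.

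When you write it out, watch two points. First, the $G$-stabilizer of a full lift $T\in\CO_{\gamma,\gamma'}$ is only $L$, so the factor $G_{\geqslant1}$ acting through the Weil representation must come out of those successive quotients, not from inducing off the stabilizer of $T$. Second, track contragredients so that step~(3) lands on $\Hom_{L^F\ltimes G_{\geqslant1}^F}\bigl(\Theta(\tau')^\vee\otimes\omega_{\psi_\CO},\pi^\vee\bigr)=\mathrm{Wh}_{\CO,\Theta(\tau')^\vee}(\pi^\vee)$, not on your expression with $\Theta(\tau')$; for this, the localized module should carry $\omega_{\psi_\CO}^\vee\otimes\omega_{L,L'}$.
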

\begin{sop} The argument is the finite-field counterpart of the proof of \cite[Theorem~3.7]{Z19}; see also \cite{GZ14}. A detailed implementation for finite unitary dual pairs is given in \cite[Section~4.4]{PW}. One realizes the Weil representation in the mixed model adapted to the $\mathfrak{sl}_2$-gradings associated with $\gamma$ and $\gamma'$ and successively takes the relevant equivariant quotients. In the present symplectic--orthogonal case, one replaces the unitary mixed model by the corresponding mixed model for the symplectic--orthogonal Weil representation; see \cite[Section~2.2]{MQZ24} and \cite{Ger77}. With this replacement, the successive equivariant-quotient calculations in the proof of \cite[Theorem~3.7]{Z19} carry over without further change.
\end{sop}

\subsection{Main results for orthogonal groups}

We use the theta correspondence to transfer the results of
Corollary~\ref{7.2} from symplectic groups to orthogonal groups.

\begin{thm}\label{omax}
Let $\pi\in\Irr(\RO_m(\mathbbm{k}))$. Then
\begin{enumerate}
\item[(i)]
\(
\ScO(\pi)^{\max,\mathrm{st}}=\ScO(\pi)^{\max}.
\)

\item[(ii)]
\(
\ScO(\pi)=\overline{\ScO(\pi)^{\max}}.
\)
\end{enumerate}
\end{thm}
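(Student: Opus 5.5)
We will transfer Corollary~\ref{7.2} from $\Sp$ to $\RO_m$ using the theta correspondence and the finite-field Gomez--Zhu compatibility of Theorem~\ref{gz}.

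\textbf{Step 1: choose the lift.} Fix $\pi\in\Irr(\RO_m(\mathbbm{k}))$ with $\RO_m=\RO(V)$. Choose a symplectic space $V'$ of dimension $2n'$ with $2n'\geqslant 2m$, and large enough that we are in the stable range. Then $\Theta(\pi)$ is nonzero and irreducible; call it $\pi'\in\Irr(\Sp_{2n'}(\mathbbm{k}))$. Its Lusztig parameter is given explicitly by Theorem~\ref{thm:L-can}. Put $\beta=2n'-m$.

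\textbf{Step 2: compare wavefront sets.} By Theorem~\ref{gz}(ii), every $\CO'\in\ScO(\pi')$ lies in the image of $\phi'$. By Theorem~\ref{gz}(i), for such $\CO'$ with $\CO=\nabla^{\rm gen}_{V',V}(\CO')$ we have $\mathrm{Wh}_{\CO',\tau'}(\pi')\cong\mathrm{Wh}_{\CO,\Theta(\tau')^\vee}(\pi^\vee)$.

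In the stable range, the orbits of interest in $\ScO(\pi')^{\max}$ have partitions with exactly $\beta$ rows. For these, $V'^{\gamma',1}_0=0$, so $L'$ is trivial and $L=\RO(\Ker T)$. Summing over $\tau'$ then gives: $\mathrm{Wh}_{\CO'}(\pi')\neq0$ if and only if $\mathrm{Wh}_{\CO}(\pi^\vee)\neq0$. Since $\pi^\vee\simeq\pi$ for orthogonal groups, this yields the dictionary $\ScO(\pi)=\nabla^{\rm gen}\bigl(\ScO(\pi')\cap\{\text{exactly }\beta\text{ rows}\}\bigr)$.

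The last part of Proposition~\ref{8.4} says every rational orbit of $\fg^F$ arises uniquely from such a $\beta$-row orbit. Its order-equivalence for $\beta$-row orbits then shows that $\nabla^{\rm gen}$ is an order isomorphism between $\beta$-row orbits and rational orbits of $\RO_m$. It also matches stable orbits, because adding one box to each row preserves the dominance comparisons of underlying partitions.

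\textbf{Step 3: transport Corollary~\ref{7.2}.} Maximal elements correspond under the order isomorphism. So part (i) for $\pi'$ gives part (i) for $\pi$, once we know that the maximal orbits of $\pi'$ have exactly $\beta$ rows. For part (ii), we use $\ScO(\pi')=\overline{\ScO(\pi')^{\max}}$ together with the downward closure of descent indices from Theorem~\ref{ds}(iii). Any $\CO_1\leqslant\CO_2\in\ScO(\pi)^{\max}$ lifts to a $\beta$-row orbit $\CO_1'\leqslant\CO_2'$. Hence $\CO_1'\in\ScO(\pi')$, and therefore $\CO_1\in\ScO(\pi)$.

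\textbf{Main obstacle.} The delicate point is justifying that, in the stable range, all maximal orbits of $\pi'$ (and all orbits we need in the closure) have exactly $\beta$ rows. Orbits with fewer than $\beta$ rows would involve a nontrivial $L'$ and the extra multiplicity factors $\Theta(\tau')$.

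We would handle this with the explicit first-descent description of Theorem~\ref{max}(i). Applied to $\pi'$, whose parameter comes from Theorem~\ref{thm:L-can}, the number of parts $k$ of its Kawanaka partition equals $\max\{k[a],|{}^{\bd}\Lambda_1|,|{}^{\bd}\Lambda_{-1}|\}$. The theta lift adds entries to the symbols, which should force $k=\beta$. If this row count fails in small cases, we would instead enlarge $n'$ further.
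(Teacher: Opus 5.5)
Your Step~1 fails. Over a finite field there is no Howe duality, and the big theta lift $\Theta(\pi)$ of $\pi\in\Irr(\RO_m(\mathbbm{k}))$ to $\Sp_{2n'}(\mathbbm{k})$ is in general reducible, however large $n'$ is. The paper's own computation shows this. For $(\RO^+_{2n},\Sp_{4n})$, \eqref{thetaps} writes $\Theta_{2n,4n}(\prll)$ as the sum of all $\prlls$ satisfying \eqref{thetaps1}--\eqref{thetaps2}. This is typically several representations, and by Lemma~\ref{theta-strict} their Kawanaka wavefront orbits are strictly smaller than that of the distinguished one. So there is no single irreducible $\pi'$ to which Corollary~\ref{7.2} can be applied.

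What survives is the dictionary $\ScO(\pi)=\nabla^{\rm gen}\bigl(\ScO(\Theta(\pi))\cap\{\text{exactly }\beta\text{ rows}\}\bigr)$, where $\ScO(\Theta(\pi))=\bigcup_j\ScO(\pi'_j)$ over the irreducible constituents $\pi'_j$. This still yields (ii): a union of rationally closed sets is rationally closed, and your lifting argument through Proposition~\ref{8.4} then shows that $\ScO(\pi)$ is downward closed. It does not yield (i). The maximal $\beta$-row orbits of $\bigcup_j\ScO(\pi'_j)$ may come from different constituents lying over different stable orbits. They may also come from non-maximal orbits of a constituent whose own maximal orbits have more than $\beta$ rows.

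The missing ingredient is a domination statement that singles out one constituent. The paper takes $\dim V'=2\dim V$ and builds the specific constituent $\prllp$ of \eqref{Lambda1p}, namely the one with $\Theta_{4n,2n}(\prllp)=\prll$. Lemma~\ref{thetamax} then shows $\ScO(\prlls)^{\max}\subset\ScO(\prllp)$ for every other constituent, so that $\ScO(\Theta(\pi))=\ScO(\prllp)$. Its proof matches each first descent of $\prlls$ with a descent of $\prllp$ having the same index and sign, via Theorem~\ref{ggp}(i), and then inducts. Lemma~\ref{theta-strict} shows that the maximal orbits of $\prllp$ have exactly $\beta=2n$ rows, whereas those of the other constituents have strictly more. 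With these two facts, the maximal $\beta$-row orbits of $\ScO(\Theta(\pi))$ are exactly $\ScO(\prllp)^{\max}$, and Corollary~\ref{7.2}(i) for $\prllp$ transports to (i).

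Your ``main obstacle'' addresses only the row count. Its expectation that all relevant maximal orbits have exactly $\beta$ rows is false for the non-distinguished constituents, and enlarging $n'$ does not remove the need for the domination lemma.

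A smaller slip: for a $\beta$-row orbit one has $\Ker T=0$, so it is $L$ that is trivial. $L'$ is the symplectic group of the multiplicity space of the length-one rows and is usually nontrivial. Your ``if and only if'' still holds, because the theta lift from $L'$ to the trivial orthogonal group vanishes unless $\tau'$ is trivial.
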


\begin{proof}

We give the details when $m$ is even. The odd-dimensional case follows
from the same argument using the corresponding odd orthogonal Witt tower.

Assume that $m=2n$, and let $\pi^\vee$ denote the contragredient of $\pi$. Assume that
\[
\pi^\vee=\pi_{\rho,\Lambda_1,\Lambda_{-1}},
\qquad
\Lambda_1=
\begin{pmatrix}
a_1,a_2,\ldots,a_{m_1}\\
b_1,b_2,\ldots,b_{m_2}
\end{pmatrix}.
\]
We treat the case
$\RO_{2n}(\mathbbm{k})=\RO^+_{2n}(\mathbbm{k})$ and
$\mathrm{def}(\Lambda_1)\leqslant0$. The remaining even-dimensional
cases follow by interchanging the two rows of the relevant symbol and
choosing the corresponding Witt tower.

Let $s$ be a semisimple element such that
\(
\prll\in\mathcal E(\RO_{2n}(\mathbbm{k}),s).
\)
Choose a semisimple element
$s'\in\SO_{4n+1}(\mathbbm{k})$ such that the pair $(s',s)$
satisfies condition~\textup{(3)} of Theorem~\ref{thm:L-can}. Let
\(
\prllp\in\mathcal E(\Sp_{4n}(\mathbbm{k}),s')
\)
be determined by $\rho'=\rho$, $\Lambda_{-1}'=\Lambda_{-1}$ and
\begin{equation}\label{Lambda1p}
    \Lambda_1'
=
\begin{pmatrix}
k,b_1,b_2,\ldots,b_{m_2}\\
a_1,a_2,\ldots,a_{m_1}
\end{pmatrix}\qquad\rank(\Lambda_1')=\rank(\Lambda_1)+n.
\end{equation}
The rank condition gives
\(
k=n+\frac{m_1+m_2}{2}>n\geqslant b_1
\)
so $\Lambda_1'$ is well defined.

By Theorem~\ref{thm:L-can}, together with the explicit description of
the unipotent theta correspondence in \cite{AMR96} and
\cite[Section~1.5]{Pan19}, the first occurrence of $\prllp$ in the Witt
tower $\mathbf O^+_{\mathrm{even}}$ has dimension $2n$, and
\begin{equation}\label{thetaps0}
\Theta_{4n,2n}(\prllp)=\prll.
\end{equation}
Moreover,
\begin{equation}\label{thetaps}
    \Theta_{2n,4n}(\prll)=\bigoplus_{\prlls}\prlls,
\end{equation}
where
$\prlls=\pi_{\rho^\star,\Lambda_1^\star,\Lambda_{-1}^\star}$ runs over
the representations satisfying
\begin{equation}\label{thetaps1}
\rho^\star=\rho,
\qquad
\Lambda_{-1}^\star=\Lambda_{-1},
\qquad
\mathrm{def}(\Lambda_1^\star)
=
-\mathrm{def}(\Lambda_1)+1,
\end{equation}
and
\begin{equation}\label{thetaps2}
\bigl(\Upsilon(\Lambda_1^\star)^-\bigr)^t
\preccurlyeq
\bigl(\Upsilon(\Lambda_1)^+\bigr)^t,
\qquad
\bigl(\Upsilon(\Lambda_1)^-\bigr)^t
\preccurlyeq
\bigl(\Upsilon(\Lambda_1^\star)^+\bigr)^t.
\end{equation}

Define
\[
\ScO\bigl(\Theta_{2n,4n}(\prll)\bigr)
:=
\bigcup_{\prlls\subset\Theta_{2n,4n}(\prll)}
\ScO(\prlls).
\]
For every constituent $\prlls\neq\prllp$, the relations
\eqref{thetaps1} and \eqref{thetaps2}, after applying Alvis--Curtis
duality and appending zeros when necessary, imply the hypotheses of
Lemma~\ref{thetamax}\textup{(i)}. Hence
\begin{equation}\label{sip}
\ScO(\prlls)^{\max}\subset\ScO(\prllp).
\end{equation}

By Corollary~\ref{7.2}(ii), $\ScO(\prlls)$ is the rational closure of
$\ScO(\prlls)^{\max}$, whereas $\ScO(\prllp)$ is rationally closed.
It follows that
\[
\ScO(\prlls)\subset\ScO(\prllp).
\]
Since $\prllp$ itself occurs in $\Theta_{2n,4n}(\prll)$, we obtain
\begin{equation}\label{eq8.7}
\ScO\bigl(\Theta_{2n,4n}(\prll)\bigr)
=
\ScO(\prllp).
\end{equation}

By Theorem~\ref{gz}(ii), all nilpotent orbits occurring in
$\ScO(\Theta_{2n,4n}(\prll))$ lie in the image of $\phi'$, while
Proposition~\ref{8.4} shows that every nilpotent orbit on
the orthogonal side has a generalized lift. Applying
Theorem~\ref{gz}(i) and using
\eqref{eq8.7}, we obtain
\begin{equation}\label{eq8.8}
\ScO(\pi)=\ScO(\prll^\vee)
=
\nabla^{\rm gen}_{V',V}\bigl(\ScO(\prllp)\bigr).
\end{equation}

Together with Corollary~\ref{7.2}\textup{(ii)} and \eqref{eq8.8},
Proposition~\ref{8.4} and Lemma~\ref{theta-strict} show that
generalized descent gives a bijection
between $\ScO(\prllp)^{\max}$ and $\ScO(\pi)^{\max}$.
By Corollary~\ref{7.2}\textup{(i)} and Proposition~\ref{orbit}, the
corresponding maximal rational orbits lie in one stable orbit. This
proves \textup{(i)}. Finally, Corollary~\ref{7.2}\textup{(ii)}, \eqref{eq8.8}, and the
order equivalence in Proposition~\ref{8.4}, which applies because by Lemma~\ref{theta-strict},
every orbit in $\ScO(\prllp)^{\max}$ has a partition with exactly
$2n$ rows, imply \textup{(ii)}.

\end{proof}

\begin{lem}\label{thetamax}
Let $\prllp$ and $\prlls$ be irreducible representations of
$\Sp_{2n}(\mathbbm{k})$.

\begin{enumerate}
\item[(i)]
Assume that
\(
\rho^\star=\rho'\),
\(\mathrm{def}(\Lambda_1^\star)
=
\mathrm{def}(\Lambda_1')\), and
\(\Lambda_{-1}^\star=\Lambda_{-1}'\).
Assume further that there exists $\epsilon\in\{\pm\}$ such that, writing
\[
\begin{aligned}
\Upsilon({}^{\bd}\Lambda_1')^\epsilon
&=(a_1',\ldots,a_{m_1'}'),
&
\Upsilon({}^{\bd}\Lambda_1')^{-\epsilon}
&=(b_1',\ldots,b_{m_2'}'),\\
\Upsilon({}^{\bd}\Lambda_1^\star)^\epsilon
&=(a_1^\star,\ldots,a_{m_1^\star}^\star),
&
\Upsilon({}^{\bd}\Lambda_1^\star)^{-\epsilon}
&=(b_1^\star,\ldots,b_{m_2^\star}^\star),
\end{aligned}
\]
we have $m_2'\leqslant m_2^\star$ and
\[
\begin{aligned}
&a_i'-1\leqslant a_i^\star\leqslant a_i'
&& (i\geqslant1),\\
&b_i'-1\leqslant b_i^\star\leqslant b_i'
&& (1\leqslant i\leqslant m_2'),\\
&b_i^\star=1
&& (m_2'<i\leqslant m_2^\star),
\end{aligned}
\]
with the convention that $a_i'=0$ for $i>m_1'$ and
$a_i^\star=0$ for $i>m_1^\star$. Then
\[
\ScO(\prlls)^{\max}\subset\ScO(\prllp).
\]

\item[(ii)]
Assume that
\(
\rho^\star=\rho'\),
\(\mathrm{def}(\Lambda_{-1}^\star)
=
\mathrm{def}(\Lambda_{-1}')\), and
\(\Lambda_1^\star=\Lambda_1'\).
Assume further that there exists $\epsilon\in\{\pm\}$ such that, writing
\[
\begin{aligned}
\Upsilon({}^{\bd}\Lambda_{-1}')^\epsilon
&=(a_1',\ldots,a_{m_1'}'),
&
\Upsilon({}^{\bd}\Lambda_{-1}')^{-\epsilon}
&=(b_1',\ldots,b_{m_2'}'),\\
\Upsilon({}^{\bd}\Lambda_{-1}^\star)^\epsilon
&=(a_1^\star,\ldots,a_{m_1^\star}^\star),
&
\Upsilon({}^{\bd}\Lambda_{-1}^\star)^{-\epsilon}
&=(b_1^\star,\ldots,b_{m_2^\star}^\star),
\end{aligned}
\]
we have $m_2'\leqslant m_2^\star$ and
\[
\begin{aligned}
&a_i'-1\leqslant a_i^\star\leqslant a_i'
&& (i\geqslant1),\\
&b_i'-1\leqslant b_i^\star\leqslant b_i'
&& (1\leqslant i\leqslant m_2'),\\
&b_i^\star=1
&& (m_2'<i\leqslant m_2^\star),
\end{aligned}
\]
with the convention that $a_i'=0$ for $i>m_1'$ and
$a_i^\star=0$ for $i>m_1^\star$. Then
\[
\ScO(\prlls)^{\max}\subset\ScO(\prllp).
\]
\end{enumerate}
\end{lem}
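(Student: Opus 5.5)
Our strategy for Lemma~\ref{thetamax} is to compare maximal descent sequences of \(\prlls\) and \(\prllp\), and then pass back to orbits. We treat~(i); part~(ii) is the same argument with the roles of the two symbols interchanged. By Theorem~\ref{max}(ii), \(\ScO(\prlls)^{\max}\) is the union of the orbits \(\CO(\Gamma^\star)\), where \(\Gamma^\star\) runs over the maximal descent sequence indexes of \(\prlls\). By Corollary~\ref{7.2}(ii) together with Corollary~\ref{od}, \(\ScO(\prllp)\) is rationally closed, and a good index that is a descent sequence index of \(\prllp\) gives an orbit in \(\ScO(\prllp)\). It is therefore enough to prove the following: for every maximal descent sequence index \(\Gamma^\star\) of \(\prlls\), there is a descent sequence index \(\Gamma'\) of \(\prllp\) with \(\Gamma^\star\leqslant\Gamma'\) and \(\Gamma'\) good. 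Lemma~\ref{lem:orbit-sequence-order}(ii) then gives \(\CO(\Gamma^\star)\leqslant\CO(\Gamma')\in\ScO(\prllp)\).

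To build \(\Gamma'\) we use Theorem~\ref{max}(i). It describes \(\Gamma^\star\) step by step: the \(\rho\)-contributions \(\ell[\ne\pm1]_i\), which are the same for both representations since \(\rho^\star=\rho'\); the \((-1)\)-symbol contributions, which are also the same since \(\Lambda^\star_{-1}=\Lambda'_{-1}\); and the \(1\)-symbol contributions, which are rank drops from successively removing the largest entries of \({}^{\bd}\Lambda_1^\star\). The hypotheses say that the rows of \(\Upsilon({}^{\bd}\Lambda_1^\star)\) are obtained from those of \(\Upsilon({}^{\bd}\Lambda_1')\) by deleting at most one box from each row, except that the \(-\epsilon\) row may gain extra rows of length one. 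Translated into entries of the symbols, and using that the defects agree, this means the \(i\)-th largest entry of \({}^{\bd}\Lambda_1^\star\) is at most the \(i\)-th largest entry of \({}^{\bd}\Lambda_1'\), up to the shift produced by the extra length-one rows. Consequently the partial sums \(\sum_{j\le r}\ell^\star_j\) are bounded by the corresponding partial sums for the first-descent sequence of \(\prllp\), padded by zeros as in Corollary~\ref{fdi-comp}. We then take \(\Gamma'\) to be a maximal descent sequence index of \(\prllp\), which is good by Theorem~\ref{ds}(ii). The dominance of partial sums yields a chain of elementary mutations from \(\Gamma^\star\) to a sequence with the same numerical data as \(\Gamma'\), via the standard transfer argument: we move \(2\) units at a time from a later position to an earlier one.

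The main obstacle is the signs. An elementary mutation only preserves the product of the two signs it touches. Hence \(\Gamma^\star\leqslant\Gamma'\) requires, at every position where the partial sums coincide, that the products of the signs up to that position agree; where the sums are strict there is freedom. We plan to use the formula \(a_i=-\epsilon_{\rho_{i-1}}\epsilon_{\rho_i}\zeta\zeta\,\varepsilon(-1)^{i-1}\) from Theorem~\ref{max}(i), for which the \(\rho\)-factors telescope identically for both representations. At a position of coincidence, the hypothesis forces \({}^{\bd}\Lambda_1^\star\) and \({}^{\bd}\Lambda_1'\) to have removed their entries from rows of the same label (the \(\epsilon\)-labelling fixed in the hypothesis), so the products of \(\zeta\)'s agree. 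Where ties in the largest entries allow either choice, we choose them via Corollary~\ref{5.6} to match. If the sign bookkeeping fails at some tie, the fallback is to use downward closure, Theorem~\ref{ds}(iii), together with Lemma~\ref{lem7.7}, to realize the needed index directly as a descent sequence index of \(\prllp\) (not necessarily maximal), and then conclude through Corollary~\ref{od}(ii). This may require splitting the last few indices into odd pairs to keep the index good.
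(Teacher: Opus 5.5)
The step that fails is the dominance claim $\sum_{j\leqslant r}\ell^\star_j\leqslant\sum_{j\leqslant r}\ell'_j$. Equivalently, it is your assertion that the $i$-th largest entry of ${}^{\bd}\Lambda_1^\star$ is at most the $i$-th largest entry of ${}^{\bd}\Lambda_1'$, up to a harmless shift.

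Read $m_1',m_2',m_1^\star,m_2^\star$ as numbers of nonzero parts, as the convention $a_i'=0$ for $i>m_1'$ indicates. Then the new unit rows $b_i^\star=1$ ($m_2'<i\leqslant m_2^\star$) raise entries in the $-\epsilon$ row of ${}^{\bd}\Lambda_1^\star$, and these raised entries can overtake the entries lowered in the $\epsilon$ row.

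The smallest instance is $n=1$, $\prllp=\mathbbm{1}$, $\prlls=\mathrm{St}$, with trivial $\rho$- and $(-1)$-parts:
\begin{itemize}
\item ${}^{\bd}\Lambda_1'=\binom{1,0}{1}$ has $\Upsilon=[\varnothing;(1)]$;
\item ${}^{\bd}\Lambda_1^\star=\binom{1}{-}\sim\binom{2,0}{0}$ has $\Upsilon=[(1);\varnothing]$;
\item with $\epsilon=-$ one gets $a'=(1)$, $b'=\varnothing$, $a^\star=\varnothing$, $b^\star=(1)$, so every hypothesis of (i) holds.
\end{itemize}
Yet the largest entry $2$ of ${}^{\bd}\Lambda_1^\star$ exceeds the largest entry $1$ of ${}^{\bd}\Lambda_1'$. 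Concretely, $\Gamma^\star=\llb(2,\pm)\rrb$, while every descent sequence of $\mathbbm{1}$ begins with a $(0,a)$-step by Proposition~\ref{trivial}(i). Elementary mutations never lower the first entry, so no descent sequence index of $\prllp$ dominates $\Gamma^\star$. Neither your sign bookkeeping nor the fallback through Theorem~\ref{ds}(iii) and Lemma~\ref{lem7.7} can repair this.

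In fact this is a counterexample to the lemma itself: $\ScO(\mathrm{St})^{\max}=\{[(2,\pm)]\}\not\subset\{0\}=\ScO(\mathbbm{1})$.
\begin{itemize}
\item Fixing $\epsilon=+$ does not save it. The unipotent representations of $\Sp_6(\mathbbm{k})$ with $\Lambda_1'=\binom{1,0}{3}$ and $\Lambda_1^\star=\binom{2,0}{2}$ satisfy the hypotheses of (i). Their Kawanaka wavefront orbits are $[2^2,1^2]$ and $[2^3]$ respectively, and $[2^3]$ is strictly larger.
\item Part (ii) fails the same way, e.g.\ on $\Sp_2$ with ${}^{\bd}\Lambda_{-1}'=\binom{0}{1}$, ${}^{\bd}\Lambda_{-1}^\star=\binom{1}{0}$, $\epsilon=-$. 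These are the two halves of the principal series induced from the quadratic character, and they are generic for different Gelfand--Graev representations.
\end{itemize}

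The paper's own proof breaks at exactly this configuration. It matches the first descent $\prlls\to\prllsp$ by a descent of $\prllp$ with the same index $(\ell^\star,a^\star)$ and inducts through (ii). There the quantity $r=b_1'-b_1^\star$ equals $-1$ when the largest entry of ${}^{\bd}\Lambda_1^\star$ lies in the $-\epsilon$ row and $m_2'=0$, contrary to the claim $r\in\{0,1\}$.

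So the statement needs an extra hypothesis excluding this situation, presumably supplied by the specific shape of $\Lambda_1'$ in the proof of Theorem~\ref{omax}. Any proof along your lines would have to invoke that hypothesis precisely in the dominance step.
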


\begin{proof}
We argue by simultaneous induction on $n$. Both assertions are
immediate for $n=0$. Assume that they hold for all smaller values of
$n$. We prove (i); the induction step for (ii) is obtained by
interchanging the roles of the $1$-part and the $(-1)$-part. We retain
the notation for the $\epsilon$- and $(-\epsilon)$-rows introduced in
the statement.

Let $\CO\in\ScO(\prlls)^{\max}$, and choose a maximal descent sequence
realizing $\CO$. By Theorem~\ref{ds}(i), this sequence is obtained by
successive first descents. Write its first descent as
\[
\prlls\xrightarrow{(\ell^\star,a^\star)}\prllsp.
\]
By maximality, the remaining arrows form a maximal descent sequence for
$\prllsp$. Let
\(
\CO'\in\ScO(\prllsp)^{\max}
\)
be the orbit realized by these arrows.

If $\ell^\star=0$, then Proposition~\ref{trivial}(i) implies that
$\prlls$ is the trivial representation. Hence $\CO$ is the zero orbit,
which belongs to $\ScO(\prllp)$ by Corollary~\ref{7.2}(ii). We may
therefore assume that $\ell^\star>0$.

By Theorem~\ref{ggp}(ii), the first descent of $\prlls$ is obtained
with
\[
(\ep,\ev)
=
\bigl(
-\zeta({}^{\bd}\Lambda_1^\star),
\zeta({}^{\bd}\Lambda_{-1}^\star)
\bigr).
\]
To match this descent, we apply Theorem~\ref{ggp}(i) to $\prllp$ with
the same pair $(\ep,\ev)$. Our goal is to construct an irreducible
representation $\prllpp$ satisfying
\[
\prllpp
\subset
\CD^{\rm FJ}_{2\ell^\star,a^\star}(\prllp).
\]
Since the $\rho$-parameters and $\Lambda_{-1}$-parameters of $\prllp$ and
$\prlls$ agree, the parameter relations in Theorem~\ref{ggp}(i) allow
us to take the $\rho$- and $\Lambda_1$-parameters of $\prllpp$ to be
those of $\prllsp$. Their contributions to the
descent-length formula are therefore unchanged. The common choice of
$(\ep,\ev)$ also gives the same rational sign $a^\star$. It remains
only to choose $\Lambda_{-1}''$ so that its contribution gives the
required length $\ell^\star$ or equivalently \(
\operatorname{rank}(\Lambda_{-1}'')
=
\operatorname{rank}(\Lambda_{-1}^{\star\prime})
\).

Since the same pair $(\ep,\ev)$ is used in the two descents, the
defect formula in Theorem~\ref{ggp}\textup{(i)} shows that, if such a
symbol $\Lambda_{-1}''$ exists, it must satisfy
\(
\mathrm{def}(\Lambda_{-1}'')
=
\mathrm{def}(\Lambda_{-1}^{\star\prime}).
\)
It then follows that
\[
\operatorname{rank}(\Lambda_{-1}'')
-
\operatorname{rank}(\Lambda_{-1}^{\star\prime})
=
\bigl|\Upsilon({}^{\bd}\Lambda_{-1}'')\bigr|
-
\bigl|\Upsilon({}^{\bd}\Lambda_{-1}^{\star\prime})\bigr|.
\]
Set
\[
r=
\begin{cases}
a_1'-a_1^\star,&-\ep=\epsilon,\\
b_1'-b_1^\star,&-\ep=-\epsilon.
\end{cases}
\]
By the hypotheses, $r\in\{0,1\}$. To ensure that
\(
\operatorname{rank}(\Lambda_{-1}'')
=
\operatorname{rank}(\Lambda_{-1}^{\star\prime}),
\)
choose, up to the row switch prescribed by $\ev$, a symbol
$\Lambda_{-1}''$ satisfying
\(
\mathrm{def}(\Lambda_{-1}'')
=
\mathrm{def}(\Lambda_{-1}^{\star\prime})
\)
and
\[
\Upsilon({}^{\bd}\Lambda_{-1}'')
=
\begin{cases}
\begin{bmatrix}
a_2',\ldots,a_{m_1'}',1^r\\
b_1',\ldots,b_{m_2'}'
\end{bmatrix},
&-\ep=\epsilon,\\[5mm]
\begin{bmatrix}
a_1',\ldots,a_{m_1'}'\\
b_2',\ldots,b_{m_2'}',1^r
\end{bmatrix},
&-\ep=-\epsilon.
\end{cases}
\]
A direct verification of the conditions in
Theorem~\ref{ggp}\textup{(i)} then shows that
\[
\prllpp
\subset
\CD^{\rm FJ}_{2\ell^\star,a^\star}(\prllp).
\]

By direct calculation, the pair $(\prllpp,\prllsp)$ satisfies the
hypotheses of~\textup{(ii)}, possibly with $\epsilon$ replaced by
$-\epsilon$ after the row switch. Since $\ell^\star>0$, the induction
hypothesis gives
\[
\CO'
\in
\ScO(\prllsp)^{\max}
\subset
\ScO(\prllpp).
\]
Since
\(
\prllpp
\subset
\CD^{\rm FJ}_{2\ell^\star,a^\star}(\prllp)
\)
and $\CO$ is obtained from $\CO'$ by adjoining the first descent index
$(\ell^\star,a^\star)$, the composition law for descents gives
\[
\CO\in\ScO(\prllp).
\]
As $\CO\in\ScO(\prlls)^{\max}$ was arbitrary, we conclude that
\[
\ScO(\prlls)^{\max}\subset\ScO(\prllp).
\]
This proves~\textup{(i)}, and the same argument proves~\textup{(ii)}.
\end{proof}

\begin{rmk}
Lemma~\ref{thetamax}~\textup{(ii)} is used both in the simultaneous induction
above and in the omitted odd case of Theorem~\ref{omax}.
\end{rmk}

\begin{lem}\label{theta-strict}
Retain the notation of the proof of Theorem~\ref{omax}, in particular
\eqref{Lambda1p}--\eqref{thetaps2}. For every irreducible constituent
\(\prlls\neq\prllp\) occurring in \eqref{thetaps}, the partition
attached to \(\ScO(\prlls)^{\max}\) is strictly smaller than that
attached to \(\ScO(\prllp)^{\max}\). Moreover, the former has more
than \(2n\) rows, whereas the latter has exactly \(2n\) rows.
\end{lem}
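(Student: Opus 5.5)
The plan is to compute both partitions explicitly from Theorem~\ref{max}(i)--(iii) and compare them. Both $\prllp$ and every $\prlls$ in \eqref{thetaps} share $\rho$ and $\Lambda_{-1}$. So the $(\ne\pm1)$-contributions $\ell[\ne\pm1]_i$ and the contributions coming from $\Lambda_{-1}$ agree term by term. The maximal descent sequences therefore differ only through the $1$-symbol, i.e.\ through the successive entries removed from ${}^{\bd}\Lambda_1'$ versus ${}^{\bd}\Lambda_1^\star$. By Theorem~\ref{max}(i), the $1$-symbol contributes at odd steps $i$ the rank drop $\rank({}^{\bd}\Lambda_1^{i-1})-\rank({}^{\bd}\Lambda_1^{i})$, which equals the removed largest entry minus a term depending only on the number of entries (computed as in Corollary~\ref{fdi-comp}). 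It contributes analogously at even steps, with the rows playing the role of $\Lambda_{-1}$. Hence the partition of $\ScO(\cdot)^{\max}$ is determined by the decreasing list of entries of the dual $1$-symbol, merged with the fixed data. The number of rows is $k=\max\{\max k[a],|{}^{\bd}\Lambda_1|,|{}^{\bd}\Lambda_{-1}|\}$.

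Next I will make the dual symbols explicit. For $\Lambda_1'$ as in \eqref{Lambda1p}, the new entry $k=n+\frac{m_1+m_2}{2}$ is very large. Passing to ${}^{\bd}\Lambda_1'$ transposes the bipartition. The top row $\Upsilon(\Lambda_1')^+$ acquires the part $k-m_2$ in its first position, so its transpose gains $k-m_2$ rows of length $\ge1$. The count then shows $|{}^{\bd}\Lambda_1'|=2n$, and that this dominates the other two maxima. I expect the latter because the $\rho$- and $\Lambda_{-1}$-parts of $\pi$ have total rank at most $n$, so their lengths are at most $n$, less than $2n$. This gives exactly $2n$ rows for $\prllp$, matching the first-occurrence index $2n$. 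It is consistent with Gomez--Zhu: the orbit of $\Theta_{4n,2n}(\prllp)=\pi$ is obtained by removing one column of length $\beta=2n$.

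For $\prlls\neq\prllp$, the constraints \eqref{thetaps1}--\eqref{thetaps2} say that the columns of $\Upsilon(\Lambda_1^\star)^\pm$ interlace with those of $\Upsilon(\Lambda_1)^\mp$. After Alvis--Curtis duality these are exactly the hypotheses of Lemma~\ref{thetamax}(i), with the tail $b_i^\star=1$ for $m_2'<i\le m_2^\star$. If $m_2^\star>m_2'$, the dual symbol of $\Lambda_1^\star$ has strictly more entries at the same rank, and therefore more than $2n$ rows. The case $m_2^\star=m_2'$ is the main obstacle. There the interlacing inequalities $a_i'-1\le a_i^\star\le a_i'$ and $b_i'-1\le b_i^\star\le b_i'$ must be strict somewhere, because the ranks agree and $\prlls\ne\prllp$. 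I will show this still forces an extra row, by showing that equality of all but one coordinate forces $\Lambda_1^\star=\Lambda_1'$ via the defect condition \eqref{thetaps1}.

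Finally, I will derive strict smallness from the row counts. Both partitions are partitions of $4n$. Lemma~\ref{thetamax}(i) together with Corollary~\ref{7.2}(ii) already gives $\ScO(\prlls)^{\max}\subset\ScO(\prllp)$, so the partition for $\prlls$ is $\le$ that for $\prllp$ in dominance, since the latter lies over the Kawanaka wavefront orbit. Having more rows, it cannot be equal, so it is strictly smaller.
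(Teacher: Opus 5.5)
Your overall architecture matches the paper's. You reduce to the $1$-symbol, get exactly $2n$ rows for $\prllp$ from $|{}^{\bd}\Lambda_1'|=2n$, show more than $2n$ rows for $\prlls\neq\prllp$, and deduce strict smallness from \eqref{sip} and Corollary~\ref{7.2}(ii). The gap is in the decisive middle step. In your dual coordinates (with $\epsilon=+$) one has $m_2'=k-m_2$, where $m_2$ is the bottom-row length of $\Lambda_1$, and $m_2^\star=(\Upsilon(\Lambda_1^\star)^+)_1$. So the whole content of the lemma is the strict inequality $m_2^\star>m_2'$ whenever $\Lambda_1^\star\neq\Lambda_1'$, which is the paper's $(\Upsilon(\Lambda_1^\star)^+)_1>k-m_2$.

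You take $m_2'\le m_2^\star$ for free as part of ``the hypotheses of Lemma~\ref{thetamax}(i)''. It does not follow from \eqref{thetaps2}: writing $\mu=\Upsilon(\Lambda_1)^-$, the interlacing only gives $b_i^\star\in\{\mu^t_i,\mu^t_i+1\}$, hence $m_2^\star\ge\mu_1$, and $\mu_1$ is in general much smaller than $k-m_2$. What forces the inequality is a size count. The equalities $\rank(\Lambda_1^\star)=\rank(\Lambda_1')$ and $\mathrm{def}(\Lambda_1^\star)=\mathrm{def}(\Lambda_1')$ give $|\Upsilon({}^{\bd}\Lambda_1^\star)|=|\Upsilon({}^{\bd}\Lambda_1')|$. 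Meanwhile every coordinatewise comparison goes the same way: $a_i^\star\le a_i'$ for all $i$, $b_i^\star\le b_i'$ for $i\le m_2'$, and $b'_{m_2'}\ge1$. So any deficit must be absorbed by tail entries $b_i^\star$ with $i>m_2'$. This yields $m_2^\star\ge m_2'$, and $m_2^\star=m_2'$ forces equality everywhere, i.e.\ $\Lambda_1^\star=\Lambda_1'$.

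This is also why your ``main obstacle'' is misdiagnosed. In the case $m_2^\star=m_2'$ no extra row can be forced. There the lower component of $\Upsilon({}^{\bd}\Lambda_1^\star)$ has the same length $k-m_2$ as for $\Lambda_1'$, the upper one is no longer, and the defect is the same, so $|{}^{\bd}\Lambda_1^\star|=2n$ exactly. The lemma holds only because this case is empty, by the size argument above; this is the paper's equality analysis.

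Two smaller points. First, converting $m_2^\star>m_2'$ into more entries uses the defect, not the rank. With $d=\mathrm{def}(\Lambda_1^\star)=\mathrm{def}(\Lambda_1')\ge1$, the bottom row of ${}^{\bd}\Lambda_1^\star$ has at least $m_2^\star$ nonzero entries, and the top row has $d$ more entries with at most one zero. Hence $|{}^{\bd}\Lambda_1^\star|\ge 2m_2^\star+d-1\ge 2(k-m_2)+d+1=2n+2$. Second, the $\Lambda_{-1}$-part is not bounded by $n$ rows: a symbol of rank $\nu$ and even defect can have about $2\nu-1$ nonzero dual entries. The correct bound, which suffices, is at most $2n$.
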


\begin{proof}
By the definition of \(\Lambda_1'\),
\[
\Upsilon(\Lambda_1')^+
=
\bigl(k-m_2,\Upsilon(\Lambda_1)^-\bigr),
\qquad
\Upsilon(\Lambda_1')^-=\Upsilon(\Lambda_1)^+.
\]
The second inequality in \eqref{thetaps2} means that each part of
\(\bigl(\Upsilon(\Lambda_1^\star)^+\bigr)^t\) exceeds the
corresponding part of
\(\bigl(\Upsilon(\Lambda_1)^-\bigr)^t\) by at most one. Therefore
\[
\bigl(\Upsilon(\Lambda_1^\star)^+\bigr)_1
\geq
\left|\Upsilon(\Lambda_1^\star)^+\right|
-
\left|\Upsilon(\Lambda_1)^-\right|.
\]
By \eqref{thetaps1} and the definition of \(\prllp\),
\(\prlls\) and \(\prllp\) have the same \(\rho\)-part and the same
\((-1)\)-symbol. Since both are
representations of \(\Sp_{4n}(\mathbbm{k})\), the parameter rank
formula gives
\(
\rank(\Lambda_1^\star)=\rank(\Lambda_1').
\)
Moreover, by \eqref{Lambda1p} and \eqref{thetaps1}, they also have the same defect.
Hence their associated bipartitions have the same total size.
Consequently,
\[
\begin{aligned}
\bigl(\Upsilon(\Lambda_1^\star)^+\bigr)_1
&\geq
\left|\Upsilon(\Lambda_1^\star)^+\right|
-
\left|\Upsilon(\Lambda_1)^-\right|\\
&=
k-m_2+
\left|\Upsilon(\Lambda_1)^+\right|
-
\left|\Upsilon(\Lambda_1^\star)^-\right|\\
&\geq k-m_2,
\end{aligned}
\]
where the last inequality follows from the first inequality in
\eqref{thetaps2}.

Suppose that
\(
\bigl(\Upsilon(\Lambda_1^\star)^+\bigr)_1=k-m_2.
\)
Then equality holds throughout the preceding inequalities. In
particular,
\(
\left|\Upsilon(\Lambda_1^\star)^-\right|
=
\left|\Upsilon(\Lambda_1)^+\right|,
\)
and hence the first relation in \eqref{thetaps2} gives
\[
\Upsilon(\Lambda_1^\star)^-
=
\Upsilon(\Lambda_1)^+.
\]
Moreover,
\[
\left|\Upsilon(\Lambda_1^\star)^+\right|
-
\left|\Upsilon(\Lambda_1)^-\right|
=
\bigl(\Upsilon(\Lambda_1^\star)^+\bigr)_1
=
k-m_2.
\]
By the second relation in \eqref{thetaps2}, the corresponding parts of
\(\bigl(\Upsilon(\Lambda_1^\star)^+\bigr)^t\) and
\(\bigl(\Upsilon(\Lambda_1)^-\bigr)^t\) differ by either zero or one.
Since the former has exactly \(k-m_2\) nonzero parts and the total
difference in size is \(k-m_2\), each of its first \(k-m_2\) parts
exceeds the corresponding part of the latter by one. Therefore
\[
\Upsilon(\Lambda_1^\star)^+
=
\bigl(k-m_2,\Upsilon(\Lambda_1)^-\bigr).
\]
Thus \(\Lambda_1^\star=\Lambda_1'\). Consequently,
\[
\bigl(\Upsilon(\Lambda_1^\star)^+\bigr)_1>k-m_2
\qquad
\text{if }\prlls\neq\prllp.
\]

By the rank condition in \eqref{Lambda1p}, we have
\(
k=n+\frac{m_1+m_2}{2}.
\)
If \(m_1>0\), then the rank formula for \(\Lambda_1\) gives
\[
a_1\leqslant n+\frac{m_1+m_2}{2}-1=k-1.
\]
If \(m_1=0\), then \(\Upsilon(\Lambda_1)^+\) is empty. Thus, in
either case,
\[
\bigl(\Upsilon(\Lambda_1)^+\bigr)_1<k-m_1+1,
\]
where the first part of the empty partition is understood to be zero.
The first relation in \eqref{thetaps2} therefore gives
\[
\begin{aligned}
\bigl(\Upsilon(\Lambda_1^\star)^-\bigr)_1
&\leq
\bigl(\Upsilon(\Lambda_1)^+\bigr)_1\\
&<
k-m_1+1\\
&=
k-m_2+{\rm def}(\Lambda_1^\star)\\
&\leq
\bigl(\Upsilon(\Lambda_1^\star)^+\bigr)_1
+
{\rm def}(\Lambda_1^\star).
\end{aligned}
\]
By Alvis--Curtis duality,
\[
\Upsilon({}^{\bd}\Lambda_1^\star)^+
=
\bigl(\Upsilon(\Lambda_1^\star)^-\bigr)^t,
\qquad
\Upsilon({}^{\bd}\Lambda_1^\star)^-
=
\bigl(\Upsilon(\Lambda_1^\star)^+\bigr)^t.
\]
The preceding strict inequality shows that the lower row of
\({}^{\bd}\Lambda_1^\star\) has
\(\bigl(\Upsilon(\Lambda_1^\star)^+\bigr)_1\) nonzero entries,
whereas the upper row has
\(
\bigl(\Upsilon(\Lambda_1^\star)^+\bigr)_1
+
\mathrm{def}(\Lambda_1^\star)
\)
entries, the last of which is zero. Hence
\[
|{}^{\bd}\Lambda_1^\star|
=
2\bigl(\Upsilon(\Lambda_1^\star)^+\bigr)_1
+
\mathrm{def}(\Lambda_1^\star)-1.
\]
For \(\prllp\), with the same calculation, we have
\[
|{}^{\bd}\Lambda_1'|
=
2(k-m_2)+\mathrm{def}(\Lambda_1')-1
=
2n.
\]
If \(\prlls\neq\prllp\), then
\(
\bigl(\Upsilon(\Lambda_1^\star)^+\bigr)_1>k-m_2.
\)
Since
\(
\mathrm{def}(\Lambda_1^\star)
=
\mathrm{def}(\Lambda_1'),
\)
it follows that
\[
|{}^{\bd}\Lambda_1^\star|
>
2(k-m_2)+\mathrm{def}(\Lambda_1')-1
=
2n.
\]
Since the \(\rho\)-part and the \(\Lambda_{-1}\)-part of both
\(\prllp\) and \(\prlls\) are inherited from those of \(\prll\), and
\(\prll\) is a representation of \(\RO^+_{2n}(\mathbbm{k})\),
Theorem~\ref{max}\textup{(i)} shows that these two parts can contribute
at most \(2n\) rows to the partitions attached to
\(\ScO(\prllp)^{\max}\) and \(\ScO(\prlls)^{\max}\). Therefore the
partition attached to \(\ScO(\prllp)^{\max}\) has exactly \(2n\)
rows, whereas that attached to \(\ScO(\prlls)^{\max}\) has more than
\(2n\) rows whenever \(\prlls\neq\prllp\). Together with
\eqref{sip} and Corollary~\ref{7.2}\textup{(ii)}, this shows that the
latter partition is strictly smaller than the former.
\end{proof}

We now define descent sequence indexes for orthogonal groups. Let $V$ be
a nondegenerate symmetric space. 
For $\ell>0$ and $a\in\{\pm\}$, let
$\CO_{2\ell+1,a}\subset\mathfrak{o}(V)^F$ be the corresponding
$F$-rational nilpotent orbit. For an $F$-rational $\mathfrak{sl}_2$-triple representing
$\CO_{2\ell+1,a}$, let $V_{\ell,a}$ be the nondegenerate multiplicity
space of the one-dimensional $\mathfrak{sl}_2$-summands in $V$. Thus
\[
\dim V_{\ell,a}=\dim V-2\ell-1.
\]
For any
\(
\pi\in\Irr(\RO(V)^F)
\), define
\[
\CD^{\rm B}_{2\ell+1,a}(\pi)
:=
\operatorname{Wh}_{\CO_{2\ell+1,a}}(\pi)
\]
and regard it as a representation of $\RO(V_{\ell,a})^F$.

For $\ell=0$ and $\epsilon\in\{\pm\}$, choose an orthogonal
decomposition
\[
V=V'\oplus V'',
\qquad
\dim V''=1,
\qquad
\disc(V'')=\epsilon,
\]
and define
\[
\CD^{\rm B}_{1,\epsilon}(\pi)
:=
\Res_{\RO(V')^F}^{\RO(V)^F}\pi.
\]

A descent sequence of $\pi\in\Irr(\RO(V_0)^F)$ is a sequence
\[
\pi=\pi_0
\xrightarrow{(\ell_1,a_1)}
\pi_1
\xrightarrow{(\ell_2,a_2)}
\cdots
\xrightarrow{(\ell_k,a_k)}
\pi_k=\mathbbm{1},
\]
where $\pi_i\in\Irr(\RO(V_i)^F)$ is an irreducible constituent of
\(
\CD^{\rm B}_{2\ell_i+1,a_i}(\pi_{i-1})
\) and $\mathbbm{1}$ is the trivial representation of the trivial orthogonal group.
Here, if $\ell_i>0$, then
\(
V_i=(V_{i-1})_{\ell_i,a_i},
\)
whereas, if $\ell_i=0$, then
\(
V_{i-1}=V_i\oplus L_i\) with
\(\dim L_i=1\) and 
\( \disc(L_i)=a_i
\).
 The corresponding descent sequence index is
\[
\Gamma
=
\llb
(2\ell_1+1,a_1),\ldots,(2\ell_k+1,a_k)
\rrb .
\]
The notions of goodness, the mutation order, and the orbit
$\CO(\Gamma)$ are defined as in Section~\ref{sec4}, with each
$(2\ell,a)$ replaced by $(2\ell+1,a)$. In particular, maximality refers
to this mutation order.

\begin{thm}\label{maxo1}
    Let $G=\RO^\epsilon_{2n}$, and $\prll\in \Irr(G^F)$. Assume that $\rho=\prod_{[a]\ne[\pm1]}\pi[a]$, where $\pi[a]$ is the irreducible representation corresponding to the partition $\lambda[a]=[\lambda[a]_1,\lambda[a]_2,\cdots,\lambda[a]_{k[a]}]$. 
   \begin{enumerate}
       \item[(i)]
    Let $\llb(2\ell_1+1,a_1),(2\ell_2+1,a_2),\cdots,(2\ell_k+1,a_k)\rrb$ be a maximal descent sequence index of $\prll$. For each \(i\), set
    \begin{itemize}
        \item $\ell[\neq\pm1]_i:=\sum_{[a]\ne[\pm1]}\widetilde{\#[a]}\lambda[a]^t_i$,
        \item $ \ell[1]_i:=\rank({}^{\bd}\Lambda_1^{i-1})-\rank({}^{\bd}\Lambda_1^{i})$,
        \item $\ell[-1]_{i}:=\rank({}^{\bd}\Lambda_{-1}^{i-1})-\rank({}^{\bd}\Lambda_{-1}^{i})$.
    \end{itemize}
    Then
    \begin{align*}
    &\ell_i
=
\begin{cases}
\ell[\ne\pm1]_i+\ell[1]_i+\ell[-1]_i-1,
& i\ \textrm{odd},\\
\ell[\ne\pm1]_i+\ell[1]_i+\ell[-1]_i,
& i\ \textrm{even},
\end{cases} \\
    & a_i=\epsilon_{\rho_{i-1}}\epsilon_{\rho_{i}}\zeta({}^{\bd}\Lambda_1^{i-1})\zeta({}^{\bd}\Lambda_{-1}^{i-1})\varepsilon(-1)^{i-1}, \\
    & k=\max\left\{\max_{[a]\ne[\pm1]}k[a],|{}^{\bd}\Lambda_1|,|{}^{\bd}\Lambda_{-1}|\right\}.
    \end{align*}

    \item[(ii)] We have
    \[
    \ScO(\prll)^{\max}=\bigcup_{\textrm{maximal descent sequence indexes }\Gamma\textrm{ of }\prll} \CO(\Gamma).
    \]
   Moreover, \(\ScO(\prll)^{\max,\mathrm{st}}\) is contained in a unique
\(F\)-stable nilpotent orbit, whose partition
\(\lambda=[\lambda_1,\ldots,\lambda_k]\) is attached to any maximal
descent sequence index
\(
\Gamma
=
\llb(2\ell_1+1,a_1),\ldots,(2\ell_k+1,a_k)\rrb,
\)
with \(\lambda_i\) corresponding to \(\ell_i\).

\item[(iii)] 
Define
\[
\mathtt{N}:=\Set{1\leqslant i<k | \begin{aligned}
& \ell[\ne\pm 1]_{i}=\ell[\ne\pm 1]_{i+1},\\
& \ell[ \delta]_{i}=\ell[\delta]_{i+1}-1, \ \ell[ -\delta]_{i}=\ell[-\delta]_{i+1} \textrm{ for some }\delta \in \{\pm 1\}
\end{aligned}}.
\]
Then \(\lambda_i\) is even precisely when either \(i\) or \(i-1\)
belongs to \(\mathtt N\).

\item[(iv)] Define
\[
\mathtt{M}:=\Set{1\leqslant i<k | \begin{aligned}
& \ell[\ne\pm 1]_{i}>\ell[\ne\pm 1]_{i+1},\\
& \ell[ \delta]_{i}=\ell[\delta]_{i+1}-1, \ \ell[ -\delta]_{i}=\ell[-\delta]_{i+1} \textrm{ for some }\delta \in \{\pm 1\}
\end{aligned}}.
\]
Then, for $\CO \in \ScO(\prll)^{\max}$, we have
\[
\dim {\mathrm{Wh}}_{\CO}(\prll)=2^{\# \mathtt{M}}.
\]
In particular, the dimension above equals 1 for quadratic unipotent representations ($\ell[\ne\pm 1]_{i}\equiv 0$).

\item[(v)] Let $\pi\in \cal{E}(G,s)$,   $\CO\in \SOM$ and $\SCO=\SOMS$. Let $\pi^*=\cal{L}^{\rm can}(\pi)\in \cal{E}(C_{G^*}(s),1)$ and $\SCO^*=\mathscr{O}(\pi^*)^{\rm{max},\rm{st}}$.
Then
\[
{\rm{dim}}{\rm Wh}_{\CO}(\pi)={\frac{|A(\SCO)|}{|\SOM||\overline{A}(\SCO^*)|}},
\]
where $\overline{A}(\SCO^*)$ is Lusztig's canonical quotient of
$\SCO^*$ in $C_{G^*}(s)$.
\end{enumerate}
\end{thm}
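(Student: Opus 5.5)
The plan is to transfer Theorem~\ref{max} and Theorem~\ref{dimW} from $\Sp_{4n}$ to $\RO^\epsilon_{2n}$ through the theta lift used in the proof of Theorem~\ref{omax}. Replace $\pi$ by $\pi^\vee$ if needed and, as there, reduce to the case $\RO^+_{2n}$ with $\mathrm{def}(\Lambda_1)\le 0$; the other cases are handled by interchanging rows and changing the Witt tower. Then form $\prllp\in\Irr(\Sp_{4n}(\mathbbm{k}))$ with $\rho'=\rho$, $\Lambda'_{-1}=\Lambda_{-1}$ and $\Lambda'_1$ as in \eqref{Lambda1p}. By \eqref{eq8.8}, Lemma~\ref{theta-strict} and Proposition~\ref{8.4}, generalized descent $\nabla^{\rm gen}$ is an order-compatible bijection $\ScO(\prllp)^{\max}\to\ScO(\pi)^{\max}$. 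Every orbit in the source has exactly $2n$ rows, so by Proposition~\ref{orbit} the descent removes one box from each row and keeps the signs, and $t=0$ because the orbits are maximal and the lift is injective.

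\textbf{Parts (i)--(iii).} Apply Theorem~\ref{max}(i) to $\prllp$ and obtain its maximal descent sequence indexes $\llb(2\ell'_i,a'_i)\rrb$. The orthogonal index is defined by $\ell_i$ with $2\ell_i+1=2\ell'_i-1$, so $\ell_i=\ell'_i-1$. The first symbol step is then computed explicitly. ${}^{\bd}\Lambda'_1$ is ${}^{\bd}\Lambda_1$ with the rows interchanged, together with the extra largest entry coming from $k$; transposing $(k-m_2,\Upsilon(\Lambda_1)^-)$ adds a column of height $k-m_2$. Tracking how $\ell'[1]_i$ relates to $\ell[1]_i$ under this extra entry, and using the parity shift between the $1$- and $(-1)$-parts in Theorem~\ref{max}(i), should produce the $-1$ exactly for odd $i$. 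The sign formula is obtained from Theorem~\ref{max}(i) once the row interchange is accounted for: it flips one $\zeta$ and cancels the leading minus sign. The statement about $k$ follows because $|{}^{\bd}\Lambda'_1|=2n$ counts the rows of the orbit, while the orthogonal partition keeps the number of nonzero rows. Part (ii) is Theorem~\ref{max}(ii) pushed through the bijection $\nabla^{\rm gen}$, and part (iii) follows because subtracting $1$ from every part swaps parity, so the set $\mathtt N$ is rewritten in the shifted indices.

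\textbf{Part (iv).} Use Theorem~\ref{gz}(i) with $\tau'$ trivial on $L'$, which is trivial since $t=0$. This gives $\dim\mathrm{Wh}_{\CO}(\pi)=\dim\mathrm{Wh}_{\CO'}(\Theta(\pi^\vee))$, where $\CO'$ is the maximal lift. By \eqref{thetaps} and Lemma~\ref{theta-strict}, only the constituent $\prllp$ contributes at the maximal orbit $\CO'$, because the other constituents have strictly smaller maximal partitions. The dimension is therefore $2^{\#\mathtt M'}$ by Theorem~\ref{max}(iv). It remains to check that $\mathtt M'$ corresponds to $\mathtt M$ under the reindexing already used in (i).

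\textbf{Part (v) and the main obstacle.} Part (v) should be proved by repeating the induction of Theorem~\ref{dimW} and Lemma~\ref{unm} with the orthogonal component-group description of Section~\ref{lcq}. A shortcut would be to compare $|A|$ and $|\overline A|$ on both sides, but the dual centralizers differ: $\Lambda_1$ lives in an even orthogonal group for $\pi$ and in an odd one for $\prllp$. The main obstacle is the symbol bookkeeping in (i), namely verifying that the extra entry $k$ and the row interchange produce exactly the $-1$ for odd $i$ and the claimed sign, including the tie cases of Corollary~\ref{5.6}. I would check this first on Example~\ref{trv}-type cases, with the trivial representation of $\RO_{2n}$ lifting to a representation whose maximal orbit is $[2^{2n}]$.
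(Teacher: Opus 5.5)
Your plan for (i)--(iv) is the paper's proof: lift to the representation $\prllp$ of $\Sp_{4n}(\mathbbm{k})$ built in the proof of Theorem~\ref{omax}, use the bijection $\nabla^{\rm gen}\colon\ScO(\prllp)^{\max}\to\ScO(\pi)^{\max}$, transfer indexes and partitions by Proposition~\ref{orbit} and Theorem~\ref{max}, and for (iv) isolate $\prllp$ in the theta lift by Lemma~\ref{theta-strict} before applying Theorem~\ref{gz}(i).

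Three details need correcting.
\begin{itemize}
\item $t=0$ follows from the row count, not from maximality. Deleting one box from each of $2n$ rows containing $4n$ boxes leaves $2n=\dim V$ boxes.
\item In (iv) it is $L=\RO(\Ker T)$ that is trivial, not $L'$. The group $L'$ is the isometry group of the multiplicity space of the length-one rows of $\CO'$, and this is nonzero in general: if $\CO$ is regular, then $\CO'=[2n,2,1^{2n-2}]$. The correct argument is as follows. Since $L$ is trivial, $\Theta(\tau')=0$ unless $\tau'$ is trivial. Hence $\mathrm{Wh}_{\CO'}(\Theta(\pi^\vee))$ is $L^{\prime F}$-trivial, and Theorem~\ref{gz}(i) gives the full dimension equality.
\item Because of these length-one rows, $\ell_i=\ell_i'-1$ holds only for $i\le k$. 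Beyond $k$, the symplectic index continues with pairs $(0,\cdot),(2,\cdot)$ that vanish under descent.
\end{itemize}

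For (v) you set aside exactly the route the paper takes, and the obstacle you cite is not a real one. The paper combines $\dim\mathrm{Wh}_{\CO}(\pi)=\dim\mathrm{Wh}_{\CO'}(\prllp)$ and $|\SOM|=|\ScO(\prllp)^{\max}|$ with Theorem~\ref{dimW} for $\prllp$. It therefore only needs $|A(\SCO)|=|A(\SCOP)|$ and $|\overline A(\SCO^*)|=|\overline A(\SCOP^*)|$.

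The first equality holds because deleting one box per row turns distinct even parts into distinct odd parts. For the second, the $\rho$- and $\Lambda_{-1}$-factors of the two dual centralizers coincide by Theorem~\ref{thm:L-can}(3), so only the $\Lambda_1$-factor changes type. Start from $\Upsilon(\Lambda_1')^+=(k-m_2,\Upsilon(\Lambda_1)^-)$ and $\Upsilon(\Lambda_1')^-=\Upsilon(\Lambda_1)^+$. One checks that, for suitable representatives, ${}^{\bd}\Lambda_1'$ is the shift of $({}^{\bd}\Lambda_1)^t$ with one entry deleted from its bottom row. That entry also occurs in the top row; this uses $a_1\le k-1$, exactly as in the proof of Lemma~\ref{theta-strict}. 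So ${}^{\bd}\Lambda_1'$ has exactly one more single than ${}^{\bd}\Lambda_1$. This is precisely what the $-\infty$ appended in Section~\ref{family} accounts for, and both canonical quotients have order $2^d$, where $2d$ is the number of singles of ${}^{\bd}\Lambda_1$.

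This computation also corrects your description in (i): ${}^{\bd}\Lambda_1'$ is not $({}^{\bd}\Lambda_1)^t$ with a new largest entry adjoined. It is also the bookkeeping you need for the $-1$ at odd $i$.

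Your alternative for (v), rerunning Lemma~\ref{unm} and Theorem~\ref{dimW} on the orthogonal side, would require orthogonal analogues of their inputs. These are explicit first Bessel descents, the diagrams of Corollary~\ref{5.6}, and the composition laws of Lemma~\ref{erL3} and Theorem~\ref{thm5.1}, none of which the paper proves. Every orthogonal statement in the paper comes through the theta transfer, so that route either lacks its ingredients or reduces to the comparison above.
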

\begin{proof}
For the split even orthogonal case, let $\prllp$ be the representation of $\Sp_{4n}(\mathbbm{k})$
constructed in the proof of Theorem~\ref{omax}. As shown in the proof of Theorem~\ref{omax}, the maximal rational
orbits of $\prll$ are exactly the generalized descents of those of
$\prllp$. The explicit formula in
Proposition~\ref{orbit} then transfers the descent sequence indexes and
stable partitions, so \textup{(i)}--\textup{(iii)} follow from
Theorem~\ref{max}. 

For \textup{(iv)}, let
\(\CO'\in\ScO(\prllp)^{\max}\) correspond to
\(\CO\in\ScO(\prll)^{\max}\) under generalized descent. By
Lemma~\ref{theta-strict} and
Corollary~\ref{7.2}\textup{(ii)}, \(\prllp\) is the only irreducible constituent
of the theta lift in \eqref{thetaps} having a nonzero
\(\CO'\)-Whittaker model. Moreover, the partition of \(\CO'\) has
exactly \(2n\) rows. Then by
Proposition~\ref{orbit} and Theorem~\ref{gz}\textup{(i)},
\[
\dim\mathrm{Wh}_{\CO}(\prll)
=
\dim\mathrm{Wh}_{\CO'}(\prllp).
\]
Proposition~\ref{orbit} identifies the corresponding descent sequence
indexes, and hence the sets \(\mathtt M\). Therefore \textup{(iv)}
follows from Theorem~\ref{max}\textup{(iv)}.

Finally, Proposition~\ref{orbit} identifies the stable and rational
orbit data under generalized descent. Hence \textup{(v)} follows from
Theorem~\ref{dimW} and the description of the canonical quotient in
Section~\ref{lcq}.

The remaining even orthogonal cases are analogous, using the
corresponding Witt tower and interchanging the two rows of the
relevant symbol when necessary.
\end{proof}

\begin{thm}\label{maxo2}
    Let $G=\RO^{\epsilon'}_{2n+1}$, and $\prllz\in \Irr(G^F)$. Assume that $\rho=\prod_{[a]\ne[\pm1]}\pi[a]$, where $\pi[a]$ is the irreducible representation corresponding to the partition $\lambda[a]=[\lambda[a]_1,\lambda[a]_2,\cdots,\lambda[a]_{k[a]}]$.
 \begin{enumerate}
    \item[(i)]
    Let $\llb(2\ell_1+1,a_1),(2\ell_2+1,a_2),\cdots,(2\ell_k+1,a_k)\rrb$ be a maximal descent sequence index of $\prllz$. For each \(i\), set
    \begin{itemize}
        \item $\ell[\ne \pm1]_i:=\sum_{[a]\ne[\pm1]}\widetilde{\#[a]}\lambda[a]^t_i$,
        \item $ \ell[1]_i:=\rank({}^{\bd}\Lambda_1^{i-1})-\rank({}^{\bd}\Lambda_1^{i})$,
        \item $\ell[-1]_{i}:=\rank({}^{\bd}\Lambda_{-1}^{i-1})-\rank({}^{\bd}\Lambda_{-1}^{i})$.
    \end{itemize}
    Then
    \begin{align*}
    & \ell_i=
\begin{cases}
\ell[\ne\pm1]_i+\ell[1]_i+\ell[-1]_i,
& i\ \textrm{odd},\\
\ell[\ne\pm1]_i+\ell[1]_i+\ell[-1]_i-1,
& i\ \textrm{even},
\end{cases} \\
    & a_i=\epsilon'\epsilon_\rho\epsilon_{\rho_{i-1}}\epsilon_{\rho_{i}}\zeta({}^{\bd}\Lambda_1^{i-1})\zeta({}^{\bd}\Lambda_{-1}^{i-1})\varepsilon(-1)^{i-1}, \\
    & k=\max\left\{\max_{[a]\ne[\pm1]}k[a],|{}^{\bd}\Lambda_1|,|{}^{\bd}\Lambda_{-1}|\right\}.
    \end{align*}

    \item[(ii)] We have
    \[
    \ScO(\prllz)^{\max}=\bigcup_{\textrm{maximal descent sequence indexes }\Gamma\textrm{ of }\prllz} \CO(\Gamma).
    \]
Moreover, \(\ScO(\prllz)^{\max,\mathrm{st}}\) is contained in a unique
\(F\)-stable nilpotent orbit, whose partition
\(\lambda=[\lambda_1,\ldots,\lambda_k]\) is attached to any maximal
descent sequence index
\(
\Gamma
=
\llb(2\ell_1+1,a_1),\ldots,(2\ell_k+1,a_k)\rrb,
\)
with \(\lambda_i\) corresponding to \(\ell_i\).

\item[(iii)] 
Define
\[
\mathtt{N}:=\Set{1\leqslant i<k | \begin{aligned}
& \ell[\ne\pm 1]_{i}=\ell[\ne\pm 1]_{i+1},\\
& \ell[ \delta]_{i}=\ell[\delta]_{i+1}-1, \ \ell[ -\delta]_{i}=\ell[-\delta]_{i+1} \textrm{ for some }\delta \in \{\pm 1\}
\end{aligned}}.
\]
Then a part $\lambda_i$ is even precisely when either $i$ or $i-1$ belongs to $\mathtt{N}$. 

\item[(iv)] Define
\[
\mathtt{M}:=\Set{1\leqslant i<k | \begin{aligned}
& \ell[\ne\pm 1]_{i}>\ell[\ne\pm 1]_{i+1},\\
& \ell[ \delta]_{i}=\ell[\delta]_{i+1}-1, \ \ell[ -\delta]_{i}=\ell[-\delta]_{i+1} \textrm{ for some }\delta \in \{\pm 1\}
\end{aligned}}.
\]
Then, for $\CO \in \ScO(\prllz)^{\max}$, we have
\[
\dim {\mathrm{Wh}}_{\CO}(\prllz)=2^{\#\mathtt{M}}.
\]
In particular, the dimension above equals 1 for quadratic unipotent representations ($\ell[\ne\pm 1]_{i}\equiv 0$).

\item[(v)] Let $\pi\in \cal{E}(G,s)$,   $\CO\in \SOM$ and $\SCO=\SOMS$. Let $\pi^*=\cal{L}^{\rm can}(\pi)\in \cal{E}(C_{G^*}(s),1)$ and $\SCO^*=\mathscr{O}(\pi^*)^{\rm{max},\rm{st}}$.
Then
\[
{\rm{dim}}{\rm Wh}_{\CO}(\pi)={\frac{|A(\SCO)|}{2|\SOM||\overline{A}(\SCO^*)|}},
\]
where $\overline{A}(\SCO^*)$ is Lusztig's canonical quotient of
$\SCO^*$ in $C_{G^*}(s)$.
\end{enumerate}
\end{thm}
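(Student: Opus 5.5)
The plan is to follow the proof of Theorem~\ref{maxo1} and transfer everything from a symplectic group through theta correspondence. This time the Witt tower is the odd orthogonal one, i.e.\ the dual pair $(\Sp_{2N},\RO^{\epsilon'}_{2n+1})$ of Theorem~\ref{thm:L-can}(2). Take $\pi=\prllz$. Its $(\pm1)$-symbols enter the symplectic side with the roles interchanged: $\pi_{\Lambda'_{-1}}=\pi_{\Lambda_1}$, and $\Lambda'_1$ is related to $\Lambda_{-1}$ by a theta lift twisted by $\epsilon'\epsilon_\rho$.

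First, choose a semisimple element $s'$ and a representation $\prllp\in\Irr(\Sp_{4n+2}(\mathbbm k))$. (Take $\Sp_{2N}$ with $2N$ about twice $2n+1$, so that Proposition~\ref{8.4} gives both the order equivalence and the surjectivity of generalized descent.) We require $\rho'$ to be obtained from $\rho$ via $a\mapsto -a$ on the non-$(\pm1)$ part, $\Lambda'_{-1}=\Lambda_1$, and $\Lambda'_1$ obtained from $\Lambda_{-1}$ by prepending a large entry $k$ to a row, as in \eqref{Lambda1p}. The row is chosen according to the sign of $\mathrm{def}(\Lambda_{-1})$ and to $\epsilon'\epsilon_\rho$, and $k$ is fixed by the rank condition. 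Using the unipotent theta correspondence (AMR, Pan), I would check three things: $\prllp$ lifts to $\pi^\vee$ at first occurrence; $\eta$ matches the formula in Theorem~\ref{thm:L-can}(2); and the big theta lift of $\pi$ back to $\Sp$ decomposes as in \eqref{thetaps}.

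Next, I would redo the argument of Theorem~\ref{omax}. Lemma~\ref{thetamax}(ii) now plays the role that part (i) played before, because the enlarged symbol is $\Lambda'_1$, which comes from the $(-1)$-part. Its analogue gives $\ScO(\pi^\star)^{\max}\subset\ScO(\prllp)$ for every other constituent $\pi^\star$, and hence $\ScO(\Theta(\pi))=\ScO(\prllp)$. An analogue of Lemma~\ref{theta-strict} shows that the maximal orbits of $\prllp$ have exactly $\beta=2N-2n-1$ rows, while those of the other constituents have strictly more. Theorem~\ref{gz} and Proposition~\ref{orbit} then identify $\ScO(\pi)^{\max}$ with $\nabla^{\rm gen}$ of $\ScO(\prllp)^{\max}$ and preserve Whittaker dimensions. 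Removing one box from each row turns $(2\ell,a)$ into $(2\ell-1,a)$, so the symplectic index formulas of Theorem~\ref{max}(i) become those in (i).

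The main bookkeeping concerns how $\prllp$'s sequence relates to $\pi$'s. The extra large entry $k$ contributes an additional first part, and the interchange of $\Lambda_1$ and $\Lambda_{-1}$ shifts the parity alternation relative to the even case. This should give the $-1$ on even $i$ in place of odd $i$. The sign $a_i$ acquires the global factor $\epsilon'\epsilon_\rho$ through the twist $\Theta_{\epsilon'\epsilon_\rho}$ and the discriminant of the multiplicity space $W_1$ in Proposition~\ref{orbit}. Parts (ii)--(iv) then follow from Theorem~\ref{max}(ii)--(iv), Proposition~\ref{orbit}, and the order equivalence in Proposition~\ref{8.4}. The sets $\mathtt N$ and $\mathtt M$ match because an odd pair of the symplectic side becomes an even pair here.

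For (v), I would combine (iv) with Theorem~\ref{dimW} applied to $\prllp$ and compare the orders of the component groups. Here $A_{\RO}(\SCO)$ for $\RO_{2n+1}$ contains the extra central $\langle -I\rangle$ factor, whereas the dual group carries the extra $\{\pm\}$ factor that is absorbed by $\eta$. This mismatch accounts for the factor $2$ in the denominator.

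The step I expect to be hardest is the analogue of Lemma~\ref{theta-strict}: proving that $\prllp$ is the unique constituent of the big theta lift attaining exactly $\beta$ rows. I would first attempt it by repeating the rank and defect inequality chain with the rows swapped, using \eqref{thetaps2} in its odd-tower form.
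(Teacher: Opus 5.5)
\emph{The gap is in the construction of the auxiliary symplectic representation.} Your overall route is the paper's: lift through the odd orthogonal tower to $\Sp_{4n+2}(\mathbbm{k})$, rerun the proof of Theorem~\ref{omax}, transfer by Theorem~\ref{gz} and Proposition~\ref{orbit}, and read off (v) from Theorem~\ref{dimW}. However, your $\prllp$ has the two eigenvalue slots reversed.

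Theorem~\ref{thm:L-can}(2) is written with primes on the \emph{orthogonal} side. In your convention, with primes on the symplectic side, it says:
\begin{itemize}
\item the symplectic $1$-symbol equals the orthogonal $(-1)$-symbol, $\Lambda'_1=\Lambda_{-1}$;
\item the orthogonal $\Lambda_1$ occurs in $\Theta_{\epsilon'\epsilon_\rho}(\pi_{\Lambda'_{-1}})$, for the even orthogonal--symplectic pair.
\end{itemize}
Your choice, $\Lambda'_{-1}=\Lambda_1$ with $\Lambda'_1$ obtained by prepending an entry to $\Lambda_{-1}$, is not even well typed. The symbol $\Lambda'_{-1}$ parametrizes a unipotent representation of an even orthogonal group, so it has even defect, whereas $\Lambda_1$ is a symbol for $\Sp_{2\nu_1(s)}$ with defect $\equiv1\pmod 4$. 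Likewise, prepending one entry to the odd-defect symbol $\Lambda_{-1}$, with or without swapping rows, gives an even-defect symbol, which cannot be the $\SO_{\rm odd}$-symbol $\Lambda'_1$. So the representation you describe does not exist, and none of the subsequent checks (first occurrence, the decomposition \eqref{thetaps}, the $\eta$ formula) can be carried out for it.

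\emph{The fix.} As in the paper, take $\Lambda'_1=\Lambda_{-1}$. Writing $\Lambda_1=\binom{c_1,\ldots,c_{m_1}}{d_1,\ldots,d_{m_2}}$, take $\Lambda'_{-1}=\binom{h,d_1,\ldots,d_{m_2}}{c_1,\ldots,c_{m_1}}$ with $\rank(\Lambda'_{-1})=\rank(\Lambda_1)+n+1$. This has even defect $1-\mathrm{def}(\Lambda_1)$, as it must.

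With this correction, the other constituents of the big theta lift share $\Lambda'_1$ with $\prllp$ and differ only in the $(-1)$-symbol. That is exactly the situation of Lemma~\ref{thetamax}\textup{(ii)}. Your stated reason, ``the enlarged symbol is $\Lambda'_1$'', would instead put you in part \textup{(i)}, so as written the citation does not match your setup.

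For the same reason, the analogue of Lemma~\ref{theta-strict} must be proved with the enlarged symbol in the $(-1)$-slot. The bookkeeping that moves the $-1$ to even $i$, and that produces the factor $\epsilon'\epsilon_\rho$ in $a_i$, also has to be done there. The factor $\epsilon'\epsilon_\rho$ comes from the twisted lift $\Theta_{\epsilon'\epsilon_\rho}$, which acts on exactly this slot.

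Once the slots are swapped, your outline coincides with the paper's proof. Your account of the factor $2$ in (v) is compatible with the paper's: the two rational quadratic spaces of dimension $2n+1$ define the same group. Equivalently, $S$ identifies rational orbits for both forms with $A_{\RO}(\mathcal{O}^{\rm st})$, while $\mathscr{O}(\pi)^{\max}$ involves only one form.
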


\begin{proof}
The proof is the same as that of Theorem~\ref{maxo1}. Let $s$ be a
semisimple element such that
\(
\prllz\in\mathcal E(\RO^{\eps'}_{2n+1}(\mathbbm{k}),s),
\)
and choose $s'\in\SO_{4n+3}(\mathbbm{k})$ such that the pair $(s',s)$ satisfies
condition~\textup{(2)} of Theorem~\ref{thm:L-can}. Write
\[
\Lambda_{1}
=
\begin{pmatrix}
c_1,c_2,\ldots,c_{m_1}\\
d_1,d_2,\ldots,d_{m_2}
\end{pmatrix}.
\]
Let
\(
\prllp\in\mathcal E(\Sp_{4n+2}(\mathbbm{k}),s')
\)
be the representation whose $\rho'$ and $\Lambda_1'$ are determined by
Theorem~\ref{thm:L-can}, and whose remaining symbol is
\[
\Lambda_{-1}'
=
\begin{pmatrix}
h,d_1,d_2,\ldots,d_{m_2}\\
c_1,c_2,\ldots,c_{m_1}
\end{pmatrix},
\qquad
\rank(\Lambda_{-1}')=\rank(\Lambda_{1})+n+1.
\]
The argument in the proof of Theorem~\ref{omax} gives the analogue of
\eqref{eq8.8} for $\prllz$ and $\prllp$. The claims then follow exactly as in the proof of
Theorem~\ref{maxo1}. The proof of
Lemma~\ref{theta-strict}, with the roles of the \(1\)- and
\((-1)\)-symbols interchanged, gives the corresponding strictness
assertion. The additional factor
$2$ in \textup{(v)} comes from the two $F$-rational quadratic spaces of
dimension $2n+1$, which give rise to the same odd orthogonal group.
\end{proof}

\section{Rational wavefront sets of unipotent representations and canonical quotients}\label{sec:un}

We now specialize the preceding results to unipotent representations.
The main purpose of this section is to describe their maximal rational
wavefront sets in terms of Lusztig's canonical quotient and
Alvis--Curtis duality.

\subsection{Preparatory results for the unipotent case}

We begin with two preparatory results. 
The following proposition specializes
Theorems~\ref{max}, \ref{maxo1}, and~\ref{maxo2} to unipotent
representations and records both the rational sign pattern and the relations between
the partition and the entries of ${}^{\mathbf d}\Lambda$ that
will be used in the sequel.

\begin{prop}\label{un}
Let $\pi_{\Lambda,\eta}$ be an irreducible unipotent representation of
$G^F$, where the additional parameter $\eta\in\{\pm\}$ occurs only for
odd orthogonal groups. Assume that the partition of $\SOMS$ is
\[
\lambda=[\lambda_1,\ldots,\lambda_k]
       =[\mu_1^{i_1},\mu_2^{i_2},\ldots,\mu_r^{i_r}].
\]
Let
\(
c_1\geq c_2\geq\cdots\geq c_k
\)
be the entries of ${}^{\mathbf d}\Lambda$, counted with multiplicity and
arranged in nonincreasing order.

\begin{enumerate}
\item[(i)]
An $F$-rational nilpotent orbit
\[
\CO=[(\mu_1^{i_1},\epsilon_1),\ldots,
     (\mu_r^{i_r},\epsilon_r)]
\]
lies in $\SOM$ if and only if there exist one-dimensional quadratic
spaces $V_1,\ldots,V_k$, with $v_j=\disc(V_j)$, satisfying the following
two conditions. First,
\begin{equation}\label{v}
(v_1,\ldots,v_k)
=
\Bigl(
\zeta({}^{\mathbf d}\Lambda^{0})x_1,\,
\varepsilon(-1)\zeta({}^{\mathbf d}\Lambda^{1})x_2,\,
\ldots,\,
\varepsilon(-1)^{k-1}
\zeta({}^{\mathbf d}\Lambda^{k-1})x_k
\Bigr),
\end{equation}
where
\[
x=
\begin{cases}
(y_1,-y_1,y_2,-y_2,\ldots),
   & G=\Sp_{2n}\text{ or }G=\mathrm O_{2n}^{\epsilon},\\
(\epsilon,y_1,-y_1,y_2,-y_2,\ldots),
   & G=\mathrm O_{2n+1}^{\epsilon},
\end{cases}
\]
with $y_i\in\{\pm\}$, truncated after its $k$-th entry. Second,
\[
\epsilon_m
=
\disc\left(
\bigoplus_{j=1+\sum_{a=1}^{m-1}i_a}^{\sum_{a=1}^{m}i_a}V_j
\right),
\qquad 1\leq m\leq r.
\]

\item[(ii)]
Suppose that $G=\Sp_{2n}$. Then:
\begin{enumerate}
\item[(a)]
$\lambda_{2i-1}=\lambda_{2i}$ is odd if and only if
\(
c_{2i-1}=c_{2i}.
\)

\item[(b)]
$\lambda_{2i-1}=\lambda_{2i}$ is even if and only if
\(
c_{2i-1}=c_{2i}+1.
\)

\item[(c)]
$\lambda_{2i}=\lambda_{2i+1}$ is even if and only if
\(
c_{2i}=c_{2i+1}.
\)
\end{enumerate}

\item[(iii)]
Suppose that $G=\mathrm O_{2n}^{\epsilon}$. Then:
\begin{enumerate}
\item[(a)]
$\lambda_{2i-1}=\lambda_{2i}$ is odd if and only if
\(
c_{2i-1}=c_{2i}+1.
\)

\item[(b)]
$\lambda_{2i-1}=\lambda_{2i}$ is even if and only if
\(
c_{2i-1}=c_{2i}.
\)

\item[(c)]
$\lambda_{2i}=\lambda_{2i+1}$ is odd if and only if
\(
c_{2i}=c_{2i+1}.
\)
\end{enumerate}

\item[(iv)]
Suppose that $G=\mathrm O_{2n+1}^{\epsilon}$. Then:
\begin{enumerate}
\item[(a)]
$\lambda_{2i-1}=\lambda_{2i}$ is odd if and only if
\(
c_{2i-1}=c_{2i}.
\)

\item[(b)]
$\lambda_{2i}=\lambda_{2i+1}$ is even if and only if
\(
c_{2i}=c_{2i+1}.
\)

\item[(c)]
$\lambda_{2i}=\lambda_{2i+1}$ is odd if and only if
\(
c_{2i}=c_{2i+1}+1.
\)
\end{enumerate}
\end{enumerate}
\end{prop}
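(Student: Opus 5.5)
We specialize Theorems~\ref{max}, \ref{maxo1} and \ref{maxo2} to the unipotent case, where $\rho$ is trivial. All $\epsilon_{\rho_i}$ and all $\ell[\ne\pm1]_i$ then disappear, and in the odd orthogonal case there is an extra factor $\epsilon'\epsilon_\rho$. A unipotent $\pi_\Lambda$ of $\Sp_{2n}$ has $\Lambda_1=\Lambda$ and $\Lambda_{-1}$ trivial, and analogously for the orthogonal groups. We therefore track a single dual symbol ${}^{\bd}\Lambda$ through the successive first descents.

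The first step is to show that the merged sequence of first descents removes the entries $c_1\geq c_2\geq\cdots$ of ${}^{\bd}\Lambda$ one at a time. By the recipe of Section~\ref{sec6.1}, the $1$-part and the $(-1)$-part are interchanged at each step. The second symbol starts as the trivial one, which under the terminal convention of Section~\ref{sec:L-s} is $\binom{0}{0}$, so it contributes only zeros. Using the rank computations in the proof of Corollary~\ref{fdi-comp}, the $i$-th index is then
\[
\ell_i = c_i - \tfrac{M-1}{2} + (\text{parity shift}),
\]
where the shift alternates with $i$ because $|{}^{\bd}\Lambda|$ alternates between odd and even as the two symbols swap roles. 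Comparing consecutive $\ell_i$ and applying Theorem~\ref{max}(iii), or its orthogonal analogues with the set $\mathtt N$, gives (ii)--(iv). For example, in $\Sp$ the shifts give
\[
\ell_{2i}-\ell_{2i-1}=1 \iff c_{2i-1}=c_{2i},
\]
which is exactly the odd-pair condition. An even repeated part occurs when $2\ell_{2i-1}=2\ell_{2i}$, which corresponds to $c_{2i-1}=c_{2i}+1$. Likewise $\ell_{2i}=\ell_{2i+1}$ corresponds to $c_{2i}=c_{2i+1}$. In the orthogonal cases the $-1$ in $\ell_i$ falls on the other parity, which shifts the statements as listed. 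This part is bookkeeping, and I would do it case by case, checking small examples such as Example~\ref{dex3}.

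For (i), the sign formula in Theorem~\ref{max}(i) gives
\[
a_i=-\zeta({}^{\bd}\Lambda_1^{i-1})\zeta({}^{\bd}\Lambda_{-1}^{i-1})\varepsilon(-1)^{i-1}.
\]
Since the symbols are interleaved, the factor coming from the trivial-symbol side is a free sign whenever its entries are tied zeros. I would write $a_i=v_i v_{i+1}'$ type products and then reorganize them by setting $v_j=\varepsilon(-1)^{j-1}\zeta({}^{\bd}\Lambda^{j-1})x_j$. The free signs come in consecutive pairs, one from each tied pair of zeros, or from the tie alternatives $(+,-)$ or $(-,+)$ of Section~\ref{sec:L-s}, and this is the source of $x=(y_1,-y_1,\dots)$. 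In the odd orthogonal case the initial sign $\epsilon$ comes from $\epsilon'$, which is the extra first entry of $x$.

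Next I would convert the descent index to the signed Young diagram via $\CO(\Gamma)$. Even parts are combined by the convention
\[
[(\lambda,a_1),\dots]=[(\lambda^m,\varepsilon(-1)^{m(m-1)/2}\textstyle\prod a_j)],
\]
and odd pairs are handled by the goodness condition (b). The goal is to check that the resulting $\epsilon_m$ equals the discriminant of $\bigoplus V_j$ over the block. Theorem~\ref{max}(ii) (resp. \ref{maxo1}, \ref{maxo2}) ensures that every such orbit arises from some maximal sequence, and conversely. Independence of the choices of $y_i$ follows from Corollary~\ref{5.6}, which describes exactly which branches exist.

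The main obstacle is the sign identification in (i). One must show that the set of realized sign vectors is exactly the set parametrized by $y\in\{\pm\}^{\lfloor k/2\rfloor}$: no more, since some branches end at $0$ in Corollary~\ref{5.6}, and no less. I would attack this by induction on $k$, removing two steps at a time as in Lemma~\ref{unm}. The two-step diagrams of Corollary~\ref{5.6}(i)--(iii) should show that each pair of steps contributes exactly one free sign $y$, with the two entries of that pair correlated as $(y,-y)$ after the $\zeta$ and $\varepsilon(-1)$ normalizations.
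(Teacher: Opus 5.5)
Your proposal is correct and follows essentially the paper's route. You specialize Theorems~\ref{max}, \ref{maxo1} and \ref{maxo2} to trivial $\rho$. You read the signs as $\varepsilon(-1)^{i-1}\zeta({}^{\bd}\Lambda^{i-1})$ times the paired free signs coming from the exhausted trivial symbol $\binom{0}{0}$, plus the initial $\epsilon$ in the odd orthogonal case. You then pass to orbits through $\CO(\Gamma)$ and compare consecutive first-descent indices to get (ii)--(iv).

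Three simplifications. First, the $v_j$ are just the descent signs $a_j$ themselves, so no products of consecutive $v$'s are needed. Second, in the odd orthogonal case the constant factor $\epsilon$ also multiplies the later signs and is simply absorbed into the free $y_i$. Third, your separate induction for exactness is unnecessary: by Theorem~\ref{ds}(i) and Theorem~\ref{ggp}(ii), the maximal indexes are exactly those produced by successive first descents with all tie choices allowed, and that is precisely the set the formula for $(v_1,\ldots,v_k)$ parametrizes.
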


\begin{sop}
For a unipotent representation, the non-quadratic part is absent,
$\Lambda_1=\Lambda$, and $\Lambda_{-1}$ is the trivial symbol
$\binom{-}{-}$ or $\binom{0}{-}$. We first determine the sign components of the maximal descent sequence
indexes. By Theorems~\ref{max}\textup{(i)},
\ref{maxo1}\textup{(i)}, and \ref{maxo2}\textup{(i)}, the contributions
coming from $\varepsilon(-1)$ and from the rows containing the successive
largest entries of ${}^{\mathbf d}\Lambda$ give the factors
\[
\varepsilon(-1)^{i-1}
\zeta({}^{\mathbf d}\Lambda^{i-1})
\]
in \eqref{v}. It remains to determine the contribution of the trivial
$(-1)$-part.

For symplectic and even orthogonal groups, the contribution of the
trivial $(-1)$-part is computed as in Example~\ref{dex2}.  Under the
exhausted-symbol convention it is represented during the iteration
by $\binom{0}{0}$.  The two zeros are removed successively from
opposite rows, and hence its signs have the form
\[
(y_1,-y_1,y_2,-y_2,\ldots).
\]

For $G=\mathrm O_{2n+1}^{\epsilon}$, the contribution of the $(-1)$-part
in Theorem~\ref{maxo2}\textup{(i)} also involves the factor $\epsilon$
determined by the form of the odd orthogonal group. Since
\(
\Lambda_{-1}=\binom{0}{-}\) and
\(
\zeta({}^{\mathbf d}\Lambda_{-1})=+,
\)
the product
\(
\epsilon\cdot\zeta({}^{\mathbf d}\Lambda_{-1})
\)
gives the initial contribution $\epsilon$. Thereafter the $(-1)$-part has the same paired form as above.
Since the $y_i$ are independent free signs, all further fixed
$\epsilon$-factors can be absorbed into their choice.

These computations determine the sign parts of the maximal descent
sequence indexes of $\pi_{\Lambda,\eta}$. Applying the descent--orbit correspondence of
Section~\ref{sec4.1} gives the rational signs in \textup{(i)}. The numerical parts of the descent indexes are given by the
descent-index formulas in the three theorems above. Comparing consecutive
descent indexes determines when two consecutive parts of $\lambda$ are
equal and their common parity, giving
\textup{(ii)}--\textup{(iv)}.
\end{sop}

Although generalized Whittaker models depend on the choice of additive
character, Proposition~\ref{un}\textup{(i)} shows that the
maximal rational wavefront set of a unipotent representation does not.

\begin{thm}\label{indep}
Let $\pi$ be an irreducible unipotent representation of $G^F$. Then
$\SOM$ is independent of the choice of the nontrivial additive character
$\psi:\mathbbm{k}\to\mathbb C^\times$.
\end{thm}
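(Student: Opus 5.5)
The plan is to read off $\SOM$ entirely from Proposition~\ref{un}\textup{(i)} and to check that none of the data entering that description depend on $\psi$. Two sets of data appear there. The first is the stable partition $\lambda$ of $\SOMS$, which is the Kawanaka wavefront orbit of $\pi$. The second is the sign vector \eqref{v}, whose entries are built from $\zeta({}^{\bd}\Lambda^{i})$, from powers of $\varepsilon(-1)$, and from the free signs $y_i$ (together with $\epsilon$ in the odd orthogonal case). The set $\SOM$ is then the set of signed Young diagrams obtained by grouping these discriminants according to the blocks of $\lambda$.

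First I would note that for a unipotent representation the label $\Lambda$, and $\eta$ in the odd orthogonal case, is intrinsic. This is the first simplifying feature listed in the introduction: unipotent representations are independent of the choice of Lusztig correspondence. In particular the label does not depend on the $\psi$-dependent normalization in Theorem~\ref{thm:L-can}. One point needs care here. The canonical choice $\CL_{\rm can}$ uses $\psi$ through theta correspondence. I would argue that on $\CE(G,1)$ it agrees with Lusztig's original symbol parametrization, which does not involve $\psi$; for symplectic and even orthogonal groups this is immediate because $\Lambda_{-1}$ is trivial, and for $\RO_{2n+1}$ the sign $\eta$ is just the central character at $-I$. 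As a result, $\zeta({}^{\bd}\Lambda^i)$, the entries $c_j$, and therefore $\lambda$ via Theorems~\ref{max}, \ref{maxo1}, \ref{maxo2} depend only on $\pi$. The quantities $\varepsilon(-1)$ and $\epsilon$ are properties of the field and of the form, so they are also independent of $\psi$.

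Next I would confirm that replacing $\psi$ by $\psi_b(x)=\psi(bx)$ leaves the recipe in Proposition~\ref{un}\textup{(i)} unchanged. The signs $y_i$ range over all of $\{\pm\}$ independently, so the union over them is a $\psi$-free set. The only way $\psi$ could enter is through the convention identifying the sign $a$ in $\CO_{2\ell,a}$ with $\disc(\Fa)$ in \eqref{eq:psi-F-J}. When $b$ is a nonsquare, changing $\psi$ to $\psi_b$ multiplies every $\Fa$ by $b$, and so flips all the descent signs $a_i$ simultaneously. I would then check that this global flip is absorbed. In \eqref{v} the sign $v_j$ is $\zeta(\cdot)\,\varepsilon(-1)^{j-1}x_j$, and $x=(y_1,-y_1,y_2,-y_2,\ldots)$. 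Flipping all $v_j$ amounts to replacing $y_i$ by $-y_i$, and for $\RO_{2n+1}$ also replacing the leading $\epsilon$ by $-\epsilon$. In the odd case the leading $\epsilon$ flip is matched because $\Gamma_\CO$ for $\psi_b$ equals $\Gamma_{b\CO}$, and scaling by a nonsquare exchanges the two forms $V^\pm$ exactly as required. Hence the set of admissible $(v_j)$ is preserved, and the resulting set of rational orbits does not change.

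The main obstacle is this last bookkeeping step. One has to verify that the substitution $\psi\mapsto\psi_b$ acts on rational orbits as $\CO\mapsto b\CO$, i.e.\ as $\Gamma^{\psi_b}_\CO\cong\Gamma^{\psi}_{b\CO}$. One then has to check that $b\CO$ multiplies each block discriminant $\epsilon_m$ by $b^{i_m}$. For even blocks of odd length this is precisely the flip produced by $y_i\mapsto -y_i$ with an odd number of flipped $V_j$; for blocks of even multiplicity both sides are unchanged. I would verify this block by block against the grouping in Proposition~\ref{un}\textup{(i)}, using the pairing structure of $x$, and handle the terminal odd index (when $k$ is odd) by the free leading sign.
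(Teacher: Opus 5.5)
For $\Sp_{2n}$ your argument is the paper's. You use $\Gamma^{\psi_b}_{\CO}\cong\Gamma^{\psi}_{b\CO}$. For a nonsquare $b$, the orbit $b\CO$ is $\CO$ with each block sign $\epsilon_m$ multiplied by $(-1)^{i_m}$, i.e.\ with every one-dimensional sign $v_j$ negated. The pattern $(y_1,-y_1,y_2,-y_2,\ldots)$ of Proposition~\ref{un}\textup{(i)} is stable under $y_i\mapsto -y_i$, so $\ScO(\pi)^{\max}_{\psi_b}=b\cdot\ScO(\pi)^{\max}_{\psi}=\ScO(\pi)^{\max}_{\psi}$. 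On this route your second paragraph is not needed. Proposition~\ref{un}\textup{(i)} is only used for the fixed $\psi$, so no claim about how $\CL_{\rm can}$ depends on $\psi$ enters.

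\textbf{Gap: the odd orthogonal case.} For $G=\RO^{\epsilon}_{2n+1}$ the first entry of $x$ in Proposition~\ref{un}\textup{(i)} is pinned to $\epsilon$. Negating all $v_j$ therefore gives a pattern starting with $-\epsilon$, which the free $y_i$ cannot absorb. Since there is an odd number of odd rows, such a pattern describes orbits for the form of discriminant $-\epsilon$, not orbits in $\mathfrak o(V)^F$.

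Saying that scaling by a nonsquare ``exchanges the two forms $V^\pm$'' only relabels $G$ as $\RO^{-\epsilon}_{2n+1}$. To get back, you must identify $\RO^{\epsilon}_{2n+1}(\mathbbm{k})$ with $\RO^{-\epsilon}_{2n+1}(\mathbbm{k})$. You must also show that $\pi$, transported along this identification, still has its maximal orbits given by Proposition~\ref{un}\textup{(i)} with the same data.

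The paper's odd case supplies exactly this, in three steps:
\begin{enumerate}
\item It chooses an even orthogonal unipotent $\pi_0$ whose dual symbol is ${}^{\mathbf d}\Lambda$ with one large entry adjoined, so that $\pi$ is a first descent of $\pi_0$ with some sign $a$.
\item The similitude-conjugation argument of \cite[Theorem~5.13(iii)]{Wang23} gives a descent with sign $-a$ onto $\pi^c$ on $\RO^{-\epsilon}_{2n+1}$, and $\pi^c=\pi$ because $\pi$ is unipotent.
\item The even orthogonal case for $\pi_0$, together with the composition law, then transfers the maximal orbits.
\end{enumerate}
Your proposal has nothing playing this role.

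\textbf{The premise fails in orthogonal Lie algebras.} You flag as the main obstacle the claim that $b\CO$ multiplies each $\epsilon_m$ by $b^{i_m}$. This holds in $\mathfrak{sp}(V)$ but fails in $\mathfrak o(V)$. There the multiplicity form $(v,X^{\lambda-1}w)$ of an odd part $\lambda$ is rescaled by $b^{\lambda-1}$, which is a square, so $b\CO=\CO$. In odd dimension any flip would change $\disc(V)$, which is impossible for an orbit in $\mathfrak o(V)^F$. In the even orthogonal case this error does not affect your conclusion, but in the odd case it is exactly where your argument breaks. Actually carrying out your block-by-block check would give $\Gamma^{\psi_b}_{\CO}\cong\Gamma^{\psi}_{\CO}$ for orthogonal groups, which settles those cases directly. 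That, however, is the opposite of the flip mechanism your odd-case argument relies on.
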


\begin{proof}
Write $\SOM_{\psi}$ for the maximal rational wavefront set defined using
$\psi$. Let
\(
\psi'(x)=\psi(\mathfrak a x)\) with \(
\mathfrak a\in\mathbbm{k}^{\times}\).
If $\mathfrak a$ is a square, the assertion is immediate. We may therefore
assume that $\mathfrak a$ is nonsquare. For
\(
\CO=[(\mu_1^{i_1},\epsilon_1),\ldots,
     (\mu_r^{i_r},\epsilon_r)],
\)
set
\[
\CO'=
[(\mu_1^{i_1},(-1)^{i_1}\epsilon_1),\ldots,
 (\mu_r^{i_r},(-1)^{i_r}\epsilon_r)].
\]

Suppose first that $G$ is symplectic or even orthogonal. Then $\CO'$ is
again an $F$-rational orbit of $G^F$, and
\[
\operatorname{Wh}_{\CO,\psi}(\pi)
\cong
\operatorname{Wh}_{\CO',\psi'}(\pi).
\]
Here $\operatorname{Wh}_{\CO,\psi}(\pi)$ denotes the generalized
Whittaker model of $\pi$ with respect to
$\CO$ and $\psi$.
Thus $\CO\mapsto\CO'$ carries $\SOM_{\psi}$ onto
$\SOM_{\psi'}$. On the other hand, negating all one-dimensional signs
preserves the paired pattern
\(
(y_1,-y_1,y_2,-y_2,\ldots).
\)
Hence Proposition~\ref{un}\textup{(i)} shows that the map $\CO\mapsto\CO'$ permutes
the elements of $\SOM_{\psi}$. It follows that
$\SOM_{\psi}=\SOM_{\psi'}$ in the symplectic and even orthogonal cases.

Now let
\(
G=\mathrm O_{2n+1}^{\epsilon}\) and \(
G'=\mathrm O_{2n+1}^{-\epsilon}\).
Then $\CO$ is an $F$-rational orbit of $G^F$, whereas $\CO'$ is an
$F$-rational orbit of $G^{\prime F}$. Note that $G^F$ and $G^{\prime F}$ are isomorphic as abstract groups.
We fix such an isomorphism and identify their representations
accordingly.

Choose an irreducible unipotent representation
$\pi_0=\pi_{\Lambda_0}$ of an even orthogonal group such that
${}^{\mathbf d}\Lambda_0$ is obtained from
${}^{\mathbf d}\Lambda$ by adjoining a sufficiently large entry, where
$\Lambda$ is the symbol corresponding to $\pi$. Then,
with respect to $\psi$, its first descent is
\(
\pi_0\xrightarrow{(\ell,a)}\pi,
\)
where the representation $\pi$ on the right is regarded as a
representation of $G^F$. The conjugation argument in
the proof of \cite[Theorem~5.13\textup{(iii)}]{Wang23} gives, with the
same additive character $\psi$, the descent
\(
\pi_0\xrightarrow{(\ell,-a)}\pi^c,
\)
where $\pi^c$ is regarded as a
representation of $G^{\prime F}$. Here
\[
\pi^c(x)=\pi(c^{-1}xc),\qquad x\in G^{\prime F},
\]
with $c$ a conformal orthogonal transformation having nonsquare
similitude factor. Since $\pi$ is unipotent, $\pi^c=\pi$ under the above
identification.

The even orthogonal case applied to $\pi_0$, together with the
composition law for descents and the two descents above, now shows that
the correspondence $\CO\mapsto\CO'$ carries $\SOM_{G,\psi}$ onto
$\SOM_{G',\psi'}$. Indeed, the change $a\mapsto -a$ changes the initial
sign from $\epsilon$ to $-\epsilon$, while the remaining
one-dimensional signs are negated. Thus the pattern
\(
(\epsilon,y_1,-y_1,y_2,-y_2,\ldots)
\)
is carried to the corresponding paired pattern for $G'$. By
Proposition~\ref{un}\textup{(i)}, this is precisely the relabelling of
the maximal rational orbits induced by scaling the defining quadratic
form. The assertion follows from the identification
$G^F\cong G^{\prime F}$.
\end{proof}

Henceforth, we suppress the additive character from the notation for
maximal rational wavefront sets.

\subsection{Canonical quotients and Alvis--Curtis duality}

Recall that the Kawanaka wavefront set of an irreducible
representation consists of a single $F$-stable orbit.
Although $\SOM$ may contain several $F$-rational orbits, we show that, for a unipotent representation, they all determine the same element of the canonical quotient of this stable orbit.

Recall that Section~\ref{lcq} defines a bijection $S$ from the relevant
rational orbits attached to $\SCO$ to $A(\SCO)$, and hence a surjection
$\overline S$ onto $\overline A(\SCO)$. For
$x\in\overline A(\SCO)$, let
\[
\mathscr O(x)=\overline S^{-1}(x).
\]
In the full orthogonal cases, the domain of $S$ is the disjoint union
of the rational orbits for the two quadratic forms. In even dimension,
for a rational orbit $\CO$ on either form, $\SCO$ denotes the
$F$-stable nilpotent orbit in the split form with the same underlying
partition as $\CO$.

\begin{thm}\label{un1}
Let $\pi$ be an irreducible unipotent representation of $G^F$ such that
$\SOMS=\SCO$. Then there exists $x\in\overline A(\SCO)$ such that
\[
\SOM=\mathscr O(x).
\]
If $\pi$ is special, regarded in the odd orthogonal case as a
representation of $\RO_{2n+1}^{+}(\mathbbm{k})$, then
\[
S(\SOM)=H(\SCO),
\]
where
\[
H(\SCO)
=
\ker\bigl(A(\SCO)\longrightarrow\overline A(\SCO)\bigr).
\]
In particular,
\[
A(\SCO)/S(\SOM)\cong\overline A(\SCO).
\]
\end{thm}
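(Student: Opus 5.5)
The plan is to read off $\SOM$ directly from Proposition~\ref{un}\textup{(i)} and compare it with the coordinate description of $\overline S$ in \eqref{S}. By Proposition~\ref{un}\textup{(i)}, an orbit $\CO=[(\mu_m^{i_m},\epsilon_m)]$ lies in $\SOM$ exactly when its block discriminants are products of one-dimensional signs $v_j=\varepsilon(-1)^{j-1}\zeta({}^{\bd}\Lambda^{j-1})x_j$. Here $x$ has the paired form $(y_1,-y_1,y_2,-y_2,\ldots)$, or $(\epsilon,y_1,-y_1,\ldots)$ in the odd orthogonal case. So $\SOM$ is the image of a free parameter $y\in\{\pm\}^{\lceil k/2\rceil}$ under an explicit map, and I will compute the quantities $\delta_m=\epsilon_m\varepsilon(-1)^{k_m}$ and the sign coordinates $y_t=\prod\delta_j$ of \eqref{S} in terms of $y$.

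First I will check that the factors $\varepsilon(-1)^{j-1}$ and the merging convention $[(\lambda,a_1),\ldots,(\lambda,a_m)]=[(\lambda^m,\varepsilon(-1)^{m(m-1)/2}\prod a_i)]$ combine exactly into the normalization $\varepsilon(-1)^{k_m}$. After this step, $\delta_m$ is the product of $\zeta({}^{\bd}\Lambda^{j-1})x_j$ over the rows $j$ of the $m$-th block, and the $\varepsilon(-1)$ terms disappear. The next step is to show that each free sign $y_i$ cancels in every coordinate $y_t$, because every $y_i$ enters through a pair of consecutive rows $\{2i-1,2i\}$ (shifted by one in the odd orthogonal case).

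For this I will use Proposition~\ref{un}\textup{(ii)}--\textup{(iv)}. The blocks separating the distinguished indices $r_t$ of $K(\SCO)$ are determined by the parity of the partial sums $\sum_{j\le i}m_j$ and the parity of $\lambda_i$. Parts (ii)--(iv) say precisely when a pair of consecutive rows lies in the same block, and with which parity. I expect a case check to show that a pair $\{2i-1,2i\}$ is never split between two different groups $V_{r_{t-1}+1}\oplus\cdots\oplus V_{r_t}$ that contribute to $K(\SCO)$. If the pair is split, it should straddle only a part whose generator is not in the distinguished subset, or a part of the parity that $A(\SCO)$ does not see. Then $\overline S(\CO)=\bigl(\prod\zeta({}^{\bd}\Lambda^{j-1})\bigr)_t$ is independent of $y$, which gives $\SOM\subseteq\mathscr O(x)$ for a single $x$.

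For the reverse inclusion I will count. The same parity analysis shows that the free signs $y_i$ that change $S(\CO)$ generate exactly the kernel $H(\SCO)$ of Sommers' description, so $|\SOM|=|H(\SCO)|=|\mathscr O(x)|$. An alternative route is Theorem~\ref{dimW}/\ref{maxo1}\textup{(v)} with multiplicity one (Theorem~\ref{max}\textup{(iv)}) for the unipotent case. There $\SCO^*$ is the special orbit of the dual family, and this gives $|\SOM|=|A(\SCO)|/|\overline A(\SCO^*)|$. One then still has to identify $|\overline A(\SCO^*)|$ with $|\overline A(\SCO)|$, which holds by Lusztig--Spaltenstein duality of canonical quotients.

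For special $\pi$, the special symbol makes the rows of successive largest entries of ${}^{\bd}\Lambda$ alternate in the way that defines the origin $[(\lambda_i^{n_i},\varepsilon(-1)^{k_i})]$. Then every $\zeta$-product in a distinguished group is $+$, so $x=0$ and $S(\SOM)=H(\SCO)$. The quotient statement follows immediately.

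The main obstacle is the parity bookkeeping in the cancellation step. It has to match the pairing pattern of Proposition~\ref{un}\textup{(i)} against Sommers' distinguished subsets simultaneously for the symplectic, even orthogonal and odd orthogonal cases, and the odd orthogonal case has the extra leading $\epsilon$ and the $\RO$ versus $\SO$ component groups. I would first do the symplectic case carefully and then transport it through the generalized descent of Proposition~\ref{orbit}.
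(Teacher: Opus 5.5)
Your proof is correct, and it takes a genuinely different route from the paper's. The paper inducts on the number $k$ of rows: it deletes the first two terms of a maximal descent sequence, which lands on a smaller unipotent $\pi'$. The branch diagrams of Corollary~\ref{5.6} make the deletion map $d\colon\SOM\to\ScO(\pi')^{\max}$ one-to-one or two-to-one. The paper then checks compatibility of $d$ with $\overline S$ in the cases $c_2>c_3$ and $c_2=c_3$, and closes with the fiber count $|H(\SCO)|=r\,|H(\CO^{\prime\rm st})|$.

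You instead read Proposition~\ref{un}(i) as a closed-form parametrization of $\SOM$ and do a single $\mathbb F_2$-affine computation. Your first step holds: since $\sum_{j=s+1}^{s+i_m}(j-1)+\tfrac{i_m(i_m-1)}{2}\equiv i_ms=k_m \pmod 2$, you get $\delta_m=\prod_{j\in\text{block }m}\zeta_jx_j$ with $\zeta_j=\zeta({}^{\bd}\Lambda^{j-1})$. The cancellation then holds because Sommers' groups end at heights of the same parity as the pairing boundaries. These are even for $\Sp$ and even $\RO$, and odd for odd $\RO$, where row $1$ carries the fixed $\epsilon$. So no pair $\{y_i,-y_i\}$ crosses a group boundary.

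Your fallback, that a pair might straddle ``a part of the parity that $A(\SCO)$ does not see'', would actually \emph{break} cancellation. It is excluded only because the unsigned blocks are pair-aligned: this is Proposition~\ref{un}(ii)(a), (iii)(b), (iv)(b), or equivalently the specialness of $\SCO$. The same alignment shows that ties in ${}^{\bd}\Lambda$ occur only inside a block, so the tie convention for $\zeta$ does no harm.

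For the count, you need two facts:
\begin{itemize}
\item A signed block $b$ that is neither distinguished nor final is joined to block $b+1$, which is then also signed, by a straddling pair; this gives $x_b+x_{b+1}$.
\item If the last row is unpaired, it lies in a signed block beyond the last distinguished index; this gives $x_b$.
\end{itemize}
These vectors span exactly the kernel of the map in \eqref{S}. Hence the image is a full $H(\SCO)$-coset and $|\SOM|=|\mathscr O(x)|$.

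Your route is non-inductive and uniform across the three types. It also gives $x$ explicitly: each Sommers coordinate is a product of the factors $-\zeta_j\zeta_{j+1}$ over the pairs in its group, times $\epsilon\zeta_1$ in the odd orthogonal case. This makes the special case immediate. The paper's induction instead produces the two-row transition rule that it reuses in the proof of Theorem~\ref{un2}.

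Do not transport the odd orthogonal case through Proposition~\ref{orbit}. In the proof of Theorem~\ref{maxo2} the symplectic partner has a nontrivial $(-1)$-symbol, so it is not unipotent and the symplectic case does not apply to it. Since Proposition~\ref{un} already covers all three types, run your computation directly in each case.
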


\begin{proof}
We prove the theorem for symplectic groups. The orthogonal cases are
analogous, using Proposition~\ref{un}\textup{(iii)} and
Proposition~\ref{un}\textup{(iv)}, respectively.

We repeatedly use Proposition~\ref{un} and Sommers'
description of the canonical quotient in Section~\ref{lcq}.

Let
\[
\lambda=[\lambda_1,\ldots,\lambda_k]
       =[\mu_1^{i_1},\ldots,\mu_h^{i_h}],
\qquad
\mu_1>\cdots>\mu_h,
\]
be the partition of $\SCO$. Since $\pi$ is unipotent, $\SCO$ is
special. We argue by induction on $k$, the cases $k\leq2$ being
immediate from Proposition~\ref{un}\textup{(i)}.

Whenever $\pi'$ is obtained from $\pi$ by two successive first
descents, Theorem~\ref{max}\textup{(i)} shows that the maximal descent
sequences of $\pi'$ have fewer terms than those of $\pi$. Apply Theorem~\ref{ggp} (ii) twice consecutively. we find that $\pi'$ is again unipotent. Hence the
induction hypothesis applies to $\pi'$. Let $\CO^{\prime\rm st}$ be the unique $F$-stable orbit containing
$\ScO(\pi')^{\max,\mathrm{st}}$, and choose
\[
x'\in\overline A(\CO^{\prime\rm st})
\qquad\text{such that}\qquad
\ScO(\pi')^{\max}=\mathscr O(x').
\]

For every such $\pi'$, choose a maximal descent sequence of $\pi$ passing
through $\pi'$ and write its first three terms as
\(
(2\ell_1,a_1),(2\ell_2,a_2),(2\ell_3,a_3).
\) 
 Then the partition of $\CO^{\prime\rm st}$ is
$[\lambda_3,\ldots,\lambda_k]$. Indeed, only its first part requires
verification. It could differ from $\lambda_3$ only if the second and
third terms of the corresponding descent sequence formed an odd pair.
Then
\[
\ell_2=\ell_3-1,\qquad \ell_1>\ell_2,
\]
and the orbit--sequence correspondence would give
\[
\lambda_1=2\ell_1>
\lambda_2=\lambda_3=2\ell_2+1.
\]
Thus exactly one even part would lie above the largest odd part,
contradicting the specialness of $\SCO$.

We shall repeatedly use the following fiber-counting observation.
\begin{center}
\fbox{\parbox{0.9\textwidth}{
\textit{Fiber-counting principle.}
Suppose that
\[
d:\SOM\longrightarrow\mathscr O(x')
\]
is an \(r\)-to-one surjection,
\(\SOM\subset\mathscr O(x)\), and
\[
|H(\SCO)|=r|H(\CO^{\prime\rm st})|.
\]
Since $S$ is a bijection, we have
\(
|\mathscr O(x)|=|H(\SCO)|\) and \(
|\mathscr O(x')|=|H(\CO^{\prime\rm st})|\).
Then
\[
|\SOM|
=r|\mathscr O(x')|
=r|H(\CO^{\prime\rm st})|
=|H(\SCO)|
=|\mathscr O(x)|,
\]
and hence \(\SOM=\mathscr O(x)\).
}}
\end{center}

Write $\pi=\pi_\Lambda$, and let
$c_1\geq c_2\geq\cdots$ be the entries of
${}^{\mathbf d}\Lambda$. The idea is to remove the first two terms of a maximal descent
sequence. The two possible choices of their rational signs either
give the same rational orbit or two distinct rational orbits. The
resulting fiber size is exactly accounted for by the corresponding
change in $H(\SCO)$.

\medskip
\noindent\textbf{Case 1: \(c_2>c_3\).}

For a maximal descent sequence index
\(
\Gamma=\llb(2\ell_1,a_1),(2\ell_2,a_2),\ldots,
(2\ell_k,a_k)\rrb
\)
of $\pi$, set
\[
d(\Gamma)=
\llb(2\ell_3,a_3),\ldots,(2\ell_k,a_k)\rrb.
\]
By Corollary~\ref{5.6}, the relevant first-descent diagram is
\begin{equation}\label{un1d}
\xymatrix{
& \pi
   \ar[dl]_{(2\ell_1,a_1)}
   \ar[dr]^{(2\ell_1,-a_1)}
&\\
\pi^{(1)}_{+}
   \ar[dr]_{(2\ell_2,a_2)}
&&
\pi^{(1)}_{-}
   \ar[dl]^{(2\ell_2,-a_2)}
\\
& \pi' &
}.
\end{equation}
Together with Theorem~\ref{max}\textup{(i)}, the diagram shows that
every maximal descent sequence index of $\pi'$ extends in exactly two
ways to a maximal descent sequence index of $\pi$, along the two
branches displayed above.
Hence $d$ is a two-to-one surjection on maximal
descent sequence indexes. 
In each of the two subcases below, this induces a surjection
\[
d:\SOM\longrightarrow\ScO(\pi')^{\max};
\]
its well-definedness and fiber size follow from the corresponding
orbit formula.

\smallskip
{\it Case 1 (a): \(\lambda_1=\lambda_2\).}
For
\(
\CO=[(\mu_1^{i_1},b_1),\ldots,(\mu_h^{i_h},b_h)]
\in\SOM,
\)
deleting the first two rows gives
\[
d(\CO)=
\begin{cases}
[(\mu_1^{i_1-2},a_1a_2\varepsilon(-1)b_1),
  (\mu_2^{i_2},b_2),\ldots],&i_1>2,\\
[(\mu_2^{i_2},b_2),\ldots],&i_1=2.
\end{cases}
\]
Indeed, deleting the first two terms removes two rows of length
$\mu_1$ and multiplies the remaining sign by
$a_1a_2\varepsilon(-1)$. Since this factor is unchanged under
\(
(a_1,a_2)\longmapsto(-a_1,-a_2),
\)
the displayed formula shows that $d$ is a bijection on rational
orbits.

If $\lambda_1=\lambda_2$ is odd, the deleted pair of rows contributes
no generator to $A(\SCO)$. Identifying the component-group generators
of $A(\SCO)$ and $A(\CO^{\prime\rm st})$ attached to the same even
parts identifies $H(\SCO)$ with $H(\CO^{\prime\rm st})$. Together
with the formula for $d(\CO)$, this gives an affine
isomorphism
\[
\iota:\overline A(\SCO)\longrightarrow
\overline A(\CO^{\prime\rm st})
\]
for which the diagram
\[
\xymatrix{
\SOM \ar[r]^{d} \ar[d]_{\overline S}
&
\ScO(\pi')^{\max} \ar[d]^{\overline S}
\\
\overline A(\SCO) \ar[r]_{\iota}
&
\overline A(\CO^{\prime\rm st})
}
\]
commutes. Therefore
\[
\SOM=\mathscr O\bigl(\iota^{-1}(x')\bigr).
\]

If $\lambda_1=\lambda_2$ is even, then
$\lambda_2>\lambda_3$, since equality would imply $c_2=c_3$.
Thus the first even block consists of exactly two rows. Let $x_1$ be
the generator of $A(\SCO)$ attached to this block. Its corner has
even height two, and hence
\[
\overline A(\SCO)\cong
\mathbb Z/2\mathbb Z\times\overline A(\CO^{\prime\rm st}),
\]
where the first factor is generated by the image of $x_1$. Under this
identification, \eqref{S} gives
\[
\overline S(\CO)=
\bigl(a_1a_2\varepsilon(-1),\overline S(d(\CO))\bigr).
\]
Consequently,
\[
\SOM\subset
\mathscr O\bigl((a_1a_2\varepsilon(-1),x')\bigr).
\]
The generators attached to all even blocks other than the deleted
first block identify $H(\SCO)$ with
$H(\CO^{\prime\rm st})$.
Since $d$ is a bijection,
the fiber-counting principle gives
\[
\SOM=
\mathscr O\bigl((a_1a_2\varepsilon(-1),x')\bigr).
\]

\smallskip
{\it Case 1 (b): \(\lambda_1>\lambda_2\).}
Here $\lambda_2>\lambda_3$, since equality would imply $c_2=c_3$.
Thus $\lambda_1$ and $\lambda_2$ are distinct even parts, each of
multiplicity one. The simultaneous change
\(
(a_1,a_2)\longmapsto(-a_1,-a_2)
\)
changes the rational signs of two distinct even blocks, so the two
branches in \eqref{un1d} determine distinct rational orbits.
Therefore
\[
d:\SOM\longrightarrow\ScO(\pi')^{\max}
\]
is two-to-one.

The corners corresponding to $\lambda_1$ and $\lambda_2$ have heights
one and two, respectively. Let $x_2$ be the generator of $A(\SCO)$
attached to $\lambda_2$. As in the even subcase of Case~1 (a),
\[
\overline A(\SCO)\cong
\mathbb Z/2\mathbb Z\times\overline A(\CO^{\prime\rm st}),
\]
where the first factor is generated by the image of $x_2$, and
\[
\overline S(\CO)=
\bigl(a_1a_2\varepsilon(-1),\overline S(d(\CO))\bigr).
\]
Hence
\[
\SOM\subset
\mathscr O\bigl((a_1a_2\varepsilon(-1),x')\bigr).
\]
The odd-height corner attached to $\lambda_1$ gives
\(
|H(\SCO)|=2|H(\CO^{\prime\rm st})|.
\)
The fiber-counting principle now yields
\[
\SOM=
\mathscr O\bigl((a_1a_2\varepsilon(-1),x')\bigr).
\]

\medskip
\noindent\textbf{Case 2: \(c_2=c_3\).}
By Proposition~\ref{un}\textup{(ii)}, this is precisely the case in
which $\lambda_2=\lambda_3$ are even; the same proposition shows that
$\lambda_1$ is even as well. By Corollary~\ref{5.6}, the relevant part
of the first-descent diagram has the form
\[
\xymatrix{
&& \pi
  \ar[dll]_{(2\ell_1,a_1)}
  \ar[drr]^{(2\ell_1,-a_1)}
&&\\
\pi^{(1)}_{+}
  \ar[d]_{(2\ell_2,a_2)}
  \ar[drrrr]^(0.2){(2\ell_2,-a_2)}
&&&&
\pi^{(1)}_{-}
  \ar[dllll]_(0.2){(2\ell_2,-a_2)}
  \ar[d]^{(2\ell_2,a_2)}
\\
\pi' && &&\pi''
}
\]
where $\pi'$ and $\pi''$ are obtained according as the entries
$c_2$ and $c_3$, respectively, are removed at the second step.

Although not every maximal descent sequence passes through $\pi'$,
every orbit in $\SOM$ is represented by one that does. Indeed, interchanging the removals of $c_2$ and $c_3$ changes a path
through $\pi''$ into one through $\pi'$. Since the two equal rows
belong to the same even block, whose rational sign depends only on the
product of their signs, this interchange leaves the associated
rational orbit unchanged.
Deleting the first two
terms of such a sequence therefore gives a surjection
\[
d:\SOM\longrightarrow\ScO(\pi')^{\max}.
\]

\smallskip
{\it Case 2 (a):
\(\lambda_1=\lambda_2=\lambda_3\).}
Here $i_1>2$, and the formula for $d(\CO)$ in Case~1 (a) applies unchanged. Thus
$d$ is a bijection on rational orbits. Deleting two rows preserves the component-group generators and changes
all corner heights by two. Hence it identifies $H(\SCO)$ with
$H(\CO^{\prime\rm st})$ and, together with the formula for $d(\CO)$,
gives an isomorphism
\[
\iota:\overline A(\SCO)\longrightarrow
\overline A(\CO^{\prime\rm st})
\]
compatible with $d$. The same argument as in Case~1 (a) gives
\[
\SOM=\mathscr O\bigl(\iota^{-1}(x')\bigr).
\]

\smallskip
{\it Case 2 (b): \(\lambda_1>\lambda_2=\lambda_3\).}
For
\(
\CO=[(\mu_1,b_1),(\mu_2^{i_2},b_2),\ldots]\in\SOM,
\)
we have
\[
d(\CO)=
[(\mu_2^{i_2-1},a_2\varepsilon(-1)b_2),\ldots].
\]
As in Case~1 (b), simultaneous negation of the first two signs changes
the signs of two distinct even blocks, so $d$ is two-to-one on
rational orbits. Deleting the first two rows induces an affine
isomorphism
\[
\iota:\overline A(\SCO)\longrightarrow
\overline A(\CO^{\prime\rm st})
\]
compatible with $d$, while the odd-height corner attached to
$\lambda_1$ gives
\(
|H(\SCO)|=2|H(\CO^{\prime\rm st})|.
\)
The fiber-counting principle therefore gives
\[
\SOM=\mathscr O\bigl(\iota^{-1}(x')\bigr).
\]

Finally, suppose that $\pi$ is special. By the special symbol
condition and Proposition~\ref{un}\textup{(i)}, for every successive
pair in a maximal descent sequence,
\[
a_{2i-1}a_{2i}=\varepsilon(-1).
\]
Each Sommers coordinate is a product over one of these successive
pairs, with the normalization factor in \eqref{S}; hence all such
coordinatesof $\overline S(\CO)$  are $+$ for $\CO\in\SOM$. The
first assertion therefore gives
\[
\overline S(\SOM)=\{1\},
\qquad
S(\SOM)=H(\SCO),
\]
and consequently
\[
A(\SCO)/S(\SOM)\cong\overline A(\SCO).
\]
\end{proof}

We next compare the family realization of the canonical quotient
from Section~\ref{family} with the rational-orbit realization from
Section~\ref{lcq}.  Let $\mathcal F$ be a family, and let
$\CO^{\rm st}_{\mathcal F}$ be the $F$-stable nilpotent orbit
corresponding to its unipotent support. Recall from
Section~\ref{family} that the representations in $\mathcal F$ are
parametrized as
\[
\pi_{\Lambda,\eta}
=
\pi_{\CO^{\rm st}_{\mathcal F},x^+,x^-,\eta},
\qquad
x^\pm\in I^\pm_{\mathcal F},
\]
where the parameter $\eta\in\{\pm\}$ and the corresponding subscript
are present only in the odd orthogonal case. For symplectic and even
orthogonal groups, $I^+_{\mathcal F}$ is identified with
$\overline A(\CO^{\rm st}_{\mathcal F})$. In the odd orthogonal case,
$I^+_{\mathcal F}$ is identified with
$\overline A_{\SO}(\CO^{\rm st}_{\mathcal F})$. Here
$\overline A_{\SO}$ and $\overline A_{\RO}$ denote the canonical
quotients defined using the special and full odd orthogonal groups,
respectively, as in Section~\ref{lcq}.

Let $d=\dim_{\mathbb F_2} I_{\mathcal F}^+$ and write
\[
x^+=(\epsilon_1,\ldots,\epsilon_d),
\qquad
\epsilon_i\in\{\pm\},
\]
with respect to the ordered basis
\[
u_i=
\begin{cases}
e_{2i},
   &\text{if $G$ is symplectic or even orthogonal},\\
e_{2i-1},
   &\text{if $G$ is odd orthogonal},
\end{cases}
\]
where $e_i$ is defined in \eqref{ei}. We call
$\epsilon_1,\ldots,\epsilon_d$ the family coordinates of $x^+$.

In the notation of Section~\ref{lcq}, let $g_i=x_{r_i}$ be the ordered
Sommers generators of the relevant canonical quotient. We call the
coordinates with respect to these generators the Sommers coordinates.
Define $\Phi_{\mathcal F}$ by:
\begin{equation}\label{eq:symbol-Sommers}
\Phi_{\mathcal F}
(\epsilon_1,\ldots,\epsilon_d)
=
\begin{cases}
(\epsilon_1,\epsilon_1\epsilon_2,\ldots,
 \epsilon_{d-1}\epsilon_d),
 &\text{if $G=\Sp_{2n}$ or $\RO_{2n}^{\pm}$},\\
(\epsilon\epsilon_1,\epsilon_1\epsilon_2,\ldots,
 \epsilon_{d-1}\epsilon_d,\epsilon_d),
 &\text{if $G=\RO_{2n+1}^{\epsilon}$}.
\end{cases}
\end{equation}
When $d=0$, in the odd orthogonal case
$\Phi_{\mathcal F}$ is understood to take the unique element of
$I_{\mathcal F}^{+}$ to the one-coordinate vector $(\epsilon)$.
Here the coordinates on the right-hand side are the corresponding
Sommers coordinates. In particular, in the odd orthogonal case,
$\Phi_{\mathcal F}$ takes values in
$\overline A_{\RO}(\CO^{\rm st}_{\mathcal F})$ rather than
$\overline A_{\SO}(\CO^{\rm st}_{\mathcal F})$, and depends on the
quadratic form defining $G$. In the odd orthogonal case, only full orthogonal groups are considered
below; we therefore write $\overline A$ for $\overline A_{\RO}$.

\begin{thm}\label{un2}
Let
\(
\pi=\pi_{\SCO,x^+,x^-,\eta}
\)
be an irreducible unipotent representation of $G^F$ belonging to a
family $\mathcal F$, where $\SCO$ corresponds to the unipotent support
of $\pi$. The parameter $\eta$ is present only in the odd orthogonal
case. Let
\(
\pi'
=
\pi_{\CO^{\prime\mathrm{st}},x^{\prime+},x^{\prime-},\eta'}
\)
be the Alvis--Curtis dual of $\pi$, belonging to a family
$\mathcal F'$. Then
\[
\ScO(\pi')^{\max}
=
\mathscr O\bigl(\Phi_{\mathcal F}(x^+)\bigr),
\qquad
\ScO(\pi)^{\max}
=
\mathscr O\bigl(\Phi_{\mathcal F'}(x^{\prime+})\bigr).
\]
\end{thm}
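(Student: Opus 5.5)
The plan is to treat the first equality, $\ScO(\pi')^{\max}=\mathscr O(\Phi_{\mathcal F}(x^+))$, as the main statement. The second equality will then follow by applying the first to $\pi'$, since Alvis--Curtis duality is an involution and ${}^{\bd}({}^{\bd}\Lambda)=\Lambda$. We work first with $G=\Sp_{2n}$. The even orthogonal case needs only the modified parity rules of Proposition~\ref{un}(iii). The odd orthogonal case needs the extra initial coordinate $\epsilon$ coming from Proposition~\ref{un}(i).

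Write $\pi=\pi_\Lambda$ with $\Lambda=\Lambda_M\in\mathcal F$, so that $\pi'=\pi_{{}^{\bd}\Lambda}$. The symbol that controls $\ScO(\pi')^{\max}$ through Proposition~\ref{un} and Theorem~\ref{max}(i) is ${}^{\bd}({}^{\bd}\Lambda)=\Lambda$ itself. Hence the entries $c_1\ge c_2\ge\cdots$ in Proposition~\ref{un} are exactly the entries of $\Lambda$, namely the doubled elements $Z_2$ together with the singles $z_1>\cdots>z_{2d+1}$. The signs $\zeta(\Lambda^{j-1})$ in \eqref{v} record the row of the $j$-th largest entry. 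For a doubled entry, the two successive choices give a pair $(+,-)$ or $(-,+)$. By the same cancellation used in Case~1(a) and Case~2 of Theorem~\ref{un1}, such a pair contributes only a factor $\varepsilon(-1)$ to the discriminant of the block it lies in, and this factor is absorbed by the normalization $\varepsilon(-1)^{k_i}$ in \eqref{S}. For a single $z_j$, $\zeta$ is $+$ or $-$ according to its row, that is, according to $\delta_j$ times its row in the special symbol.

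We next compute the Sommers coordinates. By Proposition~\ref{un}(ii), the distinguished generators $x_{r_t}$ of $K(\SCO')$ (even parts with even $\sum_{j\le i}m_j$) are the corners where the block of rows attached to $\Lambda$ ends between two consecutive singles $z_{2t}, z_{2t+1}$ or, depending on the indexing, $z_{2t-1},z_{2t}$. The coordinate $y_t=\prod\delta_j$ from \eqref{S} then becomes the product of the $\zeta$-signs of the singles lying in that interval. The free paired signs $(y_1,-y_1,\dots)$ coming from the trivial $(-1)$-part cancel in pairs inside each interval.

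For the special symbol ($M^\#=\emptyset$) we already know from Theorem~\ref{un1} that all coordinates are $+$. So for general $M$ each coordinate is multiplied by the $\delta$'s of the singles it involves. Matching the interval boundaries with the basis $u_i=e_{2i}$ gives coordinates $\delta_1\delta_2=\epsilon_1$, then $\epsilon_1\epsilon_2$, and so on. This is the formula defining $\Phi_{\mathcal F}$. Theorem~\ref{un1} already shows that $\ScO(\pi')^{\max}$ is a full fiber $\mathscr O(x)$, so identifying $x$ with $\Phi_{\mathcal F}(x^+)$ finishes the argument.

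The main obstacle is the index bookkeeping: showing that the Sommers intervals (corners of odd height) align with the pairs $\{z_{2i-1},z_{2i}\}$ in such a way that the coordinates come out as consecutive products $\epsilon_{i-1}\epsilon_i$ rather than individual $\epsilon_i$. My first attempt would be to verify this on the cases listed in Proposition~\ref{un}(ii)(a)--(c): each equality $c_{2i-1}=c_{2i}$ or $c_{2i}=c_{2i+1}$ corresponds to a doubled entry in $Z_2$, while strict drops separate the singles. This would show that the $t$-th distinguished corner lies between the $(2t-1)$-st and $2t$-th singles.

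In the odd orthogonal case, the extra initial entry $\epsilon$ in $x$ from Proposition~\ref{un}(i) and the shift to the basis $e_{2i-1}$ produce the first coordinate $\epsilon\epsilon_1$ and the last coordinate $\epsilon_d$, again by the same interval argument.
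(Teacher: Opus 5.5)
Your route is sound but genuinely different from the paper's. The paper inducts on the number of entries of $\Lambda$. It deletes the two largest entries and separates the row configurations (I)--(IV) and the patterns $a_0>b_0>c$, $a_0=b_0$, $a_0>b_0=c$. It then reads off the change of the Sommers vector from the case analysis in the proof of Theorem~\ref{un1} and matches it with the change $x_2^+\mapsto x^+$ of family coordinates.

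You instead evaluate $\overline S$ on all of $\ScO(\pi')^{\max}$ at once through Proposition~\ref{un}\textup{(i)}, comparing $\Lambda_M$ with the special symbol $\Lambda_{M_0}$. All symbols of $\mathcal F$ have the same multiset of entries, hence the same partition, the same tie positions and the same free-sign pattern. They differ only in the $\zeta$-sign of each single $z_j$, by its row-agreement sign $\delta_j$. The special case of Theorem~\ref{un1}, applied to the special representation $\pi_{{}^{\bd}\Lambda_{M_0}}$, supplies the base point; alternatively, check it directly, since $\zeta$ alternates along the entries of $\Lambda_{M_0}$.

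This gives directly the closed formula $\gamma_t=\delta_{2t-1}\delta_{2t}$ for the $t$-th Sommers coordinate. It avoids both the four-case induction and the family-side transition formulas. The price is a global identification of the Sommers corners with the singles, which the paper never needs because it only looks at two entries at a time. Note also that the relevant canonical quotient is that of $\SCO=\CO^{\rm st}_{\mathcal F}$, the wavefront orbit of $\pi'$, not $K$ of a primed orbit.

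That identification is where your sketch is inconsistent, so make it precise. Your last paragraph puts the $t$-th corner between the $(2t-1)$-st and $2t$-th singles. That would give coordinates $\delta_1,\delta_2\delta_3,\ldots$, which is not $\Phi_{\mathcal F}(x^+)$. What Proposition~\ref{un}\textup{(ii)} actually gives is that row $p$ ends an even block of even cumulative length if and only if $p$ is even and $c_p$ is a single. Odd parts occur only as pairs $\lambda_{2i-1}=\lambda_{2i}$, so failure of (a) makes $\lambda_p$ even, and failure of (c) makes $\lambda_p>\lambda_{p+1}$. Since each double occupies two consecutive positions, $z_j$ sits at a position of the same parity as $j$. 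So the corners are the rows of $z_2,z_4,\ldots,z_{2d}$, and no double is split by a corner. The $t$-th interval therefore contains exactly the singles $z_{2t-1},z_{2t}$, giving $\gamma_t=\delta_{2t-1}\delta_{2t}=\epsilon_{t-1}\epsilon_t$ with $\epsilon_0=+$.

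Two of your local sign claims are also wrong as stated, though harmless once you argue relative to $\Lambda_{M_0}$.
\begin{itemize}
\item For a double at positions $(2i,2i+1)$ one has $v_{2i}v_{2i+1}=\varepsilon(-1)y_iy_{i+1}$. This depends on the free signs, so the double does not just contribute $\varepsilon(-1)$ to its block.
\item $\gamma_t$ is not the product of the $\zeta$-signs of the singles in the interval; for $\Lambda_{M_0}$ that product is $-1$.
\end{itemize}
The correct absolute statement is that the factors $\varepsilon(-1)^{j-1}$ in \eqref{v} telescope against the normalization in \eqref{S}. Hence $\gamma_t=\prod_j\zeta(\Lambda^{j-1})x_j$, the product running over the rows $j$ of the $t$-th interval.

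Finally, in the odd orthogonal case the corners are the rows of $z_1,z_3,\ldots,z_{2d+1}$, and the last interval gives $\delta_{2d}\delta_{2d+1}$. To identify this with $\epsilon_d$ you need $\prod_j\delta_j=+$, i.e.\ that $|M^\#|$ is even. The paper instead uses that the product of all Sommers coordinates is the discriminant $\epsilon$.
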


\begin{proof}
We prove the first equality, giving the details for symplectic groups.
Assume first that $G=\Sp_{2n}$. Write
\(
\pi=\pi_\Lambda.
\)
Then
\(
\pi'=\pi_{{}^{\mathbf d}\Lambda}.
\)
By \cite{Lu92}, the stable orbit containing
$\ScO(\pi')^{\max}$ is $\SCO$. Hence Theorem~\ref{un1} gives
$x\in\overline A(\SCO)$ such that
\[
\ScO(\pi')^{\max}=\mathscr O(x).
\]
We prove
\[
x=\Phi_{\mathcal F}(x^+)
\]
by induction on the number of entries of $\Lambda$. The initial cases, in which $\Lambda$ contains at most one entry, follow directly from the definitions.

Following Section~\ref{sec:L-s}, let $\Lambda^2$ be obtained from
$\Lambda$ by removing its two largest entries. Write
\[
A=(a_1,\ldots,a_k),
\qquad
B=(b_1,\ldots,b_m).
\]
With $a_0\geq b_0$, there are four possibilities:
\begin{align*}
\textup{(I)}\quad
\Lambda&=
\begin{pmatrix}
a_0,A\\
b_0,B
\end{pmatrix},
&
\textup{(II)}\quad
\Lambda&=
\begin{pmatrix}
b_0,A\\
a_0,B
\end{pmatrix},
\\
\textup{(III)}\quad
\Lambda&=
\begin{pmatrix}
a_0,b_0,B\\
A
\end{pmatrix},
&
\textup{(IV)}\quad
\Lambda&=
\begin{pmatrix}
B\\
a_0,b_0,A
\end{pmatrix}.
\end{align*}
In \textup{(III)} and \textup{(IV)}, strictness of the rows forces
$a_0>b_0$. In \textup{(I)} and \textup{(II)},
$\Lambda^2=\binom{A}{B}$ has defect congruent to $1$ modulo $4$.
In \textup{(III)} and \textup{(IV)},
$\Lambda^2=\binom{B}{A}$, while
$(\Lambda^2)^t=\binom{A}{B}$ has defect congruent to $1$ modulo $4$.
Accordingly, put
\[
\pi_2=
\begin{cases}
\pi_{\Lambda^2},
   &\text{in \textup{(I)} and \textup{(II)}},\\
\pi_{(\Lambda^2)^t},
   &\text{in \textup{(III)} and \textup{(IV)}}.
\end{cases}
\]
Write
\(
\pi_2=\pi_{\CO^{\rm st}_2,x_2^+,x_2^-},
\)
let $\mathcal F_2$ be its family, and let $\pi_2'$ be its
Alvis--Curtis dual. By Theorem~\ref{un1} and the induction hypothesis,
there is $x_2\in\overline A(\CO^{\rm st}_2)$ such that
\[
\ScO(\pi_2')^{\max}
=
\mathscr O(x_2),
\qquad
x_2=\Phi_{\mathcal F_2}(x_2^+).
\]
Write
\[
x_2^+=(\epsilon_1,\ldots,\epsilon_r)
\qquad\text{and}\qquad
x_2=(\gamma_1,\ldots,\gamma_r)
\]
in family and Sommers coordinates, respectively. When $r=0$, both vectors are empty, and multiplication of the first
coordinate below is understood to have no effect. For $r\geq1$, we have
\begin{equation}\label{eq:un2-small-coordinates}
\gamma_1=\epsilon_1,
\qquad
\gamma_i=\epsilon_{i-1}\epsilon_i
\quad
(2\leq i\leq r).
\end{equation}

For the induction step, we compare the parameters attached to
$\Lambda$ with those attached to the smaller symbol $\Lambda^2$ or $(\Lambda^2)^t$ defining $\pi_2$,
for which the desired identity is known by the induction hypothesis. On the family side this is the change from
$x_2^+$ to $x^+$; on the rational wavefront side it is the change
from the canonical-quotient parameter $x_2$ of
$\ScO(\pi_2')^{\max}$ to the parameter $x$ of
$\ScO(\pi')^{\max}$. 

{\bf Rational wavefront side:}
Set
\[
c=
\begin{cases}
a_1,&\text{in \textup{(I)} and \textup{(III)}},\\
b_1,&\text{in \textup{(II)} and \textup{(IV)}},
\end{cases}
\qquad
\sigma=-\zeta^0(\Lambda)\zeta^1(\Lambda).
\]
Here $c$ is the only remaining entry that can equal $b_0$; if the
corresponding row is empty, set $c=-\infty$. Moreover,
$\sigma=+$ in \textup{(I)}, \textup{(II)}, and $\sigma=-$ in
\textup{(III)}, \textup{(IV)}.

Since
\(
{}^{\mathbf d}({}^{\mathbf d}\Lambda)=\Lambda,
\)
the three largest entries of $\Lambda$ are the entries denoted by
$c_1,c_2,c_3$ when Proposition~\ref{un} is applied to $\pi'$.
Proposition~\ref{un}\textup{(ii)} and the calculations in the proof
of Theorem~\ref{un1} give
\begin{equation}\label{eq:un2-transition}
x=
\begin{cases}
(\sigma,\gamma_1,\ldots,\gamma_r),
   &a_0>b_0>c,\\
(\gamma_1,\ldots,\gamma_r),
   &a_0=b_0,\\
(\sigma\gamma_1,\gamma_2,\ldots,\gamma_r),
   &a_0>b_0=c.
\end{cases}
\end{equation}
Indeed, as in the proof of Theorem~\ref{un1}, in the case of $a_0>b_0>c$, the ordered generators of
$\overline A(\CO^{\rm st}_2)$ correspond to the second and subsequent ordered
generators of $\overline A(\SCO)$, so $\overline A(\SCO)$ has one
additional leading generator, whose coordinate is $\sigma$. In the
case of $a_0=b_0$, the ordered generators of the two canonical quotients are
identified in the same order, since the deleted odd pair contributes
no new generator. In the case of $a_0>b_0=c$, they are again identified in the
same order, and \eqref{S} multiplies the first Sommers coordinate by
$\sigma$ when such a coordinate exists.

{\bf Family side:} 
By a retained single we mean a single of the smaller symbol $\Lambda^2$ or $(\Lambda^2)^t$ defining
$\pi_2$ that remains a single in $\Lambda$. If $a_0>b_0>c$ or
$a_0=b_0$, every single of the smaller symbol is retained. If
$a_0>b_0=c$, the single $c$ in $\Lambda^2$ or $(\Lambda^2)^t$ forms a double with $b_0$ in $\Lambda$
and is replaced in the ordered list of singles in $\Lambda$ by the new single
$a_0$; we call $a_0$ the replacement single.

We identify each retained single with the same entry in $\Lambda$ and,
when $a_0>b_0=c$, identify $c$ with the replacement single $a_0$.
Thus $x_2^+$ may be viewed as an ordered sign vector indexed by the
corresponding singles of $\Lambda$. Recall that the row-agreement sign
of a single is $+$ or $-$ according as it lies in the same or the
opposite row of the special symbol, and by the coordinate formula in
Section~\ref{family}, the $i$-th coordinate of
$x^+$ is the product of the signs of the first $2i$ ordered singles.

 We shall prove
\begin{equation}\label{eq:un2-symbol-transition}
x^+
=
\begin{cases}
(+,x_2^+),
   &\text{in \textup{(I)}, \textup{(II)} if }a_0>b_0>c,\\
x_2^+,
   &\text{in \textup{(I)}, \textup{(II)} if }
      a_0=b_0\text{ or }a_0>b_0=c,\\
(-,-x_2^+),
   &\text{in \textup{(III)}, \textup{(IV)} if }a_0>b_0>c,\\
-x_2^+,
   &\text{in \textup{(III)}, \textup{(IV)} if }a_0>b_0=c.
\end{cases}
\end{equation}
Here $-x_2^+$ denotes the coordinatewise negation of the sign vector
$x_2^+$.

In Cases~\textup{(I)} and \textup{(II)}, the retained singles keep
their row-agreement signs. In the strict case $a_0>b_0>c$, the two
new leading singles have signs $+,+$ in Case~\textup{(I)} and
$-,-$ in Case~\textup{(II)}; in either case their product is $+$.
Hence
\[
x^+=(+,x_2^+).
\]
If $a_0=b_0$, then $a_0,b_0$ form a double. If $a_0>b_0=c$, then
$b_0,c$ form a double and $a_0$ replaces $c$ with the same
row-agreement sign. Thus in both equality cases
\[
x^+=x_2^+.
\]

In Cases~\textup{(III)} and \textup{(IV)}, every retained
row-agreement sign is reversed. If $a_0>b_0>c$, the two new leading
singles have product $-$. Since each coordinate of $x_2^+$ involves
an even number of retained singles, their sign reversals do not change
the corresponding products. The leading contribution $-$ therefore
changes every subsequent coordinate, and
\[
x^+=(-,-x_2^+).
\]
If $a_0>b_0=c$, then $b_0,c$ form a double and $a_0$ replaces $c$
with the same row-agreement sign. Among the first $2i$ singles, the
other $2i-1$ signs are reversed, so every coordinate changes sign:
\[
x^+=-x_2^+.
\]
This proves \eqref{eq:un2-symbol-transition}.

Substituting \eqref{eq:un2-small-coordinates} into
\eqref{eq:un2-transition} and using
\eqref{eq:un2-symbol-transition} and
\eqref{eq:symbol-Sommers}, we obtain
\[
x=\Phi_{\mathcal F}(x^+).
\]
Consequently,
\[
\ScO(\pi')^{\max}
=
\mathscr O\bigl(\Phi_{\mathcal F}(x^+)\bigr).
\]

For even orthogonal groups, the same argument applies with
Proposition~\ref{un}\textup{(iii)} in place of
Proposition~\ref{un}\textup{(ii)}.

For odd orthogonal groups, the preceding argument applies with the
following changes. On the rational wavefront side, one uses
Proposition~\ref{un}\textup{(iv)} in place of
Proposition~\ref{un}\textup{(ii)}, while
Proposition~\ref{un}\textup{(i)} supplies the initial factor
$\epsilon$ determined by the quadratic form defining $G$. On the
family side, by the coordinate description in Section~\ref{family},
the $i$-th family coordinate is the product of the row-comparison
signs of the first $2i-1$ ordered singles. The same case-by-case comparison as above gives the first $d$
Sommers coordinates
\[
\epsilon\epsilon_1,\epsilon_1\epsilon_2,\ldots,
\epsilon_{d-1}\epsilon_d.
\]
For the full odd orthogonal group, it follows from \eqref{S} that the
product of all Sommers coordinates is the discriminant of the quadratic form defining $G$, and hence equals $\epsilon$. Therefore the last
coordinate is $\epsilon_d$. Thus
\[
x
=
(\epsilon\epsilon_1,\epsilon_1\epsilon_2,\ldots,
 \epsilon_{d-1}\epsilon_d,\epsilon_d)
=
\Phi_{\mathcal F}(x^+).
\]
The parameter $\eta$ does not enter this calculation.

Thus the first equality holds in all cases. Since Alvis--Curtis duality is involutive, applying the first equality
to $\pi'$ proves the second equality.

\end{proof}

\begin{rmk}
In the odd orthogonal case, although $\Phi_{\mathcal F}$ depends on
the quadratic form defining $G$, its image under the natural projection
\(
\overline A_{\RO}\longrightarrow\overline A_{\SO}
\)
is independent of this choice. Thus the two realizations of the same
representation give the same element of the special orthogonal
canonical quotient.
\end{rmk}

\subsection{Isolating rational nilpotent orbits}

Although the maximal rational wavefront sets of unipotent
representations admit a concise description in terms of Lusztig's
canonical quotients, the situation can be much more complicated for
general representations, even for quadratic unipotent ones. The
following result illustrates the flexibility that already occurs in
this class.

\begin{thm}
For every $F$-rational nilpotent orbit $\CO\subset\fg^F$, there exists
an irreducible quadratic unipotent representation
$\pi\in\Irr(G^F)$ such that
\[
\SOM=\{\CO\}.
\]
\end{thm}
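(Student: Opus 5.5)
We construct $\pi$ by reversing the first-descent recipe, so that $\pi$ has a unique maximal descent sequence whose index $\Gamma$ satisfies $\CO(\Gamma)=\CO$. We treat $G=\Sp_{2n}$ first and obtain the orthogonal cases afterwards through the theta construction of Theorems~\ref{omax}, \ref{maxo1}, \ref{maxo2}.

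\emph{Choosing the index.} Fix a good index $\Gamma=\llb(2\ell_1,a_1),\ldots,(2\ell_k,a_k)\rrb\in\mathfrak G(\CO)$. Each even part of $\CO$ with its split signs gives an entry $(2\ell_i,a_i)$. Each odd pair $(2\ell+1)^2$ gives an adjacent rise $(2\ell,a),(2\ell+2,\varepsilon(-1)a)$.

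\emph{Building the representation.} We want a quadratic unipotent $\pi=\pi_{\Lambda_1,\Lambda_{-1}}$ whose maximal descent indexes are controlled by Theorem~\ref{max}\textup{(i)}. We construct ${}^{\bd}\Lambda_{1}$ and ${}^{\bd}\Lambda_{-1}$ by induction on $k$, from the tail $\llb(2\ell_2,a_2),\ldots\rrb$ upward. By induction there is a quadratic unipotent $\pi'$ realizing $\CO_{\ge 2}$ as its unique maximal orbit. We then adjoin one new largest entry to one of the two dual symbols, and choose its row and the row-switch conventions of the recipe in Section~\ref{sec6.1} so that three things hold.
\begin{enumerate}
\item[(a)] The first descent of $\pi$ is $\pi'$, and it is the unique constituent.
\item[(b)] The descent has index $\ell_1$, computed by Corollary~\ref{fdi}.
\item[(c)] The descent has sign $a_1$, computed by the formula in Theorem~\ref{ggp}\textup{(ii)}.
\end{enumerate}
The amount by which the new entry exceeds the old largest entry is fixed by $\ell_1-\ell_2$ through the rank-drop formulas in Corollary~\ref{fdi-comp}. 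The row containing it fixes $\zeta$, and hence fixes $a_1$. For an odd pair we instead adjoin two equal entries in the tied configuration of Corollary~\ref{5.6}\textup{(i)}. This forces $\ell_1=\ell_2-1$, and Theorem~\ref{thm5.1} then shows that both branches reconverge to a single orbit.

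\emph{Uniqueness of the maximal orbit.} To get $\SOM=\{\CO\}$ we must avoid the ambiguity $a\leftrightarrow -a$ of Examples~\ref{dex2} and~\ref{dex4}. Whenever $\CO$ has an even part, we arrange strict inequalities $a_1\ne b_1$ with $\min\{a_1,b_1\}\notin\{a_2,b_2\}$ in both symbols, so that we are in case (v) of Corollary~\ref{5.6}. There the first descent has a single branch with a single constituent, so the two maximal descent sequences of Example~\ref{trv} type do not occur. Odd pairs contribute one orbit in any case. With all maximal indexes equal to $\Gamma$, Theorem~\ref{max}\textup{(ii)} gives $\SOM=\{\CO(\Gamma)\}=\{\CO\}$.

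\emph{Main obstacle.} The hard step is the simultaneous bookkeeping of signs. The symbol roles alternate between $\Lambda_1$ and $\Lambda_{-1}$ at each step, and the row switches depend on $\epsilon_{-1}$ and $\varepsilon(-1)$. We must check that every sign sequence allowed by goodness is realizable while staying in the non-tied cases of Corollary~\ref{5.6} whenever the part is even. We would first settle this for orbits with only even distinct parts, where the cumulative sign formula $a_i=-\zeta\zeta\varepsilon(-1)^{i-1}$ of Theorem~\ref{max}\textup{(i)} shows that each choice of row flips exactly one $a_i$. We would then extend to repeated even parts and to odd pairs by the sign conventions for merging equal parts.
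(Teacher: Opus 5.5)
\paragraph{Symplectic case: the uniqueness step fails.}
Your symplectic strategy is the paper's: induct on a good index $\Gamma\in\mathfrak G(\CO)$, reverse the explicit first descent starting from a $\pi'$ that realizes the tail, fix $a_1$ by a row choice, and conclude with Theorem~\ref{max}(ii). The uniqueness step, however, breaks down as you set it up.

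A first descent removes a largest entry from \emph{both} dual symbols, and the $1$- and $(-1)$-parts exchange roles. So one reversed step adjoins two entries, not one:
\begin{itemize}
\item an entry $u$ to ${}^{\bd}\Lambda'_{-1}$, giving ${}^{\bd}\Lambda_1$; its row is forced by the defect condition;
\item an entry $v$ to ${}^{\bd}\Lambda'_{1}$, giving ${}^{\bd}\Lambda_{-1}$; its row is free, and this choice fixes $a_1$.
\end{itemize}
Let $x_2$ and $y_2$ be the largest entries of ${}^{\bd}\Lambda'_1$ and ${}^{\bd}\Lambda'_{-1}$. The computation in the proof of Corollary~\ref{fdi-comp} gives
\[
\ell_1-\ell_2=(v-x_2)+(u-y_2)-1,\qquad v-x_2\geqslant0,\quad u-y_2\geqslant0 .
\]
Here $v=x_2$ (resp.\ $u=y_2$) means exactly that ${}^{\bd}\Lambda_{-1}$ (resp.\ ${}^{\bd}\Lambda_1$) has tied row maxima. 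Only the sum of the two gaps is fixed by $\ell_1-\ell_2$.

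Consequently, at any step with $\ell_1=\ell_2$, which happens whenever $\CO$ has a repeated even part, exactly one of the two symbols is \emph{forced} to be tied at the top. Case~(v) of Corollary~\ref{5.6} cannot be arranged there. You are necessarily in case (ii), (iii), (iv) or (vii), where $\pi$ has two first-descent constituents, one for each sign $\pm a_1$. So neither ``single branch'' nor ``all maximal indexes equal $\Gamma$'' holds; the latter also fails for every odd pair.

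What actually gives $\SOM=\{\CO\}$ is the following.
\begin{itemize}
\item The branches reconverge, by Corollary~\ref{5.6}.
\item For the tied symbol, the two choices turn $(a_1,a_2)$ into $(-a_1,-a_2)$. Both pairs merge into the same block $\bigl((2\ell_1)^2,\varepsilon(-1)a_1a_2\bigr)$.
\end{itemize}
The correct target is that every maximal index lies in $\mathfrak G(\CO)$, not that it equals $\Gamma$. This is what the paper extracts from Corollary~\ref{5.6} ``in the equal-head case''. Your closing remark about merging conventions points this way, but it is the core of the uniqueness proof, not a routine extension. When $\ell_1>\ell_2$, you only need that neither symbol is tied at the top, and the identity above lets you arrange this. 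The second-level conditions in case~(v) are properties of $\pi'$, fixed by induction, so you cannot impose them.

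\paragraph{Orthogonal case: missing input.}
Your orthogonal step is only a pointer, and read literally it needs an input you do not supply. Theorems~\ref{omax}--\ref{maxo2} go from a given orthogonal $\pi$ to a specially constructed $\pi'$ on a symplectic group of twice the size, with $\Lambda'_1$ as in \eqref{Lambda1p}. Running them backwards requires the symplectic representation you build to have that special form, i.e.\ to come from the orthogonal group by theta. Nothing in your construction arranges this.

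The paper goes the other way:
\begin{itemize}
\item Delete one box from each row of $\CO$ to get $\CO_{\rm sp}=\nabla^{\rm gen}_{V',V}(\CO)$ on the \emph{smaller} group $\Sp(V)$.
\item Take $\sigma$ with $\ScO(\sigma)^{\max}=\{\CO_{\rm sp}\}$ from the symplectic case.
\item By Theorem~\ref{gz}(i), some constituent $\pi$ of $\Theta(\sigma)$ on $\RO(V')$ has $\CO\in\ScO(\pi)$; it is quadratic unipotent by Theorem~\ref{thm:L-can}.
\item For any $\widetilde{\CO}\in\ScO(\pi)$, Theorem~\ref{gz}(ii) and (i) put $\nabla^{\rm gen}_{V',V}(\widetilde{\CO})$ in $\ScO(\sigma)$, hence below $\CO_{\rm sp}$.
\item Since $\CO$ has exactly $\dim V'-\dim V$ rows, Proposition~\ref{8.4} then gives $\widetilde{\CO}\leqslant\CO$.
\end{itemize}
This route needs no statement about the image of a theta lift.
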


\begin{proof}
We prove the symplectic case by induction
on $r$, the length of a good descent sequence index. The case $r=0$
is immediate. Recall from Section~\ref{sec4.1} that
$\mathfrak G(\CO)$ denotes the set of good descent sequence indexes
corresponding to $\CO$. Choose
\[
\Gamma=\llb(2\ell_1,a_1),\ldots,(2\ell_r,a_r)\rrb
\in\mathfrak G(\CO),
\]
and let $\CO'$ be the orbit corresponding to the good descent sequence
index obtained from $\Gamma$ by deleting its first term. By induction, there exists a
quadratic unipotent representation
$\pi'=\pi_{\Lambda_1',\Lambda_{-1}'}$ whose unique orbit in $\ScO(\pi')^{\rm max}$ is $\CO'$. Write
\[
{}^{\bd}\Lambda'_1=
\begin{pmatrix}
a^+_1,\ldots,a^+_{k^+}\\
b^+_1,\ldots,b^+_{m^+}
\end{pmatrix},
\qquad
{}^{\bd}\Lambda'_{-1}=
\begin{pmatrix}
a^-_1,\ldots,a^-_{k^-}\\
b^-_1,\ldots,b^-_{m^-}
\end{pmatrix}.
\]
Set
\[
{}^{\bd}\Lambda_1=
\begin{cases}
\begin{pmatrix}
a,a^-_1,\ldots,a^-_{k^-}\\
b^-_1,\ldots,b^-_{m^-}
\end{pmatrix},
&\mathrm{def}({}^{\bd}\Lambda'_{-1})\equiv0\pmod4,\\[6pt]
\begin{pmatrix}
a^-_1,\ldots,a^-_{k^-}\\
a,b^-_1,\ldots,b^-_{m^-}
\end{pmatrix},
&\mathrm{def}({}^{\bd}\Lambda'_{-1})\equiv2\pmod4,
\end{cases}
\]
and choose ${}^{\bd}\Lambda_{-1}$ from
\[
\begin{pmatrix}
b,a^+_1,\ldots,a^+_{k^+}\\
b^+_1,\ldots,b^+_{m^+}
\end{pmatrix},
\qquad
\begin{pmatrix}
a^+_1,\ldots,a^+_{k^+}\\
b,b^+_1,\ldots,b^+_{m^+}
\end{pmatrix}
\]
so that the first descent sign is $a_1$. Choose $a$ and $b$ so that
the resulting symbols have the required ranks and the first descent
has index $(2\ell_1,a_1)$ with constituent $\pi'$. The existence of
such choices follows from Theorem~\ref{ggp}\textup{(i)}.

The explicit first-descent formula, together with
Corollary~\ref{5.6} in the equal-head case and the induction
hypothesis for $\pi'$, shows that $\Gamma$ is a maximal descent
sequence index of $\pi=\pi_{\Lambda_1,\Lambda_{-1}}$ and that every
maximal descent sequence index of $\pi$ belongs to
$\mathfrak G(\CO)$. Theorem~\ref{max}\textup{(ii)} proves the
symplectic case.

Now let $G=\RO(V')$. By Proposition~\ref{orbit}, deleting one box
from every row of $\CO$, while preserving the rational forms, gives
\[
\CO_{\mathrm{sp}}
=\nabla^{\rm gen}_{V',V}(\CO)
\subset\mathfrak{sp}(V)^F.
\]
By the symplectic case, choose a quadratic unipotent
$\sigma\in\Irr(\Sp(V)^F)$ whose unique 
orbit in \(\ScO(\sigma)^{\rm max}\) is $\CO_{\mathrm{sp}}$. Since $\dim V<\dim V'$, the theta lifting from $\Sp(V)^F$ to $\RO(V')^F$  is nonzero:
$\Theta(\sigma)\neq0$. By Theorem~\ref{gz}\textup{(i)}, there is an
irreducible constituent $\pi$ of $\Theta(\sigma)$ such that
$\CO\in\ScO(\pi)$. By Theorem~\ref{thm:L-can}, $\pi$ is quadratic
unipotent.

For any $\widetilde{\CO}\in\ScO(\pi)$,
Theorem~\ref{gz}\textup{(ii)} places $\widetilde{\CO}$ in the image
of the moment map, while Theorem~\ref{gz}\textup{(i)} shows that its
generalized descent belongs to $\ScO(\sigma)$ and hence is at most
$\CO_{\mathrm{sp}}$. Since $\CO$ has exactly
$\dim V'-\dim V$ rows, Proposition~\ref{8.4} gives
$\widetilde{\CO}\leq\CO$. Thus $\CO$ is the unique
maximal rational orbit in $\ScO(\pi)$.
\end{proof}

\end{document}